\documentclass[11 pt, reqno]{amsart}

\usepackage{amsmath,amsfonts,amssymb,graphicx}
\usepackage[colorlinks=true,hyperindex=true]{hyperref}
\usepackage{cancel,bbm}

\usepackage{comment}
\usepackage{enumitem}
\usepackage{cite}

\usepackage{tikz}

\counterwithin*{equation}{section}

\usepackage[left=1in, right=1in, top=1in, bottom= 1in]{geometry}
\usepackage{color}
 \usepackage{fancyhdr}
\usepackage{latexsym}

\usepackage{bm}

\newtheorem{ccounter}{ccounter}[section]
\newtheorem{thm}[ccounter]{Theorem}

\newtheorem{lem}[ccounter]{Lemma}
\newtheorem{cor}[ccounter]{Corollary}
\newtheorem{defn}[ccounter]{Definition}
\newtheorem{prop}[ccounter]{Proposition}
\newtheorem{ass}[ccounter]{Assumption}
\newtheorem{ex}[ccounter]{Example}

\def\bet{\begin{thm}}
\def\eet{\end{thm}}
\def\bel{\begin{lem}}
\def\eel{\end{lem}}
\def\bas{\begin{ass}}
\def\eas{\end{ass}}
\def\bec{\begin{cor}}
\def\eec{\end{cor}}
\def\bed{\begin{defn}}
\def\eed{\end{defn}}
\def\bep{\begin{prop}}
\def\eep{\end{prop}}
\def\beq{\begin{equation}}
\def\eeq{\end{equation}}
\def\bea{\begin{equation*}}
\def\eea{\end{equation*}}

\def\bex{\begin{ex}}
\def\eex{\end{ex}}

\def\1{\boldsymbol{1}}

\def\eps{\varepsilon}
\renewcommand\leq\varleq
\renewcommand\geq\vargeq

\def\F{\mathcal{F}}

\def\Cov{\mathrm{Cov}}

\def\Z{\mathbb{Z}}
\def\T{\mathbb{T}}

\renewcommand{\P}{\mathbb{P}}
\newcommand{\eqd}{\,{\buildrel d \over =}\,}

\newcommand{\V}{\bm{V}}
\newcommand{\slope}{\mathrm{slope}}
\newcommand{\SFE}{\mathrm{SFE}}
\newcommand{\Ent}{\mathrm{Ent}}

\usepackage{enumitem} 

\title[Rigidity of sloped height functions in $d\ge 3$]{On the rigidity of sloped height functions in $d\ge 3$ \\ and non-crossing surfaces}

\author{Arka Adhikari}
\address{Department of Mathematics, University of Maryland. College Park, MD. \\ \url{arkaa@umd.edu}}

\author{Reza Gheissari}
\address{Department of Mathematics, Northwestern University. Evanston, IL. \\ \url{gheissari@northwestern.edu}}

\author{Ron Peled}
\address{Department of Mathematics, University of Maryland. College Park, MD. \\ \url{peledron@umd.edu}}

\date{September 1, 2026}

\begin{document}

\begin{abstract}
    We consider integer-valued $\nabla\phi$ height functions, with general  convex interactions, placed on a slope. Sheffield (2003) conjectured that for all slopes, such height functions are localized in dimensions $d\ge3$, in the sense of having tight fluctuations in finite volume, and admitting infinite-volume limits. We establish this conjecture when there are two coordinates on which the slope vector is zero and on which the interactions are even and low temperature. In this setting, we also prove the uniqueness, in the appropriate sense, of the infinite-volume limit and classify its extremal components. 
    
    We further study the structural properties of the infinite-volume limit. We establish the entropic repulsion between macroscopic domain walls (boundaries of level sets), showing that they are maximally separated in a precise sense described by a rotation-of-the-circle dynamical system. Lastly, we show exponential decay of correlations in the extremal components.

    Our setup includes, as a special case, infinitely many zero-slope integer-valued surfaces (of dimension two or higher) conditioned not to cross. We deduce the existence and structural properties of the bulk Gibbs measure over such non-crossing surfaces with any given average spacing. 
\end{abstract}

\maketitle

\vspace{-.8cm}
\section{Introduction}
Our goal in this paper is to study \emph{sloped} states for integer-valued height functions ($\nabla\phi$ models) with translation-invariant convex interactions. These height functions are the prototypical models in the statistical physics of random surfaces~\cite{bricmont1986random,Sheffield-Random-Surfaces,Velenik2006}, modeling a competition between a smoothing interaction and fluctuations induced by temperature and boundary conditions. They model a variety of physical phenomena including equilibrium crystal shapes and spin system interfaces.

We consider the following setup. Fix a dimension $d\ge 1$. Let $\V := (V_i)_{1\le i\le d}$ be a collection of functions $V_i:\Z\to\mathbb R \cup \{\infty\}$, termed the (gradient) \emph{interactions}. We assume that
\begin{equation}\label{eq:potential assumption}\tag{Convexity}
    \text{$V_i$ is convex and $\lim_{k\to-\infty}V_i(k)=\lim_{k\to\infty} V_i(k)=\infty$ for each $1\le i\le d$.}
\end{equation}
Here, $V$ is \emph{convex} if $\{k\colon V(k)<\infty\}$ is a non-empty (discrete, finite or infinite) interval and $V(k+1)+V(k-1)-2V(k)\ge 0$ for all $k$ in this interval. We sometimes further require an interaction $V$ to be \emph{even}, i.e., to satisfy $V(k) = V(-k)$ for all $k\in\Z$. In addition, we say that $\V$ is \emph{isotropic} if the $V_i$ are identical and even.

The \emph{integer-valued height function model with interactions $\V$} is the model on $\Omega:=\{\phi:\Z^d\to\Z\}$
with formal Hamiltonian
\begin{equation}\label{eq:height function model}
    H(\phi):= \sum_{(u,v)\in\vec{E}(\Z^d)}V_{uv}(\phi_v - \phi_u),
\end{equation}
where $\vec{E}(\Z^d):=\{(u,v)\colon u\in\Z^d, v = u+\mathfrak{e}_i\text{ for some $1\le i\le d$}\}$, with $\mathfrak{e}_i$ the $i$'th standard basis vector, are the edges of $\Z^d$ oriented in the positive coordinate directions and where we set $V_{uv} := V_i$ when $v = u+ \mathfrak{e}_i$. Our convention is to absorb the temperature parameter into the interactions $\V$.

The following examples serve to illustrate the usefulness of the generality of our setup:
\begin{enumerate}[label=(\alph*)]
    \item \label{item:nabla-phi} $|\nabla \phi|^p$ models: The isotropic model with $V_i(k)=V(k) = \beta|k|^p$ for some $p\ge 1$ and inverse temperature $\beta>0$. The $p=1$ case is termed the \emph{Solid-On-Solid (SOS) model} while the $p=2$ case is known as the \emph{integer-valued Gaussian free field}.
    \item \label{item:Lipschitz} Lipschitz height functions: The model is said to be \emph{Lipschitz} if, for each $i$, $V_i(k)=\infty$ outside of a finite interval. For instance, the isotropic model with $V(0)=0$, $V(\pm 1)=\beta$ and $V(\pm k)=\infty$ for $|k|>1$ is sometimes called the \emph{$1$-Lipschitz model}.
    \item \label{item:non-isotropic} Non-isotropic coupling constants: One may fix $V:\Z\to(-\infty,\infty]$ satisfying~\eqref{eq:potential assumption} and let $V_i := \beta_i V$ for some positive $(\beta_i)_{1\le i\le d}$ ($\beta_i=0$ is allowed in the Lipschitz case).
    \item \label{item:non-crossing-surfaces} Non-crossing surfaces \cite[Section 4.6]{Sheffield-Random-Surfaces}: Suppose $V_d(k) = \begin{cases}0&k\ge 0\\ \infty&k<0\end{cases}$.\footnote{To satisfy the requirement $V_d(k)\xrightarrow[k\to\infty]{}\infty$ of \eqref{eq:potential assumption}, we note that this choice of $V_d$ is equivalent to $V_d(k)= \alpha k$ for $k\ge 0$, for any $\alpha>0$, and $V_d(k) = \infty$ for $k<0$. Indeed, for any fixed boundary conditions on a finite domain, all these interaction choices give exactly the same finite-volume Gibbs measure.} The single height function $\phi:\Z^d\to\Z$ may then be regarded as a family of height functions $\psi^{(m)}:\Z^{d-1}\to\Z$ indexed by $m\in\Z$, defined by $\psi^{(v_d)}(v_1,\ldots, v_{d-1}):=\phi(v_1,\ldots, v_d)$. The interaction $V_d$ ensures that the surfaces are non-crossing in the sense that $\psi^{(m+1)}\ge \psi^{(m)}$ for all $m$, almost surely. The model may thus be interpreted as a model for infinitely many paths (for $d=2$) or surfaces (for $d\ge 3$) conditioned to stay ordered. See also Figure~\ref{fig:non-crossing-surfaces}. 
\end{enumerate}
The above definitions also make sense for real-valued height functions, but our focus in this paper is on the integer-valued case. Figure~\ref{fig:3d-sloped-surfaces} depicts a $d=3$ SOS model at $(0,0,1)$ slope.

\begin{figure}
\centering
\begin{tikzpicture}
        \node at (-4,-.25){\includegraphics[width = 0.35\textwidth]{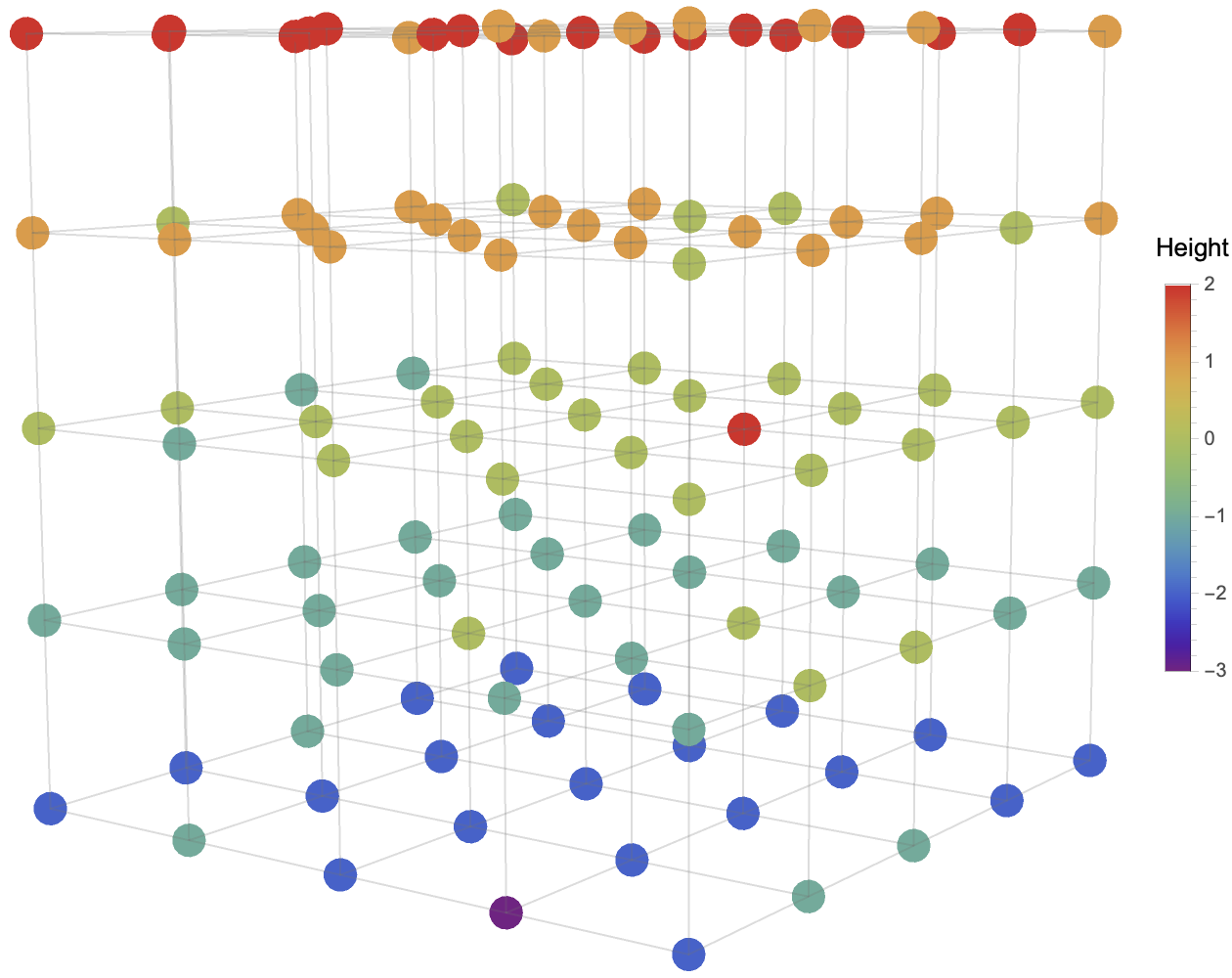}};
        \node at (4,0){
  \includegraphics[width = 0.42\textwidth]{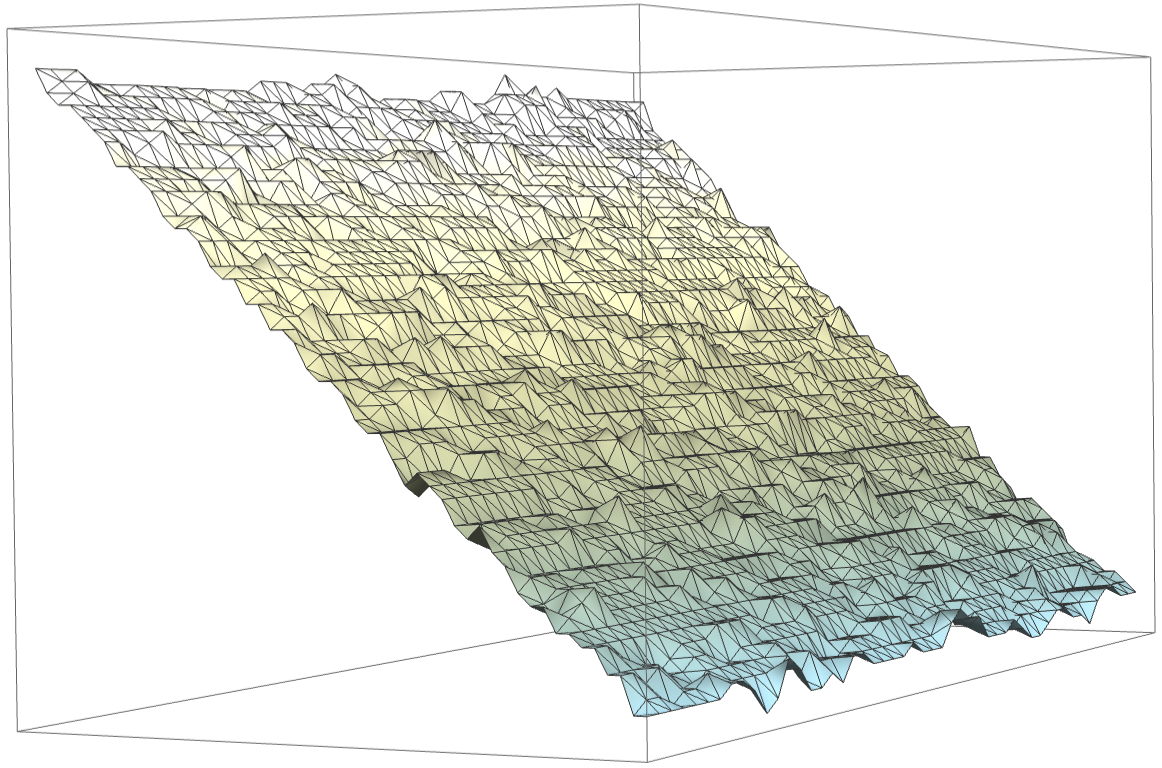}};
  \end{tikzpicture}
    \caption{Left: A snapshot from the bulk of a slope-$(0,0,1)$ 3D SOS (i.e., $
    |\nabla \phi|$) model with the colors indicating the height. Right: The $x_1 = 0$ cross-section from a sample of the same SOS model, now depicted as a surface with heights being given by the location on the $z$-axis (and also still depicted by the color scheme).}\label{fig:3d-sloped-surfaces}
\end{figure}

In this paper we study the \emph{existence and uniqueness problems} for Gibbs measures on height functions of a given slope. We elaborate on several perspectives on the problem in Section~\ref{subsec:additional-motivations}, but briefly note here that such sloped states are the building blocks for the Wulff construction and associated variational problems, and also that their study necessitates the development of new tools for dealing with the ``gas'' of macroscopic level surfaces that the slope induces.

At \emph{zero slope}, the existence problem received significant attention in the following form: Consider the height function model on a sequence of domains $(\Lambda_L)_L$ with zero boundary values. Is there a limiting distribution to the heights as $\Lambda_L$ increases to $\Z^d$ (maybe along a subsequence)? Is there a limiting distribution to the gradients? While one expects the gradient distribution to generically have subsequential limits, the question for the heights is more subtle: the heights are said to be \emph{localized} if a subsequential limit exists, and otherwise \emph{delocalized}. It is a standard fact that the heights are delocalized in dimension $d=1$ (in non-degenerate cases). In dimension $d=2$, for many choices of interactions, the model undergoes a \emph{roughening transition}~\cite{FrohlichSpencer}---the heights are localized at low temperatures and delocalized at high temperatures. Finally, in dimensions $d\ge 3$, it is generally believed that the heights at zero slope are localized, with rigorous proofs available in several cases (see Section~\ref{subsec:related-work}).

For non-zero slopes, the existence problem is obtained by replacing the zero boundary values with values that approximate a hyperplane of that slope. In this context, Sheffield~\cite[Section 10.2.2]{Sheffield-Random-Surfaces} conjectured that the heights are localized for all slopes in dimensions $d\ge 3$. Standard techniques for showing localization, such as cluster expansions, Peierls-type arguments, or correlation inequalities break down in the presence of non-zero slope, necessitating the development of new methods (see also Section~\ref{subsec:proof-ideas}).

If a slope is shown to be localized, the next fundamental question is whether and in what sense the Gibbs measure of that slope is unique. 
As global integer shifts of a height function clearly preserve the Gibbs condition, the natural uniqueness property is for the family of ergodic \emph{gradient Gibbs measures} of the given slope.\footnote{For technical reasons, one needs to further restrict the uniqueness question to be among ergodic gradient Gibbs measures of \emph{minimal free energy} (see Section~\ref{sec:background gradient Gibbs measures}).}
(Note that there may generically be many extremal components to each such measure, which we will also be interested in.)
When the slope has an irrational coordinate or when $d=2$, uniqueness was established in~\cite{Sheffield-Random-Surfaces}, but it was left open 
 in the general case.\footnote{\cite[Section 8.8]{Sheffield-Random-Surfaces} also gave a counterexample to uniqueness for a $2\mathbb Z^3$-translation-invariant interaction in $d= 3$.}

\begin{figure}
\begin{tikzpicture}[xscale=0.8, yscale=1.2]
\node[] at (0,0) {
    \includegraphics[width= 0.75\textwidth]{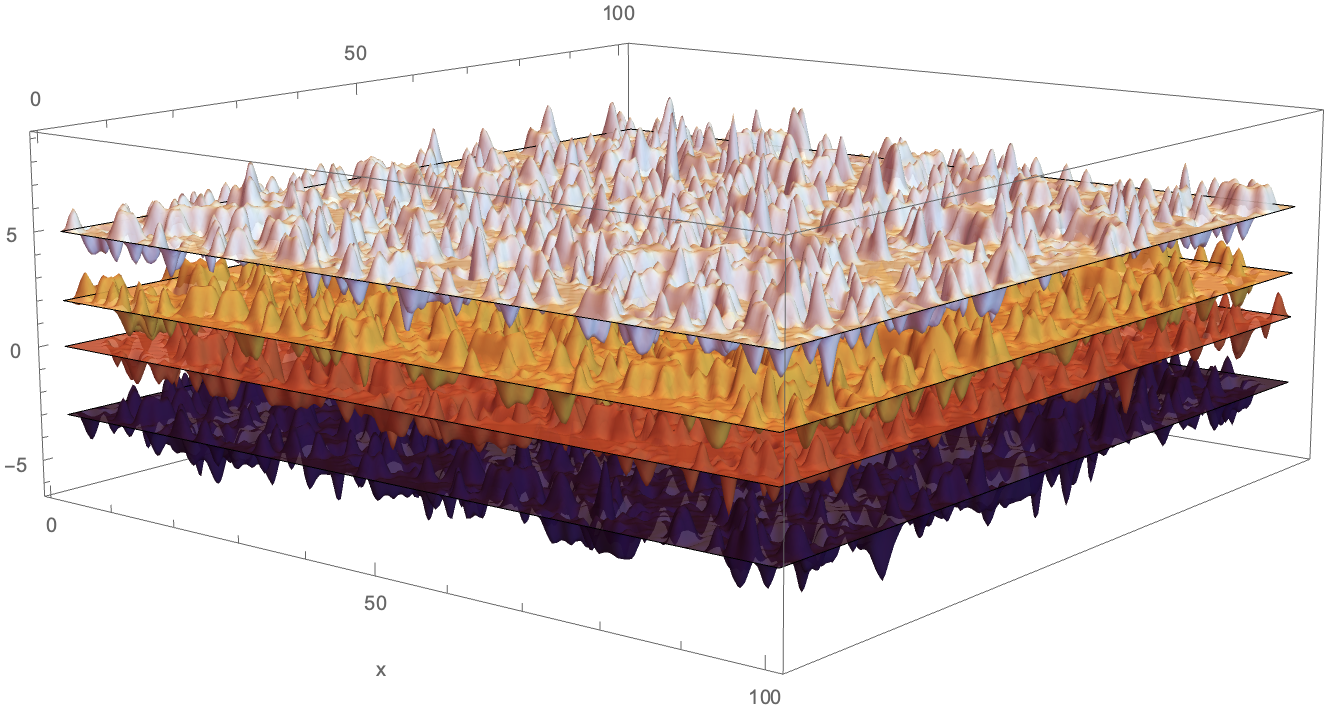}};
    \node[font = \Large] at (0,2) {$\boldsymbol{\vdots}$};   
    \node[font = \Large] at (0,-1.6) {$\boldsymbol{\vdots}$};

\end{tikzpicture}
    \caption{A sample from the Gibbs measure on non-crossing low-temperature zero-slope SOS surfaces with average spacing $5/2$. As explained in Corollary~\ref{cor:non-crossing-surfaces}, the surfaces pick out one of two ``dominant height" sequences, with spacings between consecutive dominant heights alternating as $(...,2,3,2,3,...)$ or as $(...,3,2,3,2,...)$, and each surface localizes about its own dominant height.}\label{fig:non-crossing-surfaces}
\end{figure}

Our first result (Theorem~\ref{thm:main-localization}) is to show that both existence and uniqueness hold under the condition that there are (at least) two coordinates on which the slope is zero and the interactions in those two coordinates are even and ``low temperature''.
More precisely, denoting the slope by $\mathfrak{s}$, we assume for a $\beta_0(d)$ sufficiently large, 
\begin{equation}\label{eq:main assumption}\tag{2-coord}
\begin{gathered}
\text{there exist two coordinates $1\le i_0<i_1\le d$ such that, for each $j\in\{i_0,i_1\}$,}\\
\text{$\mathfrak{s}_j=0$, $V_j$ is even and $V_j(1) - V_j(0)\ge \beta_0$}.
\end{gathered}
\end{equation}
(Note that no assumption is made on the interactions in the other coordinates.) For visualization, as in Figure~\ref{fig:3d-sloped-surfaces}, it may help the reader to think about $i_0=1, i_1 =2$, and partition $\mathbb Z^d$ into its \emph{layer} planes $\mathbb Z^2\times (v_3,...,v_d)$ within each of which the slope is zero.
To our knowledge, this is the first general setting in which Sheffield's localization conjecture for non-zero slopes has been established (exempting a few special choices of pairs of slopes and interactions---see Section~\ref{subsec:related-work}). In particular, for $\mathfrak{s}\notin \mathbb Z^d$, the localization is already new for the SOS and integer-valued Gaussian free field.  

Once existence and uniqueness are settled, we wish to obtain further structural information on the sampled height function. For slopes $\mathfrak{s}\notin\Z^d$, there are many height functions of minimal energy, distinguished by the locations of their macroscopic domain walls. We show that in samples from the ergodic gradient Gibbs measure of slope $\mathfrak{s}$, the domain walls become maximally separated (due to entropic repulsion) in a precise manner determined by a rotation-of-the-circle dynamical system. Moreover, the different ways of obtaining such maximal separation are in one-to-one correspondence with the extremal components of the measure.\footnote{The fact that the ergodic gradient Gibbs measure is not extremal when $\mathfrak{s}\notin\Z^d$ was already shown in~\cite[Section 8.7]{Sheffield-Random-Surfaces}, but we give a new description of the extremal components in terms of the macroscopic domain walls.} Lastly, we prove that there is exponential decay of correlations in these extremal components.

Our proofs rely on a notion of restricted reflection positivity, which we develop following Shlosman--Vignaud~\cite{ShlosmanVignaud} who introduced it to prove rigidity of slope zero interfaces in certain spin systems,
 together with a variety of probabilistic and ergodic-theoretic arguments. After establishing rigidity on finite domains with ``sloped-periodic" boundary conditions, the proofs identifying the domain wall locations in extremal components also make essential use of Sheffield's theory of gradient Gibbs measures for random surfaces~\cite{Sheffield-Random-Surfaces}. We have included in Section~\ref{sec:background gradient Gibbs measures} a brief summary of the results from~\cite{Sheffield-Random-Surfaces} that we use, that readers seeking a quick introduction to that theory may find useful. We outline our proof ideas in Section~\ref{subsec:proof-ideas}. 

\subsection{Further perspectives}\label{subsec:additional-motivations}
Before detailing our results, we mention a few further reasons for the study of sloped states:

\smallskip
\noindent (1) Analogously to the famous Wulff construction for the Ising model~\cite{DKS,CerfPisztora}, Sheffield~\cite[Theorem 7.3.1]{Sheffield-Random-Surfaces} proved that the macroscopic profile of the $\nabla\phi$ model with general continuous boundary conditions is, informally, attained by patching together sloped states in a way that matches the boundary conditions and minimizes the overall specific free energy. 
Sheffield's result extends landmark results on the limit shapes and large deviation principles for the dimer model in~\cite{CohnKenyonPropp,kenyon2006dimers}, and analogous principles for real-valued surfaces~\cite{DeuschelGiacominIoffe}.
The (affine) sloped states can thus be viewed as the ``basic building blocks'' for the macroscopic behavior of these models. Furthermore, their localization properties are closely related to cusps in the corresponding surface tension function~\cite{MiracleSoleStepFreeEnergy}.

\smallskip \noindent 
(2) A height function with non-zero slope necessarily has a density of (sometimes overlapping) macroscopic domain walls, on which the heights jump by $1$. These domain walls can be thought of as a gas of macroscopic objects with complicated interactions. Their emergence, as the slope changes from zero, is an instance of the commensurate-incommensurate transition (see, e.g.,~\cite[Chapter 10]{chaikin1995principles}) as the domain walls, on the one hand, are bound to lattice positions, and, on the other hand, obey a possibly irrational average spacing.

Following Fisher and Fisher~\cite{fisher1982wall}, this point of view was discussed in the seminal work of Bricmont, El Mellouki, and Fr\"ohlich~\cite{bricmont1986random}. The latter authors write ``\emph{We pose, but do not solve, the problem of constructing 
statistical mechanical methods (e.g., some sort of Mayer expansion) for a 
gas of extended objects, such as the steps appearing in a tilted interface}".
Our work provides new methods for studying tilted interfaces.

\smallskip \noindent 
(3) While the theory of random non-crossing \emph{paths} is well developed dating back to~\cite{KarlinMcGregor}, the literature on non-crossing \emph{surfaces} of two or more dimensions seems much sparser; they were discussed, but not analyzed, in~\cite[Section 3]{bricmont1986random} and~\cite[Section 4.6]{Sheffield-Random-Surfaces}. 

Our work includes and has an especially clean interpretation for non-crossing surfaces. Consider an infinite family of slope-zero low-temperature two-dimensional integer-valued height functions, conditioned on non-crossing; our main theorem constructs an infinite-volume Gibbs measure over such non-crossing surfaces, and gives the exact sequence of heights they each localize about. See Section~\ref{sec:non-crossing-surfaces}, specifically Corollary~\ref{cor:non-crossing-surfaces} for more on this important example.

 \subsection{Main results}\label{subsec:main-results}
 In this section, we present our main results.

\subsubsection{Existence and uniqueness of sloped states}
Our results on sloped states are best phrased in the language of (infinite-volume) Gibbs measures. Recall that a Gibbs measure $\mu$ for the integer-valued height function model with interactions $\mathbf{V}$ is a measure over $\Omega$ satisfying the Dobrushin--Lanford--Ruelle (DLR) consistency relations~\cite{georgii2011gibbs}. The gradient measure of a Gibbs measure $\mu$ is the law of the discrete gradients $(\phi_v - \phi_u)_{uv\in \vec{E}(\mathbb Z^d)}$ that it induces (equivalently, it is the law of the height function $\phi$ modulo an additive integer constant). When the gradient measure is translation invariant, we define the \emph{slope} of the Gibbs measure $\mu$, or equivalently of its gradient, as the vector
\begin{equation}\label{eq:slope-def}
    \slope(\mu)_i:=\mu[\phi_{\mathfrak{e}_i} - \phi_0] \qquad \text{for }1\le i\le d\,,
\end{equation}
whenever the expectation exists, with $\slope(\mu)$ undefined otherwise. It is clear that the slope vector lies in the set of \emph{admissible slopes},
\begin{equation}\label{eq:admissible-slopes}
\mathfrak{S} := \{\mathfrak{s}\in \mathbb R^d: V_i(\lfloor\mathfrak{s}_i\rfloor)<\infty\text{ and }V_i(\lceil\mathfrak{s}_i\rceil)<\infty \text{ for all }1\le i\le d\}.
\end{equation}
By way of example, $\mathfrak{S}=\mathbb{R}^d$ for the $|\nabla \phi|^p$ models from Example~\ref{item:nabla-phi}, $\mathfrak{S}=[-1,1]^{d}$ for the $1$-Lipschitz model from~\ref{item:Lipschitz}, and $\mathfrak{S} = \mathbb{R}^{d-1} \times [0,\infty)$ for non-crossing surfaces from~\ref{item:non-crossing-surfaces} with first $d-1$ interactions of $|\nabla \phi|^p$ type.
We write $\mathfrak{S}^\circ$ for the interior of $\mathfrak{S}$.

The first question we consider is the existence of a Gibbs measure with a given slope. Precisely, a slope $\mathfrak{s}$ is called \emph{localized} if there exists a Gibbs measure $\mu$, with \emph{ergodic} gradient measure, having slope $\mathfrak{s}$; otherwise, the slope is called \emph{delocalized}. Ergodicity rules out the possibility that $\mu$ is a mixture of measures of different slopes. Existence vs.\ non-existence of Gibbs measures is very closely related to other notions of localization vs.\ delocalization (e.g., in terms of the fluctuations of the height in finite volume with sloped boundary conditions) as the DLR condition requires that under resampling in an arbitrarily large finite domain, the heights remain tight.

The next question concerns uniqueness. Given a Gibbs measure $\mu$ and an integer $k$, one can construct the measure $\mu+k$ as the law of $(\phi_v+k)_v$ when $\phi\sim \mu$. This will also be a Gibbs measure having the same gradient measure as $\mu$. Therefore, the natural uniqueness question is whether there is a unique ergodic \emph{gradient Gibbs measure} of a given slope. 
We will address this question in the family of \emph{minimal} gradient measures, which means that the specific free energy of the gradient measure is minimal among gradient measures with slope $\mathfrak{s}$. The notions of gradient Gibbs measures and minimality are defined precisely in Section~\ref{sec:background gradient Gibbs measures}. Conjecture 10.3.2 of~\cite{Sheffield-Random-Surfaces} predicts that any ergodic gradient measure with finite free energy is necessarily minimal, in which case the latter constraint is not needed.

\begin{thm}\label{thm:main-localization}(Existence and uniqueness)
    Let $d\ge 2$. There exists $\beta_0(d)>0$ such that the following holds. Assume that the interactions $\V$ satisfy~\eqref{eq:potential assumption}. Let $\mathfrak{s}\in\mathfrak{S}^\circ$ and suppose that~\eqref{eq:main assumption} holds with $\beta_0(d)$.

    There is a \emph{unique} ergodic, minimal gradient Gibbs measure of slope $\mathfrak{s}$. Moreover, it is the gradient of a (height function) Gibbs measure. In particular, the slope $\mathfrak{s}$ is localized.
\end{thm}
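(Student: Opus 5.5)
The plan is to build the measure from finite volumes with sloped-periodic boundary conditions, prove rigidity there by restricted reflection positivity, and then import uniqueness from Sheffield's theory of gradient Gibbs measures.

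\emph{Finite-volume construction.} Assume without loss of generality $i_0=1$, $i_1=2$. Consider tori $\T_L=(\Z/L\Z)^d$ with $L$ a multiple of a period chosen so that $L\mathfrak{s}_i$ is close to an integer $n_i$ for $i\ge 3$; for irrational $\mathfrak{s}_i$ we approximate, using that $\mathfrak{s}\in\mathfrak{S}^\circ$. On $\T_L$ we take height functions with sloped-periodic boundary conditions, $\phi(v+L\mathfrak{e}_i)=\phi(v)+n_i$, with $n_1=n_2=0$. The resulting measures $\mu_L$ are translation invariant modulo additive constants, and their gradient measures have slope close to $\mathfrak{s}$.

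\emph{Rigidity via restricted reflection positivity.} The key estimate is uniform tightness of $\phi_u-\phi_v$ under $\mu_L$ for $u,v$ in a common layer $\Z^2\times(v_3,\dots,v_d)$, and more generally of $\phi_u-\phi_v-\mathfrak{s}\cdot(u-v)$, uniformly in $L$. Since $V_1,V_2$ are even and the slope is zero in those coordinates, the measure is reflection positive with respect to reflections through hyperplanes orthogonal to $\mathfrak{e}_1,\mathfrak{e}_2$. Following Shlosman--Vignaud, we restrict reflection positivity to these two directions and use the chessboard estimate. Its consequence is that the probability that a given layer contains a contour separating heights $k$ from $k+1$ of area $A$ is at most $e^{-c\beta_0 A}$. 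The bound comes from comparing, after reflecting a single plaquette event throughout the layer-direction torus, a configuration that is forced to pay $V_j(1)-V_j(0)\ge\beta_0$ per edge with one that is flat in the $\mathfrak{e}_1,\mathfrak{e}_2$ directions.

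\emph{Main obstacle.} The hardest step is making this Peierls/chessboard count work while the other coordinates carry arbitrary interactions and nonzero slope. Reflections act only in two directions, so the comparison must be done fiberwise: condition on everything outside a two-dimensional layer region, and show the reflected (disseminated) bad event costs $\beta_0$ per area. The entropy of layer contours in $\Z^2$ is only exponential in length, which $\beta_0(d)$ large absorbs, and the convexity of $V_i$ for $i\ge3$ must be used to control the cross-layer terms. I would try first to bound $\mu_L(\phi_u\ne\phi_v)$ for neighbours $u,v$ in a layer by the reflected event's partition-function ratio, and then sum over contours surrounding $u$ within its layer. Combined with the fact that, by periodicity and convexity, the average height of each layer matches $\mathfrak{s}\cdot v$ up to $O(1)$, this yields tightness of $\phi_u-\lfloor\mathfrak{s}\cdot u+c\rfloor$ uniformly in $L$.

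\emph{Infinite volume and uniqueness.} Tightness lets us extract subsequential limits $\mu$ of the height measures, after pinning at the origin. Translation averaging and passing to an ergodic component of the gradient give a Gibbs measure with ergodic gradient of slope $\mathfrak{s}$; the slope is identified by uniform integrability from the exponential tails. Minimality follows because the periodic finite-volume measures minimize free energy for their torus slope, and the free energy is continuous in the slope. For uniqueness we invoke Sheffield's theory from Section~\ref{sec:background gradient Gibbs measures}. Any two ergodic minimal gradient Gibbs measures of slope $\mathfrak{s}$ can be coupled, via his monotone coupling and cluster-swapping argument, so that the difference of the heights has no infinite cluster of constant sign. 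To run this we need one of them to be localized, meaning it is the gradient of a Gibbs measure, which the above provides. Sheffield's result that a localized ergodic minimal measure is the unique one of its slope, through the comparison of $\phi-\psi$ via ergodicity and the zero-slope difference, then gives uniqueness.
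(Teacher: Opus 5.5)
Your existence plan (sloped-periodic tori, reflection positivity restricted to the two flat directions, passage to the limit, minimality from Sheffield) follows the paper's outline. There are, however, two genuine gaps.

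\textbf{Uniqueness.} The last step relies on a result Sheffield does not prove. He obtains uniqueness of the ergodic minimal gradient measure only when $\mathfrak{s}$ has an irrational coordinate or $d=2$. For a localized slope, his Theorem 8.6.3 only says that every ergodic minimal gradient measure of slope $\mathfrak{s}$ is the gradient of a Gibbs measure. For rational slopes in $d\ge 3$ (even $\mathfrak{s}=0$) uniqueness was open, and Sheffield gives a counterexample for a $2\Z^3$-periodic interaction, so it cannot follow from the general theory alone.

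A coupling in which $\phi_1-\phi_2$ has no infinite constant-sign cluster would require matching the extremal components of $\mu_1$ and $\mu_2$ that share a dominant-height profile. For independent samples with different profiles, $\phi_1-\phi_2$ is a nonzero constant on a percolating set in some layer. The paper needs three ingredients that your plan lacks:
(i) Lemma~\ref{lem:ergodic-minimal-has-dominant-height}: an \emph{arbitrary} ergodic minimal measure of slope $\mathfrak{s}$, to which the chessboard estimate does not apply directly, still has exponentially suppressed horizontal gradients. Suppose such a measure gave the bad event more probability than the torus bound. Sheffield's Theorem 6.2.1 would then give the matching empirical-density event probability $e^{-o(N^d)}$ under the sloped-periodic torus measures, whereas chessboard plus Chernoff bounds show it is $e^{-\Omega(N^d)}$.
(ii) Section~\ref{sec:extremal-components}: Sheffield's ordering and translation properties force every extremal component to have dominant heights $h_0+\lfloor\alpha+\langle\mathfrak{s},\mathcal L\rangle\rfloor$.
(iii) A disagreement-percolation argument for components paired by $\alpha$ (Lemmas~\ref{lem:supercluster} and~\ref{lem:notgoodpair}), which is what makes the swap legitimate.

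\textbf{Rigidity.} You flag this step as the main obstacle but leave it unresolved, and the fiberwise route you suggest would fail for two reasons. First, conditioning on everything outside a layer region leaves a two-dimensional measure with random fields from the neighbouring layers, which is not reflection invariant. Second, a flattening confined to one layer can raise vertical energy whenever a vertically adjacent $\mathfrak{e}_1$-edge carries a gradient of the same sign.

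The paper instead keeps the full torus measure. Each site of $\T_L^2$ carries its whole column in $\Z^{\T_L^{d-2}}$, and the block is the prism $\{0,1\}^2\times\T_L^{d-2}$. The Peierls map flattens the entire signed gradient cluster of the defect edge in $\{0,1\}\times\T_L^{d-2}$ in two stages. It first replaces each pair of endpoint heights by the floor and ceiling of their average. It then removes the leftover unit gradients by lowering all right endpoints or raising all left endpoints, whichever is cheaper. The inequality $V(a+b_1)+V(a-b_1)\le V(a+b_2)+V(a-b_2)$ for $|b_1|\le|b_2|$ shows that neither stage, with the better choice, increases vertical energy. A witness decomposition (cluster, gradient values, signs, stage-two choice) makes the events prism-local and the map injective, with at most $C^k$ witnesses.

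Two smaller points:
\begin{itemize}
\item Your claim that each layer's average height is $\mathfrak{s}\cdot v+O(1)$ in finite volume ``by periodicity and convexity'' is an unproved entropic-repulsion statement, and the argument does not need it.
\item Pinning at the origin does not yield a Gibbs measure. Pinning the dominant height of layer $0$, which is tail measurable, does.
\end{itemize}
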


A concrete construction of a Gibbs measure with ergodic, minimal gradient measure of slope $\mathfrak{s}$ as an infinite-volume limit of $L\times \cdots \times L$ boxes with certain ``sloped-periodic'' boundary conditions is described in Section~\ref{subsec:proof-ideas}. 

\subsubsection{Structure of the sloped state}
Once the localization of a slope has been established, we seek to obtain further information on the Gibbs measures realizing it. Of special interest is the structure of the macroscopic domain walls, or steps, in the sloped height function. There is a tension between the fact that the height function takes integer values and the fact that it needs to conform to a given slope vector $\mathfrak{s}$. 
When $\mathfrak{s}\in\Z^d$, the height function can respect both the integrality constraint and the slope by increasing, on average, by $\mathfrak{s}_i$ for every step in the $\mathfrak{e}_i$ coordinate. However, when $\mathfrak{s}\notin\Z^d$ (including the possibility that some coordinate in $\mathfrak{s}$ is irrational), the structure of the macroscopic domain walls is non-trivial. Sheffield~\cite[Section 8.7]{Sheffield-Random-Surfaces} showed that in this case the ergodic, minimal gradient Gibbs measure of slope $\mathfrak{s}$ will fail to be extremal, and described some of the properties of its extremal components: see Section~\ref{sec:background gradient Gibbs measures}. We sharpen this description, under our condition~\eqref{eq:main assumption}, by showing that the extremal components are characterized by
the locations of the macroscopic domain walls, with each extremal component corresponding to a different way in which these domain walls are maximally separated. We formulate this precisely in this section, using the following definition.

Since macroscopic domain walls only arise in directions orthogonal to non-zero coordinates of $\mathfrak{s}$, under~\eqref{eq:main assumption} it is helpful to think of the layers of $\mathbb Z^d$ given by fixing all coordinates except $i_0, i_1$.  

\begin{defn}\label{defn:dominant-height}
    Let $i_0,i_1$ be the coordinates from~\eqref{eq:main assumption}. 
    \begin{itemize}
        \item (Layer) The \emph{layer} of a vertex $v\in\Z^d$ is the plane $\mathcal{L}_v:=\{v + k\mathfrak{e}_{i_0}+\ell\mathfrak{e}_{i_1}\colon k,\ell\in\Z\}$.
        \item (Dominant height) For a Gibbs measure $\mu$, we say it has a \emph{dominant height} at layer $\mathcal{L}$ if, almost surely, when $\phi$ is sampled from $\mu$, there exists a (possibly random) $h\in\Z$ such that
        $\{w\in \mathcal{L}: \phi(w)=h\}$ has an infinite connected component, whose complement in $\mathcal{L}$ has only finite connected components (in the connectivity on $\mathcal{L}$ induced from $\Z^d$).
        
        If $\mu$ has dominant heights at every layer, we write $h({\mathcal{L}})$ to denote the dominant height at layer $\mathcal{L}$. For convenience, we also set $h_0 := h({\mathcal{L}_0})$.
    \end{itemize}
\end{defn}

\begin{thm}\label{thm:structural results} (Dominant heights and extremal decomposition)
    Suppose the same assumptions as in Theorem~\ref{thm:main-localization}. Let $\mu$ be a Gibbs measure with ergodic, minimal gradient measure of slope $\mathfrak{s}$. Then
    \begin{enumerate}
        \item (Existence of dominant heights) $\mu$ has dominant heights at every layer.
        \item (Structure of dominant heights) There exists a random variable $\alpha$ such that almost surely,
        \begin{align*}
            h({\mathcal L}) = h_0 + \lfloor \alpha + \langle \mathfrak{s}, \mathcal L\rangle\rfloor \qquad \text{for all layers }\mathcal{L}
        \end{align*}
        where $\langle \mathfrak{s}, \mathcal L\rangle$ is the Euclidean inner product of $\mathfrak{s}$ with any vertex in $\mathcal L$ (all of them having the same inner product since $\mathfrak{s}_{i_0}=\mathfrak{s}_{i_1} =0$). 
        \begin{itemize}
            \item If $\mathfrak{s}$ has an irrational coordinate then the distribution of $\alpha$ under $\mu$ is uniform on~$[0,1]$. 
            \item If $\mathfrak{s}$ has all rational coordinates, with lowest common denominator $n$, then the distribution of $\alpha$ under $\mu$ is uniform on $\{0,\frac{1}{n},\ldots, \frac{n-1}{n}\}$.
        \end{itemize}
        \item (Extremal decomposition) 
        $\mu$ conditioned on $h_0$ and $\alpha$ is an extremal Gibbs measure.
    \end{enumerate}
\end{thm}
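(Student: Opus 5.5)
Since $\mu$ has the unique ergodic minimal gradient measure of slope $\mathfrak{s}$ from Theorem~\ref{thm:main-localization}, we may work with the concrete infinite-volume limit $\mu_\infty$ of the sloped-periodic boxes described in Section~\ref{subsec:proof-ideas}. By uniqueness, any $\mu$ as in the statement has the same gradient measure as $\mu_\infty$, up to the choice of the global height. All statements in Theorem~\ref{thm:structural results} are gradient statements once $h_0$ is subtracted. So it suffices to prove them for $\mu_\infty$.

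\textbf{Step 1 (dominant heights).} The finite-volume input is a restricted reflection positivity (chessboard) estimate in the two coordinates $i_0,i_1$. On each layer, within the sloped-periodic torus, this estimate bounds the probability that a given collection of plaquettes/edges in $\mathcal L$ carries nonzero gradient by $e^{-c\beta_0}$ per edge. The key observation is that the slope is zero in these two directions and $V_{i_0},V_{i_1}$ are even, so reflections in hyperplanes orthogonal to $\mathfrak{e}_{i_0},\mathfrak{e}_{i_1}$ preserve the sloped boundary conditions. A Peierls argument then applies within $\mathcal L$. The probability that a $*$-circuit of length $m$ around a given vertex consists of disagreement edges is at most $(Ce^{-c\beta_0})^m$. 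Borel--Cantelli, uniform in $L$ and passed to the limit, yields the conclusion: on each layer the set of edges with $\phi_u\neq\phi_v$ has only finite $*$-components. Hence a unique height $h(\mathcal L)$ has an infinite cluster with finite complementary components. This gives (1).

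\textbf{Step 2 (structure).} The map $\mathcal L\mapsto h(\mathcal L)$ is a function on $\Z^{d-2}$, the layer index. Two properties of this function are needed.

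First, we show that adjacent layers satisfy $h(\mathcal L+\mathfrak{e}_j)-h(\mathcal L)\in\{\lfloor \mathfrak{s}_j\rfloor,\lceil\mathfrak{s}_j\rceil\}$, and more generally $|h(\mathcal L')-h(\mathcal L)-\langle\mathfrak{s},\mathcal L'-\mathcal L\rangle|<1$ for all pairs of layers. Adjacent dominant clusters overlap on a positive density of sites, since both have finite-complement structure. Hence the dominant-height difference equals the typical gradient in direction $j$. Ergodicity and the slope identify its average as $\mathfrak{s}$. The energy-minimality/entropic repulsion estimate from the finite-volume analysis then forces the balanced (Sturmian) pattern: there are no two domain walls stacked within a window where the slope does not require them. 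I would derive this by comparing, via reflection positivity, to configurations in which the walls are maximally separated.

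Second, a $\Z$-valued function on $\Z^{d-2}$ within distance $<1$ of an affine function with linear part $\mathfrak{s}$ is exactly of the form $h_0+\lfloor\alpha+\langle\mathfrak{s},\cdot\rangle\rfloor$ for some $\alpha\in[0,1)$. This is an elementary lemma on balanced/Sturmian sequences, and it produces $\alpha$ as a measurable function of $\phi$.

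The law of $\alpha$ follows from translation invariance of the gradient measure. Shifting by $\mathfrak{e}_j$ maps $\alpha\mapsto\alpha+\mathfrak{s}_j \bmod 1$. So the law of $\alpha$ is invariant under the rotation group generated by the $\mathfrak{s}_j$. Ergodicity of the gradient measure makes it ergodic for this rotation action. In the irrational case unique ergodicity gives Lebesgue measure. In the rational case we get uniform measure on the finite orbit $\frac1n\Z/\Z$. A translation of the subgroup forces $\alpha\in\frac1n\Z$ once the convention $h_0=h(\mathcal L_0)$ is used, since $\lfloor\alpha\rfloor=0$ at $\mathcal L_0$.

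\textbf{Step 3 (extremality).} Conditioning on $(h_0,\alpha)$ fixes all dominant heights. Conditioning on tail-measurable quantities preserves the Gibbs property, since $(h_0,\alpha)$ is tail measurable: dominant heights are unaffected by finite modifications. For extremality, I would show that the tail $\sigma$-algebra is trivial given $(h_0,\alpha)$, via a coupling/decay-of-correlations argument. Given the dominant heights, the Peierls estimate shows that disagreements with the "ground state" $h_0+\lfloor\alpha+\langle\mathfrak{s},v\rangle\rfloor$ form finite clusters with exponential tails. Two samples with the same $(h_0,\alpha)$ can then be coupled to agree outside a random finite set, using monotone (FKG) couplings available for convex interactions. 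This gives tail triviality. Alternatively, I would invoke Sheffield's description of the extremal components (Section~\ref{sec:background gradient Gibbs measures}) and match its index against $(h_0,\alpha)$.

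The main obstacle is Step 2's balancedness, namely ruling out macroscopic clustering of domain walls and thus establishing entropic repulsion. The Peierls bound in Step 1 is comparatively standard once reflection positivity is set up.
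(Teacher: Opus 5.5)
\textbf{The gap.} It is in Step~2, at the balancedness claim $h(\mathcal L')-h(\mathcal L)\in\{\lfloor\langle\mathfrak{s},\mathcal L'-\mathcal L\rangle\rfloor,\lceil\langle\mathfrak{s},\mathcal L'-\mathcal L\rangle\rceil\}$ for all pairs of layers. This is the entire entropic-repulsion content of (2), and none of the mechanisms you propose can deliver it.
\begin{itemize}
\item Ergodicity and the slope only fix the average increment $\mathfrak{s}_j$.
\item Ground-state energy cannot distinguish wall arrangements. For $\mathfrak{s}_3=\tfrac12$, the increment patterns $0,1,0,1,\dots$ and $0,0,1,1,\dots$ use the same vertical gradients with the same frequencies. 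Yet the second is unbalanced, since $h(\mathcal L+2\mathfrak{e}_3)=h(\mathcal L)$ for some $\mathcal L$. For SOS, even increments $0,2$ and $1,1$ cost the same, so not even your nearest-neighbour claim follows from energy.
\item Restricted reflection positivity is blind to wall placement. It holds only for reflections orthogonal to $\mathfrak{e}_{i_0},\mathfrak{e}_{i_1}$, which preserve the layer index and so leave every arrangement of domain walls unchanged; chessboard estimates are therefore insensitive to which dominant-height sequence occurs. Reflections in the sloped directions are not symmetries of the sloped-periodic measure: they flip the sign of $\mathfrak{s}_j$, and $V_j$ need not be even there.
\item Theorem~\ref{thm:1-2-direction-rigidity} controls only horizontal gradients, so there is no finite-volume repulsion estimate to invoke.
\end{itemize}
The selection is entropic, and the pathwise statement about wall positions has to come from the stochastic ordering in Sheffield's theory.

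\textbf{The missing idea.} The paper proves exactly this claim as Lemma~\ref{lem:controldiff}, using Theorem~\ref{thm:Sheffield}. The extremal components satisfy three properties:
\begin{itemize}
\item $\mu_{\mathfrak{s},a}\prec\mu_{\mathfrak{s},a'}$ for $a\le a'$;
\item $b+\mu_{\mathfrak{s},a}=\mu_{\mathfrak{s},a+b}$ for $b\in\Z$;
\item $\theta_x\mu_{\mathfrak{s},a}=\mu_{\mathfrak{s},a+\langle\mathfrak{s},x\rangle}$.
\end{itemize}
Together these give $-\lceil\langle\mathfrak{s},x\rangle\rceil+\theta_x\mu_{\mathfrak{s},a}\prec\mu_{\mathfrak{s},a}\prec-\lfloor\langle\mathfrak{s},x\rangle\rfloor+\theta_x\mu_{\mathfrak{s},a}$. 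Stochastic domination orders dominant heights, so comparing them at one layer gives the floor/ceiling bound in every extremal component that has dominant heights.

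\textbf{What survives.} Given that lemma, the rest of your Step~2 works, and is cheaper than the paper's route (Lemma~\ref{lem:heightoffset0}, plus the weak-dominant-height and right-continuity argument around Lemma~\ref{lem:contheights} in the irrational case). Set $\alpha:=\sup_x\big(h(\mathcal L_x)-h_0-\langle\mathfrak{s},x\rangle\big)$. Then $\theta_{\mathfrak{e}_j}$ rotates $\alpha$ by $\mathfrak{s}_j$ mod $1$, and unique ergodicity (resp.\ transitivity on $\frac1n\Z/\Z$) identifies its law. However, your Sturmian lemma is false as stated for irrational $\mathfrak{s}$: $\lceil\langle\mathfrak{s},x\rangle\rceil$ is balanced but not of floor form. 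Such sequences force $\alpha+\langle\mathfrak{s},x\rangle\in\Z$ for some $x$, a Lebesgue-null set, so you must discard them explicitly.

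\textbf{Circularity in your opening reduction.} The uniqueness in Theorem~\ref{thm:main-localization} is proved in Theorem~\ref{thm:disagreementperc} by pairing extremal components with equal dominant-height sequences. That step uses Corollaries~\ref{cor:dominheight} and~\ref{cor:fincor}, i.e.\ item (2) itself, so you cannot use uniqueness to reduce to the sloped-periodic limit. The paper instead obtains (1) for an arbitrary ergodic minimal $\mu$ directly from Lemma~\ref{lem:ergodic-minimal-has-dominant-height}. It obtains (3) from disagreement percolation, which needs the averaging step of Lemma~\ref{lem:notgoodpair}, because the exponential bounds are a priori known only for the mixture, not for each conditioned measure.
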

We make two remarks on the theorem.
First, as $\mu$ has dominant heights at every layer, the macroscopic domain walls of $\phi$ occur (up to microscopic perturbations) between adjacent layers with different dominant heights, that is, between layers $\mathcal L$ and $\mathcal{L}+\mathfrak{e}_i$ when $h({\mathcal L})\neq h({\mathcal L+\mathfrak{e}_i})$, or equivalently when $\lfloor \alpha+\langle \mathfrak{s}, \mathcal{L}\rangle \rfloor\neq \lfloor \alpha+ \langle \mathfrak{s}, \mathcal{L}\rangle + \mathfrak{s}_i\rfloor$. The location of the domain walls then depends on the random variable $\alpha$, which serves as an initial offset to the ``rotation on the circle'' dynamical system $\langle\mathfrak{s}, \mathcal L\rangle$, with different values of $\alpha$ (in $[0,1)$ in the irrational case, and in $\{0,\frac{1}{n},\ldots, \frac{n-1}{n}\}$ in the rational case) yielding distinct location fields. This dynamical system is thus the way in which the height function model resolves the tension between the domain walls being at integer locations, the fact that their overall density is governed by $\mathfrak{s}$, and the entropic repulsion of the walls which pushes them to maximal separation. 
In particular, the favored dominant height sequences are those with the most entropy around them (maximally separated domain walls allow for the most excitations in each layer), and those are precisely the ones described by translates of the near-affine function $\lfloor\langle\mathfrak{s} , \mathcal L\rangle \rfloor$. 
Figure~\ref{fig:dominant height sequence} depicts the dominant height sequences characterizing the distinct extremal measures for the fixed choice of $h_0=0$. 

\begin{figure}
\centering
\begin{tikzpicture}[xscale=0.22, yscale=.3]
    \draw[lightgray, very thin, step=1]
        (-5.5,-2.5) grid (5.5,2.5);
    \draw[->, thick] (-5.5,0) -- (5.5,0) node[right] {$x_3$};
    \draw[->, thick] (0,-2.5) -- (0,2.5)
        node[above] {$h({\mathcal{L}_x)}$};

    \begin{scope}
        \clip (-5.5,-2.5) rectangle (5.5,2.5);
        \draw[blue, thin, dashed, domain=-5.5:5.5, samples=2]
            plot (\x,{2*\x/5});
    \end{scope}

    \draw[blue, thick, const plot]
        plot coordinates {
            (-5,-2) (-4,-2) (-3,-2) (-2,-1) (-1,-1)
            (0,0) (1,0) (2,0) (3,1) (4,1) (5,2)
        };
\end{tikzpicture}
%
\begin{tikzpicture}[xscale=0.22, yscale=.3]
    \draw[lightgray, very thin, step=1]
        (-5.5,-2.5) grid (5.5,2.5);
    \draw[->, thick] (-5.5,0) -- (5.5,0) node[right] {$x_3$};
    \draw[->, thick] (0,-2.5) -- (0,2.5)
        node[above] {$h({\mathcal{L}_x)}$};

    \begin{scope}
        \clip (-5.5,-2.5) rectangle (5.5,2.5);
        \draw[orange, thin, dashed, domain=-5.5:5.5, samples=2]
            plot (\x,{1/5 + 2*\x/5});
    \end{scope}

    \draw[orange, thick, const plot]
        plot coordinates {
            (-5,-2) (-4,-2) (-3,-1) (-2,-1) (-1,-1)
            (0,0) (1,0) (2,1) (3,1) (4,1) (5,2)
        };
\end{tikzpicture}
%
\begin{tikzpicture}[xscale=0.22, yscale=.3]
    \draw[lightgray, very thin, step=1]
        (-5.5,-2.5) grid (5.5,2.5);
    \draw[->, thick] (-5.5,0) -- (5.5,0) node[right] {$x_3$};
    \draw[->, thick] (0,-2.5) -- (0,2.5)
        node[above] {$h({\mathcal{L}_x)}$};

    \begin{scope}
        \clip (-5.5,-2.5) rectangle (5.5,2.5);
        \draw[red, thin, dashed, domain=-5.5:5.5, samples=2]
            plot (\x,{2/5 + 2*\x/5});
    \end{scope}

    \draw[red, thick, const plot]
        plot coordinates {
            (-5,-2) (-4,-2) (-3,-1) (-2,-1) (-1,0)
            (0,0) (1,0) (2,1) (3,1) (4,2) (5,2)
        };
\end{tikzpicture}
%
\begin{tikzpicture}[xscale=0.22, yscale=.3]
    \draw[lightgray, very thin, step=1]
        (-5.5,-2.5) grid (5.5,2.5);
    \draw[->, thick] (-5.5,0) -- (5.5,0) node[right] {$x_3$};
    \draw[->, thick] (0,-2.5) -- (0,2.5)
        node[above] {$h({\mathcal{L}_x)}$};

    \begin{scope}
        \clip (-5.5,-2.5) rectangle (5.5,2.5);
        \draw[purple, thin, dashed, domain=-5.5:5.5, samples=2]
            plot (\x,{3/5 + 2*\x/5});
    \end{scope}

    \draw[purple, thick, const plot]
        plot coordinates {
            (-5,-2) (-4,-1) (-3,-1) (-2,-1) (-1,0)
            (0,0) (1,1) (2,1) (3,1) (4,2) (5,2)
        };
\end{tikzpicture}
%
\begin{tikzpicture}[xscale=0.22, yscale=.3]
    \draw[lightgray, very thin, step=1]
        (-5.5,-2.5) grid (5.5,2.5);
    \draw[->, thick] (-5.5,0) -- (5.5,0) node[right] {$x_3$};
    \draw[->, thick] (0,-2.5) -- (0,2.5)
        node[above] {$h({\mathcal{L}_x)}$};

    \begin{scope}
        \clip (-5.5,-2.5) rectangle (5.5,2.5);
        \draw[green!60!black, thin, dashed,
              domain=-5.5:5.5, samples=2]
            plot (\x,{4/5 + 2*\x/5});
    \end{scope}

    \draw[green!60!black, thick, const plot]
        plot coordinates {
            (-5,-2) (-4,-1) (-3,-1) (-2,0) (-1,0)
            (0,0) (1,1) (2,1) (3,2) (4,2) (5,2)
        };
\end{tikzpicture}
\caption{(Solid lines): At slope $\mathfrak{s} = (0,0,2/5)$, the five extremal measures having $h_0 =0$ are characterized by the five distinct dominant height sequences above. (Dashed lines): the functions $\alpha + \langle \mathfrak{s} , \mathcal L\rangle$ whose floor gives the dominant heights.}\label{fig:dominant height sequence} 
\end{figure}

Second, the extremal components of $\mu$ are indexed by the values of $h_0$ (of positive probability) and $\alpha$, while the extremal components of the gradient measure of $\mu$ are indexed solely by $\alpha$. In particular, when $\mathfrak{s}\notin\Z^d$, the gradient measure is ergodic but not extremal.

\smallskip
The Gibbs measure with ergodic gradient, $\mu$, necessarily has long-range correlations (heights at far away vertices give information on $h_0$ and $\alpha$ which determine the dominant heights everywhere). Still, the following result shows that within each extremal component, deviations of the height at a vertex $v$ away from the dominant height of its layer have exponential tails, and that the extremal components exhibit exponential decay of correlations. 

\begin{thm}\label{thm:structural results-exponential-decay} (Exponential decay)
    Suppose the same assumptions as in Theorem~\ref{thm:main-localization}. Let $\mu$ be a Gibbs measure with ergodic, minimal gradient measure of slope $\mathfrak{s}$. Then there exist $C,c>0$, depending only on $\beta_0(d)$ of~\eqref{eq:main assumption} and the dimension $d$, such that the following holds:
        \begin{enumerate}
            \item For each $v\in\Z^d$ and $t>0$,
            \begin{equation}
                \mu(|\phi_v - h(\mathcal L_v)|\ge t)\le Ce^{-ct}.
            \end{equation}
            \item Let $f_1,f_2:\Z^{\Z^d}\to[-1,1]$ be local in the sense that $f_i(\phi)$ depends only on the restriction of $\phi$ to a finite $\Lambda_i\subset\Z^d$, for each $i\in\{1,2\}$. Then
            \begin{equation}\label{eq:exponential decay local functions}
                \mathbb{E}\left[|\Cov_{\mu | h_0, \alpha}(f_1(\phi), f_2(\phi))|\right]\le C\sum_{u\in\Lambda_1} e^{-cd(u,\Lambda_2)}
            \end{equation}
            where $\Cov_{\mu | h_0,\alpha}$ denotes covariance when $\phi$ is sampled from (the extremal measure) $\mu$ conditioned on $h_0$ and $\alpha$, where the expectation $\mathbb{E}$ is over $h_0$ and $\alpha$ and where $d(u,A)=\inf_{v\in A}\|u-v\|_1$ is the graph distance in $\Z^d$ from $u$ to $A$.
        \end{enumerate}
\end{thm}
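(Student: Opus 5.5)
We plan to derive both parts of Theorem~\ref{thm:structural results-exponential-decay} from a quantitative rigidity estimate on finite ``sloped-periodic'' tori, which we transfer to $\mu$ through the infinite-volume construction used for Theorem~\ref{thm:main-localization} and the extremal decomposition of Theorem~\ref{thm:structural results}. The basic finite-volume input is a chessboard (restricted reflection positivity) estimate in the two zero-slope coordinates $i_0,i_1$. Consider the event that a given plaquette-like pattern in a layer deviates from the dominant height of that layer. Reflecting this event across the layer torus, its probability is at most the $1/|\text{torus}|$-th power of the probability that the deviation pattern tiles the entire layer. That probability is in turn bounded by $e^{-c\beta_0}$ per vertex. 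The bound comes from the energy cost $V_j(1)-V_j(0)\ge\beta_0$ in the two even coordinates. Only the restricted reflections preserving the slope are used.

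For part (1), we will run a Peierls-type argument within each layer. If $|\phi_v-h(\mathcal L_v)|\ge t$, then by Definition~\ref{defn:dominant-height} the vertex $v$ lies in a finite complementary component of the dominant cluster in $\mathcal L_v$. Moreover, there are nested level-set contours in $\mathcal L_v$ around $v$ separating heights $h,h\pm1,\dots,h\pm t$. Each contour of length $\ell$ has chessboard weight at most $e^{-c\beta_0\ell}$. We will sum over at least $t$ nested contours in the two-dimensional layer, each of length at least $4$. With $\beta_0$ large this beats the $C^\ell$ entropy of contours and gives $Ce^{-ct}$. These bounds hold uniformly on finite tori. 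They pass to the weak limit, and then to every $\mu$ by the uniqueness in Theorem~\ref{thm:main-localization}, since the event is measurable with respect to gradients together with $h(\mathcal L_v)$.

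For part (2), we plan a disagreement-percolation / coupling argument in the extremal component. Conditioned on $(h_0,\alpha)$, by Theorem~\ref{thm:structural results} the dominant heights of all layers are determined. We will call a vertex \emph{bad} if $\phi_u\neq h(\mathcal L_u)$, or if some neighbour in its layer is bad. We will couple two samples of $\mu|h_0,\alpha$, one independent and one identical, via a monotone (FKG-type, using convexity of $\V$) coupling of the conditional laws outside a region. Then $\Cov(f_1,f_2)$ is bounded by the probability of a path of ``disagreement or bad'' vertices from $\Lambda_1$ to distance $d(u,\Lambda_2)$. Part (1), strengthened to a multi-vertex chessboard bound, gives exponential decay of such paths \emph{within} a layer. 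A path crossing between layers must either cross a domain wall, which is pinned by the conditioning on $\alpha$, or pass through a bad vertex.

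The main obstacle is exactly this inter-layer propagation. Along non-zero-slope coordinates there is no low-temperature assumption, so vertical connections are not suppressed by energy. Our first attempt will show the following: given the dominant heights, whole layers with a large contiguous region equal to their dominant height decouple. In the extremal measure, conditioning on a layer-wide event of the form ``the boundary of a box in the layers equals the dominant heights'' screens the inside from the outside, by the Markov property and the monotone coupling sandwiching between the maximal and minimal dominant-height boundary conditions. The probability that such a screening surface fails to exist in an annulus of width $r$ then decays like $e^{-cr}$, by a union bound over bad vertices using part (1). Summing over $u\in\Lambda_1$ gives \eqref{eq:exponential decay local functions}. The averaging $\mathbb E$ over $(h_0,\alpha)$ absorbs the fact that our bounds come from the non-extremal finite-volume measures.
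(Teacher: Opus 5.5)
Your part (1) is essentially the paper's argument (Corollary~\ref{cor:one-point-exponential-tails}): nested horizontal contours around $v$ in $\mathcal L_v$, the joint bound \eqref{eq:exp-suppressed-gradients-condition}, and a Peierls sum. Two small corrections. The paper transfers the bound to an arbitrary $\mu$ via Lemma~\ref{lem:ergodic-minimal-has-dominant-height} rather than via uniqueness. Also, $\{|\phi_v-h(\mathcal L_v)|\ge t\}$ is not a cylinder event, so you must pass the contour (cylinder) bounds to infinite volume and do the sum there. Your first paragraph also undersells the chessboard input. The reflected event is an exponentially large family of configurations that differ in the vertical directions, and the bound needs the flattening map of Definition~\ref{def:single-edge-map}, which does not raise vertical energies; take this from Theorem~\ref{thm:1-2-direction-rigidity}.

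Part (2), however, has a genuine gap: the single-copy screening step does not decorrelate. Let $\Sigma$ be a random closed surface of vertices at their dominant heights separating $\Lambda_1$ from $\Lambda_2$. The Markov property makes the law inside the finite-volume measure with boundary values $h(\mathcal L_\cdot)$ on $\Sigma$. But that law depends on $\Sigma$, whose location is correlated with $f_2$, so $\pi(f_1\mid\text{outside})$ is not constant on the screening event. FKG sandwiching cannot repair this. Dominant-height boundary values are neither maximal nor minimal: heights are unbounded and these values sit in the bulk of the distribution. Hence there is no stochastic monotonicity of the inside law in $\Sigma$. The paper instead uses two independent copies $\phi_1,\phi_2$ of $\pi=\mu(\cdot\mid h_0,\alpha)$. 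It forms superclusters from the excitation clusters of both copies and swaps the two configurations on the superclusters meeting $\Lambda_1$. Both copies have the same dominant heights, so they agree on the outer boundary of those superclusters and the swap is measure preserving. This gives $|\Cov_\pi(f_1,f_2)|\le 2\,\pi\otimes\pi(\text{some supercluster joins }\Lambda_1\text{ to }\Lambda_2)$.

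Bounding that probability is where your plan misses the key idea. A union bound over single bad vertices using part (1) cannot give $e^{-cr}$, since the sum of one-point probabilities over an annulus grows with $r$. You need a joint bound for an entire crossing path, and that same bound removes your ``main obstacle''. Decompose a $\Z^d$-connected path of excitations into horizontal clusters. Each cluster is surrounded, in its own layer, by an outer contour of at least $4$ horizontal edges with non-zero gradient (excited inside, dominant height outside). The bound \eqref{eq:exp-suppressed-gradients-condition} holds jointly for the union of these contours across all layers at once, because the chessboard estimate reflects only horizontally and the prism contains every vertical coordinate. So every layer the path enters costs at least $e^{-4c\beta}$. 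Vertical propagation is paid for by horizontal energy, and no domain-wall pinning or vertical low-temperature input is needed. The total outer-boundary length dominates the distance travelled, and chains of clusters with total boundary $M$ number at most $C^M$. Hence the sum is at most $C|\Lambda_1|e^{-c\beta_0 d(\Lambda_1,\Lambda_2)}$ for $\beta_0$ large. Finally, the two-copy structure is what lets the average over $(h_0,\alpha)$ absorb non-extremality. Let $E_a$ be the event that the clusters labelled $a$ are excitations of copy $a$. Then $\mathbb{E}[\pi(E_1)\pi(E_2)]\le\min\{\mu(E_1),\mu(E_2)\}\le\sqrt{\mu(E_1)\mu(E_2)}$, and $\mu$ satisfies \eqref{eq:exp-suppressed-gradients-condition} by Lemma~\ref{lem:ergodic-minimal-has-dominant-height}. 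The paper phrases this step via Markov's inequality and ``good pairs''.
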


This exponential decay of correlations is in contrast to the real-valued Gaussian free field which exhibits power-law decay of correlations in $d\ge 3$, and importantly holds even in the sloped directions where we do not assume strong coupling strengths.

\subsection{The application to non-crossing surfaces}\label{sec:non-crossing-surfaces}

As described earlier in Example~\ref{item:non-crossing-surfaces}, the generality of the assumptions on $\mathbf{V}$ allows us to deduce a notable implication for the study of non-crossing surfaces. Bricmont--El Mellouki--Fr\"ohlich~\cite{bricmont1986random} posed understanding the behavior of many zero-slope, 2D $|\nabla \phi|^p$ surfaces conditioned not to cross, as motivation for their simpler questions of one SOS surface conditioned either to take values in an interval (between a pair of idealized flat surfaces called hard walls), or to only take positive values (above a hard wall). 
When a low-temperature surface is only constrained to be above a wall, they identified that there is no infinite-volume Gibbs measure due to an \emph{entropic repulsion} away from the floor, whereby the surface delocalizes in order to make room for downward excitations.
The work led to a rich line of research on more refined aspects of this entropic repulsion phenomenon: a sampling of these works is~\cite{BolthausenDeuschelZeitouni,BolthausenDeuschelGiacomin,Sakagawa} for the real-valued Gaussian free field,~\cite{CLMST16,LMS16,chen2026limitshapeemergencediscrete} for the SOS and more generally $|\nabla \phi|^p$ surfaces, and recently~\cite{chen2025logarithmic} for the 3D Ising and Potts Dobrushin interfaces. Our Theorems~\ref{thm:main-localization}--\ref{thm:structural results-exponential-decay} imply that the mutual entropic repulsions between consecutive non-crossing surfaces \emph{localize} the surfaces to heights given by the same ``maximally separated" dominant height sequences.  

To map to our setting, let $d=D+1$ for $D\ge 2$. By way of example, take the $|\nabla \phi|^p$, $p\ge 1$, interaction on the surfaces as in Example~\ref{item:nabla-phi}, so that $$V_1 (k) = \cdots = V_D(k) = \beta |k|^p \qquad \text{and} \qquad  V_{D+1} (k) = \begin{cases}
    0 & k\ge 0 \\ \infty & k<0
\end{cases}\,.$$ The $(D+1)$'th coordinate is the index of the surface, and the first $D$ coordinates give the heights taken by that $|\nabla \phi|^p$ surface. Finally, consider a slope vector $\mathfrak{s} = (0,...,0,\theta)$ where $\theta>0$ will have the interpretation of  the expected spacing between consecutive surfaces.

\begin{cor}\label{cor:non-crossing-surfaces}
    Fix $D\ge 2$. There exists $\beta_0(D)>0$ such that for all $p\ge 1$,  $\beta>\beta_0$, and $\theta \in (0,\infty)$, the following holds. There exists an infinite-volume Gibbs measure over an infinite family $(\psi^{(i)}_v)_{v\in \mathbb{Z}^D,i\in \mathbb Z}$ of zero-slope $|\nabla\phi|^p$ height functions conditioned on non-crossing ($\psi^{(i+1)}_v \ge \psi_{v}^{(i)}$) and expected spacing $\theta$ in the sense of $\mathbb{E}[\psi_v^{(i+1)}- \psi_v^{(i)}]=\theta$ for all $v,i$, such that: 
    \begin{enumerate}
        \item It is unique (modulo global integer shifts) among the family of such measures with ergodic minimal gradient and expected spacing $\theta$.
        \item There exists a pair of random variables $h_0, \alpha$, where $\alpha \sim \text{Unif}[0,1]$ if $\theta \notin \mathbb Q$ and $\alpha \sim \text{Unif}\{0,...,\frac{n-1}{n}\}$ if $\theta\in \mathbb Q$ with denominator $n$, such that  in almost every extremal component, for all $k\in \Z$, the $k$'th surface has dominant height $h_0 + \lfloor \alpha + k \theta \rfloor$. 
        \item The family of surfaces has exponential decay of correlations in the $\mathbb Z^{D+1}$-distance within its extremal components in the sense of~\eqref{eq:exponential decay local functions}.
    \end{enumerate}
\end{cor}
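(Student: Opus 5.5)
The plan is to deduce the corollary directly from Theorems~\ref{thm:main-localization}, \ref{thm:structural results} and~\ref{thm:structural results-exponential-decay}, applied with $d=D+1\ge 3$ and the interactions specified before the statement. Three things must be checked.

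\emph{Convexity.} Each $V_i(k)=\beta|k|^p$ with $p\ge1$ is convex and tends to $\infty$ at $\pm\infty$. The constraint $V_{D+1}$ is replaced, as in the footnote to Example~\ref{item:non-crossing-surfaces}, by $V_{D+1}(k)=\alpha' k$ for $k\ge0$ and $\infty$ for $k<0$, with any $\alpha'>0$. This replacement is convex and produces identical finite-volume specifications. Since it has the same specification, it also has the same Gibbs measures, so the choice is harmless. It does add the constant $\alpha'\theta$ per edge to the specific free energy of every measure of slope $\mathfrak s$, but this constant is common to all such measures, so minimality is unaffected.

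\emph{Admissible slope.} We take $\mathfrak{s}=(0,\dots,0,\theta)$ with $\theta>0$. Here $\mathfrak{S}=\mathbb{R}^{D}\times[0,\infty)$, so $\mathfrak{s}\in\mathfrak{S}^\circ$.

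\emph{Assumption \eqref{eq:main assumption}.} Take $i_0=1$, $i_1=2$, which is possible because $D\ge2$. Then $\mathfrak{s}_1=\mathfrak{s}_2=0$, the $V_j$ are even, and $V_j(1)-V_j(0)=\beta>\beta_0$. This holds uniformly in $p\ge1$, since the theorems' $\beta_0(d)$ depends only on $d$ and not on the form of the interaction. So we set $\beta_0(D):=\beta_0(D+1)$.

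Under the dictionary $\psi^{(i)}_v=\phi(v,i)$, a Gibbs measure on $\phi$ is exactly an infinite-volume Gibbs measure for the non-crossing family. The slope condition translates as follows: $\mathbb{E}[\psi^{(i+1)}_v-\psi^{(i)}_v]=\theta$ for all $v,i$ is $\slope_{D+1}=\theta$, and zero expected slope within each surface gives the remaining coordinates.

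\emph{Existence and uniqueness (item 1).} Theorem~\ref{thm:main-localization} gives a unique ergodic minimal gradient Gibbs measure of slope $\mathfrak{s}$, and it is the gradient of a height Gibbs measure $\mu$. Since global integer shifts preserve the gradient, uniqueness modulo shifts follows.

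\emph{Dominant heights (item 2).} Layers are $\mathcal L=\mathbb{Z}^2\times\{(v_3,\dots,v_D,k)\}$ and $\langle\mathfrak{s},\mathcal L\rangle=k\theta$. For $D=2$ a layer is exactly surface $k$. For $D>2$, surface $k$ is a union of layers, all sharing the same value $k\theta$, so by Theorem~\ref{thm:structural results}(2) they all have the dominant height $h_0+\lfloor\alpha+k\theta\rfloor$. Note that the theorem's "dominant height at a layer" is stated plane by plane; the corollary's "dominant height of the $k$-th surface" is understood in this layerwise sense, or in the stronger sense supplied by the exponential tails below. The distribution of $\alpha$ follows from the rational/irrational dichotomy of Theorem~\ref{thm:structural results}: the lowest common denominator of $\mathfrak{s}$ is that of $\theta$. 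Extremality of $\mu$ conditioned on $(h_0,\alpha)$ is item (3) of that theorem.

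\emph{Exponential decay (item 3).} This is Theorem~\ref{thm:structural results-exponential-decay}(2), where the graph distance on $\mathbb{Z}^{D+1}$ is the stated distance.

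I expect no real obstacle. The only delicate point is the legitimacy of swapping the hard-wall $V_{D+1}$ for a convex, coercive interaction without changing Gibbs measures, gradient Gibbs measures or minimality. The footnote's observation handles the first two, and the additive free-energy constant argued above handles the third.
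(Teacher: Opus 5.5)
Your proposal is correct and matches the paper's argument. The paper also obtains the corollary by specializing Theorems~\ref{thm:main-localization}, \ref{thm:structural results} and~\ref{thm:structural results-exponential-decay} to $d=D+1$ and $\mathfrak{s}=(0,\dots,0,\theta)$, with the hard-wall $V_{D+1}$ replaced as in the footnote to Example~\ref{item:non-crossing-surfaces}. Your extra remark that this replacement shifts the free energy of every slope-$\mathfrak{s}$ measure by the same constant $\alpha'\theta$ (so minimality is unchanged) fills in a detail the paper leaves implicit.
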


We refer the reader to Figure~\ref{fig:non-crossing-surfaces} for a depiction of Corollary~\ref{cor:non-crossing-surfaces} for the $p=1$ case, with $\theta = 5/2$.

Our results imply similar facts for infinitely many non-crossing sloped surfaces in $D\ge 3$ dimensions satisfying~\eqref{eq:main assumption}.

\subsection{Related work}\label{subsec:related-work}
In this section we discuss prior work on the localization of $\nabla \phi$ height functions on the lattice, placing our results in context.

\subsubsection{Real-valued height functions} We first discuss real-valued $\nabla \phi$ height functions.

\subsubsection*{Gaussian free field:}
The most basic example of a $\nabla \phi$ height function is the \emph{real-valued} Gaussian free field: The case where $\phi$ takes real values, with an interaction $\V$ that is isotropic, equaling $V(x) = x^2$ in all directions. Its Gaussian distribution permits exact calculations, which imply, in particular, that at all slopes the model delocalizes in dimension $d=2$ and localizes in $d\ge 3$. 
Moreover, for each slope there is a unique ergodic minimal gradient measure of that slope. 
\smallskip
\paragraph*{Zero slope:} The seminal work of Brascamp--Lieb--Lebowitz~\cite{brascamp1975statistical} considered the real-valued case with an even interaction $V:\mathbb{R}\to\mathbb{R}$, identical in all directions, assumed only to satisfy that $\int_0^\infty e^{-\alpha V(x)}dx<\infty$ for all $\alpha>0$. They focused on the zero slope case and conjectured that delocalization always holds in dimension $d=2$ while localization always holds in dimensions $d\ge 3$. Their work establishes the conjecture under various conditions on the interaction, with delocalization proofs relying on Mermin--Wagner-type arguments, and localization proofs using the Brascamp--Lieb inequality~\cite{brascamp1975some, brascamp1976extensions} (which compares the model to the Gaussian free field) which relies on the convexity, and often strict convexity, of $V$. Later works extended significantly the allowed class of interactions, including ~\cite{dobrushin1980nonexistence,frohlich1981absence,ioffe20022d, milos2015delocalization} for delocalization results (in two dimensions) and~\cite{brydges2012fluctuation, ye2019models, magazinov2022concentration, Dario2024, sellke2024localization} for localization results (in dimensions $d\ge 3$). There is also significant literature on Gaussian free field scaling limits; see the recent~\cite{buchholz2026gradient} and references therein.

\subsubsection*{Non-zero slope:} We are not aware of works focusing on localization at non-zero slope (in dimensions $d\ge 3$). Arguments using the Brascamp--Lieb inequality apply to show localization around the expected height for arbitrary boundary conditions, when the interaction $\V = (V_i)_{1\le i\le d}$ satisfies that each $V_i$ is 
\emph{strictly} convex (see also~\cite[Remark at end of Section 4.1]{magazinov2022concentration}). 
\smallskip
\subsubsection*{Uniqueness:} 
Uniqueness of ergodic minimal gradient Gibbs measures of a given slope (in the real-valued case) was shown by Funaki--Spohn~\cite{funaki1997motion} in the isotropic case under certain assumptions and extended by Sheffield~\cite[Theorem 8.6.3]{Sheffield-Random-Surfaces} in great generality. This uniqueness may break for \emph{non-convex} interactions at zero slope~\cite{biskup2007phase, buchholz2021phase}.

\smallskip
\subsubsection{Integer-valued height functions}
Our focus in this paper is on \emph{integer-valued} $\nabla \phi$ height functions, where the theory is less developed.
\subsubsection*{Two dimensions:} In dimension $d=2$ our results do not add to the existing literature: First, uniqueness of sloped states in two dimensions was established by Sheffield~\cite[Theorem 9.1.1]{Sheffield-Random-Surfaces}. Second, when $d=2$, Assumption~\eqref{eq:main assumption} implies that the slope $\mathfrak{s}=0$ and the low temperature condition holds in both coordinates, whence our existence and structural results are derivable via standard Peierls-type methods (c.f., Brandenberger--Wayne~\cite{brandenberger1982decay}). We thus refrain from further detailed discussion of the two-dimensional case, remarking briefly that there is significant past and contemporary work around the roughening transition which takes place at zero slope, including~\cite{FrohlichSpencer,lammers2022height,aizenman2021depinning, van2023elementary,bauerschmidt2024discrete,bauerschmidt2024discrete2,park2025central, duminil2021macroscopic,chandgotia2021delocalization,lis2021delocalization,Glazman2021,glazman2023transition,karrila2023logarithmic,DuminilCopin2024,glazman2025delocalisation}. Regarding non-zero slope,~\cite[Corollary 9.1.2]{Sheffield-Random-Surfaces} proved delocalization (at all temperatures) when $\mathfrak{s}\notin\Z^d$. More recent works established delocalization at all slopes, sometimes with quantitative delocalization estimates, in certain scenarios~\cite{LammersOtt,garban2023statistical,ott2025quantitative,laslier2024tilted}.
\subsubsection*{Zero slope:} We turn to dimensions $d\ge 3$. Again, existing work has concentrated on the isotropic case at zero slope. Localization and exponential decay at low temperatures with zero boundary conditions follow from Peierls-type methods as in~\cite{brandenberger1982decay}. However, as far as we know, the literature does not provide an accompanying general uniqueness result for ergodic minimal gradient Gibbs measures of slope zero (Theorem~\ref{thm:main-localization} fills this gap).

At high (or intermediate) temperatures less is known: for certain interactions, such as $|\nabla \phi|^p$ with $0< p\le 2$, the height function with zero boundary conditions is known to be localized. This is proved in~\cite{van2025duality}, and also follows by noting that the height function is
dominated (in a suitable sense) by the real-valued height function with the same interaction~\cite[Section 9.4 and Lemma D.1]{aizenman2021depinning}, so that localization follows from the known real-valued results. For the SOS model, an alternative proof of localization is given in~\cite[Section 4]{bricmont1982surface}. For the integer-valued Gaussian free field, exponential decay has also been established~\cite{gopfert1982proof}.

Localization, and uniqueness of ergodic gradient Gibbs measures, at zero slope have also been shown for certain Lipschitz functions in high dimensions~\cite{Peled2017,galvin2015phase,Feldheim2018, peled2020long}.
\subsubsection*{Non-zero slope:} The only localization results that we are aware of at non-zero slope are: (1) the simple fact that the integer-valued Gaussian free field at a slope $\mathfrak{s}\in\Z^d$ is localized, as it has the same distribution as the zero slope field to which a hyperplane of slope $\mathfrak{s}$ is added (such a hyperplane indeed takes integer values when $\mathfrak{s}\in\Z^d$). (2) Localization at a few \emph{special integer slopes} for Ising model interfaces (related to the SOS model) in $3+1$ dimensions at low temperatures is established in~\cite[Section 6]{messager1977correlation} by using symmetries of those slopes and extending van Beijeren's technique~\cite{van1975interface}.

\subsubsection*{Uniqueness:} As mentioned, uniqueness of ergodic, minimal gradient Gibbs measures of slope $\mathfrak{s}\in\mathfrak{S}^\circ$ is shown in~\cite[Corollary 8.7.6]{Sheffield-Random-Surfaces} when $\mathfrak{s}$ has an irrational coordinate. We are not aware of other results in this direction.

\subsection{Proof ideas}\label{subsec:proof-ideas}
In this subsection, we will summarize some of the central challenges to obtaining results on sloped height functions, then describe some key ideas from our proof. 

\medskip
\noindent \emph{Challenges.}
At zero slope, there are several general-purpose tools available for proving localization of height functions. For certain choices of interactions (e.g., $|\nabla \phi|^p$ for $1\le p \le 2$) there are correlation inequalities like absolute-value FKG, or the Ginibre-type inequalities which dominate the integer-valued model's  fluctuations by those of the real-valued free field. However, the applicability of these inequalities seems to be limited to zero slope.
A more robust tool one could hope to appeal to, especially given our assumption~\eqref{eq:main assumption} of the slope having two zero coordinates and strong couplings in those two coordinates, is an argument of Peierls-type. However, the fact that the slope may be non-zero in the other coordinates precludes standard Peierls-type arguments, as any Peierls map will have to contend with interference from the constant density of macroscopic domain walls which necessarily exist.  

The problem is even more acute when $\mathfrak{s}\notin \mathbb Z^d$, as there is not a unique reference ground state with respect to which one defines excitations. 
In fact, in this case there is an exponential multiplicity of ground states of global slope $\mathfrak{s}$, and 
large excitations may align in such a way to move the interface between one ground state and another with minimal energy cost. 
Moreover, as Theorem~\ref{thm:structural results} shows, the multiplicity of ground states, and the entropic considerations from the interactions in the sloped directions, play a crucial role in the behavior of the model and cannot be ignored.

\medskip

\noindent \emph{Restricted reflection positivity.} 
Our starting point for circumventing the above challenges will be a restricted notion of reflection positivity, which helps isolate the rigidity in the slope-zero directions from the considerations in the non-zero slope directions. Reflection positivity is a celebrated property of many statistical physics models, which gives a powerful method (alternative to cluster expansion/Peierls arguments) for establishing long-range order. The idea is that on an $L\times L$ torus, the probability of a certain local event $\mathcal E_e$ (e.g., a non-zero gradient along an edge $e$) is bounded by the $L^2$'th root of the probability of the global event $\bigcap_{\tau \in T} \tau\mathcal E_{e}$, that suitable reflections of the local event occur on all $L^2$ edges simultaneously. See Section~\ref{subsec:reflection-positivity-primer} for a primer on reflection positivity.

Traditionally, reflection positivity requires strong symmetries of the underlying system that are broken by a non-zero slope. A key insight of our argument is to view the sloped height function distribution as having a restricted form of reflection positivity that holds only in the two zero-slope directions (which we refer to as horizontal) and leverage this restricted reflection positivity. Our idea of using restricted reflection positivity originated from~\cite{ShlosmanVignaud,VignaudThesis}, who introduced it to give a new proof of the rigidity of the zero-slope Ising interface~\cite{Dobrushin72a} (as well as other interfaces). 

\medskip
\noindent \emph{Sloped-periodic Gibbs measures}.
In order to leverage the reflection positivity in the two zero-slope directions, we need a distribution to be translation and reflection invariant in those two directions. We therefore wish to define a finite-volume Gibbs measure on the torus $\mathbb T_L^d$ ``with slope $\mathfrak{s}$". We follow the construction of  \emph{sloped-periodic} boundary conditions in Section 4 of~\cite{Sheffield-Random-Surfaces}, where edges interior to the fundamental domain $\{1,...,L\}^d$ of the torus have the standard interaction of~\eqref{eq:height function model}, but gradients along edges that ``wrap around" the torus, i.e., connect $v,w$ on opposite hyperplanes of the fundamental domain, are computed with a shift of $\langle \mathfrak{s}, v-w\rangle$: roughly, 
\begin{align}\label{eq:sloped-periodic-V-shift}
    V_{vw}( \phi_w - \phi_v - \langle \mathfrak{s},v-w\rangle)\,.   
\end{align}
We are more precise about this construction in Section~\ref{subsec:ergodic-minimal-grad-meas}. 
By the convexity of $\mathbf{V}$, see e.g. ~\cite[Lemma 4.2.6]{Sheffield-Random-Surfaces}, subsequential limits of the induced gradient measures exist and have slope $\mathfrak{s}$. 

\medskip
\noindent \emph{Non-zero gradients along horizontal edges are exponentially unlikely}. 
In order to lift limiting gradient measures to infinite-volume measures over height functions, the main work is to show that the height differences $\phi_x - \phi_y$ remain uniformly tight as $d(x,y)\to \infty$ if $x,y$ are in the same layer.  

We will do so by showing that in the gradient measure with sloped-periodic boundary conditions, non-zero gradients within a layer are exponentially suppressed (for large $\beta_0$). Such an estimate is the content of Theorem~\ref{thm:1-2-direction-rigidity}. 
Towards that, by applying restricted reflection positivity to the sloped-periodic gradient measure, for each horizontal edge $e$, the aim is to define the appropriate family of events $(\mathcal E_{e,i})_{i}$ that cover the event of a non-zero gradient at $e$, and whose horizontal dissemination across $\mathbb T_L^2$ can be shown to have probability $\exp( - \Omega( \beta_0 L^2))$. Because the reflections are only in the horizontal directions, bounding the probability of the reflected event is still not the probability of a fixed configuration, but of an exponentially large family (it marginalizes over values in the vertical directions). 

In particular, the entropy of the number of covering events has to be controlled only by the energy gain from the horizontal edges involved, while ensuring that the change in interactions along vertical edges does not incur a cost in energy. 
This involves the non-trivial Peierls-type combinatorial construction of a map that ``flattens" the gradients along the connected components of defect edges in the column $\{e\} \times \mathbb T_L^{d-2}$ with the same sign on their gradients. There is  ambiguity on how to ``flatten" the gradients, and this poses challenges given the non-zero vertical slope. Definition~\ref{def:single-edge-map} constructs a map that performs this flattening and, for each edge $f=uv$ in a cluster of horizontal defect edges, does the following: 
\begin{itemize}
    \item If $\frac{\phi_u + \phi_v}{2}\in \Z$, it sets the new height of both $u$ and $v$ to that value;  
    \item If $\frac{\phi_u + \phi_v}{2}\notin \mathbb Z$, it makes a judicious choice of the two possible roundings ($\lfloor \frac{\phi_u + \phi_v}{2}\rfloor$ or $\lceil\frac{\phi_u + \phi_v}{2}\rceil$) and sets the new height at both $u$ and $v$ to that value. 
\end{itemize}
In the second case, we leverage convexity of the vertical interactions to show that there is always a good choice of rounding such that the vertical interaction energies will not increase when this is performed throughout a horizontal defect cluster. 

We refer the reader to Section~\ref{sec:reflection-positivity} which is self-contained and describes this mapping on reflected configurations and our novel use of restricted reflection positivity.

\medskip

\noindent \emph{Dominant heights.}
Having shown that horizontal gradients are exponentially unlikely, it is straightforward that in each horizontal layer, there is one (possibly random) dominant height whose level set percolates.  The construction of infinite-volume Gibbs measures from the limiting gradient measure obtained from the sloped-periodic measure then goes by assigning a fixed height (say $0$) to be the dominant height at layer zero (this then determines all the other heights). 

The fact that this construction yields a valid Gibbs measure is because the dominant height level set percolates, so the height prescriptions we made are tail measurable, and thus can be reconstructed when doing resampling of finite regions. This argument is found in Section~\ref{sec:limits}. 

\medskip

\noindent \emph{Uniqueness and classifying the extremal components.} 
One is then interested in the law on the sequence of dominant heights, among the infinite family of possible distributions over dominant height sequences which achieve slope $\mathfrak{s}$. 
Sheffield~\cite{Sheffield-Random-Surfaces} showed that if a slope $\mathfrak{s}$ is localized, which the above construction established, then the number of extremal components and mixture over them in the ergodic gradient measures are determined. 

We study the law of the resulting dominant height sequences by identifying a one-to-one correspondence between extremal components of the ergodic measure and dominant height sequences with maximally separated jumps. 
To identify the dominant height sequence of a given extremal component, we use translation covariance properties of the infinite-volume measures, and show that if an extremal component had any dominant height sequence besides $h_0 + \lfloor \alpha + \langle \mathfrak{s}, \mathcal L\rangle \rfloor$, then it would eventually violate stochastic orderings of~\cite{Sheffield-Random-Surfaces} between different extremal measures. 

Finally, the uniqueness of the ergodic measure is proved using a disagreement percolation argument in the style of~\cite{VanDenBerg}. An important step here is Lemma~\ref{lem:ergodic-minimal-has-dominant-height} which shows that the same control over non-zero horizontal gradients that is obtained from the sloped-periodic limit is applicable to \emph{every} ergodic, minimal gradient Gibbs measure of the given slope. The transfer of chessboard estimates from torus-limit Gibbs measures to general translation-invariant Gibbs measures was previously known only for the hard-core model~\cite{hadas2022columnar}, though that proof does not apply to our setting due to the unbounded interactions (lack of a ``safe symbol'') and the fact that we only have restricted reflection positivity.

\subsection*{Acknowledgments}

The research of R.G.\ is supported in part by NSF CAREER grant 2440509 and NSF DMS grant 2246780. The research of R.P. is partially supported by the National Science Foundation grant DMS-2451133 and the Brin professorship at the University of Maryland. 

We thank Fabio Toninelli and Giambattista Giacomin for a helpful discussion of the literature on sloped states for $\nabla\phi$ height functions. We thank Daniel Hadas for fruitful discussions regarding the chessboard estimate in infinite volume.

\subsection*{Statement on the use of generative AI}
We have come up with all proofs in the paper ourselves without AI assistance. The writing of the paper was done without AI assistance besides proofreading of the final draft. 
We consulted ChatGPT (versions from 5.4 to 5.6) in our literature search, including clarification of details of existing papers. ChatGPT~5.6 helped with coding the simulations in Figures~\ref{fig:3d-sloped-surfaces} and~\ref{fig:non-crossing-surfaces}.

\section{Background on gradient Gibbs measures}\label{sec:background gradient Gibbs measures}
In this section, we recap the basic theory of gradient Gibbs measures, their free energy, and central results of~\cite{Sheffield-Random-Surfaces} that we will appeal to in the later sections of the paper. The reader primarily interested only in localization of the slope $\mathfrak{s}$ should feel free to skip this section, as Section~\ref{sec:reflection-positivity} is a self-contained proof of rigidity in finite volume with the sloped-periodic boundary condition.

\subsection{Gibbs and gradient Gibbs measures}

We will work extensively with Gibbs and gradient Gibbs measures for the formal Hamiltonian~\eqref{eq:height function model}. The meaning of the formal Hamiltonian~\eqref{eq:height function model} is the following. For each finite non-empty $\Lambda\subset\Z^d$ and each boundary condition $\psi\in\Omega$, the \emph{finite-volume Gibbs measure} $\mu^{\Lambda, \psi}$ on $\phi\in\Omega$ is defined by
\begin{equation}
    \mu^{\Lambda, \psi}(\phi):=\frac{1}{Z^{\Lambda,\psi}}\exp\Bigg(-\sum_{\substack{uv \in \vec{E}(\mathbb Z^d): \\ \{u,v\}\cap \Lambda \ne \emptyset}}V_{uv}(\phi_v - \phi_u)\Bigg)\prod_{v\notin\Lambda}\1_{\phi_v = \psi_v}
\end{equation}
where
\begin{equation}
    Z^{\Lambda,\psi}:=\sum_{\substack{\phi\in\Omega\\\phi\equiv\psi\text{ on $\Lambda^c$}}} \exp\Bigg(-\sum_{\substack{uv \in \vec{E}(\mathbb Z^d): \\ \{u,v\}\cap \Lambda \ne \emptyset}}V_{uv}(\phi_v - \phi_u)\Bigg)
\end{equation}
is the normalization constant (partition function) turning $\mu^{\Lambda, \psi}$ to a probability measure (these depend only on the restriction $\psi|_{\Lambda^c}$). One checks that $Z^{\Lambda,\psi}<\infty$ under our assumption~\eqref{eq:potential assumption}. The probability measure $\mu^{\Lambda, \psi}$ is well defined when also $Z^{\Lambda,\psi}>0$. 
We say that $\psi$ is $\V$-admissible if $Z^{\Lambda,\psi}>0$ for all finite non-empty $\Lambda\subset\Z^d$, and we note that all $\psi$ are $\V$-admissible when none of the $V_i$ takes the value $+\infty$.

An infinite-volume (height function) \emph{Gibbs measure} $\mu$ is a probability measure on $\Omega$, supported on $\V$-admissible height functions and satisfying the DLR criterion: Suppose $\phi$ is sampled from $\mu$. For each finite non-empty $\Lambda\subset\Z^d$, conditioning on $\phi|_{\Lambda^c}$ the distribution of $\phi$ is the finite-volume Gibbs measure $\mu^{\Lambda,\psi}$ where $\psi\in\Omega$ is any function equaling $\phi|_{\Lambda^c}$ on $\Lambda^c$.

A central role in our work is played by the notion of \emph{gradient Gibbs measures}, as we now define. Define an equivalence relation on $\Omega$ by setting $\phi\sim\psi$ if $\phi-\psi$ is a constant. We write $\phi^\nabla$ for the equivalence class of $\phi$ and set
\begin{equation}
    \Omega^\nabla:=\{\phi^\nabla\colon\phi\in\Omega\}
\end{equation}
for the set of all equivalence classes, equipped with the quotient sigma algebra $\F^\nabla:=\F/\sim$ (i.e., $A\in\F^\nabla$ if and only if the union of the equivalence classes in $A$ is in $\F$). We point out that a gradient function may be equivalently represented as an antisymmetric function on oriented edges of $\Z^d$ (representing the gradient on that edge) with zero sum on every cycle in $\Z^d$.

Observe that the following holds for all finite non-empty $\Lambda\subset\Z^d$ and $\psi^1\sim\psi^2$:
\begin{itemize}
    \item $Z^{\Lambda,\psi^1} = Z^{\Lambda,\psi^2}$. In particular, $\psi^1$ is $\V$-admissible if and only if $\psi^2$ is $\V$-admissible so that the notion of $\V$-admissibility extends to equivalence classes $\psi^\nabla$.
    \item Suppose $\psi^1$ is $\V$-admissible. If $\phi^1$ is sampled from $\mu^{\Lambda, \psi^1}$ and $\phi^2$ is sampled from $\mu^{\Lambda, \psi^2}$ then $\phi^2+c\eqd \phi^1$, where $c\equiv\psi^1-\psi^2$ (and $\eqd$ denotes equality in distribution).
    
    Thus, for a $\V$-admissible equivalence class $\psi^\nabla$, the probability measure $\mu^{\nabla, \Lambda, \psi^\nabla}$ is well defined on $\Omega^\nabla$ as the push-forward of $\mu^{\Lambda, \psi}$ through $\nabla$.
\end{itemize}
An infinite-volume \emph{gradient Gibbs measure} $\mu^\nabla$ is a probability measure on $\Omega^\nabla$, supported on $\V$-admissible equivalence classes, satisfying the DLR criterion: Suppose $\phi^\nabla$ is sampled from $\mu^\nabla$ (the choice of $\phi$ in the equivalence class is arbitrary). For each finite non-empty $\Lambda\subset\Z^d$, conditioning on $\phi^\nabla|_{\Lambda^c}$ the distribution of $\phi^\nabla$ is the finite-volume Gibbs measure $\mu^{\nabla,\Lambda,\psi^\nabla}$ where $\psi\in\Omega$ is any function equaling $\phi|_{\Lambda^c}$ on $\Lambda^c$.

\subsection{Translations and ergodicity}

Given $v\in\Z^d$, the translation operator $\theta_v$ is defined as follows: For a function $\phi\in\Omega$, $(\theta_v\phi)_\cdot := \phi_{\cdot + v}$. For an event $A\in\F$, $\theta_v A:=\{\theta_v\phi\colon \phi\in A\}$. For a measure $\P$ on $\Omega$, $\theta_v\P$ is the pushforward of $\P$ by $\theta_v$. 

An event $A\in\F$ is called \emph{translation invariant} if $\theta_v A =A$ for all $v\in\Z^d$. A measure $\P$ on $\Omega$ is called \emph{translation invariant} if $\theta_v\P=\P$ for all $v\in\Z^d$. A translation invariant measure $\P$ on $\Omega$ is called \emph{ergodic} if $\P(A)\in\{0,1\}$ for all translation-invariant events $A\in\F$. All these definitions extend naturally to gradient functions $\nabla\phi\in\Omega^\nabla$, gradient events $A\in\F^\nabla$ and gradient measures~$\P^\nabla$.

\subsection{Free energies of gradient Gibbs measures}
The \emph{(specific) free energy} (SFE) of a translation-invariant gradient Gibbs measure $\mu^\nabla$ is defined as 
\begin{equation}\label{eq:SFE def}
    \SFE(\mu^\nabla):=\lim_{L\to\infty}\frac{1}{|\Lambda_L|}\left(\mu^\nabla(H^{\Lambda_L,\mathrm{free}}(\phi^\nabla))-\Ent(\mu^\nabla|_{\Lambda_L})\right)
\end{equation}
with the following definitions: $\Lambda_L:=\{-\frac{L}{2},...,\frac{L}{2}\}^d$, and 
\begin{equation}
    H^{\Lambda_L,\mathrm{free}}(\phi):= \sum_{\substack{uv\in \vec{E}(\mathbb Z^d)\\u,v\in\Lambda_L}}V_{uv}(\phi_v - \phi_u)
\end{equation}
is the restriction of the formal Hamiltonian~\eqref{eq:height function model} to $\Lambda_L$ with free boundary conditions, $\Ent$ is the Shannon entropy of a discrete probability measure and $\mu^\nabla|_{\Lambda_L}$ is the restriction of $\mu^\nabla$ to $\Lambda_L$ (i.e., it is the distribution of $(\nabla_{(u,v)}\phi^\nabla \colon u,v\in\Lambda_L, u\sim v)$ when $\phi^\nabla$ is sampled from $\mu^\nabla$).

Sheffield~\cite[Lemma 2.3.4 and following paragraph]{Sheffield-Random-Surfaces} shows that the limit in~\eqref{eq:SFE def} exists in $(-\infty,\infty]$. 
It is also shown~\cite[Lemma 2.5.1 and Theorem 2.5.2]{Sheffield-Random-Surfaces} that $\SFE(\mu^\nabla)$ is minimized among translation-invariant measures $\mu^\nabla$ on gradient functions, with the minimum value in $\mathbb{R}$ and any measure achieving the minimum being a gradient Gibbs measure.

Recall the definition of the set of admissible slopes $\mathfrak{S}$ for interaction $\mathbf{V}$ from~\eqref{eq:admissible-slopes}, and the slope of a translation-invariant gradient Gibbs measure from~\eqref{eq:slope-def}. 

The \emph{surface tension} $\tau:\mathbb{R}^d\to\mathbb{R}\cup\{\infty\}$ of a slope is defined by
\begin{equation} \label{eq:surfacetension}
    \tau(\mathfrak{s}):=\inf\{\SFE(\nu^\nabla) \colon \nu^\nabla\text{ is a translation-invariant gradient measure with }\slope(\nu^\nabla)=\mathfrak{s}\}.
\end{equation}
It is clear that $\tau(\mathfrak{s})<\infty$ if and only if $\mathfrak{s}\in\mathfrak{S}$ (see also~\cite[Lemma 4.2.4 and Corollary 4.2.5]{Sheffield-Random-Surfaces}).
Given $\mathfrak{s}\in\mathfrak{S}$, a translation-invariant gradient measure $\mu^\nabla$ of slope $\mathfrak{s}$ is called \emph{minimal} if $\SFE(\mu^\nabla)=\tau(\mathfrak{s})$ (whenever we say minimal, we mean among gradient measures of slope $\mathfrak{s}$). A minimal $\mu^\nabla$ is necessarily a gradient Gibbs measure by~\cite[Theorem 2.5.2]{Sheffield-Random-Surfaces}.

Finally, recall that we say a slope $\mathfrak{s}$ is localized if there exists a Gibbs measure on height functions whose gradient Gibbs measure is translation invariant and has slope $\mathfrak{s}$, and it is delocalized otherwise.  Theorem 8.6.3 of~\cite{Sheffield-Random-Surfaces} shows that if $\mathfrak{s}$ is localized, then any translation invariant gradient Gibbs measure of slope $\mathfrak{s}$ with minimal SFE will be the gradient of a height-function Gibbs measure.

\subsection{Infinite free energy and non-minimality}\label{sec:infinite free energy}

We briefly discuss the necessity in the above discussion, of the conditions on finite slope, minimal free energy, and the possibility of gradient measures with infinite free energy. This discussion is only for context, and not used in our paper. 

Let $\mu^\nabla$ be a gradient Gibbs measure. Sheffield~\cite[Lemma 2.3.8]{Sheffield-Random-Surfaces} shows that if $\SFE(\mu^\nabla)<\infty$ then $\slope(\mu^\nabla)$ is defined (i.e., the expectation in~\eqref{eq:slope-def} exists). Conversely,~\cite[Conjecture 10.3.2]{Sheffield-Random-Surfaces} conjectures that if $\mu^\nabla$ is ergodic and $\slope(\mu^\nabla)\in \mathfrak{S}^\circ$ then $\SFE(\mu^\nabla)<\infty$ (the restriction to the interior of $\mathfrak{S}$ may not be relevant in our integer-valued setup). Additionally,~\cite[Conjecture 10.3.1]{Sheffield-Random-Surfaces} conjectures that if $\mu^\nabla$ is ergodic, $\SFE(\mu^\nabla)<\infty$ 
and $\mathfrak{s}=\slope(\mu^\nabla)\in\mathfrak{S}^\circ$ then $\mu^\nabla$ is minimal. For Lipschitz height functions (in our integer-valued setup), or when the interactions $\V$ are isotropic, this is proved as part of the variational principle~\cite[Theorem 6.3.1]{Sheffield-Random-Surfaces}.

We are not aware of results on which interactions $\V$ (if any) admit an ergodic gradient Gibbs measure $\mu^\nabla$ with $\SFE(\mu^\nabla)=\infty$. 
To highlight the subtlety of the issue, we briefly leave the scope of integer-valued height functions and consider the real-valued Gaussian free field 
and ask whether the model possesses an ergodic Gibbs measure with infinite specific free energy. In dimensions $d=1,2$ the model is rough (has no Gibbs measures at all), but in $d\ge 3$ the question is equivalent to the existence of non-trivial ergodic harmonic functions on $\Z^d$, and is still open.\footnote{To see the equivalence, let~$\mu^0$ be the infinite-volume Gibbs measure obtained in the limit with zero boundary conditions (known to exist when $d\ge 3$ and to have finite specific free energy). Observe that for every (deterministic) \emph{harmonic} function $m:\Z^d\to\mathbb{R}$ (i.e., $m_v = \frac{1}{2d}\sum_{u\colon u\sim v}m_u$ at every $v\in\Z^d$) there exists a Gibbs measure $\mu^m$, obtained as the distribution of $\phi + m$ when $\phi$ is sampled from $\mu^0$. In fact, these $\mu^m$ are exactly the extremal (i.e., tail trivial) Gibbs measures of the model~\cite[Theorem 13.24 and (13.29)]{georgii2011gibbs}. One may then verify that every ergodic Gibbs measure has the form $\mu^m$ where $m$ is now a random harmonic function, with ergodic distribution under $\Z^d$ shifts. For an ergodic harmonic function $m$, if $\mathbb{E}[(m_u-m_v)^2]<\infty$ for all $u\sim v$ then $m$ is almost surely (a deterministic) constant (one way to see this is to note that for each $1\le i\le d$, $(m_{X_n}-m_{X_n+e_i})_n$ is an $L^2$-bounded martingale, and thus almost surely convergent, when $(X_n)$ is a simple random walk. Then couple two such walks with different starting points). One may then conclude that the model admits an ergodic Gibbs measure $\mu$ with $\SFE(\mu)=\infty$ if and only if there exists an ergodic harmonic function which is not almost surely constant.} It has been proved that such harmonic functions do not exist in two dimensions~\cite[Appendix B]{buhovsky2022discrete} (see also~\cite{bou2025unique}), but it remains open whether they exist in dimensions $d\ge 3$ (some related continuum objects have been shown to exist, even in two dimensions~\cite{MR1422707,tsirelson2007divergence, buhovsky2019translation, glucksam2019measurably, glucksam2025measurable}).

\subsection{Ergodic minimal gradient Gibbs measures}\label{subsec:ergodic-minimal-grad-meas}

We recap the results of~\cite{Sheffield-Random-Surfaces} on ergodic minimal gradient Gibbs measures of the $\nabla\phi$ model 
 for convex translation-invariant $\mathbf{V}$. 

 \medskip 
 \noindent \emph{Sloped-periodic gradient measures}: For a slope $\mathfrak{s}$, in Section~4.2 of~\cite{Sheffield-Random-Surfaces}, the following construction was made of the aforementioned sloped-periodic gradient measure on the finite-volume torus $\mathbb T_L^d$ of slope $\mathfrak{s}$.  Fix a fundamental domain $\Lambda_{L} = \{1,...,L\}^d \subset \mathbb Z^d$ and let $$\mathcal E_L^+ = \{(v,v+\mathfrak{e}_i): v\in \Lambda_L, 1\le i\le d\}$$ be the set of edges in $\mathbb Z^d$ consisting of those internal to $ \{1,...,L\}^d$ together with those on its outer boundary and in the upper orthant (to not double count). Extend a gradient function $\phi^\nabla$ from the fundamental domain to $\mathbb Z^d$ as follows: for $w\in \mathbb Z^d$ with $w = v + (n_1 L,...,n_d L)$ for integer $n_i$ and $v$ in the fundamental domain, define the approximation $\mathfrak{s}^{(L)} = \frac{1}{L} \lfloor \mathfrak{s} L\rfloor$
\begin{align}\label{eq:extend-torus-configuration}
    \phi (w) - \phi(v)  = \langle \mathfrak{s}^{(L)},(n_1,...,n_d)\rangle  L
\end{align}
The sloped-periodic gradient measure on $\T^d_L$ is then defined as 
\begin{align}\label{eq:torus-measure-with-slope}
    \mu_{L,\mathfrak{s}}^{\nabla}(\phi) \propto \prod_{e \in \mathcal E_L^+} e^{ -V_e(\nabla_e \phi)}\,.
\end{align}

By the convexity of $\mathbf{V}$, see e.g. ~\cite[Lemma 4.2.6]{Sheffield-Random-Surfaces}, subsequential limits as $L\to\infty$ of  $\mu_{L,\mathfrak{s}}^\nabla$ exist, have finite specific free energy, and are $\mathbb Z^d$ translation-invariant.  Furthermore, Corollary 6.1.5 of~\cite{Sheffield-Random-Surfaces} demonstrates that any such subsequential limit $\mu^\nabla$ will be minimal. In fact, \cite[Lemmas~8.2.7--8.2.8]{Sheffield-Random-Surfaces} use convexity of the surface tension to show that almost every ergodic component of such a subsequential limit will have slope $\mathfrak{s}$ and have minimal free energy. 

We remark that, as a consequence of Theorem~\ref{thm:main-localization}, under~\eqref{eq:main assumption}, the infinite-volume limit of the sloped-periodic construction exists (without taking a subsequence) and is ergodic.

\medskip
\noindent \emph{Structure of infinite-volume gradient measures}.
 Given the above, one could aim to classify the ergodic gradient Gibbs measures with slope $\mathfrak{s}$ having minimal SFE by characterizing their extremal components. This was done in Theorems 8.7.9 and 8.7.8 of~\cite{Sheffield-Random-Surfaces} under the assumption that the slope $\mathfrak{s}$ is localized. We reproduce those results here for the benefit of the reader. The extremal measures in the gradient of a height function measure were characterized in~\cite{Sheffield-Random-Surfaces} by the value of their \emph{height offset spectrum}, which can be expressed as
 $$
 h(\phi) = \lim_{L \to \infty} |\Lambda_L|^{-1} \sum_{x \in \Lambda_L} \phi_x. \footnote{Note, that~\cite{Sheffield-Random-Surfaces} uses $\liminf$ in the definition of the height offset spectrum in Theorem 8.7.3, but the proof uses the ergodic theorem to argue that the $\liminf$ can actually be reduced to $\lim$.}
 $$

 \begin{thm}[Theorem 8.7.9  and Theorem 8.7.8 of \cite{Sheffield-Random-Surfaces}]\label{thm:Sheffield}
Suppose $\mathfrak{s}\in \mathfrak{S}^\circ$ is localized, and that $\mu_{\mathfrak{s}}$ is a Gibbs measure with minimal, ergodic gradient of slope $\mathfrak{s}$. There are now two cases to consider. 

\smallskip
\noindent \textbf{Case 1: All coordinates of $\mathfrak{s}$ are rational.}  
Writing $\mathfrak{s} = p_i/q_i$ for relatively prime integers $p_i,q_i$, let $n$ be the least common multiple of $q_i$. 
There exists a $\mathfrak{c}$ and a unique family $(\mu_{\mathfrak{s},a})_a$ of extremal Gibbs measures such that: 
\begin{enumerate}
\item \textbf{Height Offset Property:} For each $a \in \mathfrak{c} + \frac{1}{n} \mathbb{Z}$, we have that for $\mu_{\mathfrak{s},a}$ almost all $\phi$, the height offset variable $h(\phi) =a$.
\item \textbf{Stochastic Domination Property:} $\mu_{\mathfrak{s},a_1}$ stochastically dominates $\mu_{\mathfrak{s},a_2}$ when $a_1 \ge a_2$. 

\item \textbf{Height Translation Property:} For  each $b \in \mathbb{Z}$, we have $b + \mu_{\mathfrak{s},a} = \mu_{\mathfrak{s},a+b}$. Here, adding an integer value $b$ to a measure $\mu$ means the pushforward of the measure $\mu$ under adding the integer $b$ pointwise to configurations.

\item \textbf{Decomposition Property:} The gradient measure associated to $\frac{1}{n}\sum_{j =0}^{n-1} \mu_{\mathfrak{s}, a+ \frac{j}{n}}$ is the same as the gradient measure $\mu_{\mathfrak{s}}^\nabla$ for any $a \in \mathfrak{c}+\frac{1}{n} \mathbb{Z}$. 

\item \textbf{Extremality:} For all $a \in \mathfrak{c}+\frac{1}{n} \mathbb{Z}$, $\mu_{\mathfrak{s},a}$ is extremal.

\item \textbf{Spatial Translation Property:} For each $x \in \mathbb{Z}^d$ and each $a$, $\theta_x \mu_{\mathfrak{s},a} = \mu_{\mathfrak{s}, a+ (\mathfrak{s},x)}$.

\end{enumerate}
\textbf{Case 2: $\mathfrak{s}$ has at least one irrational coordinate.}
In this case, we have to make the following adjustments beyond the obvious replacement of each appearance of $a \in \mathfrak{c}+ \frac{1}{n} \mathbb{Z}$ with $a \in \mathbb{R}$. We remark further that in this case there is a unique ergodic component up to translation.
\begin{itemize}

\item \textbf{Decomposition Property:} The gradient measure associated to $\int_0^1 \mu_{\mathfrak{s},\mathfrak{c} +a} \text{d}a$ is the same as the gradient measure $\mu_{\mathfrak{s}}^\nabla$ for any $\mathfrak{c} \in \mathbb{R}$.

\item  \textbf{Right-continuity}: For each increasing event $A$, $\mu_{\mathfrak{s},a}(A)$ is increasing and right-continuous in $a$. (Note that this is unique to the case that $\mathfrak{s}$ has at least one irrational coordinate.)
\end{itemize}
\end{thm}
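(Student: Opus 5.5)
Since this is a recap of~\cite[Theorems 8.7.8--8.7.9]{Sheffield-Random-Surfaces}, the plan follows Sheffield's route. We take the localization hypothesis as given. The idea is to realize the extremal components of a height-function Gibbs measure $\mu_{\mathfrak{s}}$ as level sets of a single tail-measurable offset variable. All properties then follow from the behaviour of that variable under height shifts and spatial shifts, together with a monotone coupling.

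\textbf{Step 1 (offset variable and its covariance).} By~\cite[Theorem 8.6.3]{Sheffield-Random-Surfaces}, the minimal ergodic gradient measure $\mu_{\mathfrak{s}}^\nabla$ lifts to a Gibbs measure $\mu_{\mathfrak{s}}$.
\begin{itemize}
\item Apply the multiparameter ergodic theorem to the stationary field $x\mapsto \phi_x-\langle\mathfrak{s},x\rangle$, which has stationary increments with mean zero. Using localization (tightness of $\phi_x-\phi_0-\langle \mathfrak{s},x\rangle$), we show the averages defining $h(\phi)$ converge almost surely. We also show that changing $\phi$ on a finite set does not change the limit, so $h$ is tail measurable.
\item Consequently $h$ is almost surely constant on each extremal component. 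The covariances are immediate from the definition: $h(\phi+b)=h(\phi)+b$ for $b\in\Z$, and $h(\theta_x\phi)=h(\phi)+\langle\mathfrak{s},x\rangle$. These give the height translation and spatial translation properties, once we know components are indexed by $h$.
\item The values of $h$ lie in an orbit $\mathfrak{c}+G$, where $G$ is the additive group generated by $\Z$ and $\{\langle\mathfrak{s},x\rangle\}_{x\in\Z^d}$. This $G$ equals $\frac1n\Z$ in the rational case and is dense in the irrational case.
\end{itemize}

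\textbf{Step 2 (uniqueness given the offset, and stochastic domination).} This is the main obstacle. We must show that two extremal components $\mu^1,\mu^2$ of (translates of) $\mu_{\mathfrak{s}}$ with $h$-values $a_1\ge a_2$ satisfy $\mu^1\succeq\mu^2$, and hence coincide when $a_1=a_2$. The plan is Sheffield's cluster-swapping argument for convex $\V$.
\begin{itemize}
\item Take an independent (later, translation-invariant and ergodic) coupling of $\phi^1\sim\mu^1$ and $\phi^2\sim\mu^2$.
\item Convexity makes the swap $(\phi^1,\phi^2)\mapsto(\max,\min)$ on a finite cluster of $\{\phi^2>\phi^1\}$ weakly energy-decreasing and measure-preserving.
\item Minimality of the specific free energy then forces the clusters of $\{\phi^2>\phi^1\}$ to be finite almost surely. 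The alternative is infinite clusters of both signs, which we rule out by a Burton--Keane type argument that uses the offset difference $a_1-a_2\ge0$.
\item Given finite clusters, resampling outside them yields a monotone coupling $\phi^1\ge\phi^2$, which is the domination property. Extremality then gives uniqueness, so $(\mu_{\mathfrak{s},a})_a$ is a well-defined family. Right-continuity in the irrational case follows by monotone limits along $a_k\downarrow a$ in the dense orbit, together with uniqueness.
\end{itemize}

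\textbf{Step 3 (decomposition).} Ergodicity of $\mu^\nabla_{\mathfrak{s}}$ means the law of $h \bmod 1$ under $\mu_{\mathfrak{s}}$ is invariant under all rotations $t\mapsto t+\langle\mathfrak{s},x\rangle \bmod 1$. Here $h\bmod1$ is a function of the gradient up to the choice of normalization.
\begin{itemize}
\item In the rational case these rotations act transitively on the $n$ points $\mathfrak{c}+\frac1n\Z$ mod $1$, so the law is uniform on them.
\item In the irrational case the rotation group is dense, and Lebesgue measure is the unique invariant measure, so the law is uniform.
\end{itemize}
Combining this with Step 2 yields the stated decomposition formulas. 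In the irrational case it also gives a single ergodic component up to translation.
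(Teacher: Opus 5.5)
The paper gives no proof of this statement. It is imported from Sheffield's monograph, so the benchmark is Sheffield's cluster-swapping theory. Your architecture is the natural one: a tail-measurable offset $h$, its covariance under height and spatial shifts, ordering of extremal components via monotone couplings, and rotation invariance of the law of $h \bmod 1$. But two claims in Step~1 are wrong.

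First, $x\mapsto\phi_x-\langle\mathfrak{s},x\rangle$ is not a stationary field; only its increments are. So the ergodic theorem does not apply to it, and tightness of $\phi_x-\phi_0-\langle\mathfrak{s},x\rangle$ does not by itself give a.s.\ convergence of box averages. You need a genuinely stationary object, e.g.\ $\phi_x-\langle\mathfrak{s},x\rangle-H(\phi)$ for an a priori tail-measurable height-offset variable $H$ with the right covariance (the kind of object Sheffield constructs for smooth phases). You also need an integrability bound, or the averages can diverge.

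Second, the assertion that $h$ takes values in the countable orbit $\mathfrak{c}+G$ is false for irrational slopes. Your own Step~3 shows $h \bmod 1$ is uniform on $[0,1)$, hence non-atomic. This matters for Case~2. The extremal components of $\mu_{\mathfrak{s}}$ realize only Lebesgue-almost every offset, so for the remaining $a$ the measure $\mu_{\mathfrak{s},a}$ must be built as a monotone limit. You then have to show that such limits are Gibbs, have offset exactly $a$, and are extremal, since the statement asserts extremality for every $a\in\mathbb{R}$. Monotone limits of extremal measures need not be extremal. Your Step~2 uniqueness concerns extremal components of Gibbs measures with gradient $\mu^\nabla_{\mathfrak{s}}$, so it does not reach these limits.

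The decisive gap is Step~2, which is asserted rather than argued. The exchange $(\phi^1,\phi^2)\mapsto(\phi^1\vee\phi^2,\phi^1\wedge\phi^2)$ on a cluster of $\{\phi^2>\phi^1\}$ is not measure-preserving. Convexity only makes it weakly energy-lowering (the lattice condition behind Holley's inequality), which is what gives finite-volume domination under ordered boundary data in your last bullet. Two fixed extremal components are not translation invariant. So free-energy and ergodicity arguments must be run on a stationary pair measure, e.g.\ $\mu_{\mathfrak{s}}\otimes\mu_{\mathfrak{s}}$ modulo common integer shifts, decomposed according to $h(\phi^1)-h(\phi^2)$, and then transferred back to almost every pair of components.

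Above all, the Burton--Keane argument you invoke cannot exclude coexisting infinite clusters of $\{\phi^1>\phi^2\}$ and $\{\phi^2>\phi^1\}$. Burton--Keane counts the infinite clusters of a single set, while two disjoint sets can both percolate in $d\ge 3$. The exclusion must use the model, through a specific-free-energy comparison after swapping clusters in a stationary coupling. That is the real content of Sheffield's cluster-swapping results, and it is missing here; nor is it explained how $a_1-a_2\ge 0$ would enter.

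Your dichotomy also omits the case where only $\{\phi^2>\phi^1\}$ percolates. That case yields $\mu^2\succeq\mu^1$, hence $a_1=a_2$. You then still need to show that domination with equal offsets forces equality, e.g.\ via a stationary monotone coupling and the ergodic theorem applied to $\mathbf{1}\{\phi^2_0>\phi^1_0\}$.
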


After we have established that the limits of sloped-periodic measures have dominant heights and all slopes $\mathfrak{s}$ satisfying~\eqref{eq:main assumption} are localized, the dominant heights will be related to the height offset spectrum to apply Theorem~\ref{thm:Sheffield} and identify the typical dominant height sequences.

\section{Reflection positivity and rarity of non-zero horizontal gradients}\label{sec:reflection-positivity}

For the rest of the paper, without loss of generality we take $i_0, i_1$ in~\eqref{eq:main assumption} to be $1$ and $2$ respectively. 
The main result of this section will be that in the $\mathfrak{e}_1,\mathfrak{e}_2$-directions (in which the slope is flat and the interaction strength is large), non-zero gradients are unlikely. 

We will work in this section with a slope vector $\mathfrak{s}= (0,0,s_3,...,s_d)$ and the sloped-periodic gradient measure $\mu_{L,\mathfrak{s}}^\nabla$ on $\phi^\nabla$ which are elements in the equivalence class of $\{\phi:\mathbb T_L^d\to \mathbb Z\}$ modulo global shifts, with energy function from~\eqref{eq:height function model} as defined in~\eqref{eq:extend-torus-configuration}--\eqref{eq:torus-measure-with-slope}. 
To avoid cumbersome notation, we will talk about height configurations $\phi$ and define maps between them but these will be understood as maps on the gradients in the obvious way (e.g., only thinking about the representatives of each equivalence class which take height zero at the origin). 

We say an edge $e\in \mathbb T_L^d$ is $\mathfrak{e}_i$-oriented if it is parallel to the basis vector $\mathfrak{e}_i$. When evaluating gradients along $\mathfrak{e}_i$-oriented edges, as stated in the introduction, they are always evaluated in the direction of $\mathfrak{e}_i$. (Note that since $V_1,V_2$ are assumed to be even, this does not matter for $\{\mathfrak{e}_1,\mathfrak{e}_2\}$-oriented edges, but does for $\mathfrak{e}_3,...,\mathfrak{e}_d$.) 
Throughout the remainder of the paper, we assume and may not restate, that $\mathbf{V}$ satisfies~\eqref{eq:potential assumption}, is even in its first two coordinates, and without loss of generality, is shifted so that $\min_{k} V_i(k)=0$ for all $1\le  i\le d$.
\begin{thm}\label{thm:1-2-direction-rigidity}
    Fix $d\ge 3$ and suppose $4$ divides $L$.\footnote{The same proofs go through in $d=2$, but most of the arguments degenerate to simpler Peierls bounds, so the presentation from now on will assume $d\ge 3$ so definitions are not vacuous.} There exist constants $c,\beta_0>0$ such that if  $\min_{i=1,2} V_i(1)\ge \beta_0$ the following holds. 
    Fix $\mathfrak{s} = (0,0,s_3,...,s_d)$ and consider an $\mathfrak{e}_1$ or $\mathfrak{e}_2$ oriented edge $e$; for all $r\ge 1$, 
    \begin{align*}
        \mu_{L,\mathfrak{s}}^\nabla (|\nabla_e \phi| \ge r) \le e^{ - c \min_{i=1,2} V_i(1) r }\,.
    \end{align*}
    More generally, for a family of distinct $\{\mathfrak{e}_1,\mathfrak{e}_2\}$-oriented edges $e_1,...,e_m$, and integers $r_1,...,r_m\ge 1$, 
    \begin{align*}
        \mu_{L,\mathfrak{s}}^\nabla \Big( \bigcap_{i=1}^m \{|\nabla_{e_i} \phi | \ge r_i\} \Big) \le e^{ - c \min_{i=1,2} V_i(1) \sum_{i=1}^m r_i}\,.
    \end{align*}
\end{thm}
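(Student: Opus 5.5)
The plan is to use the restricted reflection positivity described in Section~\ref{subsec:proof-ideas}, reflecting only in the $\mathfrak{e}_1,\mathfrak{e}_2$ directions, and then to run a chessboard estimate followed by a Peierls-type flattening map on the fully reflected configuration.

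\emph{Step 1: reflection positivity.} Since $\mathfrak{s}_1=\mathfrak{s}_2=0$, the sloped-periodic wrap-around in the $\mathfrak{e}_1,\mathfrak{e}_2$ directions carries no shift. The shifts $\langle\mathfrak{s}^{(L)},\cdot\rangle L$ in the vertical directions $\mathfrak{e}_3,\dots,\mathfrak{e}_d$ are unchanged by a reflection $x_1\mapsto -x_1$ or $x_2\mapsto -x_2$. Since $V_1,V_2$ are even, $\mu^\nabla_{L,\mathfrak{s}}$ is reflection positive with respect to reflections through hyperplanes $\{x_1=\text{const}\}$ and $\{x_2=\text{const}\}$ passing through vertices. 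Each such hyperplane has the form $\{x_1=c\}\times\mathbb T_L^{d-1}$, so it contains full vertical columns. I would verify this in the standard way: write the weight as $F\cdot\Theta F\cdot G$, where $G$ collects the vertical and in-plane interactions of the hyperplane, which are reflection invariant. The heights on the plane are summed first, and the $L^2$ inner-product structure then gives positivity. The resulting chessboard estimate, with $4\mid L$, reads as follows. For an event $A$ measurable with respect to the gradients in a horizontal block $B\times\mathbb T_L^{d-2}$, where $B$ is a unit square containing $e$,
\[
\mu^\nabla_{L,\mathfrak{s}}\Big(\bigcap_{j} A_{e_j}\Big)\le\prod_j \mu^\nabla_{L,\mathfrak{s}}\Big(\bigcap_{\tau\in T}\tau A_{e_j}\Big)^{1/|T|},\qquad |T|\asymp L^2,
\]
where $T$ is the group of horizontal reflections.

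\emph{Step 2: decomposition into cluster events.} On $\{|\nabla_e\phi|\ge r\}$, consider the column $\{e\}\times\mathbb T_L^{d-2}$ of horizontal edges lying over $e$. Take the connected set $S$ in this column of edges containing $e$ that carry nonzero gradients of the same sign as $\nabla_e\phi$, connected through vertical adjacency. Let $\rho=(\rho_f)_{f\in S}$ record $|\nabla_f\phi|$. I will cover the event by the events $\mathcal E_{e,S,\rho}$, which specify $S$ as a maximal such cluster together with the values $\rho$, subject to $\rho_e\ge r$. The number of pairs $(S,\rho)$ with $\sum_f\rho_f=k$ is at most $C_d^k$. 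It therefore suffices to prove
\[
\mu^\nabla_{L,\mathfrak{s}}\Big(\bigcap_{\tau}\tau\mathcal E_{e,S,\rho}\Big)\le e^{-c' V(1)\,L^2\sum_f\rho_f},
\]
with $V(1)=\min_{i=1,2}V_i(1)$. Indeed, taking the $1/|T|$-th root and summing over $(S,\rho)$ then gives $\sum_{k\ge r}C_d^k e^{-c'V(1)k}\le e^{-cV(1)r}$ once $\beta_0$ is large. For the multi-edge statement I apply the chessboard inequality to the product of events for $e_1,\dots,e_m$, and the same sum factorizes.

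\emph{Step 3: the flattening map (main obstacle).} On the disseminated event, every reflected copy of the column carries a cluster $\tau S$ of same-sign horizontal defects. I would define the map $\Phi$ as in the proof-ideas paragraph. For each defect edge $f=uv$ in each reflected cluster, set $\phi_u,\phi_v$ to $\tfrac{\phi_u+\phi_v}{2}$ when this is an integer. Otherwise round it, consistently along vertical chains: the rounding direction is chosen according to the sign pattern of the vertical neighbours.

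Convexity of $V_i$ for $i\ge3$ should show that some choice of rounding does not increase the total vertical energy. I would try to prove this via a pairwise exchange inequality of the form $V(a)+V(b)\ge V(\lfloor\cdot\rfloor)+V(\lceil\cdot\rceil)$ for averaged differences. Applied along a vertical edge $(w,w+\mathfrak{e}_i)$ both of whose endpoint pairs are flattened, this compares $V_i$ at the two original differences with $V_i$ at the average difference. Because the same sign holds throughout the cluster, the roundings can be aligned: rounding both endpoints down, or both up, keeps the averaged difference exact. The remaining horizontal edges adjacent to the cluster, which are not in $S$ (zero gradient or opposite sign by maximality), are handled by evenness and convexity of $V_1,V_2$. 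Each flattened $f$ saves at least $V(\rho_f)\ge\rho_f V(1)$, using $V(0)=0$ and convexity, minus the possible cost on these neighbouring horizontal edges, which I expect to be dominated.

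The map must also be injective up to a factor $e^{o(V(1))L^2\sum\rho}$. Given the image and the data $(S,\rho)$ together with the roundings, the preimage is recoverable up to one rounding bit per edge, so the multiplicity is at most $2^{L^2|S|}$. This is absorbed for $\beta_0$ large.

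Verifying the vertical-energy monotonicity at boundaries between different clusters, and at edges of the cluster where only one endpoint pair is flattened, is where I expect the real work to be. This is the step I would treat most carefully, first in $d=3$ with a single vertical direction.
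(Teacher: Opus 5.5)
\textbf{Gap 1: the cluster is too small, so the $\mathfrak{e}_2$-edges are uncontrolled.} Your overall architecture is the paper's: horizontal reflection positivity, the chessboard estimate for the block $\{0,1\}^2\times\mathbb T_L^{d-2}$, a rounding Peierls map on the disseminated event, and witness counting. The problem is your choice of cluster. Your $S$ lives only in the column $\{e\}\times\mathbb T_L^{d-2}$. The horizontal reflections send $e=\{(0,0),(1,0)\}$ only to $\mathfrak{e}_1$-edges with \emph{even} $x_2$-coordinate. So on $\bigcap_\tau\tau\mathcal E_{e,S,\rho}$ your map moves heights in the even rows and leaves the odd rows alone, and nothing controls the $\mathfrak{e}_2$-edges joining them.

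Your claim that the neighbouring horizontal edges carry zero or opposite-sign gradients ``by maximality'' is false. Maximality of $S$ only concerns vertical adjacency inside the column. Here is a concrete failure:
\begin{itemize}
\item Take $S=\{e\}$, $\rho_e=1$, and $\phi(x_1,x_2,\vec 0)=\mathbf{1}[x_1\text{ odd}]$ in every row.
\item Make the other layers horizontally constant, chosen so the vertical energy is unaffected.
\item This configuration lies in the disseminated event.
\item Flattening the even rows saves $\frac{L^2}{2}V_1(1)$ on $\mathfrak{e}_1$-edges but costs $\frac{L^2}{2}V_2(1)$ on $\mathfrak{e}_2$-edges, whichever rounding you use.
\end{itemize}
That is no gain for SOS, and a net loss if $V_2(1)>V_1(1)$, which the hypotheses allow.

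The paper avoids this by taking the signed gradient cluster among the $\mathfrak{e}_1$-edges of \emph{both} slices, $\{e,e+\mathfrak{e}_2\}\times\mathbb T_L^{d-2}$, with adjacency through $\mathfrak{e}_2$ as well as the vertical directions. On the disseminated event every row is then flattened where it carries same-sign defects. Moreover, every $\mathfrak{e}_2$-pair inside a prism with exactly one edge in the cluster has its other edge of opposite sign or zero. That is exactly the situation covered by \eqref{eq:convexity-inequality}.

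\textbf{Gap 2: the rounding needs one global choice, fixed by the event.} A pairwise exchange inequality cannot settle the vertical step. A single half-integer edge sits in several perpendicular pairs, and these may prefer opposite roundings, so per-edge choices do not work. The paper proceeds in two stages.
\begin{itemize}
\item First it sets each cluster edge to $(\lfloor\bar\phi_f\rfloor,\lceil\bar\phi_f\rceil)$. This preserves all averages, so \eqref{eq:key-convexity-inequality-applied} applies pair by pair.
\item Then it makes \emph{one} global choice: lower all right endpoints of the remaining gradient-$1$ edges, or raise all left endpoints. For every perpendicular pair, the mean of the two options' energies is at most the intermediate energy; this uses that the other edge of the pair has gradient $\le 0$. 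Hence the better option does not increase the total.
\end{itemize}
This bit, together with the gradient values, is recorded in the witness and is therefore fixed on the event. The displacement at each vertex is then a deterministic function of the event data. Without this, the disseminated map $\bar\Phi$ is not even well defined at vertices shared by adjacent reflected prisms. For example, $(1,0,\vec v)$ is an endpoint of both $e$ and its mirror image across $x_1=1$. A rounding rule driven by the local ``sign pattern of vertical neighbours'' does not guarantee this consistency.
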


To preview, in Section~\ref{sec:limits}, Theorem~\ref{thm:1-2-direction-rigidity} will be combined with a Peierls argument in the two-dimensional $\text{Span}\{\mathfrak{e}_1,\mathfrak{e}_2\}$, to establish that in every layer $\mathbb T_L^2 \times \{v\}$ for $v\in \mathbb T_{L}^{d-2}$, there is a dominant height phase (most vertices in that slice are at the same height, and the set of vertices not at that height behave like a sub-critical percolation process).

Our proof of Theorem~\ref{thm:1-2-direction-rigidity} will use the  \emph{reflection positivity} of the sloped-periodic gradient measure in the $\mathfrak{e}_1,\mathfrak{e}_2$-directions where the slope is zero and the interaction is even. We refer to the $\mathfrak{e}_1,\mathfrak{e}_2$ directions as the ``horizontal" directions, and $\mathfrak{e}_3,...,\mathfrak{e}_d$ as vertical. We describe the reflection positivity framework in more detail in Subsection~\ref{subsec:reflection-positivity-primer}, but its upshot will be that the work in the proof of Theorem~\ref{thm:1-2-direction-rigidity} is bounding the probability of height configurations where the event $\{\nabla_{\mathfrak{e}_1}\phi =k\}$ has been reflected horizontally across the torus.

\subsection{Peierls-type maps within prisms}

Our aim in this section is to define an appropriate event, which contains the event in Theorem~\ref{thm:1-2-direction-rigidity} and that when reflected horizontally, the probability of the reflected event can be bounded. Let us begin by introducing some geometric notions. 

\begin{defn}[Prisms and slices]
We begin by defining a base domain, which is the $2\times 2$ cell $\{0,1\}^2$ cross product with the $d-2$ other dimensions, called the \emph{base prism}: $\mathcal P = \{0,1\}^2 \times \mathbb T^{d-2}_L$. This prism is naturally embedded in $\mathbb T^d_L$. 

We refer to $\mathcal S_0 = \{(0,0),(1,0)\}\times \mathbb T^{d-2}$ or $\mathcal S_1 = \{(0,1),(1,1)\}\times \mathbb T^{d-2}$ as the two $\mathfrak{e}_1$-oriented slices of $\mathcal P$. 
\end{defn}

For a configuration $\sigma$ on $\mathcal P$, let $E(\mathcal P)$ be the set of edges fully internal to $\mathcal P$, and denote by $H_{\mathcal P}$ the sub-sum of the Hamiltonian in~\eqref{eq:extend-torus-configuration}--\eqref{eq:torus-measure-with-slope} where only contributions from edges in $E(\mathcal P)$ are counted. 

For the set of $\mathfrak{e}_1$-oriented edges, $E_1(\mathcal P)$, define the adjacency structure where two edges $e,e'\in E_1(\mathcal P)$ are adjacent if their midpoints are at distance $1$ from one another---equivalently, we are identifying $E_1(\mathcal P)$ via its midpoints, located at $\{\frac{1}{2}\}\times \{0,1\}\times \mathbb T^{d-2}_L$, with the graph $\{0,1\}\times \mathbb T^{d-2}_L$ with its natural notion of adjacency. 
Let $o= (0,0,...,0)$ be the origin vertex. 

\begin{defn}[Signed gradient cluster]\label{def:signed-gradient-cluster}
    For a configuration $\phi$ with $\nabla_{\{o,\mathfrak{e}_1\}}\phi \ne 0$, define the signed gradient cluster $\mathcal C_{\{o,\mathfrak{e}_1\}}(\phi)$ as the maximal $E_1(\mathcal P)$-connected component of 
    \begin{align*}
        \{ f\in E_1(\mathcal P): \text{sgn}( \nabla_{f}\phi) = \text{sgn}(\nabla_{\{o,\mathfrak{e}_1\}}\phi)\}
    \end{align*}
    (The sign of zero is taken to be zero to distinguish it.)
\end{defn}

Our main step to establishing Theorem~\ref{thm:1-2-direction-rigidity} is the following lemma showing the existence of a Peierls-type map on configurations on $\mathcal P$ that flattens a non-zero gradient on an $\mathfrak{e}_1,\mathfrak{e}_2$-oriented edge $e$ (along with all the  gradients in its signed gradient cluster). By translation invariance and rotation symmetry, it suffices to consider the specific choice $e = \{o,\mathfrak{e}_1\}$.

\begin{lem}\label{lem:Peierls-criteria-within-prism}
    There
    exists a map $\Phi$ between configurations in $\mathcal P$, and  a constant $C(d)>0$ such that for any $\mathbf{V}$ satisfying~\eqref{eq:potential assumption}, 
    \begin{itemize}
        \item For any $\phi$ with $\nabla_{e}\phi\ne 0$, one has 
        \begin{align*}
            H_{\mathcal P}(\phi) - H_{\mathcal P}(\Phi(\phi)) \ge \sum_{f\in E_1(\mathcal C_e)} V_1(\nabla_f\phi) \ge V_1(1) \cdot |\mathcal C_e(\phi)|\,.
        \end{align*}
        \item For any $k\ge 1$, the number of pre-images of a $\phi'\in \text{Im}(\Phi)$, satisfies 
        \begin{align*}
            \{\phi: \Phi(\phi) = \phi'\,,\, H_{\mathcal P}(\phi) - H_{\mathcal P}(\phi') \le V_1(1) \cdot k\} \le C^k\,.
        \end{align*}
    \end{itemize}
\end{lem}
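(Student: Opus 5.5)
The plan is to build $\Phi$ by \emph{flattening} every edge of the signed gradient cluster. Fix $\phi$ with $\nabla_e\phi\neq 0$, with $e=\{o,\mathfrak{e}_1\}$, and assume without loss of generality that $\nabla_e\phi>0$; the negative case is symmetric via the evenness of $V_1$. Every $f\in\mathcal C_e(\phi)$ is an $\mathfrak{e}_1$-oriented edge $f=\{u_f,v_f\}$ with $v_f=u_f+\mathfrak{e}_1$ and $\nabla_f\phi=\phi_{v_f}-\phi_{u_f}\ge 1$. I would set
\[
\Phi(\phi)_{u_f}=\Phi(\phi)_{v_f}=m_f:=\Big\lfloor \tfrac{\phi_{u_f}+\phi_{v_f}}{2}\Big\rfloor ,
\]
and leave every vertex of $\mathcal P$ not incident to $\mathcal C_e$ unchanged. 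Distinct edges of $E_1(\mathcal P)$ share no vertex, so this is well defined. I fix the floor rule for concreteness; if the boundary estimate below fails for the floor, I would switch to the paper's ``judicious'' rounding, i.e.\ choose floor or ceiling per edge so that the boundary terms below do not increase. By construction every flattened edge gains exactly $V_1(\nabla_f\phi)$. Because $V_1$ is convex, even and has minimum $V_1(0)=0$, we get $V_1(g)\ge |g|\,V_1(1)\ge V_1(1)$ for $g\neq0$. So the first bullet reduces to showing that no other edge of $E(\mathcal P)$ increases its energy.

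The remaining edges of $E(\mathcal P)$ fall into pairs. Each $\mathfrak{e}_2$-oriented edge and each vertical $\mathfrak{e}_j$-oriented edge ($j\ge3$) joins a vertex of one $\mathfrak{e}_1$-edge $f$ to the corresponding vertex of an adjacent $\mathfrak{e}_1$-edge $f'$, in the adjacency on $\{0,1\}\times\mathbb T^{d-2}_L$. These edges come in pairs $(u_f u_{f'},\,v_fv_{f'})$ carrying the same interaction $V$; the sloped-periodic shift is the same constant for both edges of a pair and can be absorbed. Write $D_u=\phi_{u_{f'}}-\phi_{u_f}$ and $D_v=\phi_{v_{f'}}-\phi_{v_f}$. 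I split into three cases.

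\emph{Case (i): $f,f'\in\mathcal C_e$.} The new pair of gradients is $(k,k)$ with $k=m_{f'}-m_f$. Since $m_{f'}-m_f$ equals $\lfloor a'\rfloor-\lfloor a\rfloor$ with $a'-a=(D_u+D_v)/2$, it is the floor or the ceiling of $(D_u+D_v)/2$. Discrete convexity then gives $2V(k)\le V(D_u)+V(D_v)$.

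\emph{Case (ii): $f\in\mathcal C_e$ and $f'\notin\mathcal C_e$.} Maximality of the cluster forces $\nabla_{f'}\phi\le 0$, so with $x=\phi_{u_{f'}}$ and $y=\phi_{v_{f'}}$ we have $y\le x$ while $\phi_{u_f}<\phi_{v_f}$. The old pair is $(x-\phi_{u_f},\,y-\phi_{v_f})$ and the new pair is $(x-m_f,\,y-m_f)$. The new pair is nested inside the old one: $x-\phi_{u_f}\ge x-m_f\ge y-m_f\ge y-\phi_{v_f}$. The two sums differ by at most $1$ (by $0$ or $1$, since $m_f$ is within $1/2$ of the midpoint). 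I would check, using discrete convexity in the form of majorization for integer pairs, that $V(x-m_f)+V(y-m_f)\le V(x-\phi_{u_f})+V(y-\phi_{v_f})$. When $x=y$ this is exactly the floor/ceiling-of-midpoint inequality from case (i). When $x>y$ the old spread exceeds the new spread by $\nabla_f\phi\ge1$, which should absorb the rounding. This verification, and in particular making the rounding choice globally consistent with case (i) if floor does not suffice, is the step I expect to be the main obstacle. That is where convexity of the vertical $V_j$ for $j\ge3$ has to be used carefully, since those interactions are not even.

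\emph{Case (iii): neither $f$ nor $f'$ is in $\mathcal C_e$.} These pairs are untouched.

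For the second bullet, fix $\phi'$ and a preimage $\phi$ with $H_{\mathcal P}(\phi)-H_{\mathcal P}(\phi')\le kV_1(1)$. By the first bullet, $n:=|\mathcal C_e(\phi)|\le k$ and $\sum_{f\in\mathcal C_e}|\nabla_f\phi|\le k$. Now $\phi$ is determined by three pieces of data: the cluster $\mathcal C_e$, which is a connected set of size $n$ containing $e$ in a graph of degree at most $2d-3$, so there are at most $C_1^n$ choices; the sign of $\nabla_e\phi$, giving $2$ choices; and the integers $g_f=\nabla_f\phi\ge1$ for $f\in\mathcal C_e$ with $\sum g_f\le k$, giving at most $2^{k}$ choices. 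Indeed, given $m_f=\phi'_{u_f}$ and $g_f$, the floor rule recovers $\phi_{u_f}=m_f-\lfloor g_f/2\rfloor$ and $\phi_{v_f}=\phi_{u_f}+g_f$ (with an extra bit per edge if the judicious rule is used). All other heights agree with $\phi'$. Summing over $n\le k$ gives at most $C^k$ preimages.
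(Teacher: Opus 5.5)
Your proposal has a genuine gap. The map you actually define, which sets both endpoints of every $f\in\mathcal C_e$ to $\lfloor\bar\phi_f\rfloor$, violates the first bullet, and the suggested fallback does not repair it.

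\emph{A counterexample.} Take $\mathcal C_e=\{e\}$ with $\phi_o=0$, $\phi_{\mathfrak{e}_1}=1$, and all other vertices of $\mathcal P$ at height $1$. The floor rule lowers $\phi_{\mathfrak{e}_1}$ to $0$. The $2d-3$ perpendicular edges at $\mathfrak{e}_1$ then go from gradient $0$ to $\pm1$, while those at $o$ are unchanged. For $V_j=\beta|\cdot|$, $1\le j\le d$, this gives $H_{\mathcal P}(\phi)-H_{\mathcal P}(\Phi(\phi))=-2(d-2)\beta<0$. The ceiling would have worked here.

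\emph{Where your argument fails.} The error is already in your Case (i). If $\nabla_f\phi$ and $\nabla_{f'}\phi$ have different parities, then $D_u+D_v$ is odd, and convexity only yields $V(\lfloor\tfrac{D_u+D_v}{2}\rfloor)+V(\lceil\tfrac{D_u+D_v}{2}\rceil)\le V(D_u)+V(D_v)$. This controls the \emph{average} of the two roundings, not a prescribed one. For instance, with $f$ at heights $(0,2)$ and $f'$ at $(0,1)$ you get $D_u=0$, $D_v=-1$, and the floor rule produces $2V_j(-1)>V_j(0)+V_j(-1)$ whenever $V_j(-1)>V_j(0)$. The same failure occurs in Case (ii) when $\nabla_f\phi$ is odd, since the new sum exceeds the old by one; the ``extra spread'' does not absorb this.

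\emph{Why per-edge rounding does not help.} Two aligned odd edges ($D_u=D_v=0$) rounded in opposite directions create gradients $\pm1$ on both connecting edges. Moreover, different neighbours of a single odd edge can prefer opposite roundings. For example, take $f'=(0,1)$ between even edges $(-1,1)$ and $(0,2)$: either rounding of $f'$ strictly increases one of the two pairs when $V(1)>V(0)$. So no choice of roundings makes every pair individually non-increasing.

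\emph{The missing idea.} You should prove non-increase only for the \emph{total} perpendicular energy, using one coherent global choice and an averaging argument. This is what the paper does.
\begin{itemize}
\item Stage 1 sends $f$ to $(\lfloor\bar\phi_f\rfloor,\lceil\bar\phi_f\rceil)$. This preserves every midpoint and satisfies $|\Delta'_f-\Delta'_{f'}|\le|\Delta_f-\Delta_{f'}|$; at the boundary this holds because $\Delta_{f'}\le 0\le\Delta'_f\le\Delta_f$. Hence \eqref{eq:convexity-inequality} makes Stage 1 pairwise non-increasing.
\item Stage 2 either lowers the right endpoint of \emph{all} remaining gradient-$1$ edges, or raises the left endpoint of all of them. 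Your floor map is exactly the first option composed with Stage 1, in the sense of Definition~\ref{def:single-edge-map}.
\item The average of the two outcomes has perpendicular energy at most that after Stage 1. Normalize the Stage-1 heights to $(0,1)$ on $f$ and $(A,A+2\Delta)$ on $f'\notin\mathcal C_e$. Then each boundary pair satisfies $V(A+2\Delta)+V(A-1)\le V(A)+V(A+2\Delta-1)$, since $2\Delta\le 0$. Internal pairs of equal parity are unchanged. Internal mixed-parity pairs give exactly the Stage-1 energy on average. (The paper's text calls these internal pairs trivial, but they are only controlled on average, which is all that is needed.)
\end{itemize}
Hence one of the two global options works. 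This costs a single extra bit in the witness, and with that bit your counting argument goes through unchanged.
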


    The construction of the map will go in two phases. By symmetry, it suffices to define the map for the case where $\nabla_{e} \phi>0$. The first step will flatten the gradients in $\mathcal C_e(\phi)$ to lie in $\{0,1\}$, and the second step will send the remaining $1$ gradients in the cluster to $0$. Let us define each of these stages of the map before analyzing it. 

    \begin{defn}\label{def:single-edge-map}
        For a configuration $\phi$ with $\nabla_e \phi >0$, 
    \begin{itemize}  
        \item \emph{Stage 1}: for all $\mathfrak{e}_1$-oriented edges $f= \{w,w+\mathfrak{e}_1\}$ internal to $\mathcal C_e(\phi)$, in $\Phi_1(\phi)$, set 
        $$ \Phi_1(\phi)_w   = \lfloor \bar \phi_f\rfloor  \qquad\text{and} \qquad \Phi_1(\phi)_{w+\mathfrak{e}_1} = \lceil \bar \phi_f\rceil \quad \text{where} \quad \bar \phi_f = \frac{1}{2} (\phi_w + \phi_{w+\mathfrak{e}_1})$$
        (Note that this changes the gradient to zero if $\bar \phi_f \in \Z$ and to $1$ otherwise.)
        \item \emph{Stage 2}: In the second stage, there are two options for the operation to perform: 
        \begin{itemize}
            \item $\Phi^{2,-}$ takes all $\mathfrak{e}_1$-oriented edges internal to $\mathcal C_e(\phi)$ with gradient exactly equal to $1$ and lowers the height on all right-vertices by one;
            \item $\Phi^{2,+}$ takes all such edges and raises the height on all left-vertices by one.
        \end{itemize} 
        Let $\Phi_2$ be the choice among $\{\Phi^{2,-},\Phi^{2,+}\}$ which lowers the energy more (and if there is a tie, pick arbitrarily).  
    \end{itemize}
    Let $\Phi = \Phi_2 \circ\Phi_1$ be the composition of the two operations.  
    \end{defn}

    We will establish that the map $\Phi$ constructed in Definition~\ref{def:single-edge-map} satisfies Lemma~\ref{lem:Peierls-criteria-within-prism}. 

\smallskip

\begin{proof}[\textbf{\emph{Energy change}}]
Fix $e = \{0,\mathfrak{e}_1\}$, configuration $\phi$ having positive gradient on $e$, and let $\mathcal C_e = \mathcal C_e(\phi)$ be as in Definition~\ref{def:signed-gradient-cluster}. Let $\mathcal E_{1}^{\ge 2}$ be those $\mathfrak{e}_1$-oriented edges internal to $\mathcal C_e$ whose gradient is at least $2$. Let $\mathcal E_{\perp}$ be those edges of $\mathcal P$ that are in directions $\mathfrak{e}_2,...,\mathfrak{e}_d$. These split either into those in the $\mathfrak{e}_2$-direction, or they are in the $(\mathfrak{e}_i)_{i\ge 3}$ directions in which case they can be decomposed according to whether their first two coordinates are $(0,0)$ at both endpoints, or both $(0,1),(1,0)$ or $(1,1)$. We compute the change in energy across these different types of edges, in stage 1 and stage 2 sequentially. 

\medskip
\noindent \textbf{Interactions along $\mathfrak{e}_1$-oriented edges}

\medskip
\noindent    \emph{Analysis of Stage 1}: 
For all $\mathfrak{e}_1$ oriented edges $f \in E_1(\mathcal C_e)$, if the average $\bar \phi_f$  was an integer, its gradient became~$0$. Since $f$ was internal to the signed gradient cluster, prior to application of Stage 1, $\nabla_f \phi>0$. The contribution to $H_{\mathcal P}(\phi) - H_{\mathcal P}(\Phi_1(\phi))$ from edges in $E_1(\mathcal C_e)$ with even gradient is  
\begin{align*}
    \sum_{f\in E_1(\mathcal C_e): \nabla_f \phi \in 2\mathbb N} V_1(\nabla_f \phi)\,.
\end{align*}
For $\mathfrak{e}_1$-oriented edges where the gradient along them was odd, in stage $1$, $\Phi_1$ decreases them to have gradient exactly $1$, and therefore the energy change from them is exactly  
\begin{align*}
    \sum_{f\in E_1(\mathcal C_e): \nabla_f\phi \in 2\mathbb N +1} \big(V_1(\nabla_f\phi) - V_1(1)\big)\,.
\end{align*}
Finally there is no contribution from $\mathfrak{e}_1$-oriented edges not in $E_1(\mathcal C_e)$, nor those in $E_1(\mathcal C_e)$ that have exactly gradient $1$ as they do not get changed by $\Phi_1$.

\medskip
\noindent  \emph{Analysis of Stage 2}: In this stage, none of the edges $f\in E_1(\mathcal C_e)$ whose value was initially even are changed, as their gradient was already made zero by $\Phi_1$. Those edges in $E_1(\mathcal C_e)$ whose value was initially odd all have their gradients changed from $1$ to $0$ by $\Phi_2$. 

\medskip
Combining the above analyses, we get that the total contribution from $\mathfrak{e}_1$-oriented edges in $E(\mathcal P)$ from application of the map $\Phi$ satisfies 
\begin{align}
    \sum_{f\in E_1(\mathcal P)} V_1(\nabla_f \phi) - V_1(\nabla_f \Phi(\phi)) & = \sum_{f\in E_1(\mathcal C_e)} V_1(\nabla_f\phi) \nonumber \\
    & \ge V_1(1) \sum_{f\in E_1(\mathcal C_e)} |\nabla_f\phi| \ge V_1(1) \cdot|E_1(\mathcal C_e(\phi))|\label{eq:e_1-oriented-energy-contribution}
\end{align}
(Here, the equality used $V(0)= 0$, the first inequality follows by evenness and convexity, and the last inequality by the fact that $|\nabla_f\phi|\ge 1$ for all $f\in E_1(\mathcal C_e)$ by definition of the signed gradient cluster.)

\medskip
\noindent 
\textbf{Interactions along $\mathfrak{e}_2,...,\mathfrak{e}_d$ edges}

Recall $E_{\perp}(\mathcal C_e)$ denotes the set of non-$\mathfrak{e}_1$-oriented edges in the prism $\mathcal P$, i.e., those that are in the direction of the basis vectors $\mathfrak{e}_j$ for $j\in \{2,...,d\}$. We call such edges \emph{perpendicular}. 

We begin with an important inequality that we will use repeatedly, using the convexity of the interactions: if $V$ is a convex function (extended piecewise linearly from the integers to the reals in its support) and $|b_1|\le |b_2|$, for any $a$, 
\begin{align}\label{eq:convexity-inequality}
    V(a +b_1) + V(a-b_1) \le V(a+b_2) + V(a- b_2)\,.
\end{align}
To see~\eqref{eq:convexity-inequality}, note that since convexity of $V$ requires that $\{k:V(k)<\infty\}$ be an interval, the inclusion $[a-|b_1|,a+|b_1|]\subset [a-|b_2|,a+|b_2|]$ means the left-hand side is finite whenever the right-hand side is finite so the inequality holds when one of the values is $\infty$. Otherwise, if all of $a\pm b_1,a\pm b_2$ are in $\{k: V(k)<\infty\}$, then it follows by writing $a+|b_1| = \lambda (a-|b_2|) + (1-\lambda) (a+|b_2|)$ and similarly $a-|b_1| = (1-\lambda)(a-|b_2|) + \lambda (a+|b_2|)$ for a $\lambda\in [0,1]$, then using the standard definition of convexity. 

The key role played by~\eqref{eq:convexity-inequality} will be when analyzing the energy change from $\Phi$ shifting heights on a pair of parallel edges. To describe that, let $f,g$ be a pair of parallel $\mathfrak{e}_1$ oriented edges with midpoints at distance one, i.e., $g= f+ \mathfrak{e}_j$ for some $j\in \{2,...,d\}$. Let $\Delta_f$ be half the gradient along the edge $f$, so that the height at the left endpoint of $f$ is $\bar \phi_f - \Delta_f$ and the height at the right endpoint is $\bar \phi_f +  \Delta_f$.  Suppose that $\Phi$ sends these values to $\bar \phi_f - \Delta_f'$ and $\bar \phi_f + \Delta_f'$ respectively, and similarly for $g$. 

Then the new energy along the pair of $\mathfrak{e}_j$-oriented edges connecting endpoints of $f$ and $g$ is
\begin{align}\label{eq:key-convexity-inequality-applied}
    &V_j((\bar \phi_f - \Delta_f') - (\bar \phi_g - \Delta_g')) + V_j((\bar \phi_f + \Delta_f') - (\bar \phi_g + \Delta_g')) \\
    & \qquad \qquad \qquad \le V_j((\bar \phi_f -  \Delta_f) - (\bar \phi_g - \Delta_g)) + V_j((\bar \phi_f +\Delta_f) - (\bar \phi_g + \Delta_g)) \nonumber
\end{align}
where the inequality follows from~\eqref{eq:convexity-inequality} by setting \begin{align}\label{eq:choices-of-a-b1-b2}
a = \bar \phi_f - \bar \phi_g \qquad b_1 = \Delta_g' - \Delta_f' \qquad b_2 =  \Delta_g -\Delta_f\end{align} as long as $|b_1|\le |b_2|$

\medskip
\noindent \emph{Analysis of Stage 1}: 
We first consider those perpendicular edges that are fully interior to $\mathcal C_e$, i.e., for a $w$ having first coordinate $0$ they are of the form $\{w,w'\}$  or $\{w+ \mathfrak{e}_1,w'+ \mathfrak{e}_1\}$, for $w' = w+\mathfrak{e}_j$. 

In these cases, let $f\in E_1(\mathcal C_e)$ be the edge $\{w,w+\mathfrak{e}_1\}$ and $f+\mathfrak{e}_j \in E_1 (\mathcal C_e)$. Then in $\phi$, the energy contribution coming from the pair of perpendicular edges $\{w,w+\mathfrak{e}_j\}$ and $\{w+\mathfrak{e}_1,w+\mathfrak{e}_1 +\mathfrak{e}_j\}$ is given by  
\begin{align*}
    V_j((\bar\phi_{f+\mathfrak{e}_j} + \Delta_{f+\mathfrak{e}_j}) - (\bar\phi_f + \Delta_f)  ) + V_j((\bar\phi_{f+ \mathfrak{e}_j} - \Delta_{f+\mathfrak{e}_j}) - (\bar\phi_f - \Delta_f) )\,.
\end{align*}
(Note in the event that the perpendicular edge under consideration is the one ``wrapping around" the torus in its fundamental domain, then by~\eqref{eq:extend-torus-configuration} in the evaluation of these energies, $\bar \phi_{f+\mathfrak{e}_j} \mapsto \bar \phi_{f+\mathfrak{e}_j} + s_j L$, but the gradients are unchanged and the task below remains the same.) 
After an application of $\Phi_1$, the four energy values change into $\bar \phi_{f + \mathfrak{e_j}} - \Delta_{f+\mathfrak{e}_j}', \bar \phi_f  - \Delta_f'$ and $\bar \phi_{f + \mathfrak{e}_j} + \Delta_{f+\mathfrak{e}_j}', \bar \phi_f + \Delta_{f}'$. We claim that $|\Delta_f'- \Delta_{f+\mathfrak{e}_j}'|\le |\Delta_f - \Delta_{f+\mathfrak{e}_j}|$ to apply~\eqref{eq:key-convexity-inequality-applied}. 
Indeed, there are a few cases to consider: 
\begin{itemize}
    \item If both gradients were initially even, then after $\Phi_1$, $\Delta_{f}',\Delta_{f+\mathfrak{e}_j}' =0$ so $|\Delta_f'- \Delta_{f+\mathfrak{e}_j}'|=0$. 
    \item If both gradients were initially odd, then they were both reduced to $1$ by $\Phi_1$, so $\Delta_{f}' = \Delta_{f+\mathfrak{e}_j}' = \frac{1}{2}$ and $|\Delta_f'- \Delta_{f+\mathfrak{e}_j}'|=0$. 
    \item If the gradients had different parities, then one of them was reduced to $0$ while the other was reduced to $1$, so they have $|\Delta_f' - \Delta_{f+\mathfrak{e}_j}'| =\frac{1}{2}$ but because the gradients had different parities, their initial difference satisfies $|\Delta_f - \Delta_{f+\mathfrak{e}_j}| \ge \frac{1}{2}$. 
\end{itemize}
In all of these cases, $|\Delta_f'- \Delta_{f+\mathfrak{e}_j}'|\le |\Delta_f - \Delta_{f+\mathfrak{e}_j}|$ holds and we can apply the convexity consequence of~\eqref{eq:key-convexity-inequality-applied}. 
Therefore, the contribution to the energy change from $\Phi_1$ along $\mathfrak{e}_j$-oriented edges for $j\in \{2,...,d\}$ where the endpoints are both in $\mathcal C_e$ is that it does not increase the energy. 

By a similar reasoning, we can consider the contributions of perpendicular edges along the boundary of $\mathcal C_e$, meaning that one endpoint is in $\mathcal C_e$ while the other is not. For these, initially, $\Delta_f>0$ because $f\in E_1(\mathcal C_e)$ while $\Delta_{f+\mathfrak{e}_j}\le 0$ as $f+ \mathfrak{e}_j \notin E_1 (\mathcal C_e)$. Then since after application of $\Phi_1$, the gradient $\Delta_f'$ is only less than or equal to $\Delta_f$ and remains non-negative, while the external gradient $\Delta_{f+\mathfrak{e}_j}'$ remains unchanged by $\Phi_1$, evidently $|\Delta_f' - \Delta_{f+\mathfrak{e}_j}'|\le |\Delta_f -\Delta_{f+\mathfrak{e}_j}|$.  

(Finally, if both $f$ and $f+\mathfrak{e}_j$ are outside $\mathcal C_e$ then the gradients along both are unchanged by $\Phi_1$.)
In total from these considerations we see that 
\begin{align}\label{eq:stage-1-perpendicular-edges}
    & \sum_{f\in E_\perp(\mathcal C_e)} V_f(\nabla_f \phi) - V_f(\nabla_f \Phi_1( \phi)) \nonumber \\ & \qquad =  \sum_{j=2}^{d}\sum_{\{w,w+\mathfrak{e}_1\} \in E_1(\mathcal C_e): w_1 =0}  \Big[V_j(\nabla_{\{w,w+\mathfrak{e}_j\}} \phi) + V_j(\nabla_{\{w+ \mathfrak{e}_1, w+ \mathfrak{e}_1+\mathfrak{e}_j\}} \phi) \nonumber \\ & \qquad \qquad \qquad \qquad \qquad \qquad \qquad  -   V_j(\nabla_{\{w,w+\mathfrak{e}_j\}} \Phi_1(\phi)) - V_j(\nabla_{\{w+ \mathfrak{e}_1, w+ \mathfrak{e}_1+\mathfrak{e}_j\}} \Phi_1(\phi)) \Big] \nonumber \\ & \qquad\ge 0\,.
\end{align}

\medskip
\noindent \emph{Analysis of Stage 2}: The remaining step is to argue that one of the two stage-2 options $\Phi^{2,-}$ or $\Phi^{2,+}$ for flattening the odd gradients in $\mathcal C_e$ to $0$ is such that it only decreases the energy contribution from perpendicular edges. We only have to consider the perpendicular edges that are on the boundary of $\mathcal C_e$ because all $\mathfrak{e}_1$-oriented edge gradients in $E_1(\mathcal C_e)$ will become $0$ after application of $\Phi_2$, and so if $f,f+\mathfrak{e}_j$ are internal edges to $\mathcal C_e$, then in applying~\eqref{eq:key-convexity-inequality-applied}, the quantities $\Delta'_f,\Delta_{f+\mathfrak{e}_j}'$ will both be $0$ and trivially $|\Delta_f' - \Delta_{f+\mathfrak{e}_j}'| \le |\Delta_f - \Delta_{f + \mathfrak{e}_j}|$. 

Let $\tilde\phi = \Phi_1(\phi)$ be the output of stage 1 and consider a pair of edges $f= \{w,w+\mathfrak{e}_1\}$ and $f+\mathfrak{e}_j$ with $f\in E_1(\mathcal C_e)$ but $f+\mathfrak{e}_j \notin E_1(\mathcal C_e)$ (the reverse case is symmetrical). If $\nabla_{f}\tilde \phi=0$, then all four endpoints of $f$ and $f+\mathfrak{e}_j$ will be unchanged by $\Phi_2$ so there is no change in energy along the edges $\{w,w+\mathfrak{e}_j\}$ and $\{w+\mathfrak{e}_1,w+\mathfrak{e}_1 + \mathfrak{e}_j\}$. 
Now assume that $\nabla_f \tilde \phi =1$  while $\nabla_{f+\mathfrak{e}_j} \phi = \nabla_{f+\mathfrak{e}_j} \tilde \phi \le 0$. By translating, we can suppose that $\tilde \phi_w =0, \tilde \phi_{w+\mathfrak{e}_1} = 1$, and let $\tilde \phi_{w+\mathfrak{e}_j}=A$ and $\tilde \phi_{w+\mathfrak{e}_1 + \mathfrak{e}_j} = A+2\Delta:=A+ \nabla_{f+\mathfrak{e}_j} \tilde \phi$. Under $\tilde \phi$, the energy in the pair of edges $\{w,w+\mathfrak{e}_j\}$ and $\{w+\mathfrak{e}_1, w+\mathfrak{e}_1 + \mathfrak{e}_j\}$ is 
\begin{align*}
    V_j(A) + V_j(A + 2\Delta -1)
\end{align*}
Under application of $\Phi^{2,-}$ it becomes 
\begin{align*}
    V_j(A) + V_j(A+2\Delta)
\end{align*}
while under application of $\Phi^{2,+}$ it becomes 
\begin{align*}
    V_j(A- 1) + V_j(A+2\Delta -1)\,.
\end{align*}
Taking the average of the energies after $\Phi^{2,-}$ and $\Phi^{2,+}$, we compare 
\begin{align*}
    V_j(A) + V_j(A+2\Delta-1) \qquad \text{to} \qquad \frac{1}{2} \big( V_j(A) + V_j(A+2\Delta) + V_j(A -1) + V_j(A+2\Delta-1)\big)
\end{align*}
By~\eqref{eq:convexity-inequality} with $a = A + \Delta - \frac{1}{2}$ and $b_2 = \Delta - \frac{1}{2}$ and $b_1 = \Delta + \frac{1}{2}$, the left is bigger than or equal to the right as long as $|2\Delta +1|\le |2\Delta-1|$, which holds true because $\Delta \le 0$.

If we sum the resulting inequality over all perpendicular boundary edges of $\mathcal C_e$, which we denote by $\partial_{\perp}\mathcal C$,  
\begin{align*}
    & \sum_{f\in \partial_{\perp} \mathcal C} V_j(A_f) + V_j(A_f+2\Delta_f -1) \\
    &  \qquad \qquad \qquad \ge \frac{1}{2} \Big(\sum  [V_j(A_f) + V_j(A_f+2\Delta_f)] +\sum [V_j(A_f-1) + V_j(A_f+2\Delta_f-1)] \Big)\,.
\end{align*}
Since the first sum on the right-hand side corresponds to the total energy from edges in $\partial_\perp \mathcal C$ after application of $\Phi^{2,-}$ and the second sum on the right-hand side corresponds to the total energy from edges in $\partial_\perp \mathcal C$ after application of $\Phi^{2,+}$, one of these two must be less than or equal to the initial energy along these perpendicular edges. By construction, that is the choice we make for the stage-2 map $\Phi_2$. 
As a consequence, we deduce the following: 
\begin{align}\label{eq:stage-2-perpendicular-edges}
    \sum_{f\in E_\perp(\mathcal P)} V_f(\nabla_{f} \Phi_1(\phi)) - V_f(\nabla_f \Phi_2(\Phi_1(\phi))_{f}) \ge 0\,.
\end{align}
Combined with the stage-1 effect on perpendicular edges~\eqref{eq:stage-1-perpendicular-edges}, and the analysis of the $\mathfrak{e}_1$-oriented edges from~\eqref{eq:e_1-oriented-energy-contribution}, we conclude item (1) of Lemma~\ref{lem:Peierls-criteria-within-prism}.   
\end{proof}

\begin{proof}[\textbf{\emph{Proof of the map's multiplicity}}]
    For the bound on the number of pre-images of a $\phi' \in \text{Im}(\Phi)$, we will construct a ``witness" object, using which the preimage of $\phi'$ can be uniquely reconstructed. We then bound the number of such witness objects. 

   \begin{defn}\label{def:witness} Our witness $\mathcal W = (A,\epsilon, W(A), \iota)$ will consist of 
    \begin{itemize}
        \item a connected set $A \subset \{\frac{1}{2}\} \times \{0,1\}\times \mathbb T_L^{d-2}$ containing $\frac{1}{2}\times \{0\}^{d-1}$;
        \item a sign $\epsilon\in \{-,+\}$ and an assignment  $(W_f)_{f\in E_1(A)}$ of positive integers to the $\mathfrak{e}_1$-oriented edges of $\mathcal P$ with midpoints at $A$;
        \item a sign $\iota \in \{-,+\}$ indicating left or right. 
    \end{itemize} 
    For each $\phi$, the witness $\mathcal W = \mathcal W(\phi)$ is uniquely defined by taking the connected set $A$ to be the (midpoints of the $\mathfrak{e}_1$ oriented edges in) signed gradient cluster $\mathcal C_e$, the sign $\epsilon$ to be the sign of the gradient at $e$, assignments of positive integers to $A$ to be the absolute gradients along those edges, and the sign $\iota$ to be the choice of which stage 2 map was used. 
    \end{defn}

    We now describe how to uniquely reconstruct $\phi$ starting only with $\phi'$ and $\mathcal W$. Suppose $\epsilon = +$; the case where the sign of the gradient on $e$ is negative is symmetrical. 
    \begin{enumerate}
        \item For each edge $f\in E_1(A)$, if $W_f$ is odd, then if $\iota = -$, increase the height of the right endpoint of $f$ by one, and if $\iota = +$ then decrease the height of the left endpoint of $f$ by one. 
        \item Next, for each edge $f\in E_1(A)$, if $W_f$ is even, then increase the right vertex height by $W_f/2$ and decrease the left vertex height by $W_f/2$. If $W_f$ is odd, increase the right vertex height by $(W_f -1)/2$ and decrease the left vertex height by $(W_f -1)/2$.   
    \end{enumerate}
    It is easily checked that these two operations invert $\Phi_2\circ \Phi_1$. 

    Because of the lower bound of part one of Lemma~\ref{lem:Peierls-criteria-within-prism}, for every preimage $\phi \in \Phi^{-1}(\phi')$, 
    \begin{align*}
        H_{\mathcal P}(\phi) - H_{\mathcal P}(\phi') \ge \sum_{f\in E_1(A)} V_1(W_f) \ge V_1(1) \sum_{f\in E_1(A)}  W_f
    \end{align*}
    for $\mathcal W= (A, \epsilon,W(A),\iota)$  the witness associated to $\phi$. Thus, necessarily we have  $\sum_{f\in E_1(A)} W_f \le V_1(1)^{-1} (H_{\mathcal P}(\phi) - H_{\mathcal P}(\phi'))$. 

    It is therefore sufficient to bound the number of possible witnesses in the following set $$\Big\{\mathcal W= (A,\epsilon,W(A),\iota): \sum_{f\in E_1(A)} W_f \le  k \Big\}\,.$$
    There are clearly $2$ choices for $\epsilon$ and $2$ choices for $\iota$. The number of possible sets $A$ is bounded by the number of connected vertex subsets of $\{0,1\} \times \mathbb T_L^{d-2}$ containing the origin, and having size at most $k$ (as every site in $A$ must contribute at least one non-zero gradient). The number of such $A$ is bounded by $C_{d}^{k}$ by standard connective constant bounds. 

    Finally, the number of possible $W = (W_f)_{f\in A}$ is bounded by the number of integer partitions of some $M \le V_1(1)^{-1} (H_{\mathcal P}(\phi) - H_{\mathcal P}(\phi'))\le k$. There are at most $V_1(1)^{-1}(H_{\mathcal P}(\phi) - H_{\mathcal P}(\phi'))$ many choices for $M$, and then having picked $M$, there are at most $2^{2M}$ many ways to partition $M$ into integers $m_1,...,m_{|A|}$ with $\sum m_i = M$ and each $m_i \ge 1$. 
\end{proof}

\subsection{Extending to multiple non-zero gradients in the slice}
We generalize the above map into the case where the event we are trying to bound is having several distinguished edges $f_1,...,f_\ell$ that are all in the same prism $\{0,1\}^2\times \T_L^{d-2}$, all having non-zero gradient. 

Consider a set of $\ell$ distinct $\mathfrak{e}_1$-oriented edges $\vec{f} = \{f_1,...,f_\ell\}$ all belonging to $E_1(\mathcal P)$. Let $\mathcal E_{\vec{f},\vec{\Delta}}$ be the event that along each of these edges, there is a non-zero gradient $(2\Delta_f)_{f\in \vec{f}}$.

\begin{defn}\label{def:Phi-multi-edges}
For any configuration $\phi$ on the prism $\mathcal{P}$ belonging to $\mathcal E_{\vec{f},\vec{\Delta}}$, let $\Phi_{\vec{f}}$ be the map that iteratively applies $\Phi_f$ of Definition~\ref{def:single-edge-map} for $f \in \vec{f}$.
(Note that if some two of the edges $f_i \ne f_j$ happened to belong to the same signed gradient cluster, then after applying the first operation $\Phi_{f_i}$, the second operation $\Phi_{f_j}$ would behave trivially as the identity.)
\end{defn}

We also construct a witness for $\Phi_{\vec{f}}$ analogously to how we defined a witness for $\Phi_f$ in Definition~\ref{def:witness} by letting $\vec{\mathcal W}$ be given by $S \subset [\ell]$, together with $(\mathcal W_{f_i})_{i\in S}$, where $S$ is the subset of $[\ell]$ such that when $\Phi_{f_i}$ is applied, it still has a gradient at $f_i$ (i.e., the gradient at $f_j$ wasn't part of the signed gradient cluster of $f_i$ for $i<j$), and for such $i\in S$, the witness $\mathcal W_{f_i}$ is constructed as per Definition~\ref{def:witness}. 

The set $\mathcal E_{\vec{f},\vec{\Delta}}$ decomposes naturally into a union over constituent events indexed by their corresponding witness: 
\begin{align}\label{eq:EfDeltaW-event}
    \mathcal E_{\vec{f},\vec{\Delta}} = \bigcup_{\vec{\mathcal W}} \mathcal E_{\vec{f},\vec{\Delta},\vec{\mathcal W}}
\end{align}
where the union is over witnesses that can correspond to $\phi \in \mathcal E_{\vec{f},\vec{\Delta}}$. 
We utilize this decomposition to define the events that will be reflected from the prism $\mathcal P$ across to the entire domain $\mathbb T_L^{d}$ using reflection positivity. The next lemma then follows directly from repeated application of Lemma~\ref{lem:Peierls-criteria-within-prism}.

\begin{lem}\label{lem:many-edges-and-gradients-bound}
    Fix any $\vec{f},\vec{\Delta}$ and compatible $\vec{\mathcal W}$. Let $\Phi$ be the map on configurations on $\mathcal E_{\vec{f},\vec{\Delta},\vec{\mathcal W}}$ as per Definition~\ref{def:Phi-multi-edges}. For any $\phi \in \mathcal E_{\vec{f},\vec{\Delta},\vec{\mathcal W}}$
    \begin{align*}
        H_{\mathcal P}(\phi) - H_{\mathcal P}(\Phi(\phi)) \ge \sum_{f\in E_1({\vec{\mathcal W}})} V_1(W_f) \,.
    \end{align*}
    Further, the number of witnesses $\vec{\mathcal W}$ having $\sum_{f} V_1(W_f) \le V_1(1)  k$ is at most $C^k$ for a constant $C(d)>0$. 
\end{lem}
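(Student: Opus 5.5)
The plan is to apply Lemma~\ref{lem:Peierls-criteria-within-prism} once for each index in the witness set $S$ and add up the energy drops. Fix $\vec f,\vec\Delta,\vec{\mathcal W}$ and $\phi\in\mathcal E_{\vec f,\vec\Delta,\vec{\mathcal W}}$. Set $\phi^{(0)}=\phi$ and $\phi^{(i)}=\Phi_{f_i}(\phi^{(i-1)})$, so that $\Phi_{\vec f}(\phi)=\phi^{(\ell)}$.

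\textbf{Energy bound.} Take an index $i\notin S$. At step $i$ the edge $f_i$ already has zero gradient in $\phi^{(i-1)}$, so the step is the identity and changes nothing. Now take $i\in S$. Then $\nabla_{f_i}\phi^{(i-1)}\neq 0$, and Lemma~\ref{lem:Peierls-criteria-within-prism} applies, with $e$ replaced by $f_i$ via the translation and reflection symmetry of $\mathcal P$ in the $\mathfrak e_2$-coordinate. It gives
\[
H_{\mathcal P}(\phi^{(i-1)})-H_{\mathcal P}(\phi^{(i)})\ge \sum_{g\in E_1(\mathcal C_{f_i}(\phi^{(i-1)}))}V_1(\nabla_g\phi^{(i-1)}).
\]
The witness $\mathcal W_{f_i}$ records exactly this cluster and the absolute gradients $W_g=|\nabla_g\phi^{(i-1)}|$ on it. By evenness, $V_1(\nabla_g\phi^{(i-1)})=V_1(W_g)$.

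The clusters at different steps of $S$ are disjoint. After step $i$, every edge of $\mathcal C_{f_i}$ has zero gradient, and later steps only modify edges in their own signed cluster, which consists of non-zero-gradient edges. So an edge that has been flattened is never touched again. Telescoping over $i$ therefore gives $H_{\mathcal P}(\phi)-H_{\mathcal P}(\Phi(\phi))\ge\sum_{f\in E_1(\vec{\mathcal W})}V_1(W_f)$. Here $E_1(\vec{\mathcal W})$ is the disjoint union of the sets $E_1(A_i)$, and this is the first claim.

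A subtle point is that the clusters are computed in $\phi^{(i-1)}$, not in $\phi$. This is harmless, because the witness is defined with respect to the configuration on which $\Phi_{f_i}$ actually acts.

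\textbf{Counting.} Suppose $\sum_f V_1(W_f)\le V_1(1)k$. By evenness and convexity, $V_1(W)\ge V_1(1)W$, so $\sum_{i\in S}\sum_{g\in A_i}W_g\le k$. Set $k_i:=\sum_{g\in A_i}W_g$; each $k_i\ge |A_i|\ge 1$. The choices are counted as follows.
\begin{itemize}
\item The set $S\subset[\ell]$ has at most $2^\ell$ choices. Since $|S|\le k$, this is bounded by $2^k$ as long as we restrict to $\ell\le k$, which we may do because each $f_j$ has non-zero gradient.
\item The tuple $(k_i)_{i\in S}$, a composition of some integer $M\le k$, has at most $2^{k}\cdot k$ choices.
\item For each $i\in S$, the proof of Lemma~\ref{lem:Peierls-criteria-within-prism} bounds the number of witnesses $\mathcal W_{f_i}$ with $\sum W=k_i$ by $4\cdot C_d^{k_i}\cdot 4^{k_i}$. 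This counts the two signs, the connected set $A_i\ni f_i$ via the connective constant, and the integer compositions.
\end{itemize}
Multiplying the factors and using $\prod_i 4^{1+k_i}\cdot C_d^{k_i}\le (16C_d)^{k}$ gives at most $C^k$ witnesses.

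\textbf{Main obstacle.} The expected difficulty is the bookkeeping around $S$. The sum $\sum_f V_1(W_f)$ only controls $|S|$, not $\ell$. The factor $2^\ell$ for choosing $S$ is the one place where the reduction $\ell\le k$ (legitimate because every $f_j\in\vec f$ lies in some recorded cluster, so $\ell\le\sum|A_i|\le k$) or an absorption into the reflection-positivity count is needed. The rest is a direct iteration of Lemma~\ref{lem:Peierls-criteria-within-prism}.
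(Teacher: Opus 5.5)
Your proposal is correct and follows the paper's route. The paper only says the lemma follows from repeated application of Lemma~\ref{lem:Peierls-criteria-within-prism}, and you carry this out: you iterate over the indices in $S$, telescope the energy drops over the disjoint flattened clusters, and multiply the per-cluster witness counts from that lemma's proof. You also supply a detail the paper leaves implicit: compatibility forces $\ell\le\sum_{i\in S}|A_i|\le k$, so the $2^\ell$ choices of $S$ are absorbed into $C^k$.
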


\subsection{Reflection positivity preliminaries}\label{subsec:reflection-positivity-primer}

In this section, we collect the necessary definitions and  facts about reflection positivity, and its use with chessboard estimates to establish properties of  low-temperature phases. We follow the presentation of~\cite[Section 3]{hadas2022columnar}  and refer to~\cite{BiskupReflectionPositivity-Notes} and ~\cite[Chapter 10]{friedli_velenik_2017} for more details on reflection positivity more generally.

For a rectangle $\Lambda \subset \Z^2$, consider the periodic boundary conditions and a distribution $\nu_{\Lambda}^\text{per}$ over  configurations $\sigma \in \Omega_{\Lambda}^\text{per}$ in some discrete product space. For us, this $\Lambda$ will be the fundamental domain of $\T_L^2$. 

For a rectangle $R = R_{K\times M,(x_0,y_0)}$ meaning it has side-lengths $K,M$ and bottom-left vertex $(x_0,y_0)$. Consider the set of coordinate-direction lines  through vertices of $(x_0  + K\mathbb Z,y_0+M\mathbb Z)$ and let $T^R$ be the group of reflections of $\mathbb R^2$ generated by reflections across these lines. 
For $v\in R\cap \Z^2$, let $\tau_{R,v}\in T^R$ be the unique isometry which sends $R$ to $R - (x_0,y_0)+v$. 
Finally, for a function $f$ of configurations on $R \cap \Z^2$ and a map $\tau \in T^R$, let $\tau f(\sigma) = f(\sigma\circ \tau)$ (i.e., the function evaluated on the configuration at $\tau$ applied to $R \cap \Z^2$). 

\begin{defn}[Reflection positivity]\label{def:reflection-positivity}
    We say a measure is vertex reflection positive if the following holds. For every $\tau$ that is a reflection across a coordinate line of the torus, and every function $f$ measurable with respect to the configuration on one side of the line,
    \begin{align*}
        \nu_{\Lambda}^{\text{per}}(f \cdot \tau f) \ge 0\,.
    \end{align*}
\end{defn}

An essential implication of reflection positivity is the so-called chessboard estimate of~\cite{FrohlichLieb}. 

\begin{defn}
    A subset $R\subset \Lambda$ is called \emph{a block of $\Lambda$} if twice the width of $R$ divides the width of $\Lambda$ and twice the height of $R$ divides the height of $\Lambda$. 
\end{defn}

For a block $R$, let $T_{\Lambda}^R = T^R/ L\mathbb{Z}^2$ be the subgroup of $T^R$ that correspond to distinct reflections within $\Lambda$. Its size is given by $|T_{\Lambda}^R|= |\Lambda|/|R|$. 

\begin{defn}[Chessboard seminorm]\label{defn:chessboard-seminorm}
    For an $R$-local function $f$, define the chessboard seminorm 
    \begin{align*}
        \|f\|_{R \vert \Lambda} := \nu_{\Lambda}^{\text{per}}\Big(\prod_{\tau \in T_\Lambda^R} \tau f\Big)^{1/|T_{\Lambda}^R|}\,.
    \end{align*}
    Similarly, for an event $\mathcal E$, we use $\|\mathcal E\|_{R\vert \Lambda}$ to mean $\|\mathbf 1_{\mathcal E} \|_{R\vert \Lambda}$. 
\end{defn}

\begin{lem}[Chessboard estimate]\label{lem:chessboard-estimate}
Let $R$ be a block of $\Lambda$, let $A \subset T_{\Lambda}^R$ and let $(f_{\tau})_{\tau \in A}$ be  
    $R$-local functions. Then  
    \begin{align*}
        \nu_{\Lambda}^{\text{per}}\Big(\prod_{\tau \in A} \tau f_\tau\Big) \le \prod_{\tau \in A} \|f_{\tau}\|_{R\vert \Lambda}\,.
    \end{align*}
\end{lem}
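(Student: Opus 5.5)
The chessboard estimate is proved in the standard Fr\"ohlich--Lieb way: reflection positivity gives a Cauchy--Schwarz inequality, and repeated applications of it make all factors in the product identical. We work with the sesquilinear form $\langle f,g\rangle_\tau := \nu_\Lambda^{\mathrm{per}}(f\cdot \tau g)$, where $\tau$ is a reflection across a coordinate line of the torus and $f,g$ are measurable with respect to one side of that line.

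\textbf{Step 1 (Cauchy--Schwarz).} Definition~\ref{def:reflection-positivity} says this form is positive semidefinite on functions of one half-torus. Expanding $\langle f+tg,f+tg\rangle_\tau\ge 0$ and using the symmetry $\langle f,g\rangle_\tau=\langle g,f\rangle_\tau$ (which follows from $\tau^2=\mathrm{id}$ and the $\tau$-invariance of $\nu_\Lambda^{\mathrm{per}}$), we obtain
\[
\nu_\Lambda^{\mathrm{per}}(f\cdot\tau g)^2\le \nu_\Lambda^{\mathrm{per}}(f\cdot\tau f)\,\nu_\Lambda^{\mathrm{per}}(g\cdot\tau g).
\]

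\textbf{Step 2 (reduction to one direction).} Extend $f_\tau:=1$ for $\tau\notin A$; the seminorm of the constant function is $1$, so the bound is unchanged. The reflections in $T^R_\Lambda$ are indexed by a grid of $(|\Lambda_1|/K)\times(|\Lambda_2|/M)$ translates of $R$, each carrying a function. Since the block condition $2K\mid|\Lambda_1|$ holds, the lines in $x_0+K\Z$ passing through the torus come in antipodal pairs, and each pair cuts the torus into two halves containing equal numbers of block columns. Reflecting across such a pair and applying Step 1 with $F$ equal to the product of the functions on one half bounds $\nu(\prod_\tau \tau f_\tau)^2$ by the product of two new expectations. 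In each of these, one half of the configuration has been replaced by the mirror image of the other half, so the functions in the reflected half are themselves reflections of $R$-local functions. Each new configuration is therefore again of the form $\prod_{\tau\in T^R_\Lambda}\tau g_\tau$.

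\textbf{Step 3 (iteration).} We follow the standard argument in~\cite{BiskupReflectionPositivity-Notes} and~\cite[Chapter 10]{friedli_velenik_2017}. Fix the column structure and define
\[
\Gamma(\{g_\tau\}) := \nu\Bigl(\prod_\tau \tau g_\tau\Bigr)\Big/\prod_\tau \|g_\tau\|_{R|\Lambda},
\]
taking $\Gamma=0$ if any seminorm vanishes, with a limiting argument to handle that case. Consider the configuration that maximizes the number of copies of a fixed column function among the maximizers of $\Gamma$ over assignments drawn from the finite collection of functions produced by the reflections. Step 2 shows that reflecting can only increase $\Gamma$ for one of the two resulting configurations, which contains more copies. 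This forces a maximizer in which every column carries the same function. Repeating in the second direction yields a maximizer with all $g_\tau$ equal to a single $g$, and for such a configuration $\Gamma=1$ by the definition of the seminorm. Hence $\Gamma\le1$ for the original functions, which is the claimed inequality.

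\textbf{Main obstacle.} The main obstacle is the bookkeeping in Step 2, specifically checking that the reflected configurations are again products of reflections $\tau g$ of $R$-local functions over the same group $T^R_\Lambda$. This requires the reflection lines through $x_0+K\Z$ to be symmetry lines of the torus, which is exactly what the block divisibility ensures, and it requires the torus measure to be invariant under these reflections. For the sloped-periodic measure, that invariance comes from the evenness of $V_1,V_2$ and the zero slope in the horizontal directions. With these in place the proof is the classical one, which we would cite.
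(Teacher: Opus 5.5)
Your route is the classical Fr\"ohlich--Lieb argument, which is all the paper relies on: it gives no proof of this lemma and imports it from the reflection-positivity literature. Steps 1 and 2 are fine, including your point that the symmetry $\nu(f\cdot\tau g)=\nu(g\cdot\tau f)$ needs reflection invariance of the measure, which the paper's definition leaves implicit.

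\textbf{The gap is the extremal step in Step 3.} The multiset of functions in the two reflected configurations $g',g''$ together is twice that of $g$. So one of them contains \emph{at least} as many copies of your fixed function $G$ as $g$ does. It contains strictly more only when the two halves of the split hold different numbers of copies, and that can fail for every split. Take four block columns carrying $G,H,G,H$. Each of the two antipodal pairs of lines puts exactly one $G$ in each half. Every reflected configuration ($G,H,H,G$, $H,G,G,H$, and so on) again has exactly two copies. Maximizing the number of copies therefore does not, by itself, force a constant maximizer. Note also that at a maximizer the correct conclusion is stronger than ``$\Gamma$ increases for one of the two''. If $m=\max\Gamma>0$ (true, since constant assignments give $\Gamma=1$), then $m^2\le\Gamma(g')\Gamma(g'')\le m^2$ with both factors nonnegative, so \emph{both} $g'$ and $g''$ are maximizers.

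\textbf{The standard repair} uses a potential that does strictly increase. Among maximizers, take one whose longest cyclic run of consecutive columns carrying the same function has maximal length $k$, and let $n$ be the number of block columns. If $k<n$ and $k\le n/2$, choose the antipodal pair of lines so that the run lies in one half with an end on a reflection line. If $k>n/2$, choose the pair so that an entire half lies inside the run. The reflected configuration that doubles the half containing the run is again a maximizer, and its run has length $\min(2k,n)>k$, a contradiction. (In the example, reflecting $G,H,H,G$ across the other pair of lines, keeping the half with the two adjacent $G$'s, gives $G,G,G,G$.) With this potential in place of the copy count, the rest of your outline goes through: equalize columns, then rows, use $\Gamma=1$ for the constant assignment, and perturb $f_\tau\mapsto f_\tau+\epsilon$ when a seminorm vanishes.
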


\subsection{Reflection positivity for the sloped-periodic gradient measure}

The reflection positivity tool is better suited to Gibbs measures over height functions than their induced gradient measures. Furthermore the reflection positivity is restricted to reflections across $\{\mathfrak{e}_1,\mathfrak{e}_2\}$-oriented lines through vertices of $\mathbb Z^2$. To deal with these, in what follows, we naturally identify a distribution on $\varphi:\mathbb T^d_L \to \mathbb Z$ with a distribution over $\tilde \Omega: =\{\tilde\varphi: \mathbb T^2_L \to \mathbb Z^{\mathbb T^{d-2}_L}\}$. In this way, our height function measures are spin systems only on a periodic torus (whereas in the sloped directions the periodicity is broken by the slope-inducing interaction), allowing application of the tools of reflection positivity. 

We will show reflection positivity for the gradient measure by showing it for approximating reflection positive height function measures, and taking a limit of their induced gradient measures to get to the right distribution. For this purpose, define the measure $\tilde \mu_{\eps}$ over $\tilde \Omega$ as 
\begin{align*}
    \tilde \mu^{(\eps)}_{L,\mathfrak{s}} (\tilde \varphi) \propto \mu^\nabla_{L,\mathfrak{s}}(\varphi^\nabla) e^{ - \eps \sum_v \varphi_v^2}
\end{align*}
where $\varphi$ is the configuration in $\Omega$ corresponding to $\tilde\varphi$. Since $\eps>0$, this is a valid probability measure on $\tilde \Omega$. At the same time, for each fixed $L$, as $\eps \downarrow 0$, the gradient distribution it induces converges to $\mu_{L,\mathfrak{s}}^\nabla$, so we will be able to pull back its reflection positivity.

\begin{lem}\label{lem:even-convex-implies-reflection-positive}
    Suppose $\mathbf{V}$ satisfies~\eqref{eq:potential assumption} and $V_1,V_2$ are even. For every $\eps>0$, the distribution $\tilde \mu_{L,\mathfrak{s}}^{(\eps)}$ is vertex-reflection positive. 
\end{lem}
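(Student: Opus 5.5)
We would follow the standard proof of reflection positivity for nearest-neighbour spin systems on a torus, as in \cite[Chapter 10]{friedli_velenik_2017}. The single-site spin space is $\mathbb Z^{\mathbb T^{d-2}_L}$: each vertex $x$ of the horizontal torus $\mathbb T^2_L$ carries a whole vertical column. By symmetry it suffices to treat a reflection $\tau$ across a line $\{x_1=a\}$ through vertices. This line meets the torus in two columns, $\{x_1=a\}$ and $\{x_1=a+L/2\}$. They split $\mathbb T^2_L$ into two halves $\Lambda_+$ and $\Lambda_-$, and the two halves intersect exactly in the two fixed columns, which we call $\Lambda_0$.

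\textbf{Step 1: factor the weight.} Write the density of $\tilde\mu^{(\eps)}_{L,\mathfrak{s}}$ as $\exp(-H(\tilde\varphi)-\eps\sum_v\varphi_v^2)$. Split each vertical-column interaction and each $\eps$ term at sites of $\Lambda_0$ evenly between the two halves. We then want to write the weight as $F(\tilde\varphi)\,(\tau F)(\tilde\varphi)$, where $F$ depends only on $\tilde\varphi|_{\Lambda_+}$ and $\tau F(\tilde\varphi)=F(\tilde\varphi\circ\tau)$. Every edge of $\mathbb T^d_L$ falls into one of the following classes.
\begin{enumerate}
\item Horizontal $\mathfrak{e}_2$-edges and all vertical $\mathfrak{e}_j$-edges ($j\ge3$). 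Such an edge lies within a single horizontal site or joins two sites with the same $x_1$. Its interaction is a function of one or two columns that lie on the same side (or in $\Lambda_0$). Because $s_1=s_2=0$, the sloped-periodic shift \eqref{eq:extend-torus-configuration} on a wrapped $\mathfrak{e}_2$-edge vanishes. On a wrapped vertical edge the shift $s_j^{(L)}L$ is the same in every column, so it commutes with the horizontal reflection.
\item Horizontal $\mathfrak{e}_1$-edges lying inside one half. Under $\tau$ these map to $\mathfrak{e}_1$-edges of the other half with reversed orientation. Since $V_1$ is even, $V_1(\varphi_v-\varphi_u)=V_1(\varphi_u-\varphi_v)$, so the weight of such an edge in $\Lambda_-$ is exactly the $\tau$-image of the weight of its mirror edge in $\Lambda_+$.
\end{enumerate}
No $\mathfrak{e}_1$-edge crosses the reflection line, because the line passes through vertices. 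Hence no edge joins the interior of $\Lambda_+$ to the interior of $\Lambda_-$, and we take $F=\prod_{\text{edges and sites in }\Lambda_+}(\cdot)$, with half weights on $\Lambda_0$.

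\textbf{Step 2: conclude positivity.} Let $f$ depend only on $\Lambda_+$. Condition on the configuration $\xi$ on $\Lambda_0$; then $\nu(f\,\tau f)\propto\sum_{\xi}\big(\sum_{\sigma_+}fF\big)\big(\sum_{\sigma_-}\tau f\,\tau F\big)$. Here $\sigma_+$ and $\sigma_-$ range over configurations on the interiors of $\Lambda_+$ and $\Lambda_-$. The map $\tau$ fixes $\xi$ and bijects $\sigma_-$ with $\sigma_+$, so the two inner sums are equal. Each summand is therefore a square, which gives $\nu(f\,\tau f)\ge0$.

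The sums are over $\mathbb Z$-valued columns, so they must be absolutely convergent. The factor $e^{-\eps\sum\varphi^2}$ makes the total weight summable. For nonnegative $f$, or bounded $f$ (all that chessboard estimates for indicators require), Fubini and Cauchy--Schwarz justify the rearrangement. Infinite values of $V$ only set weights to zero, which is harmless.

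\textbf{Main obstacle.} The delicate step is the bookkeeping in Step 1: checking that the sloped-periodic convention does not break the factorization. The concern is the wrap-around edges in the vertical directions and the orientation convention on the fundamental domain, given that $\mathcal E_L^+$ treats edges asymmetrically. We would handle this by noting three things. First, the shift depends only on the vertical displacement. Second, the vertical interactions themselves are not required to be even, since they are never reversed by a horizontal reflection. Third, the horizontal torus edges are genuinely periodic because $s_1=s_2=0$. The representative ambiguity (gradients versus heights) is removed by the $\eps$-term, which is reflection invariant since $\varphi_v^2$ is summed over all sites.
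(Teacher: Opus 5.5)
Your proposal is correct and uses the same argument as the paper. The paper also treats $\tilde\mu^{(\eps)}_{L,\mathfrak{s}}$ as a spin system on $\mathbb T_L^2$ with spin space $\mathbb Z^{\mathbb T_L^{d-2}}$. It checks that the weight splits as $A+\vartheta A$, with half weights on the reflection plane and the $\eps$-terms split the same way, using evenness of $V_1,V_2$ for the reflected horizontal edges, and then cites Corollary~5.4 of \cite{BiskupReflectionPositivity-Notes}; you instead write out the conditioning/sum-of-squares step and the summability justification that this corollary packages.
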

\begin{proof}
    
    This will be an immediate consequence of e.g., Corollary 5.4 of~\cite{BiskupReflectionPositivity-Notes}, by taking the spin space for each vertex to be $\Z^{\mathbb T_L^{d-2}}$, and the underlying graph to be $\mathbb T_L^2$. To see the assumption that the Hamiltonian satisfies the requisite equation (5.10) there with only the $A$ and $\vartheta A$ terms, for a configuration $\varphi$ on the vertices on $(\mathbb T_L^2)^+$ (the half of the torus on one side of the reflection plane $P$ that goes through vertices) is seen by defining 
    \begin{align*}
        - A(\tilde \phi) & : = \frac{1}{2} \sum_{u\sim v: u,v\in P\times \mathbb T_L^{d-2}} V_{uv}(\phi_v - \phi_u) + \sum_{\substack{u\sim v: \\ u\notin P \times \mathbb T_L^{d-2} \text{ or }v\notin P \times \mathbb T_L^{d-2}, \\ u,v\in (\mathbb T_L^2)^+ \times \mathbb T_L^{d-2}}}V_{uv}(\phi_u - \phi_v) \\ 
        & \qquad + \frac{\eps}{2} \sum_{u\in P\times \mathbb T_L^{d-2}}  \phi_u^2 + \frac{\eps}{2} \sum_{u\notin P\times \mathbb T_L^{d-2}\,,u\in (\mathbb T_L^2 )^+\times \mathbb T_L^{d-2}}  \phi_u^2\,.   
    \end{align*}
    where $\phi$ is the configuration in $\{\varphi: \mathbb T_L^d \mapsto \mathbb Z\}$ that $\tilde \varphi$ corresponds to. Note that the fact we split the interaction along $P$ in half between $A$ and $\vartheta A$, was only possible using evenness of the potential $V$ for the $\{\mathfrak{e}_1,\mathfrak{e}_2\}$-oriented edges as the reflection plane is $\{\mathfrak{e}_1,\mathfrak{e}_2\}$-oriented.
\end{proof}

As a corollary, we get that the chessboard estimate is applicable to the gradient measure $\mu_{L,\mathfrak{s}}^\nabla$.  
\begin{cor}\label{cor:gradient-chessboard-estimate}
    Let $R$ be a block of $\Lambda$, and for each $R$-local gradient function $f$ (i.e., function only of the gradients of $\phi$ on edges of $R\times \mathbb T_L^{d-2}$), define its chessboard seminorm as 
    \begin{align*}
        \|f\|_{R\vert \Lambda} = \mu_{L,\mathfrak{s}}^\nabla \Big( \prod_{\tau\in T_{\Lambda}^R} \tau f\Big)^{1/|T_\Lambda^R|}
    \end{align*}
    Then for $A \subset T_{\Lambda}^R$ and bounded $(f_\tau)_{\tau\in A}$ that are $R$ local, 
    \begin{align*}
                \mu_{L,\mathfrak{s}}^\nabla\Big(\prod_{\tau \in A} \tau f_\tau\Big) \le \prod_{\tau \in A} \|f_{\tau}\|_{R\vert \Lambda}\,.
    \end{align*}
\end{cor}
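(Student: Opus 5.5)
The plan is to deduce Corollary~\ref{cor:gradient-chessboard-estimate} from Lemma~\ref{lem:chessboard-estimate}, applied to the approximating height-function measures $\tilde\mu^{(\eps)}_{L,\mathfrak{s}}$, and then to let $\eps\downarrow 0$. Here $\Lambda$ is the fundamental domain of $\T_L^2$, viewed as a spin system with spin space $\Z^{\T_L^{d-2}}$.

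\textbf{Step 1: lifting to height functions.} Fix a block $R$. An $R$-local gradient function $f$ lifts to a function $\tilde f(\tilde\varphi):=f(\varphi^\nabla)$. Since $\tilde f$ depends only on differences of $\varphi$ along edges of $R\times\T_L^{d-2}$, it is $R$-local for the spin system on $\T_L^2$. Next I check how reflections act on the lift. A reflection $\tau\in T^R_\Lambda$ acts only on the two horizontal coordinates, so $\tau\tilde f$ is again the lift of the gradient function $\tau f$. For this I use that the horizontal slope components vanish, so the sloped-periodic extension~\eqref{eq:extend-torus-configuration} introduces no shifts in the horizontal directions. Gradients along reflected $\mathfrak{e}_1,\mathfrak{e}_2$ edges change sign under the reflection, but this is consistent with how $\tau f$ is defined as evaluation on the reflected configuration.

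\textbf{Step 2: chessboard estimate at fixed $\eps$.} By Lemma~\ref{lem:even-convex-implies-reflection-positive}, $\tilde\mu^{(\eps)}_{L,\mathfrak{s}}$ is vertex-reflection positive. Lemma~\ref{lem:chessboard-estimate} therefore gives
\[
\tilde\mu^{(\eps)}_{L,\mathfrak{s}}\Big(\prod_{\tau\in A}\tau\tilde f_\tau\Big)\le\prod_{\tau\in A}\tilde\mu^{(\eps)}_{L,\mathfrak{s}}\Big(\prod_{\tau'\in T^R_\Lambda}\tau'\tilde f_\tau\Big)^{1/|T^R_\Lambda|}.
\]
For products of functions that are possibly negative, the chessboard lemma is applied in its standard form.

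\textbf{Step 3: the limit $\eps\downarrow 0$.} Every quantity above is the expectation of a bounded function of the gradients. It thus suffices to show that the gradient law induced by $\tilde\mu^{(\eps)}_{L,\mathfrak{s}}$ converges to $\mu^\nabla_{L,\mathfrak{s}}$ in total variation, which the paper already asserts. I would verify it as follows. Conditional on the gradient, the $\eps$-weight only selects the global additive constant $c$, with weight $\sum_c e^{-\eps\sum_v(\varphi^0_v+c)^2}$, where $\varphi^0$ is the representative with $\varphi^0_o=0$. A Riemann-sum comparison shows this weight is asymptotically $\sqrt{\pi/(\eps|\T_L^d|)}$, uniformly on sets where $\sum_v(\varphi^0_v)^2$ is bounded. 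Tightness of the finite-dimensional gradient law then yields total-variation convergence. Since the functions are bounded, both sides of the inequality in Step 2 converge to the corresponding $\mu^\nabla_{L,\mathfrak{s}}$ quantities, and taking $|T^R_\Lambda|$-th roots is continuous. This gives the corollary.

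\textbf{Main obstacle.} The delicate step is Step 3. One has to handle configurations with large gradients, where the uniform asymptotics for the weight in $c$ fail. I would split on the event $\{\max|\nabla\varphi|\le M\}$, use uniformity of the asymptotics there, and send $M\to\infty$ after $\eps\to0$, using boundedness of the $f_\tau$ and finiteness of the gradient measure. Beyond this, the sign bookkeeping in Step 1, which relies on evenness of $V_1$ and $V_2$, is where care is needed.
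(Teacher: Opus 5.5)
Your proposal is correct and is essentially the paper's argument: apply the chessboard estimate to the reflection-positive regularized measures $\tilde\mu^{(\eps)}_{L,\mathfrak{s}}$, then let $\eps\downarrow 0$, using that the $f_\tau$ are bounded. Your Step 3 fills in the convergence of the induced gradient laws, which the paper only asserts; completing the square gives $\sum_{c\in\Z}e^{-\eps\sum_v(\varphi^0_v+c)^2}=e^{-\eps\sum_v(\varphi_v-\bar\varphi)^2}\sqrt{\pi/(\eps|\T_L^d|)}\,(1+o(1))$ uniformly in $\varphi$ (with $\bar\varphi$ the spatial average), so the Radon--Nikodym density with respect to $\mu^\nabla_{L,\mathfrak{s}}$ is uniformly bounded and tends to $1$ pointwise, and dominated convergence then gives total-variation convergence without your truncation in $M$.
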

\begin{proof}
    Let $\mu^{\nabla,(\eps)}_{L,\mathfrak{s}}$ be the gradient measure induced by $\tilde \mu_{L,\mathfrak{s}}^{(\eps)}$. By Lemma~\ref{lem:even-convex-implies-reflection-positive} and Lemma~\ref{lem:chessboard-estimate}, for $R$-local gradient functions $f_\tau$ for all $\eps>0$ fixed, we have 
    \begin{align*}
        \mu_{L,\mathfrak{s}}^{\nabla,(\eps)}\Big(\prod_{\tau \in A} \tau f_\tau\Big) \le \mu_{L,\mathfrak{s}}^{\nabla,(\eps)} \Big( \prod_{\tau\in T_{\Lambda}^R} \tau f_\tau\Big)^{1/|T_\Lambda^R|}\,.
    \end{align*}
    By continuity of probability measures, for bounded $f_\tau$, sending $\eps\downarrow 0$ on both sides concludes. 
\end{proof}

\subsection{Concluding the proof}

For our purposes, the two-by-two block $\{0,1\}^2 = R_{1\times 1,(0,0)}$ or its shifts across $\T_L^2$ are taken as $R$. In order for this to be a block in $\T_L^2$ we therefore assume $L$ is a multiple of four. 
For a fixed set of edges $\vec{f}$ in the prism $\mathcal P$, a fixed set of gradients along them $\vec{\Delta}$ and a fixed set of witnesses $\vec{\mathcal W}$, the indicator functions for events of the form $\mathcal E_{\vec{f},\vec{\Delta},\vec{\mathcal W}}$ from~\eqref{eq:EfDeltaW-event} are then evidently bounded $R$-local gradient functions.

We will therefore focus on bounding the chessboard seminorm of events $\mathcal E_{\vec{f},\vec{\Delta},\vec{\mathcal W}}$ by disseminating the event across $\T^{d}$ via the reflections $\tau \in T_{\Lambda}^R$ across the lines of $\T_L^2$.

\begin{lem}\label{lem:chessboard-seminorm-bound}
    Suppose $4$ divides $L$. Fix any $\vec{f} = (f_1,...,f_k) \in E_1(\mathcal P)$, any $\vec{\Delta}$, and any $\vec{\mathcal W}$ compatible with those. If $\mathbf{V}$ satisfies~\eqref{eq:potential assumption} and $V_1(1)\ge \beta_0$, then 
    \begin{align*}
        \|\mathcal E_{\vec{f},\vec{\Delta},\vec{\mathcal W}}\|_{R \vert \Lambda_L} \le \exp\Big( - \frac{1}{4} V_1(1) \sum_{i=1}^k \sum_{f\in \mathcal W_i} |W_f| \Big)
    \end{align*}
\end{lem}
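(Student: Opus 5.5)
The plan is to bound the probability of the fully reflected event $\bigcap_{\tau\in T^R_\Lambda}\tau\mathcal E_{\vec f,\vec\Delta,\vec{\mathcal W}}$ by a Peierls argument on the whole torus. The prism map $\Phi$ from Lemma~\ref{lem:many-edges-and-gradients-bound} is applied simultaneously in every reflected copy $\tau\mathcal P=\tau(\{0,1\}^2)\times\T_L^{d-2}$, $\tau\in T^R_\Lambda$. There are $|T^R_\Lambda|=L^2/4$ such copies. The reflections send the $2\times 2$ block $\{0,1\}^2$ to the blocks of a tiling of $\T_L^2$. Each reflected event is witnessed by the same data (up to reflection), namely the clusters $\tau A_i$, their gradient magnitudes $W_f$, and the signs.

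First I define the global map $\Psi=\prod_\tau (\tau\circ\Phi\circ\tau^{-1})$. Distinct prisms $\tau\mathcal P$ are vertex-disjoint, so the individual maps act on disjoint sets of vertices and commute. Given $\phi$ in the reflected event and the common witness $\vec{\mathcal W}$, the inversion procedure of Lemma~\ref{lem:Peierls-criteria-within-prism} is applied in each prism. Hence $\Psi$ is injective on the reflected event, and its image is a single configuration class per preimage.

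The main step is the energy comparison. The total Hamiltonian splits into edges internal to some prism $\tau\mathcal P$ and horizontal edges joining two adjacent prisms. The internal edges are controlled by Lemma~\ref{lem:many-edges-and-gradients-bound}: each prism loses at least $\sum V_1(W_f)\ge V_1(1)\sum W_f$. This uses that vertical edges, including the sloped wrap-around ones, stay inside a prism, where the shift $s_jL$ does not affect the convexity argument. The inter-prism edges are all horizontal, and their energy can increase, since flattening the clusters moves heights at the prism boundary. I expect this to be the main obstacle. My plan is to bound the increase only crudely. One route exploits the reflection structure: adjacent prisms are mirror images of each other on the reflected event, so the configuration is symmetric across the shared boundary face. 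The shared face, however, lies between the two prisms. Since the block has side length $2$ with vertex reflections, the reflection line passes through vertices, so columns on the reflection line belong to both neighbouring reflected blocks. In that case the edges between blocks are themselves reflected copies of internal edges, and no separate inter-prism term exists. I would verify this carefully. If it fails, I would lose a factor, which is presumably where the $\tfrac14$ comes from: only one quarter of the $\mathfrak e_1$-edges counted are not shared, or the energy gain is split among neighbouring copies.

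With the energy decrease of $\tfrac14 V_1(1)\sum W_f$ per copy, or better, established, the estimate follows by summing the ratio of Boltzmann weights. Since $\Psi$ is injective with decrease at least $\frac{L^2}{4}\cdot\frac14\cdot 4 V_1(1)\sum_i\sum_{f\in\mathcal W_i}W_f$ (normalising so that each copy gains $\tfrac14 V_1(1)\sum W_f$ net), we get
\begin{align*}
\mu^\nabla_{L,\mathfrak s}\Big(\bigcap_\tau \tau\mathcal E\Big)\le \exp\Big(-|T^R_\Lambda|\tfrac14 V_1(1)\sum_i\sum_{f\in\mathcal W_i}W_f\Big)\,\mu^\nabla_{L,\mathfrak s}(\Psi(\cdot)).
\end{align*}
Taking the $|T^R_\Lambda|$-th root gives the claim. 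No entropy factor appears because the witness is fixed. The sloped-periodic measure is handled through the same $\varepsilon$-regularisation as in Corollary~\ref{cor:gradient-chessboard-estimate}, or directly, since $\Psi$ acts on gradients.
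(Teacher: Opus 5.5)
There is a genuine gap. It sits exactly at the step you flag as the main obstacle, and it starts from a false structural claim.

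\textbf{The prisms are not disjoint.} You build the global map as a product of commuting local maps because ``distinct prisms $\tau\mathcal P$ are vertex-disjoint''. For the vertex reflections used here (across every line $x\in\mathbb Z$ and $y\in\mathbb Z$), the images $\tau(\{0,1\}^2)$ are \emph{all} the unit squares of $\mathbb T_L^2$. So each column $\{(x,y)\}\times\mathbb T_L^{d-2}$ lies in four prisms and each horizontal edge in two. You half-notice this later, which contradicts what you used.

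The missing idea is why simultaneous application is nevertheless well defined. On $\bigcap_\tau\tau\mathcal E_{\vec f,\vec\Delta,\vec{\mathcal W}}$ every pull-back $\bar\varphi\circ\tau$ has the \emph{same} witness. The increment $\Phi(\phi)-\phi$ at a base-prism vertex depends only on the witness and the position: Stage 1 moves the endpoints of a cluster edge by $\pm\lfloor W_f/2\rfloor$, and Stage 2 moves one endpoint of each odd-$W_f$ edge by one unit as dictated by $\iota$. Moreover, $\tau^{-1}(v)$ is the same corner of $\{0,1\}^2$ for every block containing $v$. Hence $\bar\Phi(\bar\varphi)(v)=\bar\varphi(v)+(\text{increment at }\tau^{-1}(v))$ is well defined. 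It is injective because the inverse increment is also witness-determined. This is how the paper defines $\bar\Phi$.

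Also, the configuration need not be symmetric across shared faces, as you assert. Only the witness data, and hence the increments, are symmetric.

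\textbf{The energy comparison is multiplicity bookkeeping.} Once the prisms overlap there are no inter-prism edges, and the step you leave open needs no crude bound. By evenness of $V_1,V_2$, each $\tau\mathcal P$ carries the energy of its pull-back. Summing the per-prism changes over $\tau$ counts every horizontal edge twice and every vertical edge four times. That is where the paper's $\frac14$ comes from, not from unshared edges or split gains.

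Be careful, though: dividing by the multiplicities requires the per-prism gain to survive weighting horizontal edges by $\frac12$ and vertical edges by $\frac14$. The $\mathfrak e_1$-gain $\sum_f V_1(W_f)$ is exact. Stage 1 is monotone pair by pair. In Stage 2 the averaging inequality (mean of the two options $\le$ current energy) holds pair by pair. So everything survives, \emph{provided} $\iota$ is chosen to minimise the weighted energy ($\mathfrak e_2$-edges counted twice relative to vertical ones) rather than the plain prism energy.

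With the plain rule, an $\mathfrak e_2$-increase can be paid for by a slightly larger vertical decrease inside the prism and still raise the global energy. For example, take $d=3$, $V_i(k)=\beta k^2$, $\phi_o=0$, $\phi_{\mathfrak e_1}=1$, $\phi=-10$ at $\mathfrak e_2$ and $\mathfrak e_1+\mathfrak e_2$, and $\phi=10$ at $\pm\mathfrak e_3$ and $\mathfrak e_1\pm\mathfrak e_3$, extended reflection-symmetrically. The plain rule raises $\phi_o$, and the total energy increases by $\beta/2$ per block.

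With the weighted choice, the global decrease is at least $\frac{|T^R_\Lambda|}{2}V_1(1)\sum_i\sum_{f\in\mathcal W_i}|W_f|$. Your final step (injectivity, ratio of Boltzmann weights, $|T^R_\Lambda|$-th root) then yields the lemma.
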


\begin{proof}
We consider the event $\bar{\mathcal E} := \bigcap_{\tau \in T_\Lambda^R} \tau \mathcal E_{\vec{f}, \vec{\Delta}, \vec{\mathcal W}}$, which is the prism event reflected across $\mathbb T_L^2$. We will apply a Peierls argument (which is simply the reflection of the basic Peierls operation done on the prism). Namely, for a configuration in  $\bar {\mathcal E}$, let $\bar\Phi$ be the operation that on each prism $\tau \mathcal P$ for $\tau \in T_{\Lambda}^R$, performs the reflection of the basic operation $\Phi$; more formally, for a configuration $\bar \varphi \in \bar{\mathcal E}$ for each reflection $\tau \in T_\Lambda^R$, the local input on the prism $\tau P$ is $\varphi^\tau=\bar\varphi\circ\tau$. Then we can define for every $v\in \tau P$, the new configuration  
\begin{align}\label{eq:global-map-construction}
\bar\Phi(\bar\varphi)(v) - \bar\varphi(v)
=\Phi(\varphi^\tau(\tau^{-1}(v)))- \varphi^\tau(\tau^{-1}(v)),
\qquad v\in\tau P\,,
\end{align}
where $\Phi$ is as defined in Definition~\ref{def:Phi-multi-edges}. There are multiple $\tau$ such that $v\in \tau\{0,1\}^2$, but it is straightforward to verify that the above operation does not depend on the choice of $\tau$, as the pre-image vertex and gradients in the base prism will not depend on the choice of $\tau$.

This operation thus gives a configuration in $\{\tilde \phi: \mathbb T_L^2\to \mathbb Z^{\mathbb T_L^{d-2}}\}$ which is naturally associated to a configuration on $\{ \phi:\mathbb T_L^d \to \mathbb Z\}$. 
On the one hand, by~\eqref{eq:global-map-construction}, for each edge $e$ internal to prism $\tau \mathcal P$, the energy change induced along $e$ from applying the map $\bar \Phi$ is the same as that of the map $\Phi$ to $\tau^{-1}e$, which induces an energy change, per Lemma~\ref{lem:Peierls-criteria-within-prism}, of 
\begin{align*}
    V_1(1) \sum_{i} \sum_{f\in \mathcal W_i} |W_f|\,\,
\end{align*}
per prism. Summing the above display over all edges in $\mathbb T_L^d$, all $\{\mathfrak{e}_1,\mathfrak{e}_2\}$-oriented edges are counted exactly twice, and all  vertical edges are counted exactly four times. Thus, in total, 
\begin{align*}
     H(\bar \varphi) - H(\bar \Phi (\bar \varphi)) \ge \frac{|T_\Lambda^R|}{4}  V_1(1)\sum_i \sum_{f\in \mathcal W_i} |W_f|
\end{align*}

Next, we observe that by the definition of a witness, for fixed $\vec{f},\vec{\Delta},\vec{\mathcal W}$, the map $\Phi$ is an injection when applied to $\mathcal E_{\vec{f},\vec{\Delta},\vec{\mathcal W}}$. Since the reflections generating the different prisms from the base one can be inverted, this means the map $\bar \Phi$ is an injection when applied on configurations in $\bar{\mathcal E}$. Therefore, we have the bound 
\begin{align*}
    \|\bar{\mathcal E}\|_{R\vert \Lambda}^{|T_{\Lambda}^R|} =  \sum_{\varphi \in \bar{\mathcal E}} \mu_{L,\mathfrak{s}}^\nabla(\bar\varphi) = \sum _{ \bar\varphi \in \bar{\mathcal E}} \frac{\mu_{L,\mathfrak{s}}^\nabla(\bar\varphi)}{\mu_{L,\mathfrak{s}}^\nabla(\bar \Phi(\bar \varphi))}\mu_{L,\mathfrak{s}}^\nabla (\bar \Phi(\bar\varphi))\,.
\end{align*}
Using injectivity of the map $\bar \Phi$ on the set $\bigcap_{\tau \in T_{\Lambda}^R} \tau \mathcal E_{\vec{f},\vec{\Delta},\vec{\mathcal W}}$ and using the energy change from the map from the first part of Lemma~\ref{lem:many-edges-and-gradients-bound},  we get 
\begin{align*}
    \|\mathcal E_{\vec{f},\vec{\Delta},\vec{\mathcal W}}\|_{R \vert \Lambda} \le \exp \Big( - \frac{1}{4} V_1(1) \sum_{i}\sum_{f\in \mathcal W_i} |W_f|\Big)
\end{align*}
where the power of $1/|T_{\Lambda}^R|$ on the probability cancels out with the fact that each of the $|T_\Lambda^R|$ many prisms contribute  an energy decrease of at least $\frac{1}{4} V_1(1) \sum_i \sum_{f\in \mathcal W_i} |W_f|$. 
\end{proof}

\medskip
\begin{proof}[\textbf{\emph{Proof of Theorem~\ref{thm:1-2-direction-rigidity}}}]
     Consider the probability that a fixed set of $m=k+\ell$ horizontal  edges $g_1,...,g_m$  in $\mathbb T_L^d$, with $g_1,...,g_{k}$ being the $\mathfrak{e}_1$-oriented ones, and $g'_1,...,g'_{\ell}$ being the $\mathfrak{e}_2$-oriented ones,  have gradients $|2\Delta_1| \ge r_1,...,|2\Delta_{k}|\ge r_k$ and $|2\Delta_1'|\ge r_{k+1},...,|2\Delta_{\ell}'|\ge r_{m}$ respectively. Suppose without loss of generality that $\sum_{i=1}^k |\Delta_i| \ge \sum_{i=1}^{\ell}|\Delta_i'|$ so we consider only $\mathfrak{e}_1$-oriented edges, incurring a factor of $2$ for a union bound in the final result. 
     
     Denote by $\mathcal A_{\vec{g},\vec{\Delta}}$ the event that $g_1,...,g_k$ have gradients $|2\Delta_1|,...,|2\Delta_k|$. For each such edge, let $\tau P$ be a prism to which it belongs (making consistent choices of prisms so that no pair of edges which share a prism are given different prisms); this naturally splits that set of horizontal edges into subsets belonging to disjoint $(\tau P)_{\tau \in S}$ for $S \subset T_{\Lambda}^R$. For each such $\tau \in S$, let $g_{i_1}^{(\tau)},...,g_{i_{k_\tau}}^{(\tau)}$ be the edges in $\tau P$ amongst $g_1,...,g_k$, and let $\tau \mathcal W^{(\tau)}$ be a potential witness (in the prism $\tau P$) for the collection $g_{i_1}^{(\tau)},...,g_{i_{k_\tau}}^{(\tau)}$. This generates a family of events $(\tau \mathcal E_{\vec{g}^{(\tau)},\vec{\Delta}^{(\tau)},\vec{\mathcal W}^{(\tau)}})_{\tau \in S}$ such that 
     \begin{align}\label{eq:A-in-tauE-inclusion}
         \mathcal A_{\vec{g},\vec{\Delta}} \subset \bigcup_{(\vec{\mathcal W}^{(\tau)})_{\tau \in S}} \bigcap_{\tau \in S} \tau \mathcal E_{\vec{g}^{(\tau)},\vec{\Delta}^{(\tau)},\vec{\mathcal W}^{(\tau)}}
     \end{align}
     where the union runs over collections of witnesses (shifted back to the origin prism) which are compatible with the edge set and gradient set of $\tau P$. 

     Therefore, we have by several union bounds, 
     \begin{align*}
         \mu_{L,\mathfrak{s}}^\nabla\Big(\bigcap_{i=1}^\ell \{|\nabla_{g_i} \phi| \ge r_i\}\Big) \le \sum_{\vec{\Delta}: \Delta_i \ge r_i} \mu_{L,\mathfrak{s}}^\nabla  (\mathcal A_{\vec{g},\vec{\Delta}}) & \le \sum_{\vec{\Delta}: \Delta_i \ge r_i} \sum_{(\vec{\mathcal W}^{(\tau)})_{\tau \in S}} \mu_{L,\mathfrak{s}}^\nabla  \Big(\bigcap_{\tau \in S} \tau \mathcal E_{\vec{g}^{(\tau)},\vec{\Delta}^{(\tau)},\vec{\mathcal W}^{(\tau)}}\Big)\,.
     \end{align*}
    By Corollary~\ref{cor:gradient-chessboard-estimate}, this implies 
    \begin{align*}
        \mu_{L,\mathfrak{s}}^\nabla\Big(\bigcap_{i=1}^\ell \{|\nabla_{g_i} \phi| \ge r_i\}\Big)\le \sum_{\vec{\Delta}: \Delta_i \ge r_i} \sum_{(\vec{\mathcal W}^{(\tau)})_{\tau \in S}} \prod_{\tau \in S} \|\mathcal E_{\vec{g}^{(\tau)},\vec{\Delta}^{(\tau)},\vec{\mathcal W}^{(\tau)}}\|_{R \vert \Lambda}\,.
    \end{align*}
      Now the chessboard seminorm on the right-hand side is bounded by Lemma~\ref{lem:chessboard-seminorm-bound} which gives  
    \begin{align*}
        \mu_{L,\mathfrak{s}}^\nabla(\mathcal A_{\vec{g},\vec{\Delta}}) \le \sum_{\vec{\Delta}:\Delta_i \ge r_i} \sum_{(\vec{\mathcal W}^{(\tau)})_{\tau\in S}}  \exp \Big( - \frac{1}{4} V_1(1) \sum_{\tau \in S} \sum_{i}^{k_\tau} \sum_{f\in \mathcal W^{\tau}_i} |W_f|\Big)\,.
    \end{align*}
    Since the weight of each of the witnesses $\mathcal W^{\tau}$ exceeds $\sum_{i=1}^{k_\tau} \Delta_{i_i}^{(\tau)}$, the sum of the $W_f$'s exceeds $\sum_{i=1}^k r_i$, and we can split off a factor of $e^{ - \frac{1}{8}V_1(1) \sum_{i=1}^k r_i}$ from the above. By Lemma~\ref{lem:many-edges-and-gradients-bound}, we have that if $|\vec{\Delta}^{(\tau)}| = m^{\tau}$, then the number of summands from the second sum with $\sum_{\tau \in S} \sum_{i}^{k_\tau} \sum_{f\in \mathcal W_i^{\tau}} |W_f| = M$ is at most $C^M$ for a universal constant $C = C(d)$.  For a fixed $M$, the number of $\vec{\Delta}$ such that $\sum \Delta_i \le M$ is bounded by $2^{2M}$. Therefore, the above is bounded as 
        \begin{align*}
        \mu_{L,\mathfrak{s}}^\nabla(\mathcal A_{\vec{g},\vec{\Delta}}) \le e^{ - \frac{1}{8}V_1(1) \sum_{i=1}^k r_i} \sum_{M \ge \sum r_i} 2^{2M} C^M e^{ - \frac{1}{8} V_1(1) M}\,.
    \end{align*}
    The claimed bound follows using that $V_1(1) \ge \beta_0$ for a sufficiently large constant $\beta_0$.   
    \end{proof}

\section{Constructing limiting height function Gibbs measures}\label{sec:limits}

In this section, we utilize the properties of the finite-volume gradient measure with sloped-periodic boundary, and in particular the fact that non-zero gradients in the $\{\mathfrak{e}_1,\mathfrak{e}_2\}$-directions are exponentially suppressed, to deduce properties of sloped limiting infinite-volume Gibbs measures. Namely, we prove existence of localized height Gibbs measures, and establish that they have dominant heights in each layer. In particular, we will prove the existence half of  Theorem~\ref{thm:main-localization}.

 \subsection{Passing to infinite-volume for the gradient Gibbs measure}
We begin by passing the exponential bounds of Theorem~\ref{thm:1-2-direction-rigidity} to subsequential limits of the gradient measure as $L\to\infty$. Since the events in Theorem~\ref{thm:1-2-direction-rigidity} are cylinder events, we immediately get the following. 

\begin{cor}\label{cor:subsequential-gradient-limit-has-suppressed-gradients}
    Let $\mu^\nabla_{\infty,\mathfrak{s}}$ be a subsequential limit of $\mu^\nabla_{L,\mathfrak{s}}$ as $L\to\infty$. Then 
    \begin{align}\label{eq:exp-suppressed-gradients-condition}
    & \mbox{for any finite set of $\{\mathfrak{e}_1,\mathfrak{e}_2\}$-oriented edges $e_1,...,e_m$ and integers $r_1,...,r_m\ge 1$,} \nonumber  \\ 
        & \qquad\qquad\mu^\nabla_{\infty,\mathfrak{s}} \Big( \bigcap_{i=1}^m \{|\nabla_{e_i} \varphi| \ge  r_i\} \Big)\le e^{ - c \min_{j=1,2}V_j(1) \sum_{i=1}^m r_i}\,.
    \end{align}
\end{cor}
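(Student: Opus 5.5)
The plan is to pass the finite-volume bound of Theorem~\ref{thm:1-2-direction-rigidity} to the limit, using that the event in question is a cylinder event. Fix the finite family of $\{\mathfrak{e}_1,\mathfrak{e}_2\}$-oriented edges $e_1,\dots,e_m$ of $\Z^d$ and integers $r_1,\dots,r_m\ge 1$, and let $L_n\to\infty$ (with $4\mid L_n$) be the subsequence along which $\mu^\nabla_{L_n,\mathfrak{s}}\to\mu^\nabla_{\infty,\mathfrak{s}}$. Here the convergence is understood in the sense of Sheffield, namely convergence of probabilities of cylinder gradient events, i.e.\ local convergence of the gradient laws restricted to finite edge sets.

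First I would embed the finite picture. Take $L_n$ large enough that all the $e_i$ sit well inside a translate of the fundamental domain $\{1,\dots,L_n\}^d$. Then they are identified with distinct horizontal edges of $\T^d_{L_n}$, and none of them is a wrap-around edge. For such edges the gradient $\nabla_{e_i}\varphi$ in the sloped-periodic model coincides with the gradient of the extended configuration on $\Z^d$. Hence the event
\[
B=\bigcap_{i=1}^m\{|\nabla_{e_i}\varphi|\ge r_i\}
\]
is the same cylinder event for all large $n$. Theorem~\ref{thm:1-2-direction-rigidity} then gives
\[
\mu^\nabla_{L_n,\mathfrak{s}}(B)\le e^{-c\min_j V_j(1)\sum_i r_i},
\]
with $c$ independent of $L_n$.

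Second I would take the limit. The event $B$ depends on finitely many gradients, each integer valued. So $B$ is a countable union of the cylinder sets $\{\nabla_{e_i}\varphi=k_i\ \forall i\}$ with $|k_i|\ge r_i$. Each of these sets is clopen in the product topology on gradient configurations, which is discrete in each coordinate. Convergence on each of them, together with Fatou's lemma applied to the countable sum (or equivalently the Portmanteau bound for the open set $B$), gives
\[
\mu^\nabla_{\infty,\mathfrak{s}}(B)\le\liminf_n\mu^\nabla_{L_n,\mathfrak{s}}(B).
\]
Combining the two steps yields \eqref{eq:exp-suppressed-gradients-condition}.

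The only point needing care is that last step. Because $B$ is not a finite cylinder event, I avoid claiming convergence of $\mu^\nabla_{L_n,\mathfrak{s}}(B)$ itself and use only the lower semicontinuity inequality, which suffices for an upper bound. The remaining steps are routine.
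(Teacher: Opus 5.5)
Your proposal is correct and follows the paper's route: the paper also just notes that the events in Theorem~\ref{thm:1-2-direction-rigidity} are cylinder events, so the $L$-uniform bound passes to any subsequential limit. Your Fatou/lower-semicontinuity step is a more careful version of the same limit passage; since $B$ depends on finitely many integer-valued coordinates, it is in fact clopen, so $\mu^\nabla_{L_n,\mathfrak{s}}(B)\to\mu^\nabla_{\infty,\mathfrak{s}}(B)$ holds outright, though the inequality is all you need.
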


In what follows, we assume $\mu_\infty^\nabla$ is any infinite-volume gradient Gibbs measure satisfying~\eqref{eq:exp-suppressed-gradients-condition} and $\min_{j=1,2}V_j(1)=\beta \ge \beta_0$ is sufficiently large, and deduce that any such measure has dominant heights, and in each layer, the deviations from the dominant height have the desired properties of a very subcritical percolation process. 

\begin{defn}\label{def:density-dominant-heights}
    We say an infinite-volume gradient Gibbs measure $\mu^\nabla_\infty$ has \emph{densely dominant heights} if the following all hold:    
    \begin{enumerate}
        \item $\mu_\infty^\nabla$ has dominant heights in all layers (recalling Definition~\ref{defn:dominant-height}). 
        \item Almost surely, in every layer $\mathcal L$, 
        \begin{align*}
            \limsup_{L\to\infty} \frac{1}{4L^2} \{x\in [-L,L]^d \cap \mathcal L: \varphi_x \ne h(\mathcal L)\} \le 1/20\,.
        \end{align*}
    \end{enumerate}
    Note that the above properties are all measurable with respect to the gradients, and therefore the above makes sense. 
\end{defn}

Our first aim in this section is to use Peierls arguments to prove the following. 

\begin{prop}\label{prop:exp-suppressed-implies-dominant-heights}
    There exists $\beta_0(d)$ such that if $\min_{j=1,2} V_j(1)= \beta \ge \beta_0$, the following holds. 
    If $\mu^\nabla_\infty$ is any translation-invariant infinite-volume gradient Gibbs measure satisfying~\eqref{eq:exp-suppressed-gradients-condition}, then it has densely dominant heights. 
\end{prop}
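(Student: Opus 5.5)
The plan is to work layer by layer: restrict to a fixed layer $\mathcal L$, which is a copy of $\Z^2$, and treat the set of horizontal edges with non-zero gradient as a random edge set $D\subset E(\mathcal L)$. Condition~\eqref{eq:exp-suppressed-gradients-condition} with all $r_i=1$ gives, for any finite set $F$ of horizontal edges,
\[
\mu^\nabla_\infty(F\subset D)\le e^{-c\beta|F|}.
\]
So $D$ is dominated, in the sense of these cylinder bounds, by a Bernoulli edge percolation of density $e^{-c\beta}$, which is tiny when $\beta\ge\beta_0$. Since the gradient is $\Z$-valued and sums to zero around every cycle, two vertices of $\mathcal L$ joined by a path avoiding $D$ have equal heights. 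The level-set structure of $\phi|_{\mathcal L}$ is therefore controlled by the connected components of $E(\mathcal L)\setminus D$.

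\textbf{Step 1 (Peierls on dual contours).} Pass to the planar dual of $\mathcal L$. Any finite connected component of vertices of $\mathcal L$ in the graph $E(\mathcal L)\setminus D$ is surrounded by a closed dual circuit $\gamma$ all of whose primal edges lie in $D$. Similarly, if two vertices $x,y$ lie in different infinite components, a bi-infinite dual path separates them, and every finite window of that path is made of $D$-edges.

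For a fixed vertex $x$, consider contours of length $n$ surrounding $x$. There are at most $n\cdot 3^n$ of them, each has probability at most $e^{-c\beta n}$, and the sum is summable once $\beta$ is large. By Borel--Cantelli together with translation invariance, a.s.\ only finitely many $D$-circuits surround any given vertex. Moreover, the probability that $x$ is separated from the boundary of $[-L,L]^2$ decays in a way that rules out two distinct infinite components. The standard argument is this: in a box of side $L$, two disjoint infinite clusters would force a $D$-dual crossing of length at least $L$, and the probability of that is at most $L\cdot 3^L e^{-c\beta L}\to 0$. The case of no infinite cluster is excluded the same way, since every vertex would then be enclosed by $D$-circuits of arbitrarily large size. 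Hence a.s.\ $E(\mathcal L)\setminus D$ has a unique infinite component. Its vertices carry a common height $h(\mathcal L)$, and every complementary component is finite, because it lies inside some finite $D$-circuit. The last point needs care, since a complement component is a union of finite clusters of other heights; each of them lies inside the outermost circuit, and these circuits are a.s.\ finite. This gives item (1) for every layer simultaneously, since there are only countably many layers.

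\textbf{Step 2 (density).} A vertex $x\in\mathcal L$ with $\phi_x\neq h(\mathcal L)$ must be enclosed by a $D$-circuit. The Peierls sum above, started at contour length $4$, gives
\[
\mu^\nabla_\infty(\phi_x\neq h(\mathcal L))\le \sum_{n\ge4} n3^n e^{-c\beta n}\le 1/40
\]
for large $\beta$. The indicator of $\{\phi_x\ne h(\mathcal L_x)\}$ is a translation-covariant gradient-measurable function, and it is invariant under shifts in the $\mathfrak{e}_1,\mathfrak{e}_2$ directions. Apply the ergodic theorem for the $\Z^2$-action within $\mathcal L$; this needs no ergodicity, only stationarity, and gives an a.s.\ limit of the density equal to the conditional expectation given the invariant $\sigma$-field. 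This conditional expectation need not be below $1/20$ pointwise from an expectation bound alone, so I would instead bound it directly. Use the exponential tail of the number of vertices in a $[-L,L]^2$ box enclosed by circuits, via a second moment or a contour-counting large deviation bound over families of disjoint circuits (multiple edges again use the product bound in~\eqref{eq:exp-suppressed-gradients-condition}). Then apply Borel--Cantelli along $L$ to get the $\limsup$ bound $\le 1/20$ a.s.

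\textbf{Main obstacle.} I expect the hardest step to be the almost-sure density bound. It requires controlling the total area enclosed by all $D$-circuits in a box with exponentially small failure probability, not merely in expectation. I would handle this by a Peierls count over collections of outermost circuits with total area $\ge L^2/20$. Their total length is at least of order $L$ by the isoperimetric inequality, so the failure probability is at most $e^{-(c\beta-C)L}$, which is summable in $L$.
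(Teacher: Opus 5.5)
Your Step~1 is essentially the paper's argument (Lemma~\ref{lem:largest-contour-in-layer-bound} and Corollary~\ref{cor:has-dominant-heights}): a planar Peierls bound in each layer from~\eqref{eq:exp-suppressed-gradients-condition} with $r_i=1$, which gives finiteness of contours and percolation of the exterior. One justification there needs replacing. Let $I$ be the infinite cluster. Finiteness of the components of $\mathcal L\setminus I$ does not follow from each finite cluster sitting inside its own circuit, since a chain of adjacent finite clusters could a priori be infinite. Use instead that connected components of the dual set $D^*$ are a.s.\ finite; your dual-path bound gives this, and it is what the paper uses. An infinite component of $\mathcal L\setminus I$ would be separated from $I$ by an infinite connected set of dual $D$-edges.

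\textbf{The gap is in Step~2, at the point you flagged.} A Peierls sum gives $e^{-(c\beta-C)L}$ only if the number of admissible families of circuits of total length $T$ is at most $C^T$, and that fails here. Placing $N$ circuits in the box costs a factor of order $\binom{4L^2}{N}$, and the isoperimetric bound $T\gtrsim L$ says nothing about $N$. Consider families consisting of one circuit of length of order $L$ (which alone supplies the area) plus an arbitrary set of well-separated unit circuits. These already contribute at least $e^{-O(\beta L)}(1+e^{-4c\beta})^{c_0L^2}$ to your sum, which tends to infinity. So summing the hypothesis bounds over complete collections of outermost circuits is vacuous; you must decide which sub-collection to pay for.

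The second-moment alternative does not close this either. Let $X_L$ be the number of excited vertices in $[-L,L]^2\times\{\vec k\}$. The product bound only yields $\mathbb E[X_L^2]\le(\varepsilon_\beta L^2)^2+O(L^2)$ with $\varepsilon_\beta\to0$ as $\beta\to\infty$. That gives a failure probability which is small but does not decay in $L$, so it is not summable.

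\textbf{The missing idea is a multi-scale accounting of this location entropy.} This is what the paper does in Lemma~\ref{lem:excitation-statistics-bound} and Corollary~\ref{cor:dominant-height-exists}. Group the contours meeting the box by weight in $[2^\ell,2^{\ell+1}]$. If the total weight at scale $\ell$ exceeds $\rho_\ell L^2$, there are $N\ge 2^{-\ell-1}\rho_\ell L^2$ such contours. Then $\log\binom{L^2}{N}$ is only of order $2^{-\ell}\log(2^{\ell}/\rho_\ell)$ per unit weight, which the energy cost $c\beta$ per unit weight beats once $\rho_\ell\approx e^{-c\beta 2^\ell/8}$. This gives failure probability $e^{-L^{0.9}}$ for all $\ell$ up to order $\log_2\log L$.

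Contours of weight $\gtrsim\log L$ meeting the box are excluded separately by Lemma~\ref{lem:largest-contour-in-layer-bound}, at polynomially small cost. A scale-$\ell$ contour encloses at most $2^{\ell+1}$ times its weight. Hence the excited area is at most $4L^2\sum_\ell 2^{\ell+1}e^{-c'\beta 2^\ell}\le 4L^2/20$ outside an event of probability $L^{-C}$. This is summable, and Borel--Cantelli gives item~(2).

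Alternatively, your ergodic-theorem route can be completed using moments of every order rather than only the second. The product bound gives $\mathbb E[(X_L/(2L+1)^2)^k]\le\varepsilon_\beta^k+o_L(1)$ for each fixed $k$. Hence the limiting density is a.s.\ at most $\varepsilon_\beta$.
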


\subsection{Sub-criticality of excitations away from the dominant height}
In this section, we use the output of the previous section together with standard Peierls arguments to deduce the fact that each layer has a dominant height, and excitations away from that dominant height are small.  
First, we establish that each layer indeed has one height that can be called its \emph{dominant height}. For that purpose, it will help to introduce some notation. 

Recall the definition of the layers of $\mathbb Z^d$, and identify them with vectors in $\mathbb Z^{d-2}$ naturally by saying the layer $\mathcal L_{\vec{k}}$ indexed by $\vec{k}\in \mathbb Z^{d-2}$ is the plane $\mathbb Z^2 \times \vec{k}$. 
Let  $E(\mathcal L_{\vec{k}})$ be the set of horizontal edges on that layer. 

\begin{defn}\label{def:horizontal-vertical-disagreement}
    An \emph{horizontal disagreement} in a layer $\mathcal L$ is a $(d-1)$-dimensional plaquette $e^*$ dual to an edge $e= vw$ for $vw\in E(\mathcal L)$ for which $\phi_v\ne \phi_w$. 
\end{defn}

The set of horizontal disagreements at layer $\mathcal L_{\vec{k}}$ for $\vec{k} \in \mathbb Z^{d-2}$ can be written as 
        \begin{align*}
    \Gamma_{\vec{k}} = \{ e^*: e\in E(\mathcal L_{\vec{k}}),  \nabla_e\phi \ne 0\}\,.
\end{align*}
Decorate the set $\Gamma_{\vec{k}}$ with integers associated to each of its plaquettes to arrive at $\mathbf{\Gamma}_{\vec{k}}:= (\Gamma_{\vec{k}}, (\nabla_e \phi)_{e^*\in \Gamma_{\vec{k}}})$. Then given $\mathbf{\Gamma}_{\vec{k}}$ and the value of $\phi$ at a single vertex in $\mathcal L_{\vec{k}}$, one can uniquely reconstruct the values of $\phi$ on all of $\mathcal L_{\vec{k}}$. Note that the horizontal disagreement plaquettes can equivalently be seen as dual edges in the layer by only taking their intersection with $\mathbb R^2 \times \{\vec{k}\}$, so we will often refer to its constituent elements as (dual) edges. 

Define a \emph{horizontal contour} at layer $\vec{k}$ to be a maximal connected component of $\Gamma_{\vec{k}}$. The set $\Gamma_{\vec{k}}$ then naturally decomposes into its constituent contours $\Gamma_{\vec{k}}= \bigcup_i \gamma_{\vec{k},i}$, and each can be decorated by the values of $\mathbf{\Gamma}_{\vec{k}}$ to get decorated horizontal contours $\boldsymbol{\gamma}_{\vec{k},i}$.

 We can assign weights to horizontal contours as follows: 
\begin{align*}
    W(\boldsymbol{\gamma}_{\vec{k}}) = \sum_{e^* \in \gamma_{\vec{k}}} |\nabla_{e}\phi | \,,
\end{align*}
From Theorem~\ref{thm:1-2-direction-rigidity} and a Peierls argument, we get the following. 

\begin{lem}\label{lem:largest-contour-in-layer-bound}
    There exists $c>0$ such that if $\mu_\infty^\nabla$ satisfies~\eqref{eq:exp-suppressed-gradients-condition} and $\beta_0$ is sufficiently large, for every layer $\vec{k}$, 
    $$\mu_{\infty}^\nabla( \exists \gamma\in \Gamma_{\vec{k}}:  \gamma \cap [-L,L]^2\times \{\vec{k}\} \ne \emptyset, W(\boldsymbol{\gamma})\ge r ) \le 4L^{2} e^{ - c\beta  r}\,.$$  
\end{lem}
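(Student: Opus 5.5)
We prove the lemma by a union bound over the possible contours combined with the multi-edge estimate~\eqref{eq:exp-suppressed-gradients-condition}. If some contour $\gamma$ meets $[-L,L]^2\times\{\vec{k}\}$ and has $W(\boldsymbol{\gamma})\ge r$, then $\gamma$ contains a dual edge $e_0^*$ that meets the box. There are at most $4L^2$ choices of such $e_0^*$, up to a constant factor from orientation that we absorb into $c$ later. Moreover, $\gamma$ is a connected set of dual edges in the two-dimensional layer containing $e_0^*$. It then suffices to fix $e_0^*$ and show
\[
\mu_\infty^\nabla\big(\exists\, \gamma\ni e_0^*\ \text{connected in } \Gamma_{\vec{k}}:\ W(\boldsymbol{\gamma})\ge r\big)\le e^{-c\beta r}.
\]

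For this, we sum over the shape and decoration of $\gamma$. Consider a connected set $\gamma$ of $n$ dual edges in the planar layer containing $e_0^*$, with decorations $r_e:=|\nabla_e\phi|\ge 1$ for $e^*\in\gamma$ and $\sum_e r_e = M\ge r$. The event that every $e^*\in\gamma$ is a horizontal disagreement with $|\nabla_e\phi|\ge r_e$ is an intersection of finitely many events of the form in~\eqref{eq:exp-suppressed-gradients-condition}, on distinct $\{\mathfrak{e}_1,\mathfrak{e}_2\}$-oriented edges. Its probability is therefore at most $e^{-c\beta M}$. We deliberately drop the requirement that $\gamma$ be a maximal component: it only enlarges the event, and the bound does not need it. The edges lie in a single layer, so no vertical geometry enters, and~\eqref{eq:exp-suppressed-gradients-condition} applies directly.

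It remains to count the configurations. The number of connected sets of $n$ dual edges containing $e_0^*$ in the planar dual lattice is at most $C_0^n$, with $C_0$ a lattice-animal constant. The number of decorations $(r_e)$ with $r_e\ge1$ and $\sum r_e=M$ is at most $\binom{M-1}{n-1}\le 2^M$. Since $n\le M$, summing gives
\[
\sum_{M\ge r}\ \sum_{n\le M} C_0^n\, 2^M e^{-c\beta M}\le \sum_{M\ge r} M (2C_0)^M e^{-c\beta M}\le e^{-c\beta r/2},
\]
once $\beta_0$ is large enough that $(2C_0)^M M e^{-c\beta M/2}$ is summable with sum at most $1$. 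Replacing $c$ by $c/2$, and folding in the factor for the choice of $e_0^*$, gives the stated bound $4L^2e^{-c\beta r}$.

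The step needing the most care is the first reduction. A contour meeting the box must actually contain a dual edge located in or adjacent to the box, which is true by definition of intersection. We must also make sure that the union bound over $\gamma\ni e_0^*$ covers the event. It does, since any contour containing $e_0^*$ is itself such a connected set with all its edges disagreeing. The rest is the standard Peierls summation.
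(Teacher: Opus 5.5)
Your proof is correct and is essentially the paper's Peierls union bound. The paper roots the contour at a vertex of the box instead of at a dual edge, bounds shapes by a lattice-animal count and decorations (including signs) by $2^{2\ell}2^{\ell}$, and then applies~\eqref{eq:exp-suppressed-gradients-condition} exactly as you do. Your use of the events $\{|\nabla_e\phi|\ge r_e\}$ on not-necessarily-maximal connected sets is a slight tidying: it removes the sign enumeration and also covers an infinite contour, a case the paper's sum over finite decorated contours leaves implicit.
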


\begin{proof}
    Consider the probability that there exists such a $\gamma \in \Gamma_{\vec{k}}$ containing a vertex $v\in [-L,L]^2 \times \vec{k}$. By a union bound, it suffices to show that the probability of that is at most $e^{ - c\beta   r}$. Fix any decorated contour $\bar{\boldsymbol{\gamma}}$ going through a fixed vertex $v$, and consider 
    \begin{align*}
        \mu_{\infty}^\nabla ( \bar{\boldsymbol{\gamma}}\in \mathbf{\Gamma}_{\vec{k}})\,.
    \end{align*}
    The event $\bar{\boldsymbol{\gamma}} \in \mathbf{\Gamma}_{\vec{k}}$ is exactly of the form: fix a dual edge-set $\Gamma$ in $E(\mathcal L_{\vec{k}})$ and prescribe non-zero gradients along those edges. The assumption~\eqref{eq:exp-suppressed-gradients-condition} gives that this probability is at most $e^{ - c \beta W(\bar{\boldsymbol{\gamma}})}$.  

    We can then union bound 
    \begin{align}\label{eq:horizontal-contours-suppressed-bound}
        \mu_{\infty}^\nabla(\exists \gamma \ni v: \gamma \in \Gamma_{\vec{k}}, W(\boldsymbol{\gamma}) \ge r ) \le \sum_{\ell \ge r} \sum_{\boldsymbol{\gamma}: \gamma \ni v, W(\boldsymbol{\gamma}) = \ell} e^{ - c\beta \ell}\,.
    \end{align}
    The following lemma bounds the number of summands in the second sum above. 
    
       \begin{lem}\label{lem:counting-horizontal-contours}
        The number of decorated horizontal contours $\boldsymbol{\gamma} \subset E(\mathcal L_{\vec{k}})$ containing a fixed vertex $v\in \mathcal L_{\vec{k}}$ in its $\mathcal L_{\vec{k}}$-interior, and of total weight $\ell$ is at most $C^\ell$ for a universal $C$. 
    \end{lem}
     \begin{proof}
        We first enumerate over the dual-edges in $E(\mathcal L_{\vec{k}})$ that form $\gamma$. These are connected sets of at most $\ell$ edges in a copy of $\mathbb Z^2$, of which there are at most $C^\ell$ for a universal constant $C$. 
        In order to augment this into a decorated horizontal contour, we must partition the total weight $\ell$ amongst those $|\gamma| \le \ell$ many edges, with each one having weight at least $1$. There are at most $2^{2\ell}$ many choices for such assignments of absolute values of gradients to the edges in $\gamma$, and a further $2^\ell$ for the choice of signs on each of these gradients. 
    \end{proof}

    Combining Lemma~\ref{lem:counting-horizontal-contours} with~\eqref{eq:horizontal-contours-suppressed-bound} and summing concludes the proof up to a change in the choice of constant $c$, so long as $\beta\ge \beta_0$ is big enough.    
\end{proof}

We are already able to deduce the proof of item (1) of Definition~\ref{def:density-dominant-heights}. 

\begin{cor}\label{cor:has-dominant-heights}
    If $\mu_\infty^\nabla$ satisfies~\eqref{eq:exp-suppressed-gradients-condition} and $\beta_0$ is sufficiently large, then $\mu_\infty^\nabla$ almost surely has dominant heights in the sense of Definition~\ref{defn:dominant-height}. 
\end{cor}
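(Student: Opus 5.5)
Fix a layer $\mathcal L=\mathcal L_{\vec k}$ and argue within it, then intersect over the countably many layers. The plan is a Borel--Cantelli argument on nested annuli, using Lemma~\ref{lem:largest-contour-in-layer-bound}, followed by a planar duality argument that identifies a single percolating level set.

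\textbf{Step 1 (no large contours near the origin at each scale).} For $n\ge 1$ let $B_n=[-2^n,2^n]^2\times\{\vec k\}$. Let $G_n$ be the event that every horizontal contour $\gamma\in\Gamma_{\vec k}$ meeting $B_{n+1}$ has $W(\boldsymbol\gamma)<n$. Since each dual edge of a contour has weight at least $1$, this also forces $|\gamma|<n$. By Lemma~\ref{lem:largest-contour-in-layer-bound} with $L=2^{n+1}$ and $r=n$,
\[
\mu_\infty^\nabla(G_n^c)\le 4\cdot 4^{n+1}e^{-c\beta n}.
\]
This is summable once $\beta_0$ is large enough that $c\beta_0>\log 4$. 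By Borel--Cantelli, almost surely $G_n$ holds for all $n\ge n_0(\omega)$. In particular, every contour intersecting $B_{n+1}$ then has diameter less than $n\ll 2^n$.

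\textbf{Step 2 (a circuit at constant height in each annulus).} Work on $G_n$ with $n\ge n_0$. Consider the annulus $A_n=B_{n+1}\setminus B_n$. Any dual path in $\Gamma_{\vec k}$ crossing $A_n$ would lie in a single contour of diameter at least $2^n>n$, which $G_n$ rules out. By planar duality in $\mathbb Z^2$, there is therefore a primal circuit in $A_n$ surrounding $B_n$ that uses no disagreement edges. Hence $\phi$ is constant on this circuit; call the value $h_n$.

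To show consecutive values agree, note that on $G_n\cap G_{n+1}$ the set of vertices of $B_{n+2}\setminus B_n$ that are not enclosed by small contours is connected. More concretely, a radial primal path can be chosen joining the circuit in $A_n$ to the one in $A_{n+1}$ while avoiding disagreement edges, since contours are small and can be detoured around. This gives $h_n=h_{n+1}$. I expect this to be the main, though routine, obstacle: the detour argument must be done carefully in two dimensions. I would handle it by observing that the union of all disagreement dual edges meeting $B_{n+2}$ splits into components of diameter less than $n+1$. So the complement of their ``filled'' versions (each contour together with its bounded interior) is connected in the annulus region. That complement is free of disagreements, so $\phi$ is constant on it.

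\textbf{Step 3 (conclusion).} Set $h=h_{n_0}$. The union over $n\ge n_0$ of the constant-height connected regions obtained in Step 2 is an infinite connected subset of $\{w\in\mathcal L:\phi(w)=h\}$. Its complement in $\mathcal L$ consists of $B_{n_0}$ together with the interiors of small contours. Each complementary component outside $B_{n_0}$ lies inside a filled contour of bounded size, and the part inside $B_{n_0}$ is finite. Hence the complement has only finite connected components. Since the layer $\vec k$ was arbitrary and the layers are countable, this gives dominant heights in every layer almost surely, in the sense of Definition~\ref{defn:dominant-height}.

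Finally, all the events involved depend only on gradients together with the choice of the representative's value at a single point of the layer. The existence statement is therefore gradient-measurable, as noted after Definition~\ref{def:density-dominant-heights}.
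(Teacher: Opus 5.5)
Your proof is correct, but it is organized differently from the paper's.

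\textbf{The paper's route.} The paper argues globally in three steps. First, it shows that all contours in the layer are almost surely finite, by the same counting as in Lemma~\ref{lem:largest-contour-in-layer-bound} but applied to connected paths of disagreement edges rather than whole contours. Second, it takes $\text{Ext}(\boldsymbol\Gamma_{\vec k})$, the set of vertices exterior to every contour, and asserts that this set is connected and hence at a single height. Third, it gets percolation from continuity of probability: failing to connect $[-M,M]^2$ to infinity through this set requires a contour enclosing $[-M,M]^2$, which has probability at most $\sum_{L\ge M}e^{-c\beta L/2}\to 0$.

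\textbf{Your route.} You use Borel--Cantelli over dyadic scales to get that, almost surely, all contours near $B_{n+1}$ are eventually tiny. You then use planar duality to produce open circuits $\mathcal C_n$ at a common height in every annulus $A_n$. This buys an explicit, self-contained topological conclusion. Every vertex lies strictly inside some $\mathcal C_n$, which is contained in the infinite component $K$ of $\{\phi=h\}$. So any connected set avoiding $K$ is trapped inside a circuit and is finite. This makes your filled-contour description in Step~3 unnecessary. It also sidesteps the paper's unproved claim that the intersection of the exteriors of infinitely many contours is connected.

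\textbf{What the paper's route buys.} It is shorter, and it directly identifies the dominant-height cluster with $\text{Ext}(\boldsymbol\Gamma_{\vec 0})$. That is the object used later in Lemma~\ref{lem:existence-of-height-measures} and Corollary~\ref{cor:one-point-exponential-tails}.

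\textbf{Two small points.}

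First, the step $h_n=h_{n+1}$ is cleanest by duality in the region between the two circuits. If they were not joined by an open primal path, a closed dual circuit would separate them and hence surround $B_n$. That would require far more than $n+1$ disagreement edges in a single contour meeting $B_{n+2}$, contradicting $G_{n+1}$. Your claim that ``the complement of the filled contours is connected in the annulus'' can fail literally: a small contour hugging the outer boundary of $B_{n+2}$ can cut off a corner piece. The conclusion you draw from it is still right.

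Second, Step~1 rules out infinite contours by citing Lemma~\ref{lem:largest-contour-in-layer-bound}. That lemma's proof only enumerates finite decorated contours. To be safe, define $G_n$ via connected sets of $n$ disagreement dual edges meeting $B_{n+1}$. There are at most $C^n$ such sets per starting edge, and each has probability at most $e^{-c\beta n}$ by \eqref{eq:exp-suppressed-gradients-condition}. This is exactly the extra step the paper's proof takes.
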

\begin{proof}
    By an essentially identical argument to the proof of Lemma~\ref{lem:largest-contour-in-layer-bound}, the probability of a connected path of horizontal disagreement edges of length at least $r$, starting at the origin, is at most $e^{ - c\beta r}$ under $\mu^\nabla_{\infty}$. In particular, by continuity of probability, almost surely under $\mu^\nabla_{\infty}$, for every layer $\vec{k}$, all contours of $\Gamma_{\vec{k}}$ are finite. As such, 
\begin{itemize}
    \item Every $\boldsymbol{\gamma}$ (viewed as a subset of $\mathbb R^2$) has complement in $\mathbb R^2$ having exactly one infinite component, which we call the exterior $\text{Ext}(\boldsymbol{\gamma})$. The intersection of all the exteriors of all components in $\boldsymbol{\Gamma}_{\vec{k}}$ is a connected subset of $\mathbb R^2\times \{\vec{k}\}$ which we call the exterior $\text{Ext}(\mathbf{\Gamma}_{\vec{k}})$ in layer $\mathcal L_{\vec{k}}$. This also means that distinct contours in the same layer are such that either one is interior to the other, or they are mutually exterior. 
\end{itemize} 
We now claim that $\text{Ext}(\boldsymbol{\Gamma}_{\vec{k}})$ percolates almost surely, and because all vertices in it get the same height (they are connected by paths of gradient zero), their height is the dominant height of the layer. The almost sure percolation will follow from Lemma~\ref{lem:largest-contour-in-layer-bound} by continuity of probability. Indeed, consider the event that $[-M,M]^2\times \{\vec{k}\}$ is connected to infinity through $\text{Ext}(\boldsymbol{\Gamma}_{\vec{k}})$ in $\mathcal L_{\vec{k}}$. The complement of this requires some contour  in $\boldsymbol{\Gamma}_{\vec{k}}$ enclosing $[-M,M]^2\times \{\vec{k}\}$ which is bounded by Lemma~\ref{lem:largest-contour-in-layer-bound} applied with $r=L$, and a union bound over $L$, as at most $\sum_{L\ge M} e^{ - c\beta L/2}$ which goes to zero as $M\to\infty$. 
\end{proof}

We also immediately obtain the following one-point tightness result.

\begin{cor}\label{cor:one-point-exponential-tails}
    If $\mu_\infty^\nabla$ satisfies~\eqref{eq:exp-suppressed-gradients-condition} and $\beta_0$ is sufficiently large, then the height difference at a layer's origin $o = (0,0,\vec{k})$ to its layer's dominant height satisfies $\mu^\nabla_\infty(|\varphi_o - h_\varphi(\vec{k})|\ge r) \le e^{ - c \beta r}$.  
\end{cor}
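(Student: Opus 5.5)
The plan is to use the contour structure from the proof of Corollary~\ref{cor:has-dominant-heights}. On the almost sure event that every horizontal contour in $\Gamma_{\vec{k}}$ is finite, the dominant height $h_\varphi(\vec{k})$ is the common height on the percolating set $\text{Ext}(\boldsymbol{\Gamma}_{\vec{k}})$. If $o=(0,0,\vec{k})$ lies in $\text{Ext}(\boldsymbol{\Gamma}_{\vec{k}})$, then $\varphi_o = h_\varphi(\vec{k})$ and there is nothing to prove. Otherwise $o$ lies in the interior of some outermost contour, and we bound the height difference by the weights of the contours that surround $o$.

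The key deterministic step is the following. Choose a straight horizontal path in $\mathcal L_{\vec{k}}$ from $o$ to a vertex of $\text{Ext}(\boldsymbol{\Gamma}_{\vec{k}})$; for instance, go along the $\mathfrak{e}_1$-axis until the path has left every contour surrounding $o$. By the telescoping sum along this path, $|\varphi_o - h_\varphi(\vec{k})|$ is at most the sum of $|\nabla_e\varphi|$ over the edges $e$ crossed by the path whose dual lies in $\Gamma_{\vec{k}}$. Each such dual edge belongs to a contour that the ray from $o$ meets. The cleanest bound uses the contours that meet the horizontal segment $[0,R]\times\{0\}\times\{\vec{k}\}$, where $R$ is the exit point. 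This gives $|\varphi_o - h_\varphi(\vec{k})| \le \sum_{\gamma} W(\boldsymbol{\gamma})$, the sum running over contours meeting that segment.

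For the probabilistic estimate I would not invoke Lemma~\ref{lem:largest-contour-in-layer-bound} directly. Instead I would go back to~\eqref{eq:exp-suppressed-gradients-condition}, which applies to intersections over arbitrary finite edge sets. The event $|\varphi_o-h_\varphi(\vec{k})|\ge r$ implies that there exist $N\ge 1$ and distinct decorated contours $\boldsymbol{\gamma}_1,\dots,\boldsymbol{\gamma}_N$ with the following properties: each meets the segment $\{(x,0,\vec{k}): 0\le x\le R\}$, together they account for every point of the segment not in $\text{Ext}$, and their total weight is at least $r$. Under this constraint, every contour $\boldsymbol{\gamma}_i$ meets the segment at some point $x_i$ with $x_i \le \sum_{j} |\gamma_j|$, because consecutive contours along the ray must be chained. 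Fixing the total weight $\ell=\sum_i W(\boldsymbol{\gamma}_i)\ge r$, we then count such collections. The starting points $x_i$ can be chosen in at most $2^{\ell}$ ways, as a composition of at most $\ell$ positions. By Lemma~\ref{lem:counting-horizontal-contours}, the decorated contours through those points number at most $C^{\ell}$. Applying~\eqref{eq:exp-suppressed-gradients-condition} to the union of their edge sets, each collection has probability at most $e^{-c\beta\ell}$. Summing over $\ell\ge r$ gives $\sum_{\ell\ge r}(2C)^\ell e^{-c\beta\ell}\le e^{-c'\beta r}$ once $\beta_0$ is large. A simpler alternative is to use only the single outermost contour surrounding $o$, provided the inner contours' gradients can be charged to it. However, nested contours add weight, so the chained count above is the safer route.

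The main obstacle is the chaining claim: we need that the contours crossed by the ray before reaching $\text{Ext}$ are "contiguous", so that their meeting points with the ray are controlled by their total size and no sum over unbounded distances arises. I would handle this by taking the exit point $R$ to be the first point of the ray lying outside every contour that surrounds $o$. Every contour crossed before $R$ lies inside the outermost contour surrounding $o$ and meets $[0,R]$, and $R$ is at most the diameter of that outermost contour. The count can therefore be organized around the outermost contour, of weight $\ell_0$ with $R\le \ell_0$. The nested contours inside it meet positions in $[0,\ell_0]$, and their number is controlled by their own weights. This keeps the total combinatorial factor exponential in the total weight.
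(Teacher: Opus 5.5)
Your proof is correct, but it is organized differently from the paper's.

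\emph{The paper's route.} The paper bounds $|\varphi_o-h_\varphi(\vec k)|$ by the total weight of only the \emph{nested} contours that surround $o$. This implicitly uses that contours not surrounding $o$ contribute nothing to $\varphi_o-h_\varphi(\vec k)$; for instance, each contour's gradient field is separately exact and vanishes on its exterior. Each surrounding contour of weight $t_i$ is then counted directly by Lemma~\ref{lem:counting-horizontal-contours}, which is stated for contours having the fixed vertex in their interior. So no positional bookkeeping is needed, and the entropy is exponential in the total weight $M$.

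\emph{Your route.} You telescope along a straight segment of the $\mathfrak{e}_1$-axis. This makes the deterministic step completely elementary, but it over-counts: contours that merely cross the segment without surrounding $o$ also enter your bound. You pay for this by having to locate where those contours meet the ray. You control that via $R\le|\gamma^*|\le\ell$ for the outermost surrounding contour $\gamma^*$, or equivalently via your covering/chaining argument. Both routes give an $e^{O(\ell)}$ entropy factor against $e^{-c\beta\ell}$ from \eqref{eq:exp-suppressed-gradients-condition}, hence the same tail.

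\emph{One step to justify.} You should show that the first ray vertex $(R,0,\vec k)$ outside $\gamma^*$ actually lies in $\text{Ext}(\boldsymbol{\Gamma}_{\vec k})$. The half-edge from this vertex to the midpoint of the crossed dual edge, which belongs to $\gamma^*$, meets no other contour. Hence any contour surrounding this vertex has $\gamma^*$ in the same bounded complementary component. It would therefore surround $o$, contradicting the maximality of $\gamma^*$. Alternatively, define $R$ directly as the first ray vertex in $\text{Ext}(\boldsymbol{\Gamma}_{\vec k})$ and use your chaining bound, which needs no such claim.
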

\begin{proof}
    By Corollary~\ref{cor:has-dominant-heights} almost surely, every layer has a well-defined $\text{Ext}(\boldsymbol{\Gamma}_{\vec{k}})$, relative to which we can define height gradients. The event that $|\varphi_o - h_{\varphi}(\vec{k})|$ exceeds $r$ necessitates that there are a family of nested horizontal contours in $E(\mathcal L_{\vec{k}})$ all confining the origin in their interiors, such that their total gradient changes exceeds $r$.  This is bounded by a Peierls argument, similar to the proof of Lemma~\ref{lem:largest-contour-in-layer-bound}. Fix $M\ge r$ to be the total weight of all contours in layer $\vec{k}$  to which $o$ is internal, enumerate over $K\le M$ which counts the number of such horizontal contours, and $C^M$ for enumerating over the specific choices of those $K$ decorated contours by Lemma~\ref{lem:counting-horizontal-contours}. Then by applying~\eqref{eq:exp-suppressed-gradients-condition} and summing over the choices made in the enumeration, we get the desired. 
\end{proof}

\subsection{Empirical counts of excitations from the dominant height}
It remains to establish that item (2) of Definition~\ref{def:density-dominant-heights} holds for $\mu_\infty^\nabla$ satisfying~\eqref{eq:exp-suppressed-gradients-condition}. 

We give bounds on the statistics of numbers of vertices internal to contours within each layer at distance at most $L$ from the origin. For that purpose, we fix a layer index $\vec{k}$, drop it from the notation for now, and partition the set of contours in $\mathbf{\Gamma}_{\vec{k}}$ intersecting $[-L,L]^2 \times \{\vec{k}\}$ by a dyadic scaling as follows: 
\begin{align*}
    A_\ell = \{\boldsymbol{\zeta} \in \mathbf{\Gamma}_{\vec{k}}: \gamma \cap [-L,L]^2 \times \{\vec{k}\}\ne\emptyset\,,\, W(\boldsymbol{\zeta}) \in [2^\ell, 2^{\ell+1}]\} \qquad \ell \ge 0\,.
\end{align*}

\begin{lem}\label{lem:excitation-statistics-bound}
    If $\mu_\infty^\nabla$ satisfies~\eqref{eq:exp-suppressed-gradients-condition} and $\beta_0$ is sufficiently large, for a universal constant $c'>0$ and for every $0 \le \ell \le \log_2 (\frac{1}{100 c \beta_0} \log L)$ (where $c$ is the constant from~\eqref{eq:exp-suppressed-gradients-condition}), 
    \begin{align*}
        \mu_{\infty}^\nabla\Big(\sum_{\boldsymbol{\zeta} \in A_\ell} W(\boldsymbol{\zeta}) \ge e^{ - c' \beta_0 2^{\ell}} L^2 \Big) \le e^{ - L^{0.9}}\,.
    \end{align*}
\end{lem}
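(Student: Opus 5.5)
We reduce the large-deviation bound to a counting-and-union-bound argument that uses the product structure in~\eqref{eq:exp-suppressed-gradients-condition}. This hypothesis bounds the probability that any prescribed family of horizontal edges simultaneously carries prescribed non-zero gradients by $e^{-c\beta \cdot(\text{total weight})}$. The key point is that distinct contours in $A_\ell$ are disjoint as dual-edge sets. Hence the event that a given collection of decorated contours all appear in $\mathbf{\Gamma}_{\vec{k}}$ is contained in a single event of the form in~\eqref{eq:exp-suppressed-gradients-condition}. Its probability is therefore at most $\exp(-c\beta\sum_i W(\boldsymbol{\zeta}_i))$.

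\textbf{Step 1: reduce to a union over collections.} On the event $\sum_{\boldsymbol{\zeta}\in A_\ell}W(\boldsymbol{\zeta})\ge e^{-c'\beta_0 2^\ell}L^2=:T$, we extract a sub-collection of distinct contours in $A_\ell$ with total weight in $[T,T+2^{\ell+1}]$. Each such contour has weight between $2^\ell$ and $2^{\ell+1}$, so the number $N$ of contours in the sub-collection satisfies $T2^{-\ell-1}\le N\le T2^{-\ell}+2$.

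\textbf{Step 2: count collections.} We specify each contour by a root vertex in $[-L,L]^2\times\{\vec k\}$ (or within distance $2^{\ell+1}$ of it), at most $CL^2$ choices, together with a decorated shape, at most $C^{2^{\ell+1}}$ choices by Lemma~\ref{lem:counting-horizontal-contours}. The number of collections is then at most
\[
\binom{CL^2}{N}C^{N2^{\ell+1}}\le \Big(\frac{eCL^2}{N}\Big)^N C^{2T+2^{\ell+2}}.
\]
Each collection has probability at most $e^{-c\beta T}$.

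\textbf{Step 3: compare entropy and energy.} Since $N\approx T2^{-\ell}$, we have $L^2/N\approx 2^\ell e^{c'\beta_0 2^\ell}$. The entropy factor is thus $\exp\big(N(\ell\log 2+c'\beta_0 2^\ell+O(1))\big)\le \exp\big(T(c'\beta_0+O(1))\big)$, using $N\le T2^{-\ell}+2$. Choosing $c'\le c/4$ and $\beta_0$ large, the total is at most $\exp(-c\beta T/2)$.

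\textbf{Main obstacle.} It remains to check that $T$ is large enough that $\exp(-c\beta T/2)\le e^{-L^{0.9}}$. The constraint $\ell\le\log_2(\frac{1}{100c\beta_0}\log L)$ gives $c\beta_0 2^\ell\le \frac{1}{100}\log L$. Hence $e^{-c'\beta_0 2^\ell}\ge L^{-1/100}$ when $c'\le c$, so $T\ge L^{1.99}\gg L^{0.9}$. A secondary issue is that $\beta$ may exceed $\beta_0$ while $T$ is defined with $\beta_0$. This only helps, since the energy factor carries $\beta\ge\beta_0$.
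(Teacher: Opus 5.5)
Your proposal is correct and follows essentially the paper's argument. Both run a union bound over collections of disjoint decorated contours in $A_\ell$: roots are counted by a binomial coefficient, shapes by Lemma~\ref{lem:counting-horizontal-contours}, the energy factor $e^{-c\beta T}$ comes from \eqref{eq:exp-suppressed-gradients-condition}, and the constraint on $\ell$ forces $T\ge L^{1.99}$. The only difference is bookkeeping: you truncate to a sub-collection of weight in $[T,T+2^{\ell+1}]$, which pins $N\asymp T2^{-\ell}$ from both sides, whereas the paper sums over all total weights $T\ge \rho L^2$.
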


\begin{proof}
    In order to have $\sum_{\boldsymbol{\zeta} \in A_\ell} W(\boldsymbol{\zeta}) \ge \rho L^2$, 
     one must have that $$|\{\boldsymbol{\zeta}: \boldsymbol{\zeta} \in A_\ell\}| \ge 2^{- (\ell+1)} \rho L^2\,.$$
     We now consider any fixed collection $\bar A_\ell$ with total  weight $\sum_{\boldsymbol{\zeta}\in \bar A_\ell} W(\boldsymbol{\zeta})$. By~\eqref{eq:exp-suppressed-gradients-condition}, the probability of having that collection of disagreement edges with their decorations as their gradients, is at most 
    \begin{align*}
        \mu_{\infty}^\nabla(\bar A_\ell) \le \exp\Big( - c \beta\sum_{\boldsymbol{\zeta}\in \bar A_\ell} W(\boldsymbol{\zeta})\Big)\,.
    \end{align*}

    We now enumerate the choices for the collection of contours $\bar A_\ell$ with specified total weight $W(\bar A_\ell):=\sum_{\boldsymbol{\zeta}\in \bar A_\ell}W(\boldsymbol{\zeta}) = T$. Since each has weight at least $2^{\ell}$, the total number of distinct contours in $\bar A_\ell$, denoted $N$, is at most $T 2^{-\ell }$. We are going to describe how to enumerate over $\bar A_{\ell}$ for fixed $T\ge \rho L^2$ and $N\le 2^{-\ell} T$. 
    \begin{enumerate}
        \item First locate $N$ vertices $v_1,...,v_N$ in $\mathbb T_L^2$ at which to root the contours. The number of choices for these $N$ vertices is~$\binom{L^2}{N}$. 
        \item Partition the integer $T$ into $N$ parts, $T_1,...,T_N$ with $\sum_{i} T_i = T$. These will be the total weights of the $N$ contours. Since $N\le T$, there are at most $2^{2T}$ many such partitions.  
        \item Next for each such root vertex $v_i$ in $\mathbb T_L^2$, enumerate over a decorated contour of total weight $T_i$ going through the vertex $v_i$. By Lemma~\ref{lem:counting-horizontal-contours}, the number of decorated horizontal contours going through vertex $v_i$ with weight $T_i$ is at most $C^{T_i}$ for a universal $C$. 
    \end{enumerate}

    Considering all of the above, if we sum over $\bar A_\ell$ which are possible collections of mutually compatible decorated contours belonging to $A_\ell$, we get that for each $\ell$, 
    \begin{align*}
        \mu_{\infty}^\nabla\Big(\sum_{\boldsymbol{\zeta}\in A_\ell} W(\boldsymbol{\zeta}) \ge \rho L^2\Big) \le   \sum_{T \ge \rho L^2 } \sum_{\bar A_\ell: W(\bar A_\ell) = T} \exp\Big( - c\beta \sum_{\boldsymbol{\zeta}\in \bar A_\ell} W(\boldsymbol{\zeta})\Big)\,.
    \end{align*} 
    By the above enumeration over $ \bar A_{\ell}$, the right-hand side is given by 
    \begin{align*}
     \sum_{T \ge \rho L^2} e^{ - c \beta_0 T}   \sum _{N \in [2^{- \ell-1} \rho L^2, 2^{-\ell} T]} \binom{L^2}N 2^{2T} \prod_{i=1}^N C^{T_i}  
    \end{align*}
    As long as $\beta_0$ is large, for $c'' = c/2$ say, 
    \begin{align*}
        \sum_{T\ge \rho L^2} \sum_{N\le 2^{ - \ell} T} \binom{L^2}{N} 2^{2T} C^T e^{ - c \beta_0 T} \le  \sum_{T\ge \rho L^2} \exp\Big( T 2^{ - \ell} \log \frac{L^2}{T 2^{-\ell}} \Big) e^{ - c''\beta_0 T}\,.
    \end{align*}
    So long as $\rho  = \rho_\ell$ is such that $T \ge \rho L^2$ implies 
    \begin{align*}
        2^{ - \ell} \log \frac{L^2}{T 2^{-\ell}} \le \frac{1}{2} c'' \beta_0 \qquad \text{i.e.} \qquad \log \frac{L^2}{T}  + \ell \log 2 \le 2^{ \ell-1} c'' \beta_0\,,
    \end{align*}
    which is to say that as long as  $2^{- \ell}\rho \ge C e^{ - c''\beta_0 2^{\ell-1}}$ for $C$ large, or equivalently, $\rho \ge C 2^{ \ell} e^{ - c'' \beta_0 2^{\ell-1}}$, then the above inequality holds true and one has the bound  
    \begin{align*}
        \sum_{T \ge \rho L^2} e^{ - c'' \beta_0 T/2}  \le C e^{ - c'' \beta_0 \rho L^2 /2}\,.
    \end{align*}
    for a universal constant $C$.  
    That constraint on $\rho$ is satisfied for $e^{ - c'' \beta_0 2^\ell/8}$, say, and the upper bound on $\ell$ implies the probability is still at most $e^{ - L^{0.9}}$.  
    \end{proof}

\begin{cor}\label{cor:dominant-height-exists}
    For any $C$ large, if $\mu_\infty^\nabla$ satisfies~\eqref{eq:exp-suppressed-gradients-condition} and $\min_{j=1,2} V_j(1)= \beta \ge \beta_0$ is sufficiently large, we have that for all large $L$ and all $\vec{k}\in \mathbb Z^{d-2}$, that 
    \begin{align*}
        \mu_\infty^\nabla \Big(|\{x\in [-L,L]^2 \times \{\vec{k}\}: \varphi_x \ne h(\mathcal L_{\vec{k}}) \}| \ge \frac{1}{C} 4L^2\Big) \le L^{- C}\,.
    \end{align*}
\end{cor}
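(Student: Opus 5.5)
The plan is to bound the number of vertices of $[-L,L]^2\times\{\vec k\}$ whose height differs from $h(\mathcal L_{\vec k})$ by the total area enclosed by horizontal contours meeting the box. We then control this area scale by scale: Lemma~\ref{lem:excitation-statistics-bound} handles small contours and Lemma~\ref{lem:largest-contour-in-layer-bound} rules out large ones.

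\textbf{Reduction to contour weights.} By Corollary~\ref{cor:has-dominant-heights}, almost surely every contour is finite and every vertex in $\text{Ext}(\boldsymbol\Gamma_{\vec k})$ takes the dominant height. Hence any $x$ with $\varphi_x\ne h(\mathcal L_{\vec k})$ lies in the interior of some contour $\boldsymbol\zeta$. If $x\in[-L,L]^2$, one of two things holds. Either $\boldsymbol\zeta$ meets $[-L,L]^2\times\{\vec k\}$, or $\boldsymbol\zeta$ surrounds the whole box and so has length at least $8L$. By the isoperimetric inequality in $\mathbb Z^2$, a contour of weight $W$, hence of length at most $W$, encloses at most $W^2$ vertices. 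This gives
\[
|\{x\in[-L,L]^2\times\{\vec k\}:\varphi_x\ne h\}|\le \sum_{\ell\ge0}\ \sum_{\boldsymbol\zeta\in A_\ell}W(\boldsymbol\zeta)^2\le \sum_{\ell\ge 0}2^{\ell+1}\sum_{\boldsymbol\zeta\in A_\ell}W(\boldsymbol\zeta),
\]
valid on the event that no contour surrounds the box.

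\textbf{Splitting scales.} Fix $\ell_*$, a large constant depending on $C$, or alternatively $\ell_*\asymp\log\log L$, so that it stays below the threshold $\log_2(\frac{1}{100c\beta_0}\log L)$ of Lemma~\ref{lem:excitation-statistics-bound}. For $\ell\le\ell_*$ we apply Lemma~\ref{lem:excitation-statistics-bound}: outside an event of probability $\ell_* e^{-L^{0.9}}$, the small-scale part is at most
\[
\sum_{\ell\le \ell_*}2^{\ell+1}e^{-c'\beta_0 2^\ell}L^2 \le 4e^{-c'\beta_0/2}L^2 .
\]
This is below $\frac{1}{2C}4L^2$ once $\beta_0$ is large in terms of $C$.

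\textbf{Large scales (the main obstacle).} For $\ell>\ell_*$ we must show that, with probability at least $1-L^{-C}$/2, no contour of weight at least $r_*:=2^{\ell_*}$ meets the box, and also none surrounds it. By Lemma~\ref{lem:largest-contour-in-layer-bound}, both events together have probability at most $4L^2e^{-c\beta r_*}+\sum_{M\ge 8L}e^{-c\beta M/2}$.

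The difficulty is choosing $r_*$. It must be large enough that $4L^2e^{-c\beta r_*}\le L^{-C}$, that is, $r_*\ge (C+3)\log L/(c\beta)$. But Lemma~\ref{lem:excitation-statistics-bound} only reaches $2^{\ell}\lesssim \log L/(100c\beta_0)$, so there is a gap of a constant factor (about $100(C+3)$).

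I would close the gap in one of two ways. The first option is to re-run the proof of Lemma~\ref{lem:excitation-statistics-bound} directly, with the weaker target probability $L^{-C}$ instead of $e^{-L^{0.9}}$. On the window $\log L/(100c\beta_0)\lesssim 2^\ell\lesssim (C+3)\log L/(c\beta)$, the enumeration still gives, for $\rho_\ell=e^{-c'\beta_02^\ell}$, a bound $e^{-c''\beta\rho_\ell L^2/2}$ whenever $\rho_\ell L^2\gg\log L$. Here $\rho_\ell\ge L^{-O(C)}$, but this is not enough when $C$ is large. The cleaner second option is therefore to use Markov's inequality for this window. Its expected contribution is
\[
\mathbb E\sum_{\ell}2^{\ell+1}\sum_{A_\ell}W\le 4L^2\sum_{w\ge r_0}w^2C_0^we^{-c\beta w}\le 4L^2e^{-c\beta r_0/2},
\]
with $r_0=\log L/(100c\beta_0)$, using the contour counting of Lemma~\ref{lem:counting-horizontal-contours}. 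This is at most $4L^2 L^{-1/(200)}$. So the probability that this window contributes more than $\frac{1}{2C}4L^2$ is at most $2C\,L^{-1/200}$. This decays, but only like a fixed power.

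To upgrade to $L^{-C}$, I would use that~\eqref{eq:exp-suppressed-gradients-condition} controls joint events. Enumerating collections of such mid-size contours of total weight $T\ge L^2/(2C\cdot 2^{\ell+1})$, exactly as in the proof of Lemma~\ref{lem:excitation-statistics-bound}, gives an $e^{-\Omega(L^{1.9})}$ bound. The only input needed is $2^{-\ell}\log(L^2 2^\ell/T)\le c''\beta_0/2$, which holds here because $T/L^2\ge L^{-o(1)}$. This is the delicate calculation I expect to be the main obstacle; once it is done, summing the three error probabilities gives the bound $L^{-C}$.
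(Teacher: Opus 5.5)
Your proposal is correct. Its skeleton is the paper's: bound the excited area by $\sum_\ell 2^{\ell+1}\sum_{\boldsymbol\zeta\in A_\ell}W(\boldsymbol\zeta)$, use Lemma~\ref{lem:excitation-statistics-bound} at small scales, and use Lemma~\ref{lem:largest-contour-in-layer-bound} to exclude large contours. You differ from the paper in how you handle the window between $\log L/(100c\beta_0)$, where Lemma~\ref{lem:excitation-statistics-bound} stops, and $(C+3)\log L/(c\beta)$, where Lemma~\ref{lem:largest-contour-in-layer-bound} starts to give $L^{-C}$.

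The paper does not cross this window; it removes it. Since~\eqref{eq:exp-suppressed-gradients-condition} at coupling $\beta$ implies the same bound for any smaller parameter, it applies Lemma~\ref{lem:excitation-statistics-bound} with a fixed auxiliary $\beta_0'$. This $\beta_0'$ only needs to be large enough in terms of $C$ that $\sum_\ell 2^{\ell+1}e^{-c'\beta_0'2^\ell}\le 1/C$, and the lemma then reaches $\log L/(100c\beta_0')$. It then takes $\beta\ge\beta_0\gg(C+3)\beta_0'$. Lemma~\ref{lem:largest-contour-in-layer-bound} then already excludes every contour of weight at least $\log L/(100c\beta_0')$ meeting the box, with probability $4L^2L^{-\Omega(\beta/\beta_0')}\le L^{-C}$. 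So the factor $100(C+3)$ you flagged as the main obstacle is absorbed by letting $\beta_0$ depend on $C$, which the statement allows.

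Your route instead re-runs the enumeration from the proof of Lemma~\ref{lem:excitation-statistics-bound} on the window, with the coarse target $T\gtrsim L^2/2^{\ell}$. This works for the reason you give. For $T\ge L^2/(K2^\ell)$ one has $2^{-\ell}\log(eL^22^\ell/T)=O((\ell+\log K)2^{-\ell})=o(1)$ once $2^\ell\ge r_0$, which gives a bound of order $e^{-\Omega(L^2/\log L)}$. Your ``Option 1'' failed only because it insisted on the density target $e^{-c'\beta_02^\ell}$; no decay in $\ell$ is needed inside the window. The Markov step is superfluous once this enumeration is done.

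One small fix: the per-scale budget in the window must be divided by the number of dyadic scales there. That number is $O(\log C)$ rather than $1$, since $r_1/r_0\le 100(C+3)$.

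What each route buys:
\begin{itemize}
\item Your argument needs $\beta_0$ only of order $\log C$, since only the small-scale density bound constrains it. The paper's choice needs $\beta_0$ of order $C\beta_0'\approx C\log C$. This is immaterial for the later application, where $C$ is fixed.
\item You also account explicitly for contours enclosing the whole box without meeting it, which the paper's proof passes over.
\item The paper's route is much shorter, because it uses both lemmas as black boxes.
\end{itemize}
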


\begin{proof}
    To start, let $\beta_0'$ be the $\beta_0$ from Lemma~\ref{lem:excitation-statistics-bound}. 
    By Lemma~\ref{lem:largest-contour-in-layer-bound}, except with probability $L^{-\Omega(\beta/\beta_0')}$, there is no contour of weight at least $\frac{1}{100 c\beta_0'} \log L$ intersecting $[-L,L]^2 \times \{\vec{k}\}$. The total number of vertices in $[-L,L]^2 \times \{\vec{k}\}$ interior to contours of weight at most $\frac{1}{100 c\beta_0'} \log L$ can be bounded as follows: by Lemma~\ref{lem:excitation-statistics-bound}
    \begin{align*}
        \mu_{\infty}^\nabla\Big( \bigcup_{\ell \le \log_2(\frac{1}{100 c\beta_0} \log L)} \Big\{\sum_{\boldsymbol{\zeta}\in A_\ell}W(\boldsymbol{\zeta}) \ge e^{ - c' \beta_0 2^{\ell} }L^2\Big\}\Big) \le  (\log \log L)e^{- L^{0.9}}
    \end{align*}
    On the complement of the union event above, since the number of vertices interior to a $\boldsymbol{\gamma}\in A_\ell$ is at most $2^{\ell+1} W(\boldsymbol{\gamma})$, summing the thresholds above gives that the total number of vertices interior to such contours is bounded by 
    \begin{align*}
        \sum_{\ell \le \log_2(\frac{1}{100 c\beta_0'} \log L)} 2^{\ell + 1} e^{ - c' \beta_0' 2^\ell} (4L^2) \le 4L^2/C
    \end{align*}
    for any large $C$ as long as $\beta_0'$ is sufficiently large. Subsequently take $\beta_0$ sufficiently large so that $\beta\ge \beta_0$ implies the probability of having a large contour was $o(L^{-C})$ to conclude. 
\end{proof}

The above Corollary gives item (2) of Definition~\ref{def:density-dominant-heights}, concluding the proof of Proposition~\ref{prop:exp-suppressed-implies-dominant-heights}.

\subsection{Constructing the infinite-volume height measure} 

We now use the dominant heights (which are tail measurable since they percolate) to lift subsequential limiting gradient measures $\mu^\nabla_{\infty,\mathfrak{s}}$ to a limiting height function Gibbs measure.

\begin{lem}\label{lem:existence-of-height-measures}
    There exists an infinite-volume Gibbs measure on height functions $\mu_{\infty,\mathfrak{s}}$ satisfying the DLR conditions, whose gradient measure is ergodic, minimal, and of slope $\mathfrak{s}$.  
\end{lem}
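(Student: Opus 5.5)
Take a subsequential limit $\mu^\nabla_{\infty,\mathfrak{s}}$ of the sloped-periodic gradient measures $\mu^\nabla_{L,\mathfrak{s}}$ along $L\in 4\mathbb N$. Section~\ref{subsec:ergodic-minimal-grad-meas} (via \cite[Lemma 4.2.6, Corollary 6.1.5, Lemmas 8.2.7--8.2.8]{Sheffield-Random-Surfaces}) tells us three things about it. It exists and is translation invariant. It is minimal. Almost every ergodic component of it has slope $\mathfrak{s}$ and is minimal.

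By Corollary~\ref{cor:subsequential-gradient-limit-has-suppressed-gradients}, the limit satisfies~\eqref{eq:exp-suppressed-gradients-condition}. We would next like this property to pass to a.e.\ ergodic component. The plan is to show it for one fixed ergodic component $\mu^\nabla$ and transfer the dominant-height structure to that component. Proposition~\ref{prop:exp-suppressed-implies-dominant-heights} and Corollary~\ref{cor:has-dominant-heights} give dominant heights almost surely under the mixture. Since this is an almost-sure property, it holds $\mu^\nabla$-a.s.\ for a.e.\ component. So we fix one such ergodic, minimal, slope-$\mathfrak{s}$ component $\mu^\nabla$ and use only the almost-sure existence of dominant heights in every layer. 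No quantitative bound is needed under $\mu^\nabla$ itself.

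Next we lift $\mu^\nabla$ to a height measure. Sample $\phi^\nabla\sim\mu^\nabla$ and choose the representative $\phi$ with $h(\mathcal L_0)=0$. This is well defined: the dominant height in a layer is determined by the gradients up to the global constant, because the percolating level set is unique (two infinite components of distinct heights cannot both have only finite complementary components). Let $\mu_{\infty,\mathfrak{s}}$ be the law of $\phi$.

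Its gradient is $\mu^\nabla$, so it is ergodic, minimal and of slope $\mathfrak{s}$. It remains to verify DLR. Fix a finite $\Lambda$. Given $\phi|_{\Lambda^c}$, the normalization $h(\mathcal L_0)=0$ is determined by $\phi|_{\Lambda^c}$ alone. The reason is that the infinite level cluster in $\mathcal L_0$, minus the finite set $\Lambda$, still contains an infinite connected piece of height $h(\mathcal L_0)$, which is visible outside $\Lambda$. Resampling inside $\Lambda$ does not change which height percolates, so the event ``dominant height of $\mathcal L_0$ equals $0$'' is measurable with respect to $\phi|_{\Lambda^c}$ and is invariant under the resampling.

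Hence the conditional law of $\phi|_\Lambda$ given $\phi|_{\Lambda^c}$ equals the gradient DLR kernel $\mu^{\nabla,\Lambda,\phi^\nabla}$, lifted with the constant fixed by the exterior. That lifted kernel is exactly $\mu^{\Lambda,\phi}$.

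The main obstacle is this tail-measurability step. Formally, one has to show that for $\mu^\nabla$-a.e.\ configuration the infinite cluster of the dominant height in $\mathcal L_0$ intersects $\Lambda^c$ in an infinite set, and that no other height becomes percolating after modifying finitely many sites. The first point holds because removing finitely many vertices from an infinite connected set leaves an infinite component. The second holds because modifying finitely many vertices cannot turn a set with only finite complementary components into one with infinite complementary components.

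Some care is also needed with the DLR formulation for gradient measures. Here one writes $\mu^\nabla$-DLR as a statement about the kernel applied to $\phi^\nabla|_{\Lambda^c}$ together with the exterior edges, and then checks that the added constant is a measurable function of the exterior gradients.
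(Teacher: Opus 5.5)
Your proposal is correct and follows essentially the same route as the paper: take an ergodic component of a sloped-periodic subsequential limit that almost surely has dominant heights, and lift it by normalizing the dominant height of $\mathcal L_0$ to $0$. The DLR condition then holds because this normalization is a function of the exterior gradients, since the unique percolating level set in $\mathcal L_0$ survives removal of a finite $\Lambda$. Two of your remarks slightly sharpen the paper's write-up: finiteness of $\phi_0$ is automatic once the dominant height exists (the paper invokes the one-point tail bound for this), and only the almost-sure dominant-height property, not the quantitative bound \eqref{eq:exp-suppressed-gradients-condition}, needs to pass to the ergodic component.
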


\begin{proof}
    By Lemma 4.2.6 in~\cite{Sheffield-Random-Surfaces}, the gradient measure $\mu_{L,\mathfrak{s}}^\nabla$ admits translation-invariant subsequential limits. Call $\tilde \mu_{\infty,\mathfrak{s}}^\nabla$ one such limit; it has a decomposition as a mixture of ergodic components, almost every one of which is minimal and has slope $\mathfrak{s}$, and by Corollary~\ref{cor:subsequential-gradient-limit-has-suppressed-gradients} together with Proposition~\ref{prop:exp-suppressed-implies-dominant-heights}, specifically item (1) of Definition~\ref{def:density-dominant-heights}, has dominant heights at every layer. Let $\mu_{\infty,\mathfrak{s}}^\nabla$ be any such ergodic component.

    We demonstrate how to lift $\mu_{\infty,\mathfrak{s}}^\nabla$ to a limiting height Gibbs measure. 
    From a gradient configuration $\phi^\nabla \sim \mu_{\infty,\mathfrak{s}}^\nabla$, generate a height function $\phi$ by setting 
    \begin{align*}
        \phi_v = \phi^\nabla_v - \phi_{\text{Ext}(\Gamma_{\vec{0}})}^\nabla
    \end{align*}
    which is to say by setting the height of the dominant height at layer $\vec{0}$ to $0$. 
    Because the right-hand side makes sense everywhere, it will suffice to show the following two items: 
    \begin{enumerate}
        \item \emph{Tightness}: The random variable $\phi_0$ is finite almost surely.
        \item \emph{Tail-measurability}:  In such a construction, for any finite $\Lambda$ the values of $\phi$ assigned to $\Lambda^c$ are measurable with respect to $(\phi_v^\nabla)_{v\in \Lambda^c}$. 
    \end{enumerate}  
    Indeed, item (1) implies that we arrive from the above construction at a probability measure on height functions $\phi:\mathbb Z^d \to \mathbb Z$. Item (2) above implies that when resampling the configuration on a finite $\Lambda$ conditional on its exterior, this is equivalent to resampling the gradients in $\Lambda$ conditional on the gradients in $\Lambda^c$ (which has the right distribution on gradients by the fact that $\mu_{\infty,\mathfrak{s}}^\nabla$ is a gradient Gibbs measure) then filling in the heights inside $\Lambda$ using that sample, and that would match with the finite-volume sample with the height boundary conditions $\phi(\Lambda^c)$. 

    Item (1) above follows from Corollary~\ref{cor:subsequential-gradient-limit-has-suppressed-gradients} together with Corollary~\ref{cor:one-point-exponential-tails}. Item (2) follows from the fact that almost surely in a sample from $\mu_{\infty,\mathfrak{s}}^\nabla$, the only percolating component of level sets of $\phi^\nabla$ in layer $\vec{0}$ is $\text{Ext}(\Gamma_{\vec{0}})$, so differences of heights from $v\in\Lambda^c$ to $\text{Ext}(\Gamma_{\vec{0}})$ are always $\Lambda^c$-measurable. 
\end{proof}

\subsection{All minimal ergodic gradient Gibbs measures satisfy~\eqref{eq:exp-suppressed-gradients-condition}}
When we go to show uniqueness of the limiting measures via disagreement percolation, we will use the following lemma which will show that all ergodic gradient measures of slope $\mathfrak{s}$ have exponentially small probabilities of non-zero gradients, and therefore have dominant heights on every layer. The proof goes by using the ergodic theorem to make absence of such rigidity an exponentially unlikely event in $L^d$, whereas differences of boundary conditions (Dirichlet vs.\ sloped periodic) cause only exponential in $L^{d-1}$ Radon--Nikodym derivatives. 

\begin{lem}\label{lem:ergodic-minimal-has-dominant-height}
    Suppose $\mathsf{m}^\nabla$ is any minimal ergodic gradient Gibbs measure of slope $\mathfrak{s}$. Then $\mathsf{m}^\nabla$ satisfies~\eqref{eq:exp-suppressed-gradients-condition} (and as a result by Proposition~\ref{prop:exp-suppressed-implies-dominant-heights}, has densely dominant heights per Definition~\ref{def:density-dominant-heights}). 
\end{lem}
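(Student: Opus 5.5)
We will transfer the bound~\eqref{eq:exp-suppressed-gradients-condition} from the sloped-periodic torus measures $\mu^\nabla_{L,\mathfrak{s}}$ to $\mathsf{m}^\nabla$. The mechanism is a comparison of large-deviation rates: volume order $L^d$ for $\mathsf{m}^\nabla$ versus boundary order $L^{d-1}$ for the cost of changing boundary conditions.

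\textbf{Step 1: the bad event and its frequency.} Fix edges $e_1,\dots,e_m$ and integers $r_i$, and let $B$ be the cylinder event $\bigcap_i\{|\nabla_{e_i}\varphi|\ge r_i\}$. Suppose, for contradiction, that $\mathsf{m}^\nabla(B)\ge e^{-c\beta\sum r_i}(1+\delta)$ for some $\delta>0$, where $c$ is a suitably smaller constant than in Theorem~\ref{thm:1-2-direction-rigidity}. Tile $\Lambda_L$ by horizontal translates of $B$ in a reflection-compatible sublattice, say translates by $4\Z^2\times\Z^{d-2}$ kept away from the boundary. By the ergodic theorem, $\mathsf{m}^\nabla$-typical configurations have $B$ occurring at a fraction at least $p:=e^{-c\beta\sum r_i}(1+\delta/2)$ of these translates, with probability tending to one.

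\textbf{Step 2: the torus bound on the same event.} On the torus we bound $\mu^\nabla_{L,\mathfrak{s}}$ of the event ``$B$ occurs at at least $pN$ of the $N\asymp L^d$ translates''. We union bound over subsets $S$ of translates. For each $S$, the multi-edge case of Theorem~\ref{thm:1-2-direction-rigidity}, which already allows arbitrary families of distinct edges, gives probability at most $e^{-c_0\beta|S|\sum r_i}$. The entropy is $\binom{N}{pN}\le e^{N p\log(e/p)}$. Since $c<c_0$ and $\beta$ is large, the total is at most $e^{-c_1 N}$ with $c_1>0$, i.e.\ $e^{-c_2L^d}$.

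\textbf{Step 3: comparing $\mathsf{m}^\nabla$ with the torus measure.} This is the main obstacle, and we plan to handle it through the free energy. Minimality gives $\SFE(\mathsf{m}^\nabla)=\tau(\mathfrak{s})$, and $\tau(\mathfrak{s})$ is also the limiting free energy per site of $\mu^\nabla_{L,\mathfrak{s}}$ by \cite[Section 4--6]{Sheffield-Random-Surfaces}. Write $\nu_L$ for the restriction of $\mathsf{m}^\nabla$ to the box, viewed as a measure on torus configurations. This requires gluing, which we do by modifying the boundary layer using convexity and the fact that $\mathsf{m}^\nabla$ has slope $\mathfrak{s}$, in the manner of \cite[Lemma 4.2.6]{Sheffield-Random-Surfaces}. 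The gluing costs $o(L^d)$ in energy and entropy in expectation. We then obtain the relative entropy bound $H(\nu_L\,|\,\mu^\nabla_{L,\mathfrak{s}})=|\Lambda_L|\,(\SFE(\mathsf{m}^\nabla)-\tau(\mathfrak{s}))+o(L^d)=o(L^d)$. Here the boundary contribution is $o(L^d)$ and need not be $O(L^{d-1})$, and $o(L^d)$ suffices.

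\textbf{Step 4: conclusion.} The entropy inequality gives $\nu_L(A)\le \bigl(H(\nu_L|\mu)+\log 2\bigr)/\log(1/\mu(A))$. With $\mu(A)\le e^{-c_2L^d}$ and $H=o(L^d)$, we get $\nu_L(A)\to0$, contradicting Step 1, where $\nu_L(A)\to1$ up to the boundary modification, which affects only $o(L^d)$ translates.

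The delicate points are the gluing in Step 3 for unbounded potentials and keeping the modified region $o(L^d)$ in size. For the gluing we would use the finiteness of the slope and the exponential moment bounds available from Sheffield's construction. The consequence about densely dominant heights then follows immediately from Proposition~\ref{prop:exp-suppressed-implies-dominant-heights}.
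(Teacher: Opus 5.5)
Your architecture matches the paper's. You play a volume-order large deviation bound for $\mu^\nabla_{L,\mathfrak{s}}$, on the frequency of disjoint translates of the bad event (from the multi-edge case of Theorem~\ref{thm:1-2-direction-rigidity}), against a comparison between $\mathsf{m}^\nabla$ and the torus measure. That comparison costs only $o(L^d)$ precisely because $\SFE(\mathsf{m}^\nabla)=\tau(\mathfrak{s})$. Your union bound over subsets $S$ is an acceptable replacement for the paper's binomial-moment/Chernoff bound; it only loses a constant in the exponent, hence your $c<c_0$. The relative-entropy inequality in Step 4 is the standard change-of-measure form of the large-deviation lower bound the paper quotes. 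Two things need fixing.

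\emph{First point: the translates must be disjoint, and the sublattice breaks ergodicity.} Theorem~\ref{thm:1-2-direction-rigidity} is stated for distinct edges. Overlapping translates only yield an exponent summing $\max r_e$ over distinct edges, rather than $|S|\sum_i r_i$. The lattice $4\Z^2\times\Z^{d-2}$ does not ensure disjointness, e.g.\ if the support of your event spans two adjacent layers. Use $M\Z^d$ with $M$ exceeding the diameter of the support instead. However, $\mathsf{m}^\nabla$ need not be ergodic under $M\Z^d$, so the frequency along a single coset converges only to a conditional expectation, not to $\mathsf{m}^\nabla(B)$. As in the paper, write the full $\Z^d$-average as the mean of the $M^d$ coset averages. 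Then some coset has frequency above $p$, and you pay a factor $M^d$ on the torus side.

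\emph{Second point, more substantive: Step 3 is only sketched, and the cited lemma does not supply it.} This step carries all the weight of the argument. \cite[Lemma 4.2.6]{Sheffield-Random-Surfaces} concerns subsequential limits of the torus measures. It does not show how to glue the box restriction of a given ergodic measure onto sloped-periodic boundary data at $o(L^d)$ cost. Doing that surgery yourself for general convex potentials means reproving part of Sheffield's Chapter~6. Without it the argument fails outright in the Lipschitz case: a mismatched wrap-around edge has infinite energy, so $\nu_L\ll\mu^\nabla_{L,\mathfrak{s}}$ does not even hold.

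The paper instead cites \cite[Theorem 6.2.1]{Sheffield-Random-Surfaces}. Take the set $B$ of measures giving your event probability above $p$; it contains $\mathsf{m}^\nabla$. The theorem gives a lower bound $e^{-|\Lambda_N|\SFE(\mathsf{m}^\nabla)-o(N^d)}$ on the partition function of configurations on $\Lambda_N$ with boundary pinned to a rounding of $\langle\mathfrak{s},\cdot\rangle$ and empirical measure in $B$. Pinned configurations embed into the sloped-periodic torus at only $O(N^{d-1})$ energy cost on the wrap-around edges, using $\mathfrak{s}\in\mathfrak{S}^\circ$. The torus free energy per site tends to $\tau(\mathfrak{s})=\SFE(\mathsf{m}^\nabla)$ by \cite[Lemmas 8.2.7--8.2.8]{Sheffield-Random-Surfaces}. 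Together these give $\mu^\nabla_{N,\mathfrak{s}}(B_N)\ge e^{-o(N^d)}$ directly. With that citation in place of your gluing, Steps 3--4 collapse to one line and the proof is complete.
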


\begin{proof}
    Fix any finite set of horizontal edges $e_1,...,e_m$ in $\mathbb Z^d$ and gradients $r_1,...,r_m$; let $p$ be the bound $e^{ - c \min_{j=1,2} V_j(1) \sum_{i} r_i}$ for $c$ which is the constant of Theorem~\ref{thm:1-2-direction-rigidity}; our aim is to show that any minimal ergodic gradient measure of slope $\mathfrak{s}$ satisfies   
    \begin{align*}
        \mathsf{m}^\nabla\Big( \bigcap_{i=1}^m\{|\nabla_{e_i} \varphi| \ge r_i \}\Big) \le p\,.
    \end{align*}
    Let $A$ be the event in the above probability. The event $A$ only depends on the gradients of $\varphi$ in a finite box $\Lambda_{M}= [-M,M]^d$; if we denote by $\theta_{v}$ for $v \in \Lambda_N$ the shift operator by vector $v$, by translation invariance of $\mathsf{m}^\nabla$ and the ergodic theorem, 
    \begin{align*}
        \mathsf{m}^\nabla(A)  = \lim_{N\to\infty } \mathsf{m}^\nabla \Big(  \frac{1}{|\Lambda_N|} \sum_{t\in \Lambda_N} \mathbf 1_{\theta_{t} A}\Big)\,,
    \end{align*}
    It therefore suffices to prove that $\mathsf{m}^\nabla$-almost surely, 
    \begin{align*}
        \limsup_{N\to\infty} \frac{1}{|\Lambda_N|} \sum_{t\in \Lambda_N}\mathbf 1_{\theta_{t} A} \le p
    \end{align*}
    We now apply Theorem 6.2.1 of~\cite{Sheffield-Random-Surfaces} to argue that for all $k$, restricting to this event as well as boundary conditions that are exactly a random rounding of the hyperplane $\langle v,\mathfrak{s}\rangle$, denoted here $\phi_{\mathfrak{s}}$, is  equivalent on the $e^{\Theta(L^{d})}$-scale to the full measure. To be more precise, following the notation of Theorems 6.1.1 and 6.2.1 of~\cite{Sheffield-Random-Surfaces}, let 
    \begin{align*}
        B = \{\nu^\nabla: \nu^\nabla(A) \ge p +\delta\} \quad C_N^{\mathfrak{s}} = \{\varphi (\partial \Lambda_N) = \phi_\mathfrak{s}(\partial \Lambda_N)\}
    \end{align*}
    and the empirical version of $B$, given by 
    \begin{align*}B_N = \{\varphi: \frac{1}{|\Lambda_N|} \sum_{t\in \Lambda_N} \delta_{\theta_t \varphi}\in B\} = \{\varphi: \frac{1}{|\Lambda_N|} \sum_{t\in \Lambda_N}\mathbf 1_{\theta_t A} \ge p+\delta\}
    \end{align*} 
    Then, define the pinned boundary free energy as 
    \begin{align*}
        PBL_{B}^{\mathfrak{s}}  =\limsup_{N\to\infty } - \frac{1}{|\Lambda_N|}\log Z_{\Lambda_N}^{\mathfrak{s}}(C_N^{\mathfrak{s}} \cap B_N) 
    \end{align*}
    meaning the free energy (log-probability) associated to configurations on $\Lambda_N$ (extended arbitrarily outside $\Lambda_N$) with the hyperplane boundary conditions pinned by $C_N^{\mathfrak{s}}$ and on the event $B_N$. Suppose by way of contradiction that $\mathsf{m}^\nabla \in B$ meaning that $\mathsf{m}^\nabla(A) \ge p+\delta$. 
    Then  Theorem 6.2.1 of~\cite{Sheffield-Random-Surfaces} says that for $SFE(\mathsf{m}^\nabla)$ being its specific free energy as discussed in Section~\ref{sec:background gradient Gibbs measures},~\eqref{eq:SFE def}, 
    \begin{align*}
        PBL_{B}^{\mathfrak{s}}  \le SFE(\mathsf{m}^\nabla)
    \end{align*}
    Next, we observe that every configuration on $\Lambda_N$ (with free boundary) in $C_N^{\mathfrak{s}} \cap B_N$ is mapped to a configuration on $\mathbb T_N^d$, with a change of energy of the sloped-periodic interactions of $\varphi(\partial \Lambda_N)$. But by definition of the sloped periodic interaction, and the fact that $\mathfrak{s}\in \mathfrak{S}^\circ$ from~\eqref{eq:admissible-slopes}, this has $O(N^{d-1})$ energy cost along the sloped-periodic boundary. Thus   
    \begin{align*}
         e^{o(N^d)} e^{-|\Lambda_N| SFE(\mu)}\le  Z_{\Lambda_N}^{\mathfrak{s}}(C_N^{\mathfrak{s}} \cap B_N)   \le  e^{O(N^{d-1})} Z_{\mathbb T_N^d}^{\mathfrak{s}}(B_N)
    \end{align*}
    with $Z_{\mathbb T_N^d}^{\mathfrak{s}}(B_N)$ being the free energy on the torus with sloped-periodic boundary conditions, restricted to the event $B_N$. (Technically, there might be a loss in the events $B_N$ for the translates that intersect the boundary of $\Lambda_N$, but these are  an $O(N^{-1})$ fraction, and can be absorbed into the $\delta$.) 
    That restricted torus free energy can be rewritten as $\mu_{N,\mathfrak{s}}^\nabla (B_N) Z_{\mathbb T_N^d}^{\mathfrak{s}}$. Lemmas 8.2.7--8.2.8 of~\cite{Sheffield-Random-Surfaces} say that the free energy of the sloped-periodic measure is minimal, i.e., converges to $SFE(\mathsf{m}^\nabla)$ as $\mathsf{m}^\nabla$ is also minimal, so altogether, we deduce  
    \begin{align}\label{eq:nts-is-a-contradiction}
        \mathsf{m}^\nabla\in B \implies \mu^\nabla_{N,\mathfrak{s}}(B_N) \ge e^{-o(N^d)}\,.
    \end{align}
    It remains to establish that for each $\delta>0$, the right-hand side of~\eqref{eq:nts-is-a-contradiction} does not hold, giving the desired contradiction.

    For that purpose, recall that $M$ was such that all the edges $e_{1},...,e_m$ belong to $\Lambda_M$ and split  
    \begin{align*}
        \frac{1}{|\Lambda_{N}|} \sum_{t\in \Lambda_{N}} \mathbf 1_{\theta_t A} \le  \max_{r\in \Lambda_M}\frac{1}{|\Lambda_{N/M}|} \sum_{t\in \Lambda_{N/M}} \mathbf 1_{\theta_{r+ Mt} A}
    \end{align*}
    and therefore it suffices to establish that all the densities on the right-hand side are at most $p+\delta$ simultaneously except with probability $e^{- \Omega(N^d)}$. The benefit now is that for each fixed $r$ the translates of $\Lambda_M$ appearing above are disjoint. 
    In particular, 
    \begin{align*}
        \mu_{N,\mathfrak{s}}^\nabla (B_N) \le M^d \max_{r\in \Lambda_M} \mu_{N,\mathfrak{s}}^\nabla(B_{N,r}) \qquad \text{where} \qquad B_{N,r} = \Big\{\frac{1}{|\Lambda_{N/M}|} \sum_{t\in \Lambda_{N/M}} \mathbf 1_{\theta_{r+ Mt} A} \ge p+\delta\Big\}\,.
    \end{align*}
    By translation invariance, it is sufficient to consider $B_{N,0}$. The probability of $B_{N,0}$ is the probability that (letting $n=|\Lambda_{N/M}|$) $S_{n} = \sum_{t=1}^n X_t$ for $X_t = \mathbf 1_{\theta_{Mt}A}$ exceeds $(p+\delta)n$. 
    
    For any fixed subset $T \subset \Lambda_{N,M}$, the event $\bigcap_{t\in T} \{\theta_{Mt} A\}$ is equivalent to $\bigcap_{\tau \in \mathcal T}\tau \mathcal E_{\tau}$ where $\mathcal T$ is a subset of the horizontal reflections, and the $\mathcal E_{\tau}$ are events of the form that in $\{0,...,M\}^2 \times \mathbb T_N^{d-2}$ all the vertical translates of $A$ that are in the subset $T$ occur (possibly reflected horizontally so that $\tau$ acts as a shift of $\{0,...,M\}^2 \times \mathbb T_N^{d-2}$). Therefore, by the chessboard estimate Corollary~\ref{cor:gradient-chessboard-estimate}  with Theorem~\ref{thm:1-2-direction-rigidity}, 
    \begin{align*}
        \mu^\nabla_{N,\mathfrak{s}} \Big[\prod_{t\in T} X_t\Big] \le p^{|T|}\,.
    \end{align*}
    It follows that the exponential moments of $S_n$ are bounded by those of a $\text{Bin}(n,p)$ random variable (by writing $\mathbb E[e^{ \lambda S} ] = \mathbb E[\prod_t(1+(e^{\lambda} -1)X_t)]$ and expanding). Therefore, by the Chernoff bound, 
    \begin{align*}
        \mu_{N,\mathfrak{s}}^\nabla (B_{N,0}) \le e^{ - n D(p+\delta \,\|\, p)}
    \end{align*}
    where $D(\cdot \,\| \,\cdot)$ is the KL-divergence. The above is $\exp( - \Omega(N^{d}))$ for any positive $\delta>0$. Therefore, multiplying by $M^d$, we get $\mu_{N,\mathfrak{s}}^\nabla(B_N) \le e^{ - \Omega(N^{d})}$ yielding the desired contradiction to~\eqref{eq:nts-is-a-contradiction}. 
\end{proof}

\section{Determining the dominant height profile}\label{sec:extremal-components}

In what follows, $\mu_{\mathfrak{s}}$ is any height function Gibbs measure of slope $\mathfrak{s}$ with ergodic minimal gradients.  Therefore by Lemma~\ref{lem:ergodic-minimal-has-dominant-height}, it satisfies~\eqref{eq:exp-suppressed-gradients-condition} and has densely dominant heights. 
We will connect the results of \cite{Sheffield-Random-Surfaces} summarized in Section~\ref{sec:background gradient Gibbs measures} to dominant heights in order to deduce the distribution over dominant height sequences in $\mu_{\mathfrak{s}}$. In order to talk about height function measures while not worrying about (deterministic or tail measurable) global shifts of the height, which retain the Gibbs property and give the same gradient measure, in what follows we always apply a global shift to have the dominant height at layer zero, $h(\mathcal L_0) =0$.

\subsection{The slope ${\mathfrak{s}} = (0,0, \mathfrak{s}_{3},\ldots, \mathfrak{s}_{d})$ has only rational components}

The analysis that we perform in this section will rely heavily on  Theorem \ref{thm:Sheffield}, case 1.

If we write $\mathfrak{s}_i = \frac{p_i}{q_i}$, where $p_i$ and $q_i$ are relatively prime, let $n$ be the least common multiple of the $q_i$'s.
Then, Theorem \ref{thm:Sheffield}  will state that $\mu_{\mathfrak{s}}$ has exactly $n$ extremal components. Namely,
\begin{equation}\label{eq:extremaldecomp}
\mu_\mathfrak{s} = {\frac{1}{n}}\sum_{i=0}^{n-1} \mu_{\mathfrak{s},\mathfrak{c} + \frac{i}{n}} .
\end{equation}
The set $\{\mathfrak{c},\mathfrak{c} + \frac{1}{n}, \ldots, \mathfrak{c}+\frac{n-1}{n} \}$ corresponds to what is called the \textit{height-offset spectrum} and the subindex $\mathfrak{c} + \frac{i}{n}$ denotes the average height in a box around the origin of the Gibbs measure. 
 Note that since $\mu_\mathfrak{s}$ has dominant heights with probability $1$, each of these extremal components also has a dominant height. We will use the notation $h_{\mu_{\mathfrak{s},a}}(x_{3},\ldots,x_{d})$ to denote the dominant height on the layer whose last $d-2$ components are given by $x_{3},\ldots,x_{d}$ of the extremal component $\mu_{\mathfrak{s},a}$.

Now, we claim that by applying Theorem \ref{thm:Sheffield}, we can establish the following lemma concerning the dominant height sequence in any one of these extremal components.

\begin{lem} \label{lem:controldiff}
Consider some measure $\mu_{\mathfrak{s},a}$ that lies in the extremal decomposition of $\mu$. Let $h_{\mu_{\mathfrak{s},a}}(x_{3},x_{4},\ldots,x_{d})$ be the dominant height on the layer whose last $d-2$ coordinates are given by $x_{3},\ldots,x_{d}$. Then, we have
$$
h_{\mu_{\mathfrak{s},a}}(x_{3}',x_{4}',\ldots,x_{d}') - h_{\mu_{\mathfrak{s},a}}(x_{3},\ldots,x_{d}) \in \bigg\{\bigg\lfloor \sum_{k=3}^{d} \mathfrak{s}_k(x'_k - x_k)\bigg\rfloor, \bigg\lceil \sum_{k=3}^{d} \mathfrak{s}_k(x'_k - x_k) \bigg\rceil \bigg\}.
$$

\end{lem}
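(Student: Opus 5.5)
We compare the extremal measure $\mu_{\mathfrak{s},a}$ with its spatial translate, using the Spatial Translation, Height Translation and Stochastic Domination properties of Theorem~\ref{thm:Sheffield}. Fix two layers indexed by $x=(x_3,\ldots,x_d)$ and $x'=(x'_3,\ldots,x'_d)$, and set $v=(0,0,x'-x)\in\Z^d$ and $\sigma:=\langle\mathfrak{s},v\rangle=\sum_{k}\mathfrak{s}_k(x'_k-x_k)$. The first step is to note that the dominant height is a deterministic constant under $\mu_{\mathfrak{s},a}$. Indeed, $h(\mathcal L)$ is tail measurable, since it is the height of the unique percolating level-set component. By extremality the tail $\sigma$-algebra is trivial, so each $h_{\mu_{\mathfrak{s},a}}(\cdot)$ is a deterministic integer.

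Next, by the Spatial Translation Property, $\theta_v\mu_{\mathfrak{s},a}=\mu_{\mathfrak{s},a+\sigma}$. The dominant height of $\theta_v\mu_{\mathfrak{s},a}$ at layer $x$ equals the dominant height of $\mu_{\mathfrak{s},a}$ at layer $x'$. Hence
\[
h_{\mu_{\mathfrak{s},a}}(x')=h_{\mu_{\mathfrak{s},a+\sigma}}(x).
\]
It therefore suffices to show, for a fixed layer $x$ and any admissible $a,b$ in the offset spectrum, that $h_{\mu_{\mathfrak{s},b}}(x)-h_{\mu_{\mathfrak{s},a}}(x)\in\{\lfloor b-a\rfloor,\lceil b-a\rceil\}$.

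We get this by sandwiching with the Height Translation and Stochastic Domination Properties. Write $b=a+\lfloor b-a\rfloor+r$ with $r\in[0,1)$. Then $a+\lfloor b-a\rfloor\le b\le a+\lceil b-a\rceil$, so stochastic domination gives
\[
\mu_{\mathfrak{s},a}+\lfloor b-a\rfloor\preceq \mu_{\mathfrak{s},b}\preceq \mu_{\mathfrak{s},a}+\lceil b-a\rceil.
\]
The dominant heights of the two outer measures at layer $x$ are $h_{\mu_{\mathfrak{s},a}}(x)+\lfloor b-a\rfloor$ and $h_{\mu_{\mathfrak{s},a}}(x)+\lceil b-a\rceil$. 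It remains to transfer the stochastic ordering to dominant heights. Suppose $\nu\preceq\nu'$ and both have deterministic dominant heights $h\le$ or $h'$ at layer $x$. Consider the increasing event that the density of vertices in $[-L,L]^2\times\{x\}$ with $\phi\ge h$ is at least $19/20$. By the densely-dominant-heights property (item (2) of Definition~\ref{def:density-dominant-heights}), this event has $\nu$-probability tending to one as $L\to\infty$. By domination, the same holds under $\nu'$. If $h'<h$, the $\nu'$ density of $\{\phi=h'\}$ would be at least $19/20$ and disjoint from $\{\phi\ge h\}$, which is a contradiction; so $h\le h'$. Applying this on both sides gives the claimed inclusion with $b-a=\sigma$.

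I expect the main obstacle to be this transfer of stochastic domination to dominant heights. Dominance only compares laws of increasing events, while dominant heights are defined through percolation. This is why the density statement of Lemma~\ref{lem:ergodic-minimal-has-dominant-height} is needed. One must also check that this statement applies to each extremal component $\mu_{\mathfrak{s},a}$, not only to the mixture. Each component is absolutely continuous with respect to $\mu_{\mathfrak{s}}$ up to the factor $n$ in~\eqref{eq:extremaldecomp}, so almost-sure properties of $\mu_{\mathfrak{s}}$ pass to it. Finally, one must handle the convention that $\mathfrak{c}$ is fixed so that the $a$'s lie in $\mathfrak{c}+\frac1n\Z$, which guarantees that $a+\sigma$ and $a\pm$ integers are again admissible indices.
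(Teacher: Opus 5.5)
Your proposal is correct and follows the paper's proof. The paper uses the spatial and height translation properties to compare $\mu_{\mathfrak{s},a+\sigma}$ with the integer shifts $\mu_{\mathfrak{s},a}+\lfloor\sigma\rfloor$ and $\mu_{\mathfrak{s},a}+\lceil\sigma\rceil$, then sandwiches the dominant heights by stochastic domination; you do the same, merging the paper's integer and non-integer cases into one sandwich, and your density argument usefully justifies the paper's unproved remark that domination orders dominant heights (use a threshold such as $0.9$ rather than exactly $19/20$, since item (2) of Definition~\ref{def:density-dominant-heights} only bounds a $\limsup$ by $1/20$).
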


\begin{proof}
We remark that before we start this proof, that the only elements of Theorem~\ref{thm:Sheffield}  that we will use in what follows are Parts 2,3, and 6 of Case 1.

If the sum $\sum_{k=3}^{d} \mathfrak{s}_k(x'_k - x_k)$ is an integer, then spatial translation symmetry, Case 1, Part 6 of Theorem~\ref{thm:Sheffield},  will give us that $\theta_{(0,0,x_{3}'-x_{3},x_{4}'-x_{4},\ldots,x_{d}'-x_{d})} \mu_{\mathfrak{s},a}$ 
will give the measure $\mu_{\mathfrak{s},a + \sum_{k=3}^{d} \mathfrak{s}_k(x_k - x_k')}$. Then, height translation symmetry, Case 1, Part 3 of Theorem \ref{thm:Sheffield}, will show that this measure is the same as $\sum_{k=3}^{d} \mathfrak{s}_k(x_k - x_k') + \mu_{\mathfrak{s},a}$. Thus, they would have the same dominant heights. Translating all these statements would show that $h_{\mu_{\mathfrak{s},a}}(x_{3},\ldots,x_{d}) = h_{\mu_{\mathfrak{s},a}}(x_{3}',\ldots,x_{d}') + \sum_{k=3}^{d} \mathfrak{s}_k(x_k-x_k')$

Now, let us consider the case when $\sum_{k=3}^{d} \mathfrak{s}_k(x'_k - x_k)$ is not an integer.
Consider the measure $-\lfloor \sum_{k=3}^{d} \mathfrak{s}_k(x_k'-x_k) \rfloor + \theta_{(0,0,x_{3}'- x_{3},\ldots,x'_{d}-x_{d}) } \mu_{\mathfrak{s},a} = \mu_{\mathfrak{s},a + \sum_{3}^{d} \mathfrak{s}_k(x'_k - x_k) - \lfloor \sum_{k=3}^{d} \mathfrak{s}_k(x_k'-x_k) \rfloor} \succ \mu_{\mathfrak{s},a}$. 
Again, here we applied spatial translation symmetry and height translation symmetry. The domination inequality is Case 1, Part 2 of Theorem \ref{thm:Sheffield}.

We remark that if $\nu \succ \mu$ (and they both have dominant heights), then the dominant heights of $\nu$ are greater than those of $\mu$. By comparing the dominant heights at the point $(x_{3},\ldots,x_{d})$, we have 
$$
h_{\mu_{\mathfrak{s},a}}(x_{3}',\ldots,x_{d}') -  \bigg\lfloor \sum_{k=3}^{d} \mathfrak{s}_k(x_k'-x_k) \bigg\rfloor \ge h_{\mu_{\mathfrak{s},a}}(x_{3},\ldots,x_{d}).
$$

If we instead considered, $$-\bigg\lceil \sum_{k=3}^{d} \mathfrak{s}_k(x_k'-x_k) \bigg\rceil + \theta_{(0,0,x_{3}'- x_{3},\ldots,x_{d}-x_{d}) } \mu_{\mathfrak{s},a} = \mu_{\mathfrak{s},a + \sum_{3}^{d} \mathfrak{s}_k(x'_k - x_k) - \lceil \sum_{k=3}^{d} \mathfrak{s}_k(x_k'-x_k) \rceil} \prec \mu_{\mathfrak{s},a},$$
we have
$$
h_{\mu_{\mathfrak{s},a}}(x_{3}',\ldots,x_{d}') -  \bigg\lceil \sum_{k=3}^{d} \mathfrak{s}_k(x_k'-x_k) \bigg\rceil \le h_{\mu_{\mathfrak{s},a}}(x_{3},\ldots,x_{d}).
$$
This is what we desired.
\end{proof}

With this lemma in hand, we can now establish the following.

\begin{lem} \label{lem:heightoffset0}
Consider any measure $\mu_{\mathfrak{s},a}$ in the extremal decomposition of $\mu_\mathfrak{s}$. Then, there exists some point $(x_{3},\ldots,x_{d})$ such that
\begin{equation} \label{eq:heightdiffs}
h_{\mu_{\mathfrak{s},a}}(x'_{3},\ldots,x'_{d}) - h_{\mu_{\mathfrak{s},a}}(x_{3},\ldots,x_{d}) = \bigg\lfloor \sum_{k=3}^{d} \mathfrak{s}_k(x'_k - x_k) \bigg\rfloor ,
\end{equation}
for all other $(x'_{3},\ldots,x'_{d})$.
\end{lem}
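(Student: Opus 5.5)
The plan is to reduce the statement to a maximization problem for the \emph{deviation} of the dominant heights from the affine function $\sum_k \mathfrak{s}_k x_k$, using Lemma~\ref{lem:controldiff} as the only input. Throughout, $x=(x_3,\ldots,x_d)\in\Z^{d-2}$ indexes layers. Write $h(x):=h_{\mu_{\mathfrak{s},a}}(x)$ and $\langle\mathfrak{s},x\rangle:=\sum_{k=3}^d\mathfrak{s}_kx_k$.

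First, I fix the probabilistic setting. Since $\mu_{\mathfrak{s},a}$ is extremal and the dominant heights are tail measurable (they are determined by percolating level sets), each $h(x)$ is $\mu_{\mathfrak{s},a}$-a.s.\ constant. Alternatively, I can work sample-wise: Lemma~\ref{lem:controldiff} holds for each of the countably many pairs $(x,x')$, so almost surely it holds for all pairs simultaneously. In either reading, I may treat $h:\Z^{d-2}\to\Z$ as a fixed function satisfying, for every pair $x,x'$,
\[
h(x')-h(x)\in\big\{\lfloor\langle\mathfrak{s},x'-x\rangle\rfloor,\lceil\langle\mathfrak{s},x'-x\rangle\rceil\big\}.
\]

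Next, I define $g(x):=h(x)-\langle\mathfrak{s},x\rangle$. The displayed constraint says that $h(x')-h(x)$ is an integer within distance strictly less than $1$ of $\langle\mathfrak{s},x'-x\rangle$, and equal to it when that number is an integer. Hence $|g(x')-g(x)|<1$ for all $x,x'$, so $g$ takes values in an interval of length at most $1$. Moreover, since all $\mathfrak{s}_k$ are rational with common denominator $n$, we have $g(x)\in\frac1n\Z$. A bounded subset of $\frac1n\Z$ is finite, so $g$ takes only finitely many values and therefore attains its maximum at some $x^\ast$.

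Finally, I check that $x^\ast$ is the required base point. For any $x'$,
\[
h(x')-h(x^\ast)=\langle\mathfrak{s},x'-x^\ast\rangle+\big(g(x')-g(x^\ast)\big),
\]
and by the previous step together with maximality, $g(x')-g(x^\ast)\in(-1,0]$. Thus $h(x')-h(x^\ast)$ is an integer in the half-open interval $\big(\langle\mathfrak{s},x'-x^\ast\rangle-1,\ \langle\mathfrak{s},x'-x^\ast\rangle\big]$. The unique such integer is $\lfloor\langle\mathfrak{s},x'-x^\ast\rangle\rfloor$, which is \eqref{eq:heightdiffs}.

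The only step requiring care is the existence of a maximizer of $g$. This is where rationality of $\mathfrak{s}$ enters: boundedness alone would not give attainment, but values confined to $\frac1n\Z$ do. In the irrational case this step fails, and one would need a supremum/right-continuity argument instead. The almost-sure bookkeeping in the first step is routine.
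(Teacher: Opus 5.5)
Your argument is correct, and it takes a more direct route than the paper's.

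\textbf{The paper's route.} The paper argues by contradiction. It assumes that for every $x$ some $x'\ge x$ (coordinatewise) has $h(x')-h(x)=\lceil\langle\mathfrak{s},x'-x\rangle\rceil>\lfloor\langle\mathfrak{s},x'-x\rangle\rfloor$. It then uses the periodicity $h(x+n\mathfrak{e}_k)-h(x)=n\mathfrak{s}_k$ (the integer case of Lemma~\ref{lem:controldiff}) to confine the witness $t(v)$ to $v+[0,n]^{d-2}$. This yields a uniform gap $\epsilon>0$. Iterating $v_i=t(v_{i-1})$, the quantity $h(v_i)-h(v_0)$ exceeds $\langle\mathfrak{s},v_i-v_0\rangle$ by $i\epsilon$, which contradicts Lemma~\ref{lem:controldiff}. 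The conclusion is first obtained on an orthant and then transported to all $x'$ by periodicity.

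\textbf{Comparison.} In terms of your $g$, the paper's argument shows that there is no infinite chain along which $g$ strictly increases. That is exactly what makes $g$ attain its maximum on an orthant, and periodicity of $g$ then makes it a global maximum. Your extremal-principle formulation gets the maximizer directly: the oscillation of $g$ is less than $1$ and $g\in\frac1n\Z$. So the gap of at least $1/n$ comes for free, with no periodicity reduction and no orthant-to-whole-space step.

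\textbf{A side benefit.} If you maximize $g$ over a finite box $[-B,B]^{d-2}$ instead of over all of $\Z^{d-2}$, you immediately get Lemma~\ref{lem:somediff} for irrational slopes as well, since a maximum over a finite set is always attained. The paper's claim that this lemma ``has the same proof'' is less transparent, because its uniform-$\epsilon$ step relied on periodicity, which is not available in general when $\mathfrak{s}$ has an irrational coordinate.
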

\begin{proof}
We will show that there is a point $(x_{3},\ldots,x_{d})$ such that the equation  \eqref{eq:heightdiffs} 
holds whenever $x_k' \ge x_k$ for all $k$. Note that by the periodicity relationship that $h_{\mu_{\mathfrak{s},a}}(x_{3}',x_{4}',\ldots,x_{d}) - h_{\mu_{\mathfrak{s},a}}(x_{3},\ldots,x_{d}) = \sum_{k=3}^{d} \mathfrak{s}_k (x_k' -x_k)$, whenever $\sum_{k=3}^{d} \mathfrak{s}_k(x_k' -x_k)$ is an integer, this means that we can always determine the value of $h_{\mu_{\mathfrak{s},a}}(x_{3}',\ldots, x_k' - n, x_{k+1}',\ldots,x_{d}')  $ from $h_{\mu_{\mathfrak{s},a}}(x_{3}',\ldots, x_k' , x_{k+1}',\ldots,x_{d}')$.

Assume for contradiction that for every point $(x_{3},\ldots,x_{d})$, there is some point $(x'_{3},\ldots,x'_{d})$ with $x_j' \ge x_j$ for all $j$ such that $h_{\mu_{\mathfrak{s},a}}(x'_{3},\ldots,x'_{d}) - h_{\mu_{\mathfrak{s},a}}(x_{3},\ldots,x_{d}) = \lceil \sum_{k=3}^{d} \mathfrak{s}_k(x'_k - x_k) \rceil$ and that this value is strictly greater than $\lfloor \sum_{k=3}^{d} \mathfrak{s}_k(x'_k - x_k) \rfloor$ (so $\sum_{k=3}^{d} \mathfrak{s}_k(x'_k - x_k) $ is not an integer.) By the periodicity relationship we mentioned earlier, we may assume that $|x'_{k} -x_k| \le n$.

Then for every point $v \in \mathbb{Z}^{d-2} $, there is another point $t(v) $ with $t(v) - v \in [0,n]^{d-2}$ and $h_{\mu_{\mathfrak{s},a}}(t(v)) - h_{\mu_{\mathfrak{s},a}}(v) = \lceil \langle \mathfrak{s}, t(v) -v \rangle \rceil > \langle \mathfrak{s}, t(v) - v \rangle + \epsilon,$
where this $\epsilon>0$ will be uniform for all points $v$. (This $\epsilon$ is uniform since we have a priori enforced that $t(v) - v $ is in a compact domain.) Note that when writing $\langle \mathfrak{s}, v \rangle$, we are implicitly dropping the first two entries of $\mathfrak{s}$ (as they are both $0$) and, thus, we can treat $\mathfrak{s}$ as a $d-2$ dimensional vector.

Now, pick some arbitrary vertex $v_0$ and consider the sequence $v_i = t(v_{i-1})$ for $i \ge 1$. Then, we must have that,
\begin{equation}
h_{\mu_{\mathfrak{s},a}}(v_i) - h_{\mu_{\mathfrak{s},a}}(v_0) \ge \sum_{k=0}^{i-1} h_{\mu_{\mathfrak{s},a}}(v_{k+1}) - h_{\mu_{\mathfrak{s},a}}(v_k) \ge \sum_{k=0}^{i-1} [\langle \mathfrak{s},v_{k+1} - v_k \rangle +\epsilon ]  = \langle \mathfrak{s}, v_{i} -v_0 \rangle + i\epsilon. 
\end{equation}
For $i$ large enough, we can set $i\epsilon \ge 2$. That contradicts $h_{\mu_{\mathfrak{s},a}}(v_i) - h_{\mu_{\mathfrak{s},a}}(v_0) \le \lceil \sum_{k=3}^{d} \mathfrak{s}_k((v_i)_k - (v_0)_k) \rceil.$ 
\end{proof}

\begin{cor} \label{cor:dominheight}
All the measures in the extremal decomposition of $\mu_{\mathfrak{s}}$ have the following dominant height profiles. For each $a\in \mathfrak{c}+ \frac{i}{n}$ there is some rational number of the form  $\frac{k}{n}$ such that 
\begin{equation}\label{eq:heightdecomp}
   h_{\mu_{\mathfrak{s},a}}(x_{3},\ldots,x_{d}) = 
   \left\lfloor \frac{k}{n} + \sum_{j=3}^{d}\mathfrak{s}_j x_j \right\rfloor. 
\end{equation}

\end{cor}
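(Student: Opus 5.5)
The plan is to deduce Corollary~\ref{cor:dominheight} directly from Lemma~\ref{lem:heightoffset0}, using Lemma~\ref{lem:controldiff} only implicitly through it. Fix an extremal component $\mu_{\mathfrak{s},a}$ and let $x=(x_3,\ldots,x_d)$ be the base point provided by Lemma~\ref{lem:heightoffset0}. Write $\sigma(y):=\sum_{j=3}^d \mathfrak{s}_j y_j$ and $H:=h_{\mu_{\mathfrak{s},a}}(x)$. Lemma~\ref{lem:heightoffset0} gives, for every $y\in\Z^{d-2}$,
\[
h_{\mu_{\mathfrak{s},a}}(y) = H + \lfloor \sigma(y)-\sigma(x)\rfloor .
\]
The task is to rewrite the right-hand side as $\lfloor \frac{k}{n}+\sigma(y)\rfloor$ for a suitable integer $k$.

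Since each $\mathfrak{s}_j$ has denominator dividing $n$ and $x\in\Z^{d-2}$, we have $\sigma(x)\in\frac1n\Z$. We therefore write $-\sigma(x)=m+\frac{k}{n}$ with $m\in\Z$ and $k\in\{0,\ldots,n-1\}$. Integers pass through the floor, so
\[
h_{\mu_{\mathfrak{s},a}}(y)=H+m+\Big\lfloor \frac{k}{n}+\sigma(y)\Big\rfloor .
\]
The integer $H+m$ is a global additive constant. Allowing $k$ to range over all of $\Z$, as the statement permits, the term $H+m$ can be absorbed by replacing $k$ with $k+n(H+m)$. This yields exactly \eqref{eq:heightdecomp}.

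The only point requiring care is whether the dominant heights of an extremal component are deterministic, so that $H$ is a number rather than a random variable. Extremal Gibbs measures are tail trivial, and dominant heights are tail measurable because the dominant level set percolates (Corollary~\ref{cor:has-dominant-heights}). Hence $h_{\mu_{\mathfrak{s},a}}$ is almost surely constant, which is implicitly what Lemmas~\ref{lem:controldiff} and~\ref{lem:heightoffset0} already use.

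Beyond this, I do not expect a real obstacle, since the corollary is essentially a reparametrization of Lemma~\ref{lem:heightoffset0}. If one instead wants $k\in\{0,\ldots,n-1\}$ under the normalization $h(\mathcal{L}_0)=0$, note that $h(0)=0$ forces $H+m+\lfloor k/n\rfloor=0$ at $y=0$, hence $H+m=0$ once $0\le k<n$. In that case no absorption is needed, and this identifies $\alpha=\frac{k}{n}$ as in Theorem~\ref{thm:structural results}.
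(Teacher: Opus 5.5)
Your argument is correct and is the same as the paper's. Both invoke Lemma~\ref{lem:heightoffset0} and move the integer $h_{\mu_{\mathfrak{s},a}}(\hat x)$ inside the floor, using that $h_{\mu_{\mathfrak{s},a}}(\hat x)-\sum_{j}\mathfrak{s}_j\hat x_j\in\frac{1}{n}\Z$; your $m,k$ bookkeeping and absorption step just spell this out, and your remarks on tail-measurability and on normalizing $k\in\{0,\ldots,n-1\}$ when $h(\mathcal{L}_0)=0$ are consistent with what the paper uses implicitly.
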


\begin{proof}
By the previous Lemma \ref{lem:heightoffset0}, for the measure $\mu_{\mathfrak{s},a}$ in the extremal decomposition, there is a point $(\hat{x}_{3},\ldots,\hat{x}_{d})$ such that we have $$h_{\mathfrak{s},\alpha}(x_{3}',\ldots,x'_{d} ) = h_{\mathfrak{s},a}(\hat{x}_{3},\ldots,\hat{x}_{d}) +\Big\lfloor\sum_{k=3}^{d} \mathfrak{s}_k(x_k'-\hat{x}_k) \Big\rfloor\,.$$ In other words, the dominant heights of $\mu_{\mathfrak{s},\alpha}$ can be written as
\begin{equation}
h_{\mu_{\mathfrak{s},a}}(x_{3},\ldots,x_{d}) = \left \lfloor  h_{\mu_{\mathfrak{s},a}}(\hat{x}_{3},\ldots,\hat{x}_{d})  - \sum_{j=3}^{d} \mathfrak{s}_j \hat{x}_j + \sum_{j=3}^{d} \mathfrak{s}_j x_j\right\rfloor
\end{equation}
{The first two terms in the floor evidently are a rational number with denominator $n$.} 
\end{proof}

\subsection{The slope $\mathfrak{s}$ has at least 1 irrational component}

This section will rely instead on Case 2 of Theorem  \ref{thm:Sheffield}. Here, extremal measures can be decomposed as,
$$
\mu_{\mathfrak{s}} = \int_0^1 \mu_{\mathfrak{s},\mathfrak{c}+a} \text{d}a,
$$
for some $\mathfrak{c}\in \mathbb{R}$.

{

 To describe the properties of all extremal Gibbs measures, we will introduce a weaker version of the dense dominant height notion from Definition~\ref{def:density-dominant-heights}, which will relax 95\% density of the dominant height level set to 80\%. 

\begin{defn}[Weak Dominant Height]
We will say that an extremal measure $\mu$ has a weak dominant height on the layer with coordinates given by  $\vec{x}:= (x_3,\ldots,x_d)$ if there is a value $h_{\mu}(x_3,\ldots,x_d)$ such that almost surely, for all $N$ sufficiently large, at least a $0.8$ fraction of all values $\phi$ takes on the box $[-N,N]^2 \times \{ \vec{x}\}$ are $h_{\mu}(x_3,\ldots,x_d)$.
\end{defn}

By Lemma~\ref{lem:ergodic-minimal-has-dominant-height}, almost every extremal component of $\mu_{\mathfrak{s}}$ satisfies items (1)--(2) of Definition~\ref{def:density-dominant-heights}. Moreover, for any of these extremal components, if they have a weak dominant height, that must be the same as its dense dominant height. Therefore, there is no ambiguity in using the same notation $h_\mu$ for the weak dominant height of an extremal measure as for its dominant height in the sense of (1)--(2) of Definition~\ref{def:density-dominant-heights} or in the sense of Definition~\ref{defn:dominant-height}. 

The following lemma is essentially the only new ingredient necessary for the irrational case.
\begin{lem} \label{lem:contheights}
Let $a_1,a_2,\ldots,a_n,\ldots$ be a sequence of numbers such that $a_i \downarrow a \in \mathbb{R}$ and  all of the measures $\mu_{\mathfrak{s},a_i}$ have densely dominant heights. Furthermore, assume that there exists some $\tilde a< a$ such that $\mu_{\mathfrak{s},\tilde{a}}$ has densely dominant heights. Then, $\mu_{\mathfrak{s},a}$ has weak dominant heights and 
\begin{equation}
h_{\mu_{\mathfrak{s},a}}(x_{3},\ldots,x_{d}) = \lim_{n \to \infty} h_{\mu_{\mathfrak{s},a_n}}(x_{3},\ldots,x_{d})
\end{equation}

\end{lem}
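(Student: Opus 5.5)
The plan is to combine the right-continuity and monotonicity from Case 2 of Theorem~\ref{thm:Sheffield} with the density of the dominant level sets. Fix a layer $\vec{x}$.

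\emph{Step 1: the heights $h_{\mu_{\mathfrak{s},a_n}}(\vec{x})$ are eventually constant.} By the Stochastic Domination Property, $\mu_{\mathfrak{s},a_n}\succ\mu_{\mathfrak{s},a}\succ\mu_{\mathfrak{s},\tilde a}$. As in the proof of Lemma~\ref{lem:controldiff}, stochastic domination between measures with densely dominant heights orders their dominant heights. Indeed, if $\nu\succ\nu'$ but $h_\nu<h_{\nu'}$ on some layer, a monotone coupling would force at least $95\%$ of the box to be at height $\ge h_{\nu'}$ under $\nu$, contradicting density. So the sequence $h_{\mu_{\mathfrak{s},a_n}}(\vec{x})$ is non-increasing in $n$ (the $a_n$ decrease) and bounded below by $h_{\mu_{\mathfrak{s},\tilde a}}(\vec{x})$. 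It is therefore eventually equal to some integer $h^*$.

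\emph{Step 2: $\mu_{\mathfrak{s},a}$ puts most of its mass at $h^*$.} Consider the increasing events
\[
U_N^{\ge}=\{\phi_y\ge h^* \text{ for at least a $0.9$ fraction of } y\in[-N,N]^2\times\{\vec{x}\}\}.
\]
Also consider the analogous events with $\phi_y\ge h^*+1$, whose complements are decreasing.
\begin{itemize}
\item \emph{Lower bound.} Since $\mu_{\mathfrak{s},a}\succ\mu_{\mathfrak{s},\tilde a}$, we get $\phi\ge h_{\mu_{\mathfrak{s},\tilde a}}$ on most sites. This alone is not enough. Instead I will use right-continuity: for each fixed $N$, $\mu_{\mathfrak{s},a}(U)=\lim_n\mu_{\mathfrak{s},a_n}(U)$ for every increasing cylinder event $U$.
\item \emph{Upper bound.} Under $\mu_{\mathfrak{s},a_n}$ with $n$ large, the event $\{\text{at least a $0.2$ fraction of sites have }\phi_y\ge h^*+1\}$ has small probability, by Corollary~\ref{cor:dominant-height-exists} applied to the densely dominant measure. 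Right-continuity transfers this to $\mu_{\mathfrak{s},a}$. Note that $\mu_{\mathfrak{s},a}\prec\mu_{\mathfrak{s},a_n}$ already gives this directly, without continuity.
\item \emph{Lower bound via continuity.} The event $\{\text{at least a $0.9$ fraction have }\phi_y\ge h^*\}$ has $\mu_{\mathfrak{s},a_n}$-probability at least $1-N^{-C}$ uniformly in large $n$. Here I need the bound of Corollary~\ref{cor:dominant-height-exists} to be uniform over the family, which holds because its constants depend only on $\beta_0$ through \eqref{eq:exp-suppressed-gradients-condition}. Right-continuity then gives the same bound under $\mu_{\mathfrak{s},a}$.
\end{itemize}
Combining the two bounds, with probability at least $1-2N^{-C}$ at least a $0.8$ fraction of the box equals $h^*$.

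\emph{Step 3: upgrade to almost sure statements.} Summing $N^{-C}$ over dyadic $N$ and interpolating between dyadic scales (which costs only a constant factor in the fractions, absorbed by the margin $0.95$ vs.\ $0.8$), Borel--Cantelli shows that $\mu_{\mathfrak{s},a}$-almost surely the $0.8$ fraction holds for all large $N$. Hence $h_{\mu_{\mathfrak{s},a}}(\vec{x})=h^*=\lim_n h_{\mu_{\mathfrak{s},a_n}}(\vec{x})$.

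The main obstacle is Step 2's lower bound. Right-continuity in Theorem~\ref{thm:Sheffield} is stated only from above, and we need the increasing event's probability to pass to the limit. This is exactly the direction provided ($a_i\downarrow a$), which is why the hypothesis takes that form. The role of $\tilde a$ is only to guarantee that the limit $h^*$ is finite rather than $-\infty$.
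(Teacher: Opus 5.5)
\textbf{The gap: the uniform-in-$n$ bound in Step 2 is not available.} Your lower-bound bullet needs $\mu_{\mathfrak{s},a_n}(U_N^{\ge})\ge 1-N^{-C}$ for all large $n$ simultaneously. You get this by applying Corollary~\ref{cor:dominant-height-exists} to each extremal component $\mu_{\mathfrak{s},a_n}$. That corollary, however, requires \eqref{eq:exp-suppressed-gradients-condition}, and Lemma~\ref{lem:ergodic-minimal-has-dominant-height} proves that condition only for the translation-invariant ergodic mixture $\mu_{\mathfrak{s}}$. It is not proved for its extremal components, which are not even translation invariant, since $\theta_x\mu_{\mathfrak{s},a}=\mu_{\mathfrak{s},a+\langle\mathfrak{s},x\rangle}$. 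The averaged bound $\int\mu_{\mathfrak{s},b}(E)\,db\le p$ gives no pointwise control at a particular $b$. This is why the paper, in Lemma~\ref{lem:notgoodpair}, only gets quantitative bounds for most $\alpha$, via Markov's inequality.

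The lemma's hypothesis is purely qualitative: each $\mu_{\mathfrak{s},a_n}$ has densely dominant heights almost surely, with no rate. The lemma is used in exactly this form in the next corollary, where the $a_i$ are taken from a full-measure set defined by item~(2) of Definition~\ref{def:density-dominant-heights}. With only this input, you know $\mu_{\mathfrak{s},a_n}(U_N^{\ge})\to1$ as $N\to\infty$ for each fixed $n$. But $\mu_{\mathfrak{s},a_n}(U_N^{\ge})$ is non-increasing in $n$, so $\lim_n\mu_{\mathfrak{s},a_n}(U_N^{\ge})=\inf_n\mu_{\mathfrak{s},a_n}(U_N^{\ge})$ could stay bounded away from $1$ for every $N$. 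Your Step~2 lower bound and the Borel--Cantelli argument of Step~3 then have nothing to work with.

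\textbf{How to close it.} Do not pass through cylinder events. Theorem~\ref{thm:Sheffield} gives right-continuity for every increasing event, so apply it to the increasing tail event
\[
A=\{\text{for all large }N,\ \text{at least }90\%\text{ of }[-N,N]^2\times\{\vec{x}\}\text{ has }\phi\ge h^*\}.
\]
Every $\mu_{\mathfrak{s},a_n}$ gives $A$ probability exactly $1$, because its dominant height is at least $h^*$ with density at least $95\%$. No uniformity is needed, and right-continuity gives $\mu_{\mathfrak{s},a}(A)=1$.

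Next fix $K$ with $h_{\mu_{\mathfrak{s},a_K}}(\vec{x})=h^*$ and use $\mu_{\mathfrak{s},a}\prec\mu_{\mathfrak{s},a_K}$. The decreasing tail event that at least $90\%$ of the box has $\phi\le h^*$ for all large $N$ has $\mu_{\mathfrak{s},a_K}$-probability one, hence also $\mu_{\mathfrak{s},a}$-probability one. Intersecting the two events gives at least $80\%$ of the box at height $h^*$ for all large $N$, almost surely, with no Borel--Cantelli step. This is the paper's argument.

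\textbf{Two smaller points.} First, your thresholds give $0.9-0.2=0.7$, not $0.8$. Second, the dyadic interpolation can inflate the bad fraction by a factor of $4$, so the margin is not actually absorbed; summing over all $N$ would avoid this. Both are moot given the main gap.
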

\begin{proof}
Define $L_{(x_{3},\ldots,x_{d})} = \lim_{n \to \infty} h_{\mu_{\mathfrak{s},a_n}}(x_{3},\ldots,x_{d}).$ Notice, that by the stochastic domination of Case 2 of Theorem  \ref{thm:Sheffield} , we have $\mu_{\mathfrak{s},a_i} \succ \mu_{\mathfrak{s},a_j}$ if $a_i>a_j$ and, as a consequence, we have that $h_{\mu_{\mathfrak{s},a_i}}(x_{3},\ldots,x_{d}) \ge h_{\mu_{\mathfrak{s},a_j}}(x_{3},\ldots,x_{d})$ if $i < j$.
Thus, the limit must exist or be $-\infty$.

Stochastic domination also dictates that $\mu_{\mathfrak{s},\tilde{a}} \prec \mu_{\mathfrak{s},a_i}$ for any $i$. Thus, $h_{\mu_{\mathfrak{s},\tilde{a}}}(x_{3},\ldots,x_{d}) \le h_{\mu_{\mathfrak{s},a_i}}(x_{3},\ldots,x_{d}).$ Since $h_{\mu_{\mathfrak{s},\tilde{a}}}(x_{3},\ldots,x_{d})$ is a finite number, $L_{(x_{3},\ldots,x_{d})}$ cannot be $-\infty$. Furthermore, since $h_{\mu_{\mathfrak{s},a_i}}(x_{3},\ldots,x_{d})$ is an integer sequence, 
 there must be some $K$ such that $h_{\mu_{\mathfrak{s},a_K}}(x_{3},\ldots,x_{d}) = L_{(x_{3},\ldots,x_{d})}$.
{ We remark that the dominant heights are tail-measurable, so on our extreme components, they must almost surely be constant. }

Let $A  = A_{(x_{3},\ldots,x_{d})}$ be the increasing event that  for all sufficiently large $N$, at least $90 \%$ of all vertices inside the box $[-N,N]^{2}\times \{\vec{x}\}$ take value greater than or equal to $L_{(x_{3},\ldots,x_{d})}$.  

We have that $\mu_{\mathfrak{s},a_i}(A) = 1$ for all $i$ since $\mu_{\mathfrak{s},a_i}$ has densely dominant heights greater than $$L_{(x_{3},\ldots,x_{d})}$$  By right continuity on increasing events of Case 2 of Theorem \ref{thm:Sheffield}, we have that $\mu_{\mathfrak{s},a}(A) = \lim \mu_{\mathfrak{s},a_i}(A) =1$. Note, that we also have the stochastic domination $\mu_{\mathfrak{s},a} \prec \mu_{\mathfrak{s},a_K}$, where $\mu_{\mathfrak{s},a_K}$ has densely dominant height  $L_{(x_{3},\ldots,x_{d})}$. The only way that $\mu_{\mathfrak{s},a}(A) = 1$ and the domination relation $\mu_{\mathfrak{s},a} \prec \mu_{\mathfrak{s},a_K}$ both hold is if $\mu_{\mathfrak{s},a}$ has  weak  dominant height $L_{(x_{3},\ldots,x_{d})}$, as the intersection of the two level sets of asymptotically at least 90\% density must have density at least 80\%. 
\end{proof}

With this lemma in hand, we can now have the following corollary.
\begin{cor}
For every $\tilde{a}$, $\mu_{\mathfrak{s},\tilde{a}}$ has weak dominant heights. 

\end{cor}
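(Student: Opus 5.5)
The plan is to derive this from Lemma~\ref{lem:contheights}, the almost-everywhere dense dominant heights from Lemma~\ref{lem:ergodic-minimal-has-dominant-height}, and right-continuity. Let $G\subset\mathbb R$ be the set of $a$ for which $\mu_{\mathfrak{s},a}$ has densely dominant heights. By the decomposition $\mu_{\mathfrak{s}}=\int_0^1\mu_{\mathfrak{s},\mathfrak{c}+a}\,\mathrm{d}a$ and Lemma~\ref{lem:ergodic-minimal-has-dominant-height}, $G\cap[\mathfrak{c},\mathfrak{c}+1]$ has full Lebesgue measure. The height translation property $b+\mu_{\mathfrak{s},a}=\mu_{\mathfrak{s},a+b}$ shifts all dominant heights by the integer $b$, and density of the exceptional set is unchanged. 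Hence $G$ is invariant under $\mathbb Z$-translations, so $G$ has full Lebesgue measure in $\mathbb R$.

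Now fix an arbitrary $\tilde a$. Since $G$ has full measure, it is dense. Choose a sequence $a_i\in G$ with $a_i\downarrow\tilde a$, for example $a_i\in G\cap(\tilde a,\tilde a+1/i)$, made monotone by passing to a subsequence. Also choose some $a'\in G$ with $a'<\tilde a$. These are exactly the hypotheses of Lemma~\ref{lem:contheights} with $a=\tilde a$. That lemma then gives weak dominant heights for $\mu_{\mathfrak{s},\tilde a}$ at every layer, with $h_{\mu_{\mathfrak{s},\tilde a}}=\lim_i h_{\mu_{\mathfrak{s},a_i}}$ layerwise. Since the layers are countable, the argument in that lemma applies to all of them simultaneously.

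The only mild subtlety is the case $\tilde a\in G$ itself. There the conclusion is immediate: densely dominant heights imply weak dominant heights, since a $19/20$ density exceeds $0.8$. The approximation argument also covers this case, so no separate treatment is needed. Beyond this, the main point to get right is that full Lebesgue measure of $G$ really follows from ``almost every extremal component''. This should be stated as almost every $a$ with respect to the mixing measure $\mathrm{d}a$. It then propagates to all of $\mathbb R$ by integer translation, which gives approximants on both sides of $\tilde a$.
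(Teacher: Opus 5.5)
Your proposal is correct and follows essentially the same route as the paper. Both arguments take the full-measure set of offsets with densely dominant heights, which comes from the decomposition $\mu_{\mathfrak{s}}=\int_0^1\mu_{\mathfrak{s},\mathfrak{c}+a}\,\mathrm{d}a$ together with Lemma~\ref{lem:ergodic-minimal-has-dominant-height}, approximate from above while using one good point below, apply Lemma~\ref{lem:contheights}, and invoke the height translation property. The only difference is the order: you extend the good set to all of $\mathbb{R}$ by integer translation before approximating, which also covers the endpoint offset $\mathfrak{c}$ that the paper's argument inside $(0,1)$ leaves implicit.
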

\begin{proof}
Note that if $\mu_{\mathfrak{s},\tilde a}$ has densely dominant height, it must automatically have weak dominant heights. Furthermore, since the measure $\mu_\mathfrak{s}$ satisfies that, via the decomposition $\mu_\mathfrak{s} = \int_0^1 \mu_{\mathfrak{s},\mathfrak{c} +a} \text{d}a$, if we let $A \subset(0,1)$ be the collection of $a$ such that $a \in A$ implies $\mu_{\mathfrak{s}, \mathfrak{c}+a}$ has item (2) of Definition~\ref{def:density-dominant-heights}, then, $A$ has measure 1. Since $A$ has measure $1$, for every $b \in A^c$, there must be a sequence $(\alpha_i)_{i\ge 0}\in A$ decreasing to $b$,  and further, there exists some $\tilde{\alpha}<b$ such that $\tilde \alpha \in A$. Thus, we can apply Lemma \ref{lem:contheights} to deduce that $\mu_{\mathfrak{s},b}$ has weak dominant heights. By the height translation property, we can transfer this statement to all $\mu_{\mathfrak{s},a} $ for $a\in (-\infty,\infty).$  
\end{proof}

The following lemma has the same proof as that of Lemma \ref{lem:controldiff}, with the Case 2 analogs replacing the Case 1 versions. 

\begin{lem} \label{lem:heightoffsetirr}
Consider some measure $\mu_{\mathfrak{s},a}$ that lies in the extremal decomposition of $\mu_{\mathfrak{s}}$. Let $h_{\mu_{\mathfrak{s},a}}(x_{3},\ldots,x_{d})$ be the weak dominant height on the layer whose last $d-2$ coordinates are given by $(x_{3},\ldots,x_{d})$. Then, we have
$$
h_{\mu_{\mathfrak{s},a}}(x_{3}',\ldots,x_{d}') - h_{\mu_{\mathfrak{s},a}}(x_{3},\ldots,x_{d}) \in \bigg\{\bigg\lfloor \sum_{k=3}^{d} \mathfrak{s}_k(x'_k - x_k)\bigg\rfloor, \bigg\lceil \sum_{k=3}^{d} \mathfrak{s}_k(x'_k - x_k) \bigg\rceil \bigg\}\,.
$$

\end{lem}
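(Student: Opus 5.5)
The plan is to repeat the proof of Lemma~\ref{lem:controldiff} verbatim, replacing each Case~1 ingredient of Theorem~\ref{thm:Sheffield} by its Case~2 analogue. Write $w:=(0,0,x_3'-x_3,\ldots,x_d'-x_d)$ and $\sigma:=\sum_{k=3}^d \mathfrak{s}_k(x_k'-x_k)$.

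\emph{Step 1: upper shift.} Combining the spatial translation property with the height translation property gives
\[
\nu_-:=-\lfloor\sigma\rfloor+\theta_{w}\mu_{\mathfrak{s},a}=\mu_{\mathfrak{s},a+\sigma-\lfloor\sigma\rfloor}.
\]
Since $\sigma-\lfloor\sigma\rfloor\ge 0$, the stochastic domination property (which now holds for all real indices) yields $\nu_-\succeq\mu_{\mathfrak{s},a}$.

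\emph{Step 2: lower shift.} In the same way,
\[
\nu_+:=-\lceil\sigma\rceil+\theta_w\mu_{\mathfrak{s},a}=\mu_{\mathfrak{s},a+\sigma-\lceil\sigma\rceil}\preceq\mu_{\mathfrak{s},a}.
\]
When $\sigma\in\mathbb{Z}$ the two shifts coincide, and we get the equality $-\sigma+\theta_w\mu_{\mathfrak{s},a}=\mu_{\mathfrak{s},a}$. That case is excluded anyway when $\mathfrak{s}$ is irrational and $\sigma\notin\mathbb{Z}$.

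\emph{Step 3: reading off dominant heights.} The weak dominant height of $\theta_w\mu_{\mathfrak{s},a}-m$ at layer $\vec{x}$ equals $h_{\mu_{\mathfrak{s},a}}(\vec{x}')-m$. This holds because translation moves the layer $\vec{x}'$ to $\vec{x}$, and the box $[-N,N]^2$ is unchanged since $w$ has zero horizontal part. So it remains to prove one monotonicity claim: if $\nu\succeq\mu$ are extremal measures with weak dominant heights $h_\nu(\vec{x})$ and $h_\mu(\vec{x})$ at the same layer, then $h_\nu(\vec{x})\ge h_\mu(\vec{x})$. Applying it to $\nu_-$ gives $h(\vec{x}')-\lfloor\sigma\rfloor\ge h(\vec{x})$. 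Applying it to $\nu_+$ gives $h(\vec{x}')-\lceil\sigma\rceil\le h(\vec{x})$. These two inequalities place the difference in $\{\lfloor\sigma\rfloor,\lceil\sigma\rceil\}$, because the difference is an integer.

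\emph{Main obstacle: the monotonicity claim for weak dominant heights.} In Case~1 this was a one-line remark, but here it needs care because only $80\%$ densities are available. Suppose instead $h_\nu<h_\mu=:H$. Consider the increasing event
\[
A=\{\text{for all large }N,\ \text{at least }80\%\text{ of }[-N,N]^2\times\{\vec{x}\}\text{ has }\phi\ge H\}.
\]
Then $\mu(A)=1$, and stochastic domination gives $\nu(A)\ge\mu(A)=1$. But under $\nu$, eventually at least $80\%$ of the box takes the value $h_\nu<H$. Two subsets each of density at least $0.8$ must intersect, so the box would contain a vertex with both $\phi=h_\nu$ and $\phi\ge H$, which is a contradiction. (If the weak dominant heights of these components come from the dense version, the densities are $95\%$ and the argument is easier still.)

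One point I will check is that $A$ is a genuinely measurable increasing event, which is clear since it is a countable $\liminf$ of increasing cylinder events. The other is that the weak dominant heights of the extremal components are deterministic. This follows from extremality, since weak dominant heights are tail measurable.
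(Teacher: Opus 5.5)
Your proposal is correct and takes essentially the paper's route: the paper proves this lemma by rerunning the proof of Lemma~\ref{lem:controldiff} with the Case~2 versions of the spatial translation, height translation and stochastic domination properties of Theorem~\ref{thm:Sheffield}. Your extra step, showing that stochastic domination orders weak dominant heights via an increasing event with $80\%$ density (two sets of density at least $0.8$ must intersect), makes explicit a fact the paper only asserts, and it is the same density-intersection device the paper uses in the proof of Lemma~\ref{lem:contheights}.
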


We cannot get the exact same statement as Lemma \ref{lem:heightoffset0}, but the following lemma has the same proof, and will serve almost the same purpose in the end.
\begin{lem} \label{lem:somediff}
Consider any measure $\mu_{\mathfrak{s},a}$ in the extremal decomposition of $\mu_\mathfrak{s}$ and fix some $B \in \mathbb{N}$. Then, there exists some point $(x_{3},\ldots,x_{d})$ such that
\begin{equation} \label{eq:heighdiffs}
h_{\mu_{\mathfrak{s},a}}(x'_{3},\ldots,x'_{d}) - h_{\mu_{\mathfrak{s},a}}(x_{3},\ldots,x_{d}) = \bigg\lfloor \sum_{k=3}^{d} \mathfrak{s}_k(x'_k - x_k) \bigg\rfloor ,
\end{equation}
for all other $(x'_{3},\ldots,x'_{d}) \in [-B,B]^{d-2}$.
\end{lem}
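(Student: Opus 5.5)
The plan is to repeat the argument of Lemma~\ref{lem:heightoffset0}, replacing the periodicity reduction (which fails for irrational $\mathfrak{s}$) by the restriction to the box $[-B,B]^{d-2}$. The only input is Lemma~\ref{lem:heightoffsetirr}: for every pair of layers, the increment of the weak dominant heights is either $\lfloor\langle\mathfrak{s},x'-x\rangle\rfloor$ or $\lceil\langle\mathfrak{s},x'-x\rangle\rceil$. These weak dominant heights are deterministic on the extremal measure $\mu_{\mathfrak{s},a}$, since they are tail measurable.

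Write $h=h_{\mu_{\mathfrak{s},a}}$ and treat $\mathfrak{s}$ as a vector in $\mathbb R^{d-2}$. Suppose, for contradiction, that no good base point exists. Then for every $v\in\mathbb Z^{d-2}$ there is some $t(v)$ with $t(v)-v\in[-B,B]^{d-2}$ such that $h(t(v))-h(v)\neq\lfloor\langle\mathfrak{s},t(v)-v\rangle\rfloor$. By Lemma~\ref{lem:heightoffsetirr}, this forces $\langle\mathfrak{s},t(v)-v\rangle\notin\mathbb Z$ and
\begin{equation*}
h(t(v))-h(v)=\lceil\langle\mathfrak{s},t(v)-v\rangle\rceil .
\end{equation*}

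Next I get a uniform gap. The displacements $w=t(v)-v$ range over the finite set $[-B,B]^{d-2}\cap\mathbb Z^{d-2}$. So there is $\epsilon>0$, depending only on $B$ and $\mathfrak{s}$, such that $\lceil\langle\mathfrak{s},w\rangle\rceil\ge\langle\mathfrak{s},w\rangle+\epsilon$ for every such $w$ with $\langle\mathfrak{s},w\rangle\notin\mathbb Z$. Hence $h(t(v))-h(v)\ge\langle\mathfrak{s},t(v)-v\rangle+\epsilon$ for every $v$. This is the same compactness point as in Lemma~\ref{lem:heightoffset0}, with the box $[-B,B]^{d-2}$ replacing $[0,n]^{d-2}$.

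Now iterate. Fix $v_0$ and set $v_{i}=t(v_{i-1})$. Telescoping gives
\begin{equation*}
h(v_i)-h(v_0)\ge\langle\mathfrak{s},v_i-v_0\rangle+i\epsilon .
\end{equation*}
Applying Lemma~\ref{lem:heightoffsetirr} directly to the pair $(v_0,v_i)$ gives $h(v_i)-h(v_0)\le\lceil\langle\mathfrak{s},v_i-v_0\rangle\rceil<\langle\mathfrak{s},v_i-v_0\rangle+1$. Taking $i>1/\epsilon$ gives a contradiction. Here $v_i-v_0$ may leave any box, but that lemma holds for arbitrary pairs of layers, so this causes no problem.

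The only delicate point is that, unlike Lemma~\ref{lem:heightoffset0}, we get no statement for all $x'$ at once. Exact increments are unavailable when $\langle\mathfrak{s},x'-x\rangle\in\mathbb Z$ is rare or impossible, so we cannot reduce arbitrary $x'$ to a bounded window. This is exactly why the conclusion is stated only on $[-B,B]^{d-2}$, with the base point allowed to depend on $B$.

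One more point needs care. The statement asks for $x'\in[-B,B]^{d-2}$ in absolute coordinates, whereas the argument above controls $x'-x\in[-B,B]^{d-2}$. I would handle this by running the argument with the box $[-2B',2B']^{d-2}$ of relative displacements, which is still finite, so that the conclusion covers the required set. Equivalently, I would read $[-B,B]^{d-2}$ as a window around $x$, as the later use intends. Beyond this, I expect no real obstacle.
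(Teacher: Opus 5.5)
Your argument is correct and is essentially the paper's: the paper proves this lemma by the same contradiction-and-iteration argument as Lemma~\ref{lem:heightoffset0}, with the finite window of displacements supplying the uniform $\epsilon$ in place of the periodicity reduction. The window is indeed meant relative to the base point, as the next lemma's use shows, since it translates the base point to the origin.

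One correction to your last paragraph: enlarging the window to $[-2B,2B]^{d-2}$ would not give the absolute-coordinate reading, because the base point produced by the contradiction argument can lie anywhere. That reading instead follows immediately by choosing $x$ to maximize $h(x)-\langle\mathfrak{s},x\rangle$ over $[-B,B]^{d-2}$: by Lemma~\ref{lem:heightoffsetirr}, the floor identity for the pair $(x,x')$ is equivalent to $h(x')-\langle\mathfrak{s},x'\rangle\le h(x)-\langle\mathfrak{s},x\rangle$.
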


The following will be the main consequence of the preceding lemma and Lemma \ref{lem:contheights}.

\begin{lem}
There exists {an $a\in (-\infty,\infty)$ and} a measure of the form $\mu_{\mathfrak{s},a}$ whose   weak  dominant heights are of the form,
\begin{equation} \label{eq:heightoffset}
h_{\mu_{\mathfrak{s},a}}(x_{3},\ldots,x_{d}) = \bigg\lfloor \sum_{k=3}^{d} \mathfrak{s}_k  x_k \bigg\rfloor.
\end{equation}
\end{lem}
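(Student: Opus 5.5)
The plan is a compactness argument in the parameter $a$. I take the boxes from Lemma~\ref{lem:somediff}, normalize by spatial and height translations, and pass to a limit in $a$ from above. Lemma~\ref{lem:contheights} then identifies the weak dominant heights of the limit.

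\emph{Step 1 (normalization).} Fix a reference component $\mu_{\mathfrak{s},a_0}$ and, for each $B$, apply Lemma~\ref{lem:somediff} with box size $2B$. This gives a point $\hat x^{(B)}$ such that $h(x')-h(\hat x^{(B)})=\lfloor\langle \mathfrak{s},x'-\hat x^{(B)}\rangle\rfloor$ for all $x'\in[-2B,2B]^{d-2}$. I will use the version of Lemma~\ref{lem:somediff} in which the box is centred at the base point; its proof gives this, since the contradiction argument only needs a long chain of points.

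I then translate by $\theta_{(0,0,\hat x^{(B)})}$ and subtract the integer $h(\hat x^{(B)})$. By the spatial and height translation properties of Theorem~\ref{thm:Sheffield}, Case 2, this yields a component $\mu_{\mathfrak{s},b_B}$. Its weak dominant heights satisfy $h(x)=\lfloor\langle\mathfrak{s},x\rangle\rfloor$ for all $x\in[-B,B]^{d-2}$, and in particular $h(0)=0$. Weak dominant heights exist for every parameter by the preceding corollary.

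\emph{Step 2 (monotone structure).} Stochastic domination in $a$ makes each $h_{\mu_{\mathfrak{s},a}}(x)$ nondecreasing in $a$. The height translation property gives $h_{\mu_{\mathfrak{s},a+1}}=h_{\mu_{\mathfrak{s},a}}+1$. Let $I_B$ be the set of $a$ with $h_{\mu_{\mathfrak{s},a}}(x)=\lfloor\langle\mathfrak{s},x\rangle\rfloor$ on $[-B,B]^{d-2}$. Then:
\begin{itemize}
\item $I_B$ is an interval, being an intersection of level sets of monotone integer-valued functions;
\item $I_B\ni b_B$, so it is nonempty;
\item the $I_B$ are nested decreasing in $B$;
\item $I_B$ lies in a fixed bounded interval, because $h(0)=0$ forces $a$ into an interval of length at most $2$.
\end{itemize}
Let $a^*:=\sup_B\inf I_B$. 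This is finite, and for every $B$ there are points of $I_B$ arbitrarily close to $a^*$ from above, or $a^*\in I_B$.

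\emph{Step 3 (limit from above).} The set $A$ of parameters with densely dominant heights has full Lebesgue measure, by Lemma~\ref{lem:ergodic-minimal-has-dominant-height} and the decomposition. So I can choose $a_B\in I_B\cap A$ with $a_B\downarrow a^*$, and also some $\tilde a<a^*$ in $A$. Lemma~\ref{lem:contheights} then gives $h_{\mu_{\mathfrak{s},a^*}}(x)=\lim_B h_{\mu_{\mathfrak{s},a_B}}(x)$. For fixed $x$, every $B\ge\|x\|_\infty$ has $h_{\mu_{\mathfrak{s},a_B}}(x)=\lfloor\langle\mathfrak{s},x\rangle\rfloor$, so the limit is exactly $\lfloor\langle\mathfrak{s},x\rangle\rfloor$. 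This is \eqref{eq:heightoffset} with $a=a^*$.

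\emph{Main obstacle.} The difficulty is Step 3's requirement that $I_B\cap A$ contain points accumulating at $a^*$ from above. If some $I_B$ degenerates to a single point, or only approaches $a^*$ from below, Lemma~\ref{lem:contheights} does not apply directly.

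I would handle this as follows. First show that each $I_B$ has nonempty interior, using Lemma~\ref{lem:heightoffsetirr} together with irrationality, which makes the values $\langle\mathfrak{s},x\rangle$ mod $1$ distinct so that the thresholds in $a$ are distinct. Since $A$ is dense, it then meets each interior. If approach from above still fails, I would instead argue at $\sup I_B$, using that $h$ only jumps on the countable set of thresholds. Failing that, I would prove a left-limit analogue of Lemma~\ref{lem:contheights}, using decreasing events and a $90\%$ density argument.
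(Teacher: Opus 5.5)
\textbf{There is a genuine gap in Step 3.} Steps 1 and 2 are fine; this includes the recentred form of Lemma~\ref{lem:somediff}, since the chain argument works for any bounded step set. Step 3, however, needs that points of $I_B$ accumulate at $a^*$ from above (or that $a^*\in I_B$), and nothing you establish gives this. Nested, bounded, nonempty intervals are equally compatible with $I_B=[a^*-1/B,a^*)$. In that scenario $\mu_{\mathfrak{s},a^*}$ really does violate the formula on some box, so the scenario must be excluded, not worked around. None of your remedies excludes it:
\begin{itemize}
\item Irrationality of one coordinate does not make the values $\langle\mathfrak{s},x\rangle$ distinct mod $1$ (e.g.\ $\mathfrak{s}=(0,0,\sqrt2,\sqrt2)$, $x=(1,-1)$).
\item The jump points of $a\mapsto h_{\mu_{\mathfrak{s},a}}(x)$ are not a priori tied to these values; that is essentially what is being proved.
\item A nonempty interior does not decide from which side $a^*$ is reached.
\item A left-limit analogue of Lemma~\ref{lem:contheights} is not available, because Theorem~\ref{thm:Sheffield} gives only right-continuity. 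In any case it would describe $\lim_{a\uparrow a^*}\mu_{\mathfrak{s},a}$, not $\mu_{\mathfrak{s},a^*}$.
\end{itemize}

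\textbf{The missing idea is that violations of the floor formula are one-sided.} Put $J=\{a\colon h_{\mu_{\mathfrak{s},a}}(0)=0\}$, so $J\supset I_B$. For $a\in J$, Lemma~\ref{lem:heightoffsetirr} gives $h_{\mu_{\mathfrak{s},a}}(x)\in\{\lfloor\langle\mathfrak{s},x\rangle\rfloor,\lceil\langle\mathfrak{s},x\rangle\rceil\}$, so the formula can only fail upwards.

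Now take $a\in I_B$ and $a'\in J$ with $a'<a$. Monotonicity gives $\lfloor\langle\mathfrak{s},x\rangle\rfloor\le h_{\mu_{\mathfrak{s},a'}}(x)\le h_{\mu_{\mathfrak{s},a}}(x)=\lfloor\langle\mathfrak{s},x\rangle\rfloor$ on $[-B,B]^{d-2}$, i.e.\ $a'\in I_B$. So each $I_B$ is a nonempty initial segment of $J$, and $\inf I_B=\inf J=:a^*$ for every $B$. There are two cases.
\begin{itemize}
\item If $a^*\in J$, then $a^*\in\bigcap_B I_B$, and $\mu_{\mathfrak{s},a^*}$ already has the required profile.
\item Otherwise each $I_B$ contains an interval $(a^*,a^*+\eta_B)$ with $\eta_B>0$. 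This interval meets your full-measure set $A$, so you can pick $a_B\downarrow a^*$ in $I_B\cap A$ and Step 3 goes through.
\end{itemize}

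This one-sidedness is exactly what drives the paper's proof. A normalised component that violates the formula takes the ceiling value at some $\tilde x$. By stochastic domination, its parameter then exceeds those of components that match the floor on larger boxes. This produces the decreasing sequence that is fed into Lemma~\ref{lem:contheights}.
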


\begin{proof}
Shifting to have dominant height $0$ at layer $0$, we can apply Lemma \ref{lem:somediff} %
combined with height and spatial translation invariance to  construct a sequence of measures $\mu_{\mathfrak{s}, a_i}$ having densely dominant heights, and whose  dominant heights satisfy, 
$$
h_{\mu_{\mathfrak{s},a_i}}(x_{3},\ldots,x_{d})=  \bigg\lfloor \sum_{k=3}^{d} \mathfrak{s}_k  x_k \bigg\rfloor\,, \qquad\forall (x_{3},\ldots,x_{d}) \in [-i,i]^{d-2}\,.
$$

Now, if one of these $\mu_{\mathfrak{s},a_i}$ satisfies equation \eqref{eq:heightoffset} everywhere, then we are done. If not, we claim that we can find a decreasing sequence of $a_i$ with finite limit $a$.

If $\mu_{\mathfrak{s},a_i}$ does not satisfy equation \eqref{eq:heightoffset}, then there exists some $(\tilde{x}_{3},\ldots,\tilde{x}_{d})$ with $$h_{\mu_{\mathfrak{s},a_i}}(\tilde{x}_{3},\ldots,\tilde{x}_{d})  = \bigg\lceil \sum_{k=3}^{d} \mathfrak{s}_k  \tilde{x}_k \bigg\rceil\, > \bigg\lfloor \sum_{k=3}^{d} \mathfrak{s}_k  \tilde{x}_k \bigg\rfloor.$$  We must thenceforth have $a_i \ge a_{k}$ for any $k > \max_{i=3}^{d}|\tilde{x}_i|$. Indeed, if, for contradiction, we had $a_i <a_k$, we must have the stochastic domination $\mu_{\mathfrak{s},a_i} \prec \mu_{\mathfrak{s},a_k}$. This in turn would imply that the weak dominant heights of $\mu_{\mathfrak{s},a_i}$ are all less than or equal to the weak dominant heights of $\mu_{\mathfrak{s},a_k}$. However, the dominant height of $\mu_{\mathfrak{s},a_i}$ at $(\tilde{x}_3,\ldots,\tilde{x}_d)$ is larger than that of $\mu_{\mathfrak{s},a_{k}}$  by construction. 

Thus, we can find some decreasing subsequence $a_{s_1}, a_{s_2},\ldots,a_{s_n},\ldots$. To see that this is bounded below, note that the  weak dominant height of this sequence at $(0,\ldots,0)$ is $0$, but the weak dominant height of  $\mu_{\mathfrak{s},a_{s_1}} -1 = \mu_{\mathfrak{s},a_{s_1}-1}$  at $(0,\ldots,0)$ is $-1$. Thus, by stochastic domination, we must have $a_{s_j} \ge a_{s_1}-1$ for all $j$.

Thus, there exists some $\hat{a}= \lim_{j\to \infty} a_{s_j}$ and by Lemma \ref{lem:contheights} the measure $\mu_{\mathfrak{s},\hat{a}}$ satisfies our desired properties. 
\end{proof}

Our final corollary is the following. 
\begin{cor} \label{cor:fincor}
For almost every measure $\mu_{\mathfrak{s},a}$ that belongs to the extremal decomposition of $\mu_\mathfrak{s}$, there exists some $r \in \mathbb R$ such that,
\begin{equation}
h_{\mu_{\mathfrak{s},a}}(x_{3},\ldots,x_{d}) = \bigg\lfloor r+ \sum_{k=3}^{d} \mathfrak{s}_k  x_k \bigg\rfloor.
\end{equation}

\end{cor}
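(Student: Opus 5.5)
The plan is to start from the measure $\mu_{\mathfrak{s},\hat a}$ supplied by the preceding lemma, whose weak dominant heights are exactly $\lfloor \langle \mathfrak{s},\vec x\rangle\rfloor$. We then transport this profile to every other extremal component using the spatial and height translation properties of Theorem~\ref{thm:Sheffield}, Case 2. For $y\in\mathbb Z^d$ and $b\in\mathbb Z$ we have
\[
b+\theta_y\mu_{\mathfrak{s},\hat a}=\mu_{\mathfrak{s},\hat a+b+\langle\mathfrak{s},y\rangle}.
\]
The weak dominant heights of this measure are $b+\lfloor \langle\mathfrak{s},\vec x+\vec y\rangle\rfloor = \lfloor r+\langle \mathfrak{s},\vec x\rangle\rfloor$ with $r=b+\langle\mathfrak{s},y\rangle$. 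Here $\vec y$ is the vector of last $d-2$ coordinates of $y$; the first two coordinates do not matter, since $\mathfrak{s}_1=\mathfrak{s}_2=0$. Consequently every $a$ in the set
\[
D:=\hat a+\mathbb Z+\{\langle\mathfrak{s},y\rangle: y\in\mathbb Z^d\}
\]
satisfies the conclusion. Since $\mathfrak{s}$ has an irrational coordinate, $D$ is dense in $\mathbb R$.

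The next step is to extend from the dense set $D$ to all $a$ by monotonicity. Fix $a\in\mathbb R$ and pick $a_i\in D$ with $a_i\downarrow a$; write $r_i$ for the corresponding offsets. Stochastic domination gives that $h_{\mu_{\mathfrak{s},a_i}}$ is pointwise nonincreasing in $i$. To control the offsets, note that the map $a\mapsto r$ on $D$ is monotone: if $r<r'$, then $\lfloor r+t\rfloor<\lfloor r'+t\rfloor$ for some value $t$ of $\langle\mathfrak{s},\vec x\rangle$, because these values are dense modulo $1$. Hence, comparing dominant heights, $r_i$ is nonincreasing. It is also bounded below, by using some $\tilde a<a$ in $D$ and domination, so $r_i\downarrow r$ for some $r$. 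Pointwise, $\lfloor r_i+t\rfloor\to\lfloor r+t\rfloor$ because the floor function is right-continuous and $r_i\downarrow r$.

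To conclude, apply Lemma~\ref{lem:contheights}, or more precisely its proof, which uses only right-continuity on increasing events and domination; this requires that the approximants have weak dominant heights. It gives that $\mu_{\mathfrak{s},a}$ has weak dominant heights equal to $\lim_i h_{\mu_{\mathfrak{s},a_i}}=\lfloor r+\langle\mathfrak{s},\vec x\rangle\rfloor$. This already yields the statement for every $a$, and in particular for almost every $a$. For almost every component the weak dominant heights coincide with the dominant heights of Definition~\ref{defn:dominant-height}, by Lemma~\ref{lem:ergodic-minimal-has-dominant-height}.

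The main obstacle is justifying the use of Lemma~\ref{lem:contheights} when the approximating measures only have weak (80\%) rather than dense dominant heights, since its proof relies on 90\% densities. I would avoid this by choosing the $a_i$ in $D\cap(\text{the full-measure set } A)$, where $A$ is the set of $a$ for which $\mu_{\mathfrak{s},a}$ has densely dominant heights. I would first check that $A$ is invariant under $a\mapsto a+b+\langle\mathfrak{s},y\rangle$, since translation preserves dense dominance; this makes $D$ either contained in $A$ or disjoint from it. If $D$ is disjoint from $A$, I would instead pick $a_i\in A$ directly, identify each profile as $\lfloor r_i+\cdot\rfloor$ by squeezing it between elements of $D$ via domination, and then take the limit as above.
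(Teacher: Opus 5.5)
Your proposal is correct and follows the paper's route: you transport the profile $\lfloor\langle\mathfrak{s},\vec x\rangle\rfloor$ of $\mu_{\mathfrak{s},\hat a}$ to the dense orbit $D=\hat a+\mathbb Z+\{\langle\mathfrak{s},y\rangle\colon y\in\mathbb Z^d\}$ by spatial and height translation, then reach every $a$ by decreasing limits through Lemma~\ref{lem:contheights}. (The offset is simply $r=a-\hat a$, so your density argument for the monotonicity of $a\mapsto r$ is not needed.)

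The only genuine difference is how you handle weak versus dense dominant heights. The paper reruns Lemma~\ref{lem:contheights} with 80\% inputs, obtaining a 60\% dominant height that still equals the dense one for a.e.\ $a$. Your orbit dichotomy for $A$ plus squeezing by elements of $D$ also works. The reason the squeeze works is that for $a_i\notin D$ the number $a_i-\hat a+\langle\mathfrak{s},\vec x\rangle$ is never an integer, so the lower and upper floor bounds coming from $D$ coincide; you should state this explicitly.
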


\begin{proof}
Consider the measure $\mu_{\mathfrak{s},c}$ that satisfies equation \eqref{eq:heightoffset}. By spatial translation symmetry, we see that for any $\mu_{\mathfrak{s}, c+ \sum_{k=3}^{d} \mathfrak{s}_k \tilde{x}_k}$, we have
$$
h_{\mu_{\mathfrak{s},c+ \sum_{k=3}^{d} \mathfrak{s}_k \tilde{x}_k }}(x_{3},\ldots,x_{d}) =  \bigg\lfloor \sum_{k=3}^{d} \mathfrak{s}_k \tilde{x}_k + \sum_{k=3}^{d} \mathfrak{s}_k  x_k \bigg\rfloor.
$$
We set $r= \sum_{k=3}^{d} \mathfrak{s}_k \tilde{x}_k $. 
Notice that since $\mathfrak{s}$ has an irrational coordinate, terms of the form $\sum_{k=3}^{d} \mathfrak{s}_k \tilde{x}_k$ modulo $1$ form a dense set of $[0,1]$, and any $v \in \mathbb{R}$ can be obtained up to a global integer shift as a limit $\lim_{i \to \infty } \sum_{k=3}^{d} \mathfrak{s}_k \tilde{x}^i_k$ for $i \in \mathbb{N}$ and an appropriate sequence $(x^i_{3},\ldots, x^i_{d}) \in \mathbb{Z}^{d-2}$. We can now apply Lemma \ref{lem:contheights} (technically now with each of the measures having a weak dominant height, which will be enough to guarantee asymptotically 60\% of vertices in each layer $\mathbb Z^2\times \{\vec{x}\}$  take height $h_\mu(x_3,...,x_d)$, but which still implies that is the same as the dense dominant height for the full measure set of extremal measures having dense dominant heights) and conclude. 
\end{proof}

\section{Uniqueness and exponential decay in extremal components}\label{sec:uniqueness-exp-decay}

In this section, our goal is to show that any two measures with ergodic gradient measures have the same distribution over dominant height sequences, have exponential decay of excitations away from that dominant height, and indeed must be the same measure modulo global integer shift. That is, we will obtain the uniqueness part of Theorem~\ref{thm:main-localization}. The same argument will also give exponential decay of correlations within the extremal measures establishing Theorem~\ref{thm:structural results-exponential-decay}.

\begin{thm} \label{thm:disagreementperc}
Let $\mu_1,\mu_2$ be height Gibbs measures of slope $\mathfrak{s}$ with ergodic, minimal gradients. Then $\mu_1^\nabla$ and $\mu_2^\nabla$ are the same.
\end{thm}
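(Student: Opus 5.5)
The plan is a disagreement percolation argument in the style of~\cite{VanDenBerg}. It is carried out extremal component by extremal component, using the dominant height profiles identified in Section~\ref{sec:extremal-components}.

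First, I would reduce to comparing extremal components with the same dominant heights. By Lemma~\ref{lem:ergodic-minimal-has-dominant-height}, both $\mu_1^\nabla$ and $\mu_2^\nabla$ satisfy~\eqref{eq:exp-suppressed-gradients-condition} and have densely dominant heights. Normalize so that $h_0=0$. By Corollaries~\ref{cor:dominheight} and~\ref{cor:fincor}, together with the decomposition properties of Theorem~\ref{thm:Sheffield}, almost every extremal component of $\mu_j$ is labelled by an offset $\alpha$. Its dominant heights are $\lfloor \alpha+\langle \mathfrak{s},\mathcal L\rangle\rfloor$, and the law of $\alpha$ is the same for $j=1,2$: uniform on $\{0,\frac1n,\dots,\frac{n-1}{n}\}$ in the rational case and uniform on $[0,1]$ in the irrational case (via the uniform mixture over the height offset spectrum). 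It therefore suffices to show that for a.e.\ $\alpha$ the components $\mu_{1,\alpha}$ and $\mu_{2,\alpha}$, which share the same dominant height function $h(\cdot)$, are equal as height measures. Their gradient mixtures then coincide.

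Second, I would establish the disagreement percolation estimate. Let $\phi^1\sim\mu_{1,\alpha}$ and $\phi^2\sim\mu_{2,\alpha}$ be independent. Call a vertex $v$ \emph{bad} if $\phi^1_v\ne h(\mathcal L_v)$ or $\phi^2_v\ne h(\mathcal L_v)$. Good vertices satisfy $\phi^1_v=\phi^2_v$. The key claim is that the bad set is subcritical in all of $\Z^d$: the probability that $o$ is joined by a $\Z^d$-path of bad vertices to distance $R$ is at most $e^{-c\beta R}$.

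To prove the claim, note that a bad vertex in layer $\mathcal L$ lies inside a horizontal contour of $\phi^1$ or $\phi^2$ in that layer (as in the proofs of Corollaries~\ref{cor:has-dominant-heights} and~\ref{cor:one-point-exponential-tails}). Along a bad path, consider the sequence of outermost contours it passes through. A horizontal displacement $r$ inside one layer forces a contour of weight $\gtrsim r$. Each vertical step enters a new layer and hence a new contour of weight at least $4$. So the total weight $T$ of the contours met is at least $cR$. Consecutive contours lie in adjacent layers and within distance $1$ of each other, so by Lemma~\ref{lem:counting-horizontal-contours} the number of such contour sequences of total weight $T$ is at most $C^T$. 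The contours are distinct horizontal edge sets, possibly split between $\phi^1$ and $\phi^2$. For each sample, \eqref{eq:exp-suppressed-gradients-condition} bounds the joint probability of its contours by $e^{-c\beta\cdot\mathrm{weight}}$, and independence multiplies the two bounds. The estimate holds under a.e.\ extremal component, since it holds in the mixture and the events are shift-invariant up to the choice of base point; it can also be obtained by averaging over $\alpha$, which is what~\eqref{eq:exponential decay local functions} needs anyway. Summing over $T\ge cR$ gives the claim once $\beta_0$ is large.

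Third, I would run the standard exploration argument. Fix a box $\Lambda_M$ and explore the bad clusters from $\partial\Lambda_N$ inwards, for $N\gg M$. With probability at least $1-Ne^{-c\beta N}$ no bad cluster meets both $\partial\Lambda_N$ and $\Lambda_M$. On that event there is a random set $U\supset\Lambda_M$ whose outer vertex boundary consists of good vertices. Moreover, $U$ is a stopping set, determined by $(\phi^1,\phi^2)$ outside $U$. By the DLR property applied to each coordinate of the product measure, conditionally on the outside the two restrictions to $U$ have the same law $\mu^{U,\psi}$, because the boundary values agree. Hence $|\mu_{1,\alpha}(f)-\mu_{2,\alpha}(f)|\le 2Ne^{-c\beta N}$ for every $f$ local on $\Lambda_M$. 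Letting $N\to\infty$ gives $\mu_{1,\alpha}=\mu_{2,\alpha}$, and hence $\mu_1^\nabla=\mu_2^\nabla$.

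I expect the main obstacle to be the second step. Both the multi-edge bound~\eqref{eq:exp-suppressed-gradients-condition} and the counting of contours are only stated horizontally, within a single layer, yet the disagreement set must be shown not to percolate vertically, where no coupling strength is assumed. Two points need care there: the chaining of contours across layers in the enumeration, and transferring the bound from the ergodic measure to a.e.\ extremal component.
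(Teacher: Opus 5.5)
Your strategy is the paper's. You reduce, via Theorem~\ref{thm:Sheffield} and Corollaries~\ref{cor:dominheight} and~\ref{cor:fincor}, to pairs of extremal components $\pi_{1,\alpha},\pi_{2,\alpha}$ with the same dominant heights. You then show that the union of the two excitation sets (the paper's superclusters) does not percolate, by chaining horizontal contours across layers and counting with Lemma~\ref{lem:counting-horizontal-contours}, and finally decouple. Your exploration/stopping-set step is an equivalent substitute for the paper's swapping involution~\eqref{eq:swapping-argument}. One small correction to the enumeration: consecutive contours can also lie in the \emph{same} layer, alternating between $\phi^1$ and $\phi^2$, not only in adjacent layers. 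The count is unaffected.

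There is one step that does not work as written: how you get the contour-chain bound under a fixed pair. The bound~\eqref{eq:exp-suppressed-gradients-condition} is only available for the ergodic mixtures $\mu_j$ (Lemma~\ref{lem:ergodic-minimal-has-dominant-height}). That is, you know $\int \pi_{j,\alpha}(E)\,\nu(\mathrm{d}\alpha)\le e^{-c\beta W(E)}$, not $\pi_{j,\alpha}(E)\le e^{-c\beta W(E)}$ for each $\alpha$.

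Shift covariance does not give a pointwise bound. Translating only moves you to another component $\pi_{j,\alpha+\langle\mathfrak{s},x\rangle}$. In the rational case you could use $\pi_{j,\alpha}\le n\,\mu_j$, but in the irrational case there is no such control. So you cannot apply~\eqref{eq:exp-suppressed-gradients-condition} under $\pi_{1,\alpha}\otimes\pi_{2,\alpha}$ and then multiply the two bounds by independence.

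Averaging over $\alpha$ does not rescue the multiplication either, because $\alpha$ is shared by both coordinates. In general $\int\pi_{1,\alpha}(E_1)\pi_{2,\alpha}(E_2)\,\nu(\mathrm{d}\alpha)$ is not bounded by $\mu_1(E_1)\mu_2(E_2)$.

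The fix is the one in Lemma~\ref{lem:notgoodpair}: drop the factor carrying less weight. Write $\pi_{1,\alpha}(E_1)\pi_{2,\alpha}(E_2)\le \pi_{j^*,\alpha}(E_{j^*})$, where $j^*$ is the configuration whose contours carry weight at least $T/2$. Integrating over $\alpha$ bounds the averaged probability of a given chain by $e^{-c\beta T/2}$, which still beats the $C^T$ entropy. You can then integrate your total-variation bound directly:
\[
\int|\pi_{1,\alpha}(f)-\pi_{2,\alpha}(f)|\,\nu(\mathrm{d}\alpha)\le 2\|f\|_\infty\int p_N(\alpha)\,\nu(\mathrm{d}\alpha)\longrightarrow 0 .
\]
This gives $\pi_{1,\alpha}=\pi_{2,\alpha}$ for a.e.\ $\alpha$. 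It is slightly more direct than the paper, which uses Markov's inequality to isolate a nearly full-measure set of ``good pairs'' and runs the Peierls sum only on those.
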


By Lemma~\ref{lem:ergodic-minimal-has-dominant-height}, both $\mu_1$ and $\mu_2$ must have dominant heights. Since we are interested in their properties modulo global integer shifts (i.e., their gradient measures), we can without loss of generality set both their dominant heights at layer zero to be zero in what follows. 

Before we can proceed with the direct proof, we need to first define some preliminary notions.
In what follows, we use $B_r$ to denote $[-r,r]^d$, the centered box of side length $2r$.

\begin{defn}[Horizontal Cluster]
In any measure $\mu$ satisfying densely dominant heights, almost every extremal component of $\mu$ has a dominant height. Given an extremal component that has dominant heights, we say that an \emph{excitation} is any vertex $v$ such that $\phi_v $ is not equal to the dominant height at that layer. 

In a configuration $\phi$, a horizontal cluster is a connected set of excited vertices within a layer. 
 Abusing notation, a  horizontal cluster 
is a connected set of vertices that lie within a single layer, not necessarily associated with a  configuration or measure. The interior of a horizontal cluster $HC$ in layer $\mathcal L_{\vec{k}} = \mathbb Z^2\times \vec{k}$ is the union of vertices in the horizontal cluster or in a finite connected component of $\mathcal L \setminus HC$.
\end{defn}

\begin{defn}[Connecting Horizontal clusters]

Given two sets $B_1$ and $B_2$, we say that $\mathcal{HC}= (HC_1,a_1),(HC_2,a_2),\ldots,(HC_m,a_m)$ is a connecting set of horizontal clusters if each of the sets $HC_i$ is a connected set of vertices that lie within a layer (though possibly different layers for each $i$) and each $a_i\in \{1,2\}.$ Furthermore, we desire that interior of the set $HC_i$ is $\mathbb Z^d$-connected to the interior of the set $HC_{i-1}$ for all $i \in \{2,\ldots m\}$  and that the interior of $HC_1$ intersects the set $B_1$ while the interior of $HC_m$ intersects the set $B_2$.
\end{defn}

\begin{defn}[Good Pair for $\mathcal{HC}$]
Given a pair of extremal measures $\pi_1$ and $\pi_2$, we say that $(\pi_1,\pi_2)$ is a good pair for a given connecting set of horizontal  clusters $\mathcal{HC}$ if we have that,
\begin{align*}
&\pi_1 \otimes \pi_2 \Big(\bigcap_{i:a_i=1} \{ HC_i  \text{ is a horizontal cluster for } \phi_1\} \cap \bigcap_{i: a_i =2} \{ HC_i \text{ is a horizontal cluster for } \phi_2 \}\Big) \\ \nonumber &
\qquad \le \exp\Big(- \frac{c \beta_0}{4} \sum_{i=1}^m |\partial_o(HC_i)| \Big),
\end{align*}
where $\partial_o(HC_i)$ is the outer boundary of the horizontal cluster, i.e., the number of dual edges in the horizontal contour that forms its outer boundary, and 
where $c \beta_0$ is as in~\eqref{eq:exp-suppressed-gradients-condition}.
\end{defn}

\begin{proof}[\textbf{\emph{Proof of Theorem~\ref{thm:disagreementperc}}}]
First, by the decomposition item of Theorem~\ref{thm:Sheffield}, together with Corollaries~\ref{cor:dominheight} and~\ref{cor:fincor}, we can decompose $\mu_i$ into its extremal components as $\int_{0}^1 \pi_{i,\alpha} \nu(\text{d}\alpha)$, with the $\alpha$ being in one-to-one correspondence with the dominant height sequence. If $\mathfrak{s}$ has only rational components with least common denominator $n$, then $\nu$ is the uniform measure on $\{0, \frac{1}{n},\ldots, \frac{n-1}{n}\}$. If $\mathfrak{s}$ has irrational coordinates, then $\nu$ is uniform on $[0,1]$. 

 In particular, for almost every $\alpha$, the pair $\pi_{1,\alpha},\pi_{2,\alpha}$ are extremal measures with the same dominant height sequences. We first sample a pair of configurations $(\phi_1,\phi_2)$ from the product measure $\pi_{1,\alpha} \otimes \pi_{2,\alpha}$. A cluster, $\mathcal{C}$, of a configuration $\phi$ is a maximal $\mathbb Z^d$-connected set of excited vertices.

\begin{defn}[Supercluster]
Let $\bigcup_{i=1}^{\infty} \mathcal{C}_{1,i}$ (respectively  $\bigcup_{i=1}^\infty \mathcal{C}_{2,i}$) be the union of the clusters of the configurations $\phi_1$ (respectively $\phi_2$). A \emph{supercluster} $\mathcal{S}$ will be a maximal union of connected clusters of the form $\mathcal{C}_{1,i}$ or $\mathcal{C}_{2,j}$, where two of these clusters are connected if one can find vertices $v \in \mathcal{C}_{1,i}$ and $w \in \mathcal{C}_{2,j}$ such that either $v$ and $w$ are connected by an edge or $v=w$. 
\end{defn}

Given any finite set $B\subset \Z^d$, if one can show that there is almost surely no infinite supercluster that intersects $B$, then as we will argue, the distribution of $\pi_{1,\alpha}$ on $B$ will be the same as the distribution of $\pi_{2,\alpha}$ on $B$.  
Let $\mathcal{S}_B$ be the union of superclusters that intersect  $B$. Between the configurations $\phi_1$ and $\phi_2$, one can swap the values of $\phi_1(\mathcal{X})$ and $\phi_2(\mathcal{X})$ on the points $\mathcal{X}$ of the clusters that belong to $\mathcal{S}_B$. Let $(\phi_1',\phi_2')$ be the new configurations derived after this swap. Notice, that the map $(\phi_1,\phi_2) \to (\phi_1',\phi_2')$ is an involution,  preserves probabilities (see a formal discussion in the next paragraph), and exchanges the values of the configurations $\phi_1,\phi_2$ on the inside of  $B$. The existence of this transformation means that $\pi_{1,\alpha}$ and $\pi_{2,\alpha}$ have the same distribution on~$B$.

 Formally, the notion of preserving probabilities can be expressed as follows. Fix integer $N>0$, let $E_1$ and $E_2$ be two cylinder events supported on  $B$, let $\Omega_N$ be the event that $\mathcal S_B$ does not intersect the boundary of the box $B_N$, and let $\xi_1$ and $\xi_2$ be boundary conditions on $\partial B_N$ for configurations $\phi_1$ and $\phi_2$. Then, we can write,
\begin{align}\label{eq:swapping-argument}
&\mathbb{P}(\phi_1 \in E_1, \phi_2 \in E_2) \nonumber \\
&\le \sum_{\xi_1,\xi_2} \mathbb{P}\left(\phi_1 \in E_1, \phi_2 \in E_2, (\phi_1,\phi_2) \in \Omega_N \mid \phi_1|_{\partial B_N} = \xi_1, \phi_2|_{\partial B_N} = \xi_2 \right)\mathbb{P}(\phi_1|_{\partial B_N}= \xi_1, \phi_2|_{\partial B_N} = \xi_2) \nonumber \\
&\qquad +\mathbb{P}((\phi_1,\phi_2)\in \Omega_N^c) \nonumber\\
& \le \sum_{\xi_1,\xi_2} \mathbb{P}\left(\phi_1' \in E_2, \phi_2' \in E_1, (\phi_1',\phi_2') \in \Omega_N \mid \phi_1'|_{\partial B_N} = \xi_1, \phi_2'|_{\partial B_N} = \xi_2 \right) \mathbb{P}(\phi_1'|_{\partial B_N}= \xi_1, \phi_2'|_{\partial B_N} = \xi_2) \nonumber \\& \qquad + \mathbb{P}((\phi_1, \phi_2) \in \Omega_N^c) \nonumber \\
& \le \mathbb{P}(\phi_1' \in E_2, \phi_2' \in E_1) + \mathbb{P}((\phi_1,\phi_2) \in \Omega_N^c). 
\end{align}
The second inequality comes from the fact that on the event $(\phi_1,\phi_2) \in \Omega_N$ the swapping operation does not affect the boundary condition, and preserves probabilities (as $\phi_1,\phi_2$ both take the same values---namely the common dominant height value---on the outer boundary of $\mathcal S_B$). 
We can similarly run the bound in the other direction and derive that \begin{align}\label{eq:tv-bound-by-supercluster-connection}|\mathbb{P}(\phi_1 \in E_1, \phi_2 \in E_2) - \mathbb{P}(\phi_1' \in E_1, \phi_2' \in E_2)| \le 2\mathbb{P}((\phi_1,\phi_2) \in \Omega_N^c)\,.
\end{align}

We are now left with showing that for almost every $\alpha$, the probability that $\mathcal S_B$ connects  $B$ to the boundary $\partial B_N$ of the box $[-N,N]^d$ will go to $0$ as $N\to\infty$. 
 This will be divided into two parts. The first is to show it for any pair $\pi_{1}$ and $\pi_{2}$ that satisfies the good pair property for all connecting horizontal clusters between $B$ and $\partial B_N$. The second is to show that almost all pairs $(\pi_{1,\alpha},\pi_{2,\alpha})$ will be good pairs for all connecting horizontal clusters for $N$ large enough.

\begin{lem} \label{lem:supercluster}
Suppose $\beta_0$ is large and a pair of extremal measures $\pi_1$ and $\pi_2$ are a good pair for all connecting horizontal clusters between $B$ and $\partial B_N$. Then, the probability that there exists a supercluster that connects $B\subset B_r$ to  $\partial B_N$ can be bounded as follows: 
\begin{equation} \label{eq:suppercluster}
\pi_1 \otimes \pi_2(\mathcal S_B \cap \partial B_N \ne \emptyset ) \le |B|\exp(-c' \beta_0 (N-r))\,,
\end{equation}
for some universal constant $c'>0$. 
\end{lem}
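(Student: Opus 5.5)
The proof is a Peierls-type union bound over the horizontal clusters that would have to make up a supercluster reaching from $B$ to $\partial B_N$. First we reduce a supercluster to a connecting set of horizontal clusters. Suppose $\mathcal S_B$ meets $\partial B_N$. Then there is a $\mathbb Z^d$-path of vertices $v_1,\dots,v_L$, with $v_1\in B$ and $v_L\in\partial B_N$, each excited for $\phi_1$ or for $\phi_2$, and consecutive vertices adjacent or equal.

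We decompose this path by layers. Each $v_j$ lies in a horizontal cluster $HC$, namely its connected component of excited vertices for the appropriate $\phi_{a}$ within its layer. We record the cluster together with the label $a\in\{1,2\}$. Consecutive vertices either lie in the same layer or in adjacent layers, so the interiors of the successive recorded clusters are $\mathbb Z^d$-connected. After deleting repetitions, this gives a connecting set of horizontal clusters $\mathcal{HC}=((HC_i,a_i))_{i\le m}$ from $B$ to $\partial B_N$. Since the clusters are maximal components, the event $\{\mathcal S_B\cap\partial B_N\neq\emptyset\}$ is contained in the union over such $\mathcal{HC}$ of the event that each $HC_i$ is a horizontal cluster for $\phi_{a_i}$. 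The good-pair hypothesis then bounds each term by $\exp(-\frac{c\beta_0}{4}\sum_i|\partial_o HC_i|)$.

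Next we need a geometric lower bound and a counting bound. For the geometry, the interior of $HC_i$ has horizontal diameter at most $|\partial_o HC_i|$. Adjacent interiors are at $\ell_1$-distance at most one, with the vertical step contributing at most one per cluster. So traveling from $B\subset B_r$ to $\partial B_N$ forces
\[
\sum_{i=1}^m\bigl(|\partial_o HC_i|+1\bigr)\ge N-r .
\]
Since $|\partial_o HC_i|\ge 4\ge 1$, this gives $\sum_i|\partial_o HC_i|\ge (N-r)/2$.

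For the counting, fix a starting vertex in $B$, giving a factor $|B|$. We enumerate sequences of outer contours with $\sum_i|\partial_o HC_i|=M$. Given the previous interior, the next cluster's contour passes within distance $O(|\partial_o HC_{i-1}|)$ of it. So the root of the next contour has at most $C|\partial_o HC_{i-1}|^2\le C^{|\partial_o HC_{i-1}|}$ choices. The contour itself is a connected dual edge set in $\mathbb Z^2$ through that root, giving $C^{|\partial_o HC_i|}$ choices, and the label gives a factor $2$. We also choose $m\le M$ and the composition of $M$ into $m$ parts, costing at most $2^{2M}$. Altogether there are at most $|B|\,C^M$ sequences.

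We should check one subtlety in the counting: the good-pair bound depends only on outer boundaries, while we enumerate clusters $HC_i$. It suffices to enumerate outer contours. The event that a cluster with a given outer contour exists is a union over the inner structure, but this should cost nothing extra, provided the good-pair hypothesis is applied to the \emph{filled} clusters, i.e.\ the interiors, which are determined by $\partial_o$. This identification is the step I expect to need the most care. If the good pair property is genuinely only for exact clusters, I would redefine the recorded objects as the filled-in interiors, and use that the good-pair proof bounds the probability of the outer contour's disagreements alone.

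Summing gives
\[
|B|\sum_{M\ge (N-r)/2}C^M e^{-c\beta_0 M/4}\le |B|e^{-c'\beta_0(N-r)}
\]
for $\beta_0$ large. The main obstacle is the geometric and entropic bookkeeping: showing that chaining clusters through vertical adjacency, including overlapping vertices between $\phi_1$ and $\phi_2$ clusters, costs only a constant factor per unit of boundary.
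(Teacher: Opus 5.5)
Your proposal is correct and is essentially the paper's argument. You reduce $\{\mathcal S_B\cap\partial B_N\neq\emptyset\}$ to the occurrence of a connecting set of horizontal clusters, apply the good-pair bound, and union-bound with factors $|B|$ for the start, $O(t_i^2)$ for the next point, $2$ for the label and $C^{t_i}$ for the cluster, using that the outer boundaries must total at least order $N-r$. The subtlety you flag is genuine, and the paper passes over it: its count of ``$C^{t_i}$ horizontal clusters of outer boundary size $t_i$ containing $p_i$'' holds only for filled clusters, i.e.\ outer contours. So your fix, reading the good-pair property as a bound on outer-contour events, is the right one; it is exactly what the Markov argument of Lemma~\ref{lem:notgoodpair} delivers via \eqref{eq:exp-suppressed-gradients-condition}. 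The only other adjustment is to count the next contour as one enclosing a point adjacent to the previous interior, as the paper does, rather than one passing near it, since a contour in an adjacent layer that encloses the previous cluster can be at distance of order $t_i$.
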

\begin{proof}

 Notice that if $\mathcal S_B$ intersects $\partial B_N$, then there exists  a connecting horizontal cluster between $B$ and $\partial B_N$. 
If we apply the assumption of being a good pair for all connecting horizontal  clusters to $\pi_{1}$ and $\pi_{2}$, we see that the probability that a specific collection of connecting horizontal clusters $\mathcal{HC} = (HC_1,a_1), (HC_2,a_2),\ldots, (HC_m,a_m)$  is excited is bounded by  $\exp(-(c\beta_0/4) \sum_{i=1}^m |\partial_o(HC_i)|)$. 
In addition, in order to connect from $B$ to $\partial B_N$, the total sum of the outer boundary lengths of these horizontal clusters  must be greater than $N-r$  since $B \subset B_r$. 

It remains to enumerate over the possible connecting horizontal  clusters from $B$ to $\partial B_N$. This can be upper bounded by counting the number of paths of horizontal clusters that have total outer boundary length equal to  $M \ge N-r$ and intersect $B$ times $\exp(-c \beta_0 M/4)$. 
In total, we get 
\begin{align}\label{eq:disagreementpercbound}
\pi_1 \otimes \pi_2(\mathcal S_B \cap \partial B_N \ne \emptyset) &\le |B|\sum_{K=1}^{\infty} \sum_{t_1 + \ldots + t_K \ge N-r, t_i \ge 1 \forall i}  \prod_{i=1}^K\exp(-c\beta_0 t_i/4) C^{t_i} 4d|t_i|^2  
\end{align}
for some constant $C$. 
We derived~\eqref{eq:disagreementpercbound} bound as follows. The first sum is over the number of horizontal clusters used, and the second over the assignment of boundary costs $|\partial_o(HC_i)|=t_i$ to each of them. Then, for each $i$, the horizontal  clusters $\tilde{\mathcal{HC}}_i$ must contain a point $p_i$ in its interior such that $p_i$ is adjacent to some point in the horizontal  cluster $\tilde{\mathcal{HC}}_{i-1}$. There are $|B|$ many choices for the point $p_1$. Given the horizontal  cluster $\tilde{\mathcal{HC}_i}$, there are at most $2d|t_i|^2$ choices for the next center $p_{i+1}$ of $\tilde{\mathcal{HC}}_{i+1}$, and $2$ choices for $a_{i+1}$. 

Given a point $p_i$, there is some absolute constant $C$ such that there are at most $C^{t_i}$ choices of horizontal clusters of outer boundary size $t_i$ that contain the point $p_i$ in their interior. Finally, as we have mentioned before, once we have specified all of the horizontal clusters, the probability of seeing them is at most $\exp(-c  \beta_0 \sum_{i=1}^K t_i/4)$. 

So long as $\beta_0$ is large enough, one has $\exp( - c\beta_0 t_i) C^{t_i} 4 d|t_i|^2\le \exp( - c'\beta_0 t_i)$ for some other absolute constant $c'$ and plugging this into \eqref{eq:disagreementpercbound}, we see that,
\begin{equation} \label{eq:clusterexpanbnd}
\begin{aligned}
\frac{1}{|B|} \pi_1 \otimes \pi_2(\mathcal S_B \cap \partial B_N \ne \emptyset) 
& \le \sum_{A= N-r}^{\infty} 2^A \exp(-c'  \beta_0 A) +\frac{\left(\sum_{t_i=1}^{\infty} \exp(- c'  \beta_0 t_i)\right)^{N-r}  }{1-\sum_{t_i=1}^{\infty} \exp(c'  \beta_0 t_i) }\,.
\end{aligned}
\end{equation}
Again, provided $ \beta_0$ is large enough, we can find some $c''$ such that the last line will be less than $\exp(-c''  \beta_0 (N-r))$. This completes our proof. 
\end{proof}

We now turn to showing that almost every pair of extremal components will satisfy the property of  being a good pair with respect to all connecting horizontal  clusters between $B$ and $\partial B_N$.

\begin{lem} \label{lem:notgoodpair}
Let $\mu_1$ and $\mu_2$ be two measures satisfying densely dominant heights, and with their respective extremal decompositions. 
Then, for $\beta_0$ large enough, the total measure of all $\alpha$ such that $(\pi_{1,\alpha},\pi_{2,\alpha})$ is not a good pair for some connecting horizontal clusters between $B$ and $\partial B_N$ is less than $|B| \exp[-c'' \beta_0 (N-r)]$.
\end{lem}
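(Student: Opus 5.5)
The plan is a first-moment / Markov argument over the mixing variable $\alpha$: first get the good-pair bound in expectation over $\alpha$, with a stronger exponent than needed, and then pay for the union over connecting horizontal clusters by Markov's inequality.

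\textbf{Step 1: the $\alpha$-averaged bound.} Fix a connecting set $\mathcal{HC}=(HC_i,a_i)_{i\le m}$ between $B$ and $\partial B_N$. Let $\mathcal{E}_j(\mathcal{HC})$ be the event that every $HC_i$ with $a_i=j$ is a horizontal cluster for $\phi_j$. Then
\begin{align*}
\int \pi_{1,\alpha}\otimes\pi_{2,\alpha}\bigl(\mathcal{E}_1\cap\mathcal{E}_2\bigr)\,\nu(\mathrm{d}\alpha)
\le \Bigl(\int \pi_{1,\alpha}(\mathcal{E}_1)\,\nu(\mathrm{d}\alpha)\Bigr)^{1/2}\Bigl(\int \pi_{2,\alpha}(\mathcal{E}_2)\,\nu(\mathrm{d}\alpha)\Bigr)^{1/2}
\end{align*}
by Cauchy--Schwarz in $\alpha$. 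Each integral on the right equals $\mu_j(\mathcal{E}_j)$, so it suffices to bound $\mu_j(\mathcal{E}_j)$.

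\textbf{Step 2: the bound on $\mu_j(\mathcal{E}_j)$.} The event $\mathcal{E}_j$ forces the outer boundary contour of each $HC_i$ (with $a_i=j$) to consist of horizontal disagreement edges for $\phi_j$. Being the outer boundary of an excited cluster, it separates excited sites from dominant-height sites. Under $\mu_j$, Lemma~\ref{lem:ergodic-minimal-has-dominant-height} gives~\eqref{eq:exp-suppressed-gradients-condition}. That condition is a statement about gradients only, so it is insensitive to the decomposition and to the global shift. The edges involved in distinct $HC_i$ lie in distinct contours, since each cluster has its own outer boundary; if the same contour were to appear twice, we count it once and use $\partial_o(HC_i)\ge1$.

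The main subtlety in this step is that clusters could repeat or share boundary edges in the connecting sequence. I would handle this by passing to the sequence of distinct clusters, which only shortens it, while the total boundary still exceeds $N-r$. Granting this, \eqref{eq:exp-suppressed-gradients-condition} gives $\mu_j(\mathcal{E}_j)\le \exp\bigl(-c\beta_0\sum_{i:a_i=j}|\partial_o(HC_i)|\bigr)$. Combining with Step 1,
\begin{align*}
\int \pi_{1,\alpha}\otimes\pi_{2,\alpha}(\cdots)\,\nu(\mathrm{d}\alpha)\le \exp\Bigl(-\frac{c\beta_0}{2}\sum_i|\partial_o(HC_i)|\Bigr).
\end{align*}

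\textbf{Step 3: Markov in $\alpha$.} By Markov's inequality in $\alpha$, the $\nu$-measure of the set of $\alpha$ for which the good-pair inequality fails for this $\mathcal{HC}$ is at most $\exp\bigl(-\frac{c\beta_0}{4}\sum_i|\partial_o(HC_i)|\bigr)$. This is the ratio of the $\frac{c\beta_0}{2}$ bound to the $\frac{c\beta_0}{4}$ threshold.

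\textbf{Step 4: the union bound.} We now sum over all connecting $\mathcal{HC}$ from $B$ to $\partial B_N$, using exactly the enumeration in the proof of Lemma~\ref{lem:supercluster}. There are $|B|$ choices of starting point. Each step contributes $C^{t_i}\,4d\,t_i^2$ choices for a cluster with outer boundary $t_i$, and the constraint is $\sum t_i\ge N-r$. With weight $e^{-c\beta_0 t_i/4}$ per cluster and $\beta_0$ large, the same computation as in~\eqref{eq:clusterexpanbnd} bounds the sum by $|B|e^{-c''\beta_0(N-r)}$.

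\textbf{Main obstacle.} The delicate point is justifying Step 2. One must check that the events $\{HC_i\text{ is a horizontal cluster}\}$ really imply prescribed nonzero gradients on a set of horizontal edges of total size at least $\sum|\partial_o(HC_i)|$. The outer boundary dual edges of an excited cluster separate an excited site from a site at dominant height, so the gradient there is nonzero and \eqref{eq:exp-suppressed-gradients-condition} applies after a union over signs and magnitudes, which costs only $C^{t}$. One must also avoid double counting when clusters of the same configuration coincide.
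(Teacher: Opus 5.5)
Your proposal is correct and follows the paper's proof. As there, you bound the $\alpha$-average of $\pi_{1,\alpha}\otimes\pi_{2,\alpha}(\mathcal{HC})$ by $\exp\bigl(-\tfrac{c\beta_0}{2}\sum_i|\partial_o(HC_i)|\bigr)$ via $\int\pi_{j,\alpha}\,\nu(\mathrm{d}\alpha)=\mu_j$ and \eqref{eq:exp-suppressed-gradients-condition}, apply Markov in $\alpha$ at threshold $\exp\bigl(-\tfrac{c\beta_0}{4}\sum_i|\partial_o(HC_i)|\bigr)$, and union bound as in \eqref{eq:clusterexpanbnd}. The only differences are cosmetic: you use Cauchy--Schwarz in $\alpha$ (with $\pi_{j,\alpha}(\mathcal{E}_j)^2\le\pi_{j,\alpha}(\mathcal{E}_j)$) where the paper notes one of $\mu_1(\mathcal{HC}_1),\mu_2(\mathcal{HC}_2)$ already satisfies that bound and drops the other factor, and you make explicit the restriction to sequences of distinct clusters, which the paper's product bound uses implicitly.
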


\begin{proof}
Let $\mathcal{HC}$ be a collection of horizontal connecting  clusters  between $B$ and $\partial B_N$. Let $\mathcal{HC}_1$ denote its components with $a_i =1$ and let $\mathcal{HC}_2$ denote its components with $a_i =2$. Notice that $\mu_1 \otimes \mu_2(\mathcal{HC}) = \mu_1(\mathcal{HC}_1) \mu_2(\mathcal{HC}_2) \le \exp[- c \beta_0 \sum_{i=1}^m |\partial_o(HC_i)|]$, where we applied Lemma~\ref{lem:ergodic-minimal-has-dominant-height} and~\eqref{eq:exp-suppressed-gradients-condition}. Without loss of generality, assume that $\mu_1(\mathcal{HC}_1) \le \exp[-(c/2) \beta_0 \sum_{i=1}^{m} |\partial_o(HC_i)| ]$. Notice, then that we can say,
\begin{equation}
 \int_0^1 \pi_{1,\alpha} \otimes \pi_{2,\alpha}(\mathcal{HC}) \nu(\text{d}\alpha) \le \int_0^1 \pi_{1,\alpha}(\mathcal{HC}_1) \nu(\text{d}\alpha) = \mu_1(\mathcal{HC}_1) \le \exp\Big(-\frac{c\beta_0}{2}  \sum_{i=1}^m |\partial_o(HC_i)|\Big)\,.
\end{equation}
(understanding the integral as against the counting measure in the case where $\mathfrak{s}$ is rational, for brevity.)

Now by applying Markov's inequality, we have that the measure of $\alpha\in [0,1]$ such that $\pi_{1,\alpha} \otimes \pi_{2,\alpha}(\mathcal{HC}) \ge \exp[- (c/4) \beta_0 \sum_{i=1}^m |\partial_o(HC_i)|]$ must be less than $\exp[-(c/4) \beta_0 \sum_{i=1}^m |\partial_o(HC_i)|]$.

We can now apply the same logic of the union bound leading up to equation \eqref{eq:clusterexpanbnd} to say that the total measure of $\alpha\in [0,1]$ such that the pair $\pi_{1,\alpha} \otimes \pi_{2,\alpha}$ is not a good pair for some horizontal clusters connecting between $B$ and $\partial B_N$ is less than $|B| \exp[-c'' \beta_0 (N-r)]$ provided $\beta_0$ is large enough, for another absolute constant $c''$. 
\end{proof}

To conclude the proof, by Lemma~\ref{lem:notgoodpair} and Lemma~\ref{lem:supercluster}, the measure of $\alpha$ for which $\pi_{1,\alpha}\otimes \pi_{2,\alpha}(\mathcal S_B \cap \partial B_N)\ge |B|e^{ - c' \beta_0 (N-r)}$ is $o_N(1)$. Fixing $B \subset B_r$, using this bound on the right-hand side of~\eqref{eq:tv-bound-by-supercluster-connection} and sending $N\to\infty$ implies by continuity of measure that for a full-measure set of $\alpha$, $\pi_{1,\alpha}$ and $\pi_{2,\alpha}$ have the same marginals on $B$. Since this holds for all $r$, they are the same measure.  
\end{proof}

As a corollary of the same proof strategy, we also can obtain exponential decay of correlations on the extremal components. 

\begin{cor}\label{cor:doc}
 Let $\mu_{\mathfrak{s}}$ be a Gibbs measure with ergodic minimal gradient, and write its extremal decomposition in the form $\int_0^1 \pi_{\alpha} \text{d}\alpha$ (again understanding this integral as against the counting measure when $\mathfrak{s}$ is rational). Fix any two finite domains $\Lambda_1$ and $\Lambda_2$ and let $f_1$, $f_2$ be any bounded functions from $\mathbb Z^{\Lambda_i} \to \mathbb{R}$. There is a universal constant $c$ such that the measure of $\alpha\in [0,1]$ such that the equation
$$
|\pi_\alpha[f_1(\phi) f_2(\phi)] - \pi_\alpha[f_1(\phi)] \pi_\alpha[f_2(\phi)]| \le \|f_1\|_{\infty} \|f_2\|_{\infty} |\Lambda_1| \exp(-c  \beta_0  d(\Lambda_1,\Lambda_2))\,,
$$ 
is violated has measure at most $|\Lambda_1|\exp(-c  \beta_0 d(\Lambda_1,\Lambda_2))$. 
\end{cor}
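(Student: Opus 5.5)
The plan is to run the disagreement-percolation argument from the proof of Theorem~\ref{thm:disagreementperc} with $\mu_1=\mu_2=\mu_{\mathfrak{s}}$ and the same $\alpha$ in both copies. The second copy then acts as an independent sample, and the swapping involution decouples $f_1$ from $f_2$.

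\textbf{Step 1: the swap.} Fix $\alpha$ and sample $(\phi,\phi')$ from $\pi_\alpha\otimes\pi_\alpha$. Let $\mathcal S_{\Lambda_1}$ be the union of superclusters of the pair that meet $\Lambda_1$, and let $D$ be the event that $\mathcal S_{\Lambda_1}$ intersects $\Lambda_2$ (or its outer vertex boundary). Define $(\tilde\phi,\tilde\phi')$ by swapping $\phi$ and $\phi'$ on $\mathcal S_{\Lambda_1}$.

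\textbf{Step 2: what the swap does off $D$.} On $D^c$ the following hold:
\begin{itemize}
\item both configurations equal the common dominant height on the outer boundary of $\mathcal S_{\Lambda_1}$;
\item the swap does not touch $\Lambda_2$;
\item the map is a measure-preserving involution, by the same DLR argument as in \eqref{eq:swapping-argument}, conditioning on the configuration outside a large box $B_N$ and letting $N\to\infty$.
\end{itemize}
Also $\tilde\phi|_{\Lambda_1}=\phi'|_{\Lambda_1}$ and $\tilde\phi|_{\Lambda_2}=\phi|_{\Lambda_2}$. Hence
\begin{align*}
\pi_\alpha[f_1 f_2]-\pi_\alpha[f_1]\pi_\alpha[f_2]
=\mathbb E\big[f_1(\phi)f_2(\phi)-f_1(\phi')f_2(\phi)\big],
\end{align*}
and the integrand is invariant under the swap on $D^c$: it becomes $f_1(\phi')f_2(\phi)-f_1(\phi)f_2(\phi)$, i.e.\ it changes sign. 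Its expectation restricted to $D^c$ therefore vanishes. This gives
\begin{align*}
|\Cov_{\pi_\alpha}(f_1,f_2)|\le 2\|f_1\|_\infty\|f_2\|_\infty\,\pi_\alpha\otimes\pi_\alpha(D).
\end{align*}

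\textbf{Step 3: bounding $\pi_\alpha\otimes\pi_\alpha(D)$.} If $D$ occurs, there is a connecting set of horizontal clusters from $\Lambda_1$ to $\Lambda_2$, with total outer boundary at least of order $d(\Lambda_1,\Lambda_2)$. Lemma~\ref{lem:supercluster}, with $B_N$ replaced by $\Lambda_2$ and rooted at a point of $\Lambda_1$, then gives
\begin{align*}
\pi_\alpha\otimes\pi_\alpha(D)\le|\Lambda_1|e^{-c'\beta_0 d(\Lambda_1,\Lambda_2)},
\end{align*}
provided $(\pi_\alpha,\pi_\alpha)$ is a good pair for every connecting set of horizontal clusters between $\Lambda_1$ and $\Lambda_2$.

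\textbf{Step 4: most $\alpha$ give a good pair.} Lemma~\ref{lem:notgoodpair}, applied with $\mu_1=\mu_2=\mu_{\mathfrak{s}}$, shows that the $\nu$-measure of $\alpha$ failing the good-pair property is at most $|\Lambda_1|e^{-c''\beta_0 d(\Lambda_1,\Lambda_2)}$. Its proof uses the Markov inequality on $\int\pi_\alpha(\mathcal{HC}_1)\,\nu(d\alpha)=\mu_{\mathfrak{s}}(\mathcal{HC}_1)$, which is bounded via~\eqref{eq:exp-suppressed-gradients-condition} from Lemma~\ref{lem:ergodic-minimal-has-dominant-height}, followed by the same enumeration as in \eqref{eq:disagreementpercbound}. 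Combining Steps 2--4 and shrinking $c$ to absorb the factor $2$ and the constants gives the claim.

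\textbf{Main obstacle.} The delicate step is justifying the swap when the distance is measured to $\Lambda_2$ rather than to $\partial B_N$. One must check that on $D^c$ the swapped configurations remain admissible and have the same probability. This uses that the interactions along the boundary of $\mathcal S_{\Lambda_1}$ only see the common dominant heights, exactly as in \eqref{eq:swapping-argument}. The only change is that the error term $\mathbb P(\Omega_N^c)$ is replaced by $\mathbb P(D)$ plus a vanishing $\Omega_N^c$ term, where $\Omega_N$ is the event that $\mathcal S_{\Lambda_1}$ avoids $\partial B_N$.
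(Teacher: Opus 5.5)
Your proposal is correct and follows essentially the same route as the paper: write the covariance with two independent copies of $\pi_\alpha$, use the supercluster swap to get cancellation unless $\mathcal S_{\Lambda_1}$ connects $\Lambda_1$ to $\Lambda_2$, bound that event by Lemma~\ref{lem:supercluster} for good pairs, and bound the measure of bad $\alpha$ by Lemma~\ref{lem:notgoodpair} with $\mu_1=\mu_2=\mu_{\mathfrak{s}}$. The only difference is your ``main obstacle'': the paper defines the bad event as the supercluster of $\Lambda_1$ reaching distance $d(\Lambda_1,\Lambda_2)$, so off that event $\mathcal S_{\Lambda_1}$ is finite and the swap is a purely finite-volume DLR argument with no $N\to\infty$ limit, whereas your version needs the extra $\Omega_N^c$ term to vanish.
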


\begin{proof}
One can first rewrite
$$
|\pi_\alpha[f_1(\phi) f_2(\phi)] - \pi_\alpha[f_1(\phi)] \pi_\alpha[f_2(\phi)]| =|\pi_1[f_1(\phi) f_2(\phi)] \pi_2 [1]- \pi_1[f_1(\phi)] \pi_2[f_2(\phi)]|,
$$
where $\pi_1$ and $\pi_2$ are two independent copies of $\pi_\alpha$. Then one can apply the swapping argument from the previous proof of Theorem \ref{thm:disagreementperc} to show that there is cancellation as long as there is no large supercluster connecting $\Lambda_1$ to $\Lambda_2$. The probability of having such a supercluster is bounded by the probability of having a supercluster that goes from $\Lambda_1$ out a distance $d(\Lambda_1,\Lambda_2)$. After translating to the origin, this is bounded by $|\Lambda_1| \exp( - c'\beta_0 d(\Lambda_1,\Lambda_2))$ per Lemma~\ref{lem:supercluster} so long as $\alpha$ is such that $\pi_1,\pi_2$  are good pairs for all connecting horizontal clusters between $\Lambda_1$ and $\Lambda_2$. Lemma~\ref{lem:notgoodpair} in turn bounds the measure of those $\pi_1,\pi_2$ that are not good pairs by $|\Lambda_1| \exp( - c''\beta_0 d(\Lambda_1,\Lambda_2))$.  
\end{proof}

\subsection{Proofs of main theorems} 
In this section, we combine all of the above to conclude our three main theorems. 

\begin{proof}[\textbf{\emph{Proof of Theorem~\ref{thm:main-localization}}}]
        The fact that the slope $\mathfrak{s}$ is localized (i.e., there exists a height function Gibbs measure of slope $\mathfrak{s}$) was shown in Lemma~\ref{lem:existence-of-height-measures}. The uniqueness of ergodic minimal gradient measures of slope $\mathfrak{s}$ was shown in Theorem~\ref{thm:disagreementperc} (recalling Theorem 8.6.3 of~\cite{Sheffield-Random-Surfaces} says if $\mathfrak{s}$ is localized, any such gradient measure must be the gradient of a height function Gibbs measure). 
\end{proof}

\begin{proof}[\textbf{\emph{Proof of Theorem \ref{thm:structural results}}}]
Item (1) of Theorem~\ref{thm:structural results} was shown in Lemma~\ref{lem:ergodic-minimal-has-dominant-height}. Corollary~\ref{cor:dominheight} together with the decomposition property of Theorem~\ref{thm:Sheffield} proves part (2) of Theorem \ref{thm:structural results} in the rational case. Corollary~\ref{cor:fincor} together with the decomposition property of Theorem~\ref{thm:Sheffield} proves it in the irrational case. Finally, the uniqueness of Theorem \ref{thm:disagreementperc}, together with the decomposition of Theorem~\ref{thm:Sheffield} proves part (3).
\end{proof}

\begin{proof}[\textbf{\emph{Proof of Theorem \ref{thm:structural results-exponential-decay}}}]

The first item follows from Corollary~\ref{cor:one-point-exponential-tails}, Corollary~\ref{cor:dominant-height-exists}. The second item follows from integrating Corollary~\ref{cor:doc} in $\alpha$ and using boundedness of $f_1,f_2$. 
\end{proof}

\bibliographystyle{plain}
 \bibliography{slopes}

\end{document}